\def\myfontsize{10pt}
\documentclass[\myfontsize]{amsart}
\usepackage{amsmath, amsfonts, amssymb}
\usepackage{stmaryrd}
\usepackage{accents}
\usepackage[all]{xy}
\usepackage{graphicx}
\usepackage{hhline}
\usepackage{tikz}
\usetikzlibrary{patterns,calc}
\usepackage[colorlinks=true, linkcolor=blue,
  citecolor=blue, urlcolor=blue]{hyperref}
\ifcsname mypreamble\endcsname\mypreamble\fi
\usepackage{dynkin-diagrams}
\usepackage{tikz-cd}

\def\newthm#1#2{\newtheorem{#1}[dummy]{#2}%
  \expandafter\def\csname#2\endcsname##1{\hyperref[#1:##1]{#2~\ref*{#1:##1}}}}
\newthm{thm}{Theorem}
\newthm{lemma}{Lemma}
\newthm{prop}{Proposition}
\newthm{cor}{Corollary}
\newthm{conj}{Conjecture}
\newthm{cons}{Consequence}
\newthm{claim}{Claim}
\newthm{fact}{Fact}
\newthm{obs}{Observation}
\newthm{guess}{Guess}
\theoremstyle{definition}
\newthm{defn}{Definition}
\newthm{remark}{Remark}
\newthm{example}{Example}
\newthm{question}{Question}
\newthm{notation}{Notation}
\newthm{problem}{Problem}

\newcommand{\Section}[1]{\hyperref[sec:#1]{Section~\ref*{sec:#1}}}
\newcommand{\Table}[1]{\hyperref[tab:#1]{Table~\ref*{tab:#1}}}
\newcommand{\Figure}[1]{\hyperref[fig:#1]{Figure~\ref*{fig:#1}}}
\newcommand{\eqn}[1]{\hyperref[eqn:#1]{(\ref*{eqn:#1})}}

\DeclareMathOperator{\GL}{GL}
\DeclareMathOperator{\SL}{SL}
\DeclareMathOperator{\PGL}{PGL}
\DeclareMathOperator{\Sp}{Sp}
\DeclareMathOperator{\SO}{SO}
\DeclareMathOperator{\PSO}{PSO}
\DeclareMathOperator{\Gr}{Gr}

\DeclareMathOperator{\LG}{LG}
\DeclareMathOperator{\IG}{IG}

\DeclareMathOperator{\OG}{OG}

\DeclareMathOperator{\Hom}{Hom}
\DeclareMathOperator{\Aut}{Aut}
\DeclareMathOperator{\Ker}{Ker}
\DeclareMathOperator{\Sym}{Sym}

\DeclareMathOperator{\Hilb}{Hilb}
\DeclareMathOperator{\HH}{H}

\DeclareMathOperator{\Stab}{Stab}

\DeclareMathOperator{\codim}{codim}
\DeclareMathOperator{\rank}{rk}

\newcommand{\bG}{{\mathbb G}}

\newcommand{\bP}{{\mathbb P}}

\newcommand{\C}{{\mathbb C}}

\newcommand{\Q}{{\mathbb Q}}
\newcommand{\Z}{{\mathbb Z}}

\newcommand{\cC}{{\mathcal C}}

\newcommand{\cE}{{\mathcal E}}
\newcommand{\cF}{{\mathcal F}}
\newcommand{\cH}{{\mathcal H}}
\newcommand{\cI}{{\mathcal I}}
\newcommand{\cK}{{\mathcal K}}

\newcommand{\cO}{{\mathcal O}}
\newcommand{\cP}{{\mathcal P}}

\newcommand{\fg}{{\mathfrak g}}
\newcommand{\fp}{{\mathfrak p}}

\newcommand{\fq}{{\mathfrak q}}
\newcommand{\fl}{{\mathfrak l}}

\newcommand{\fz}{{\mathfrak z}}
\newcommand{\fr}{{\mathfrak r}}
\newcommand\SLT[1]{{S_{#1}}}
\newcommand\Stt{{{\texttt{S}}}}
\newcommand\Ssf{{{\textsf{S}}}}
\newcommand\Rsf{{{\textsf{R}}}}
\newcommand\Qsf{{{\textsf{Q}}}}

\newcommand{\ev}{\operatorname{ev}}

\newcommand{\ov}{\overline}

\newcommand{\ignore}[1]{}

\newcommand{\scal}[1]{\langle #1 \rangle}

\begin{document}

\title{Sphericality of spaces of conics \\
on homogeneous spaces}

% Revision information from git. The variables \gitdate and \gittag are
% set by the shell script mkpdf.sh
\ifdefined\gitdate
\date{\gitdate\ revision {\tt \gittag}}
\else
\date{June 16, 2026}
\fi

\author{Minseong Kwon}
\address{Morningside Center of Mathematics, Academy of Mathematics and Systems Science, Chinese Academy of Sciences, Beijing 100190, China}
\email{minseong@amss.ac.cn}

\author{Nicolas Perrin}
\address{Centre de Math\'ematiques Laurent Schwartz (CMLS), CNRS, \'Ecole
polytechnique, Institut Polytechnique de Paris, 91120 Palaiseau, France}
\email{nicolas.perrin.cmls@polytechnique.edu}

\subjclass[2020]{Primary 14M15; Secondary 14C25, 14L30, 14N35}

\keywords{Homogeneous spaces, Hilbert scheme, Rational curves}

\thanks{M.K. was partially supported by the Institute for Basic Science (IBS-R032-D1) and by Morningside Center of Mathematics, Chinese Academy of Sciences.
N.P. was partially supported by ANR project FanoHK, grant ANR-20-CE40-0023.}

\begin{abstract}
  We prove that the Hilbert scheme of conics on $X = G/P$, a projective rational homogeneous space with Picard rank one, is $G$-spherical if and only if $P$ is associated to a long root of the Dynkin diagram.
\end{abstract}

\maketitle

%%%%%%%%%%%%%%%%%%%%%%%%%%%%%%%%%%%%%%%%%%%%%%%%%%%%%% newintro

\section{Introduction}

\subsection{Motivation}
We work over $\C$, the field of complex numbers.
Rational curves have been widely used in the study of Fano manifolds. 
Rational homogeneous spaces form an important class of Fano manifolds, and the geometry of lines on rational homogeneous spaces is well understood.
Various characteristic properties of rational homogeneous spaces of Picard rank one have been revealed based on a good understanding of the space of lines.
Notable examples include: characterisation of rational homogeneous spaces via their spaces of lines (\cite{HH:characterization}); deformation rigidity of rational homogeneous spaces (\cite{HwMo:Rigidity}); characterisation of surjective morphisms from rational homogeneous spaces of Picard rank one (answer to the so-called Lazarsfeld conjecture; see \cite[\S4.5]{hwang:geometry}).
We refer to \cite{LM:representation} for a further discussion.

It is natural to expect that the study of rational curves of higher degree should provide a useful tool.
However, spaces of rational curves of higher degree have not been fully described, even in the case of conics, i.e., rational curves of degree two.
The main difficulty is that the space of smooth rational curves is no more compact if the degree is at least two, and hence one need to consider its compactification.
A natural compactification is the Hilbert scheme of conics, and in this paper, we investigate the Hilbert scheme of conics on a rational homogeneous space.
Our main theorem states that for a rational homogeneous space of Picard rank one, the Hilbert scheme of conics is a spherical variety with only few exceptions.

\subsection{Statement of main theorem}

Let $X$ be a rational homogeneous space of Picard rank one.
Then $X = G/P$ for a simple Lie group $G$ and a maximal parabolic subgroup $P \subset G$.
Denote by $\varpi$ the fundamental weight defining $P$ and by $\delta$ the simple root with $\scal{\delta^\vee,\varpi} = 1$.
Define $\Hilb_{d}(X) \coloneqq \Hilb_{dt+1}(X \subset \bP(V_{\varpi}))$, the Hilbert scheme of closed subschemes with Hilbert polynomial $dt+1$.
The structure of $\Hilb_{1}(X)$, the Hilbert scheme of lines on $X$, is well understood.

\begin{thm}[\cite{landsberg.manivel:on}]
\label{thm:landsberg-manivel}
$\Hilb_{1}(X)$ consists of at most two $G$-orbits.
Moreover, $\Hilb_{1}(X)$ is $G$-homogeneous if and only if $\delta$ is a long simple root.
\end{thm}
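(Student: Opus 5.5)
The plan is to reduce the statement to the structure of the variety of lines through the base point $x_0 = eP$. Every line on $X\subset\bP(V_\varpi)$ meets the orbit of $x_0$, since $X$ is homogeneous. Hence the $G$-orbits on $\Hilb_1(X)$ correspond bijectively to the $P$-orbits on the variety $\cC_{x_0}$ of lines through $x_0$. Concretely, a $G$-orbit $G\cdot\ell$ is sent to the set of $P$-orbits of lines through $x_0$ lying in it. This is well defined because of the incidence variety $\{(x,\ell): x\in\ell\}$, on which $G$ acts with fibres over $X$ equal to $\cC_{x}$.

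So it suffices to understand $\cC_{x_0}\subset\bP(T_{x_0}X)$ as a $P$-variety. Write $\fg=\bigoplus_k\fg_k$ for the grading given by $\varpi$, so that $T_{x_0}X\cong\fg_{-1}\oplus\fg_{-2}\oplus\cdots$ as $P$-modules, and $\fg_{-1}$ is an irreducible $L$-module ($L$ the Levi factor) whose lowest weight is $-\delta$. A line through $x_0$ is the closure of $\exp(tv)\cdot x_0$ for suitable $v$. The lines through $x_0$ are exactly the orbit closures $\overline{U_{-\alpha}\cdot x_0}$ for negative roots $-\alpha$ with $\scal{\alpha^\vee,\varpi}=1$, together with their $P$-translates. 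The condition for such a curve to have degree one is that the weight vector $v_{w}$ of $V_\varpi$ satisfy $e_{-\alpha}^2 v_\varpi = 0$. This holds exactly when $\scal{\alpha^\vee,\varpi}=1$.

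I would therefore classify the positive roots $\alpha$ with $\scal{\alpha^\vee,\varpi}=1$ up to the Weyl group $W_L$ of $L$. The key step is to show that such $\alpha$ fall into at most two $W_L$-orbits: the long roots and the short roots. First, if $\alpha$ is long, then $\alpha^\vee$ has $\delta^\vee$-coefficient $1$ and lies in $\fg_1$-degree one. Second, if $\alpha$ is short, then the coefficient of $\delta^\vee$ in $\alpha^\vee$ is $1$ only when $\delta$ is short, with degree in $\{1,2,3\}$ by the ratio of lengths.

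The main obstacle is the assertion that roots of fixed length and fixed $\varpi$-degree form a single $W_L$-orbit. This is the irreducibility of each graded piece, or more precisely the standard fact that the roots in $\fg_k$ of a given length form one $W_L$-orbit, which I would quote from Azad–Barry–Seitz. Granting this, there are two cases.

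\begin{itemize}
\item \emph{$\delta$ long.} A short root $\alpha$ has $\alpha^\vee = (|\alpha_{\rm long}|^2/|\alpha|^2)\,\alpha$ in coroot coordinates, so its $\delta^\vee$-coefficient is a multiple of $2$ or $3$ and never equals $1$. Hence only long roots occur, and they occur only in degree $1$, giving a single orbit.
\item \emph{$\delta$ short.} Both the long roots of degree $2$ (or $3$) and the short roots of degree $1$ occur. This gives two orbits, and they are genuinely distinct because root length is $W$-invariant.
\end{itemize}

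The non-homogeneity in the second case follows because lines of different types have nonisomorphic stabilisers. Alternatively, one can see that the $P$-action on $\cC_{x_0}$ has two orbits: one inside $\bP(\fg_{-1})$ and one not.
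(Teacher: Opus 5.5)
Your route works, and it differs from the paper's. The paper does not reprove the orbit count; it quotes it from Landsberg--Manivel. What it reproves, in Theorem~\ref{thm:small-d=1}, is the homogeneity criterion, and it looks only at the dense orbit. Since $C_{\gamma(1)}$ is the unique line through $1.P$ and $s_{\gamma(1)}.P$ (Theorem~\ref{thm:Gamma-small-d}), its stabiliser $\Ssf$ can be computed (Proposition~\ref{prop:stab-sd}). One gets $\Ssf=\Qsf$ for $\delta$ long, while for $\delta$ short the map $G/\Ssf\to G/\Qsf$ has a non-trivial affine fibre, so the dense orbit is not closed. That argument gives sphericity as a by-product but does not count orbits. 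Your reduction to $P$-orbits on $\cC_{x_0}$, and then to $W_L$-orbits on $\{\alpha\in R^+ : \scal{\alpha^\vee,\varpi}=1\}$ sorted by length via Lemma~\ref{lemm:n}, is essentially Landsberg--Manivel's own argument, and it does give ``at most two''.

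The step carrying the whole reduction is only asserted, however: that every line through $x_0$ is a $P$-translate of a $T$-stable line $\overline{U_{-\alpha}.x_0}$. Without it, the root count says nothing about $P$-orbits that contain no $T$-fixed point. The fix is short. For $y\in\ell\setminus\{x_0\}$, the Bruhat decomposition $X=\bigsqcup_w Pw.P$ gives $p\in P$ with $p^{-1}y=w.P$. Then $p^{-1}\ell$ is the unique line of $\bP(V_\varpi)$ through two $T$-fixed points, hence $T$-stable; your claim about closures of $\exp(tv).x_0$ becomes unnecessary.

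Two further steps need conjugacy of maximal tori inside the stabiliser of the line, which replaces $g$ by an element of $N_G(T)$ sending $\alpha$ to $\pm\beta$. The first is matching $P$-orbits of $T$-stable lines with $W_L$-orbits of roots. The second is showing that, for $\delta$ short, a short-root line and a long-root line are not $G$-conjugate. Only after this reduction does $W$-invariance of root length apply.

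You do not need the claimed bijection between $G$-orbits and $P$-orbits. It would require $\Stab_G(\ell)$ to act transitively on $\ell$, which is true since $\SLT{\alpha}$ does, but it is not needed. Surjectivity gives ``at most two'', and the length argument gives non-homogeneity. The existence of a long root with $\scal{\alpha^\vee,\varpi}=1$ is Lemma~\ref{lemm:root} with $d=1$.

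Several statements need correcting:
\begin{enumerate}
\item Your ``First/Second'' sentences are wrong for $\delta$ short. By Lemma~\ref{lemm:n}, a long $\alpha$ with $\scal{\alpha^\vee,\varpi}=1$ then has $n_\delta(\alpha)\in\{2,3\}$, and a short one has $n_\delta(\alpha)=1$. Your bullets have this right.
\item $T_{x_0}X$ splits as $\fg_{-1}\oplus\fg_{-2}\oplus\cdots$ only as an $L$-module, not as a $P$-module.
\item $-\delta$ is the highest weight of $\fg_{-1}$, not the lowest.
\item The $P$-stable subspace separating the two orbits is $(\fg_{-1}\oplus\fp)/\fp$, not $\fg_{-1}$.
\end{enumerate}
Finally, the transitivity you quote from Azad--Barry--Seitz is only needed for roots of maximal length in $\fg_1$, respectively $\fg_2$ or $\fg_3$. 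It follows from the irreducibility in Lemma~\ref{lemma:g(f)-is-irreducible}. If $\lambda$ is the highest weight, every dominant weight $\mu\neq\lambda$ satisfies $(\mu,\mu)<(\lambda,\lambda)$. Hence the weights of maximal norm form the single orbit $W_L\lambda$.
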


Indeed, it turns out that, whether $\delta$ is long or not, $\Hilb_{1}(X)$ is a $G$-spherical variety (Theorem~\ref{thm:small-d=1}), that is, it is a normal $G$-variety containing an open dense $B$-orbit where $B$ is a Borel subgroup of $G$.
We refer to \cite{timashev:homogeneous} and \cite{perrin:geometry} for more on spherical varieties.

It is known in the literature that for some $X$, the Hilbert scheme $\Hilb_{2}(X)$ of conics is also $G$-spherical: see Corniani--Massarenti \cite{corniani.massarenti:complete} when $X = \LG(2,\,4)$, and the first author \cite{kwon:spherical} when $X$ is an adjoint variety not of type $A$ nor $C$.
On the other hand, in contrast to the case of lines, $\Hilb_{2}(X)$ is not always $G$-spherical, as shown in the following examples.

\begin{example}
\label{exam:F4/P3}
We list some cases for which $\Hilb_2(X)$ is not $G$-spherical for dimension reasons.
\begin{enumerate}
\item Let $X = \Sp_{2n}/P_1$ with notations as in \cite{bourbaki:elements*78}. Then $X = \bP^{2n-1}$, $\dim(X) = 2n-1$ and $c_1(X) = 2n$. The dimension of $\Hilb_2(X)$ is equal to $6n-4$ while the dimension of a Borel subgroup in $\Sp_{2n}$ is equal to $n(n+1)$. In particular $\Hilb_2(X)$ is not $\Sp_{2n}$-spherical for $n = 2$ or $n = 3$. Actually, we will see in Propositions \ref{prop:conic-sp} and \ref{prop:near-short} that $\Hilb_2(X)$ is not  $\Sp_{2n}$-spherical for all $n \geq 3$.
\item Let $X = \SO_{2n+1}/P_n$ with notations as in \cite{bourbaki:elements*78}. Then $\dim(X) = \frac{1}{2}(n^{2}+n)$ and $c_1(X) = 2n$. The dimension of $\Hilb_2(X)$ is equal to $\frac{1}{2}(n^{2}+9n-6)$ while the dimension of a Borel subgroup in $\SO_{2n+1}$ is equal to $n(n+1)$. In particular $\Hilb_2(X)$ is not $\SO_{2n+1}$-spherical for $n \in [2,\,5]$. We will see in Propositions \ref{prop:geom-B} and \ref{prop:near-long} that $\Hilb_2(X)$ is not  $\SO_{2n+1}$-spherical for all $n \geq 4$.
\item Let $X = F_4/P_3$ with notations as in \cite{bourbaki:elements*78}. Then $\dim(X) = 20$ and $c_1(X) = 7$. The dimension of $\Hilb_2(X)$ is equal to $31$ while the dimension of a Borel subgroup in $F_4$ is equal to $28$. In particular $\Hilb_2(X)$ is not $F_4$-spherical.
\item Let $X = F_4/P_4$ with notations as in \cite{bourbaki:elements*78}. Then $\dim(X) = 15$ and $c_1(X) = 11$. The dimension of $\Hilb_2(X)$ is equal to $34$ while the dimension of a Borel subgroup in $F_4$ is equal to $28$. In particular $\Hilb_2(X)$ is not $F_4$-spherical.
\item Let $X = G_2/P_1$ with notations as in \cite{bourbaki:elements*78}. Then $\dim(X) = 5$ and $c_1(X) = 5$. The dimension of $\Hilb_2(X)$ is equal to $12$ while the dimension of a Borel subgroup in $G_2$ is equal to $8$. In particular $\Hilb_2(X)$ is not $G_2$-spherical.
\end{enumerate}
\end{example}

Now a natural question arises: for which $X$ is $\Hilb_{2}(X)$ $G$-spherical?
Our main theorem gives a complete answer to this question.

\begin{prop}%[Proposition~\ref{prop:Hilb-irreducible} and Corollary~\ref{cor:hilb-is-smooth}]
    $\Hilb_{2}(X)$ is a smooth irreducible variety.
\end{prop}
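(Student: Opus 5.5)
We prove smoothness and irreducibility together, by studying the tangent spaces of $\Hilb_2(X)$ and the dimension of its $G$-orbits. Every point of $\Hilb_2(X)$ is a conic $C \subset X \subset \bP(V_\varpi)$ with Hilbert polynomial $2t+1$. Such a $C$ spans a plane $\Pi = \langle C\rangle \subset \bP(V_\varpi)$ and is one of three kinds: a smooth conic, a pair of distinct lines meeting in a point, or a planar double line (a nonreduced scheme in $\Pi$ supported on a line $L \subset X$). No non-planar scheme has Hilbert polynomial $2t+1$, so this list is complete. The plan has three steps.

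\textbf{Step 1: dimension of the main component.} Rational homogeneous spaces are convex, so the space of smooth conics is smooth of dimension
\[
\dim X + 2c_1(X) - 3 .
\]
This is consistent with the numbers in Example~\ref{exam:F4/P3}. Let $H_0$ denote the closure of this locus; it is irreducible.

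\textbf{Step 2: $H^1(C,N_{C/X})=0$ for every $C$.} Since $X$ is homogeneous, $T_X|_C$ is globally generated, and hence so is $N_{C/X}$ on the reduced curves. On a smooth conic or a line pair, a quotient of a globally generated bundle has vanishing $H^1$: the curve is a tree of $\bP^1$'s, so the usual gluing argument applies. For the double line we compute directly, using the exact sequence
\[
0 \to N_{C/X} \to N_{C/\bP} \to N_{X/\bP}|_C .
\]
We reduce to the line $L$ with its filtration $0\to \cO_L(-1)\to \cO_C\to \cO_L\to 0$. The splitting type of $T_X|_L$ (known from the description of lines in Theorem~\ref{thm:landsberg-manivel}) and the condition that the plane $\Pi$ is tangent to $X$ along $L$ then show that $h^1$ vanishes. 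Once $H^1(N_{C/X})=0$ holds everywhere, $\Hilb_2(X)$ is smooth of dimension $h^0(N_{C/X}) = \chi(N_{C/X})$. By Riemann--Roch on the connected genus-$0$ curve $C$ this equals $\dim X + 2c_1(X) - 3$ at every point. Smoothness follows.

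\textbf{Step 3: irreducibility.} Each boundary stratum (line pairs, and double lines) is a union of finitely many $G$-orbits, by Theorem~\ref{thm:landsberg-manivel} and the transitivity of $P$ on the relevant tangent data. We check that each stratum has dimension strictly less than $\dim X + 2c_1(X) - 3$. Since $\Hilb_2(X)$ is smooth of pure dimension $\dim X + 2c_1(X) - 3$ by Step 2, no stratum can be an irreducible component. Hence every point lies in $H_0$. A smooth connected scheme is irreducible, so it remains only to note that $\Hilb_2(X)=H_0$, which we have just shown.

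The main obstacle is the $H^1$-vanishing for the nonreduced double lines, together with the short-root cases where lines form two $G$-orbits. There $T_X|_L$ has a different splitting, and the tangency condition for $\Pi$ must be analysed case by case; the first thing I would try is an explicit $\mathfrak{sl}_2$-weight decomposition of $T_X|_L$ along a $T$-stable line. If the vanishing fails for some double lines, the fallback is to show directly that each such point is a limit of line pairs, which are smooth points of $H_0$, and to deduce smoothness instead from the tangent space dimension $h^0(N_{C/X})$.
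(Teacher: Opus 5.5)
Your Step~2 has a genuine gap at the one place where real work is needed: the vanishing $H^1(C,N_{C/X})=0$ for a planar double line $C$ supported on a line $L$. The sentence ``the splitting type of $T_X|_L$ and the tangency of $\Pi$ along $L$ then show that $h^1$ vanishes'' is an assertion, not an argument. The route you suggest through $0\to N_{C/X}\to N_{C/\bP}\to N_{X/\bP}|_C$ would require controlling the cokernel of $H^0(N_{C/\bP})\to H^0(N_{X/\bP}|_C)$, i.e.\ second-order information about $X\subset\bP(V_\varpi)$ along $L$. It would also need the case analysis for short $\delta$, which you leave open.

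Your fallback is circular. By Riemann--Roch, $h^0(N_{C/X})=\dim X+2c_1(X)-3+h^1(N_{C/X})$, so knowing that $C$ is a limit of line pairs gives smoothness at $C$ only if $h^1(N_{C/X})=0$ anyway.

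The missing idea is to compare $N_{C/X}|_L$ directly with $N_{L/X}$. Start from the conormal sequence of $L\subset C\subset X$ and identify the kernel of $N^*_{C/X}|_L\to N^*_{L/X}$ with $N^*_{C/\Pi}|_L$ (via determinants and adjunction). This gives the exact sequence of Proposition~\ref{prop:basic-seq-for-double-line}:
\[
0\to N_{L/C}\simeq\cO_L(1)\to N_{L/X}\to N_{C/X}|_L\to N_{C/\Pi}|_L\simeq\cO_L(2)\to 0 .
\]
Homogeneity of $X$ makes $N_{L/X}$ globally generated for \emph{every} line. No splitting type is needed, and both $G$-orbits of lines for $\delta$ short are covered at once. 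Hence $N_{C/X}|_L$ and $N_{C/X}|_L(-1)$ have vanishing $H^1$. Tensoring your filtration $0\to\cO_L(-1)\to\cO_C\to\cO_L\to 0$ with $N_{C/X}$ then gives $H^1(C,N_{C/X})=0$.

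\textbf{Smaller points.}

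\emph{Step 1.} The irreducibility of the locus of smooth conics does not follow from convexity. It is the theorem of Thomsen and Kim--Pandharipande, which the paper cites. It is the actual input for irreducibility, so you must quote it.

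\emph{Line pairs.} Here $N_{C/X}$ is not a quotient of $T_X|_C$: the cokernel of $T_X|_C\to N_{C/X}$ is a skyscraper at the node. The vanishing still follows, because $H^1$ is right exact on curves.

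\emph{Step 3.} The finiteness of $G$-orbits on the boundary is not justified for double lines, and it is not needed. The dimension count is direct. Line pairs move in dimension $\dim X+2(c_1(X)-2)$. Double lines supported on $L$ inject into $\bP(H^0(N_{L/X}(-1)))\simeq\bP^{c_1(X)-3}$, so they form a family of dimension at most $\dim X+2c_1(X)-6$.

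With these repairs, your dimension-count route to irreducibility is a valid alternative to the paper's. The paper instead uses the Borel fixed point theorem to reduce to $B$-stable double lines. It shows each of these is a limit of smooth conics, and uses smoothness there so that only one component passes through it.
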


\begin{thm} \label{thm:main}
    $\Hilb_{2}(X)$ is $G$-spherical if and only if $\delta$ is a long simple root.
\end{thm}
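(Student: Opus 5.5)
Since $\Hilb_2(X)$ is smooth and irreducible by the preceding Proposition, it is normal, and $G$-sphericality amounts to the existence of a dense $B$-orbit. Equivalently, a generic point $[C]$ has a stabiliser $H=\Stab_G([C])$ that is a spherical subgroup of $G$, which can be tested on Lie algebras via $\fg = \fb + \mathfrak{h}$ for a suitable Borel subalgebra $\fb$. I will argue through the geometry of a general smooth conic $C\subset X$.

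\textbf{Long root case.} A general conic spans a plane $\langle C\rangle\cong\bP^2\subset\bP(V_\varpi)$. I expect that, when $\delta$ is long, a general conic meets $P$-orbit structure in a controlled way: for general $x,y\in C$ the points lie in the open $G$-orbit of $X\times X$ at "distance two", and $C$ lies in a smooth quadric $Q_{x,y}\subset X$ (the span of lines through $x$ and $y$, a homogeneous quadric of the symmetric-space type). The stabiliser $G_{x,y}$ contains a Levi factor $L$ whose semisimple part acts on $Q_{x,y}$ as a full orthogonal group, and conics in $Q_{x,y}$ through $x,y$ form a single orbit under $G_{x,y}$ acting by the torus $\C^*$ and by $\SO$. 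Hence
\[
\dim \Hilb_2(X) = \dim G/G_{x,y} + \dim(\text{conics in }Q_{x,y}\text{ through }x,y) - 2,
\]
and $\Hilb_2(X)$ has an open $G$-orbit $G/H$ with $H$ described explicitly as an extension of a subgroup of $L$. I then check sphericality of $H$ case by case, using the classification of spherical Levi-type subgroups (Krämer/Brion tables). Concretely, I exhibit a Borel $B$ with $\mathfrak b+\mathfrak h=\fg$, reducing via parabolic induction to a symmetric-subgroup statement in the Levi $L$ of the stabiliser of $x$. The known cases, adjoint varieties (\cite{kwon:spherical}) and $\LG(2,4)$, serve as consistency checks.

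\textbf{Short root case.} The short-root $X$ are $\Sp_{2n}/P_k$ for $k<n$, $\SO_{2n+1}/P_n$, $F_4/P_3$, $F_4/P_4$ and $G_2/P_1$. Where $\dim\Hilb_2(X)>\dim B$, Example~\ref{exam:F4/P3} already settles the question. For the remaining infinite families ($\Sp_{2n}/P_k$, and $\SO_{2n+1}/P_n$ for $n\ge 6$), I will show that the generic stabiliser is too small. I compute $\dim H$ for a general conic, using that the line through two points is nonhomogeneous (Theorem~\ref{thm:landsberg-manivel}), which produces an extra modulus. I then show that $\dim \Hilb_2(X) - \dim G/H$ is positive, or that $H$ fails $\dim H\ge \dim G - \dim B$ after restriction to a Levi.

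\textbf{Main obstacle.} The delicate step is the uniform description of the generic conic and its stabiliser in the long-root case, in particular identifying the exceptional types and showing that the resulting $H$ is spherical rather than merely of the right dimension. I would first try to reduce to the Levi $L$ acting on the tangent/quadric data at the point $x$, where sphericality becomes a statement about a prehomogeneous $L$-module; that statement can then be verified from known lists.
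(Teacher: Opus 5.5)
There are genuine gaps in both halves of your proposal.

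\textbf{Short roots.} The mechanism you propose for the infinite families does not work. There is no ``extra modulus'': $G$ \emph{does} have a dense orbit on $\Hilb_2(X)$ for $\Sp_{2n}/P_k$ ($k<n$) and for $\SO_{2n+1}/P_n$.

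Take $X=\bP^{2n-1}=\Sp_{2n}/P_1$. A general conic spans a plane $\Pi$ on which $\omega$ has a one-dimensional kernel $\ell$. $\Sp_{2n}$ is transitive on such planes. By Witt's theorem, $\Stab_{\Sp_{2n}}(\Pi)$ maps onto the stabiliser of the point $\bP(\ell)$ in $\PGL(\Pi)$, and that stabiliser is transitive on smooth conics not through $\bP(\ell)$. So $\Sp_{2n}$ has a dense orbit on $\Hilb_2(X)$.

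The dimension tests also fail here. For $n\ge4$ one has $\dim\Hilb_2(X)=6n-4\le n(n+1)=\dim B$. The inequality survives your ``restriction to a Levi'': $H\subset\Stab(\ell)$, and $\dim\Stab(\ell)/H=4n-3\le 1+n(n-1)$, the dimension of a Borel of the Levi $\GL_1\times\Sp_{2n-2}$. Similarly $\dim\Hilb_2(\IG(2,2n))=8n-10<n(n+1)$ for $n\ge6$.

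So neither positive codimension of generic orbits nor $\dim H<\dim G-\dim B$ can detect non-sphericality in the symplectic family. You need a genuine Borel-orbit computation, in one of two forms:
\begin{itemize}
\item Fibre over a spherical homogeneous space $G/K$ and choose $B'$ with $B'K$ open. Then show that $B'\cap K$ acts on the fibre through a group of smaller dimension than the fibre. This uses that $B'$ meets the relevant unipotent radical trivially and that an $\Sp$-factor acts trivially on the fibre (Propositions~\ref{prop:conic-sp}, \ref{prop:geom-C}).
\item As in the paper's second proof, exhibit a non-spherical $L$-submodule of the normal space $N_C$ to the closed orbit of $B$-stable double lines (Propositions~\ref{prop:near-long}, \ref{prop:near-short} with Lemma~\ref{lemma:loc-str-spherical}).
\end{itemize}
For $\OG(n,2n+1)$ a count does suffice, but only after reducing to $\SO_9$ acting on conics in $\OG(4,9)$, where $23>20$ (Proposition~\ref{prop:geom-B}).

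\textbf{Long roots.} Here you have a programme rather than an argument, and the reduction you indicate cannot work as stated.

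If $X$ is not cominuscule, then $d(\Theta)\ge2$, and Theorem~\ref{thm:Gamma-small-d} gives $\Gamma_2(x,y)=C_{\gamma(2)}$. Your quadric $Q_{x,y}$ is therefore the conic itself, the fibration carries no information, and everything rests on $H=\Ssf=\Stab_G(C_\gamma)$.

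This group contains $\SLT{\gamma}$, which moves $x$. So $H\not\subset\Stab_G(x)$, and there is no parabolic induction from the stabiliser of $x$. For adjoint varieties one has $\Qsf=G$, and $H$ is a maximal-rank symmetric subgroup (e.g.\ $E_7\times\SL_2\subset E_8$ up to isogeny) lying in no proper parabolic. In general $H$ is not reductive, so Kr\"amer's and Brion's tables of reductive spherical subgroups do not apply directly. Also, a prehomogeneous $L$-module need not be spherical.

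The missing idea is Proposition~\ref{prop:stab-sd} together with $U_\Qsf\subset\Ssf$:
\begin{itemize}
\item $H$ is generated by $\SLT{\gamma}$ and $P\cap P^{s_\gamma}$.
\item $H$ lies in the parabolic $\Qsf$ attached to $\varpi-\gamma=\frac12(\varpi+s_\gamma\varpi)$, i.e.\ to the pair $\{x,y\}$ rather than to $x$.
\item $H$ contains $U_\Qsf$, except for $F_4/P_3$.
\item $H\cap L_\Qsf$ is the symmetric subgroup cut out by $\mathrm{Ad}\,\gamma^\vee(-1)$. This uses that $\gamma$ is long, so $\scal{\gamma^\vee,\alpha}$ is even only for $\alpha=\pm\gamma$ or $\alpha\perp\gamma$.
\end{itemize}
Parabolic induction from a symmetric space then gives Theorem~\ref{thm:small-d}.

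In the cominuscule cases you also need that $\Stab_G(\Gamma_2)$ is a parabolic containing $\SO(\Gamma_2)$. One then induces from the symmetric space of conics in a quadric (Corollary~\ref{cor:geometric-comin}).

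Your sketch never says where the length of $\delta$ enters. In the paper it enters in two places:
\begin{itemize}
\item through $U_\Qsf\subset\Ssf$, which fails for $F_4/P_3$, the only short case with $d(\Theta)\ge2$;
\item through the fact that, when $d(\Theta)=1$, $\delta$ is long if and only if $X$ is cominuscule.
\end{itemize}
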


Notice that given a normal $G$-variety, being $G$-spherical only depends on the open $G$-orbit.
Therefore the $G$-sphericality still holds true for other (normalized) compactified spaces of conics on $X$ where the natural $G$-action extends such as the Chow variety or the space of unmarked stable maps, provided that $\delta$ is long.

It is worth mentioning that the length condition in Theorem~\ref{thm:main} is same with that in Theorem~\ref{thm:landsberg-manivel}.
Thus we conclude that $\Hilb_{1}(X)$ is $G$-homogeneous if and only if $\Hilb_{2}(X)$ is $G$-spherical.

\subsection{Some consequences}
\label{subsection:consequences}

Theorem~\ref{thm:main} yields a number of consequences, provided that $X$ is associated to a long simple root.
Indeed, thanks to the well-known description of $\Aut(X)$, Theorem~\ref{thm:main} applies to all $X$ but few exceptions:

\begin{fact}
    $X = \Aut(X)/P_{X}$ where $P_{X}$ is associated to a long simple root if and only if $X$ is neither $\Sp_{2n}/P_{i}$ ($2 \le i \le n-1$) nor $F_{4}/P_{i}$ ($i \in \{3,\,4\}$) with notations as in \cite{bourbaki:elements*78}.
\end{fact}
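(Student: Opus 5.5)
The plan is to use the classification of rational homogeneous spaces of Picard rank one together with Onishchik's description of $\Aut(X)$ for $X = G/P$. Write $X = G/P$ with $G$ simple and $P = P_i$ maximal. Then $\Aut(X)^\circ = G$ (adjoint form) except in three cases:
\[
\Sp_{2n}/P_1 = \bP^{2n-1}\ (\Aut^\circ = \PGL_{2n}), \quad \SO_{2n+1}/P_n \cong \SO_{2n+2}/P_{n+1}\ (\Aut^\circ = \PSO_{2n+2}), \quad G_2/P_1 = Q^5\ (\Aut^\circ = \PSO_7).
\]
So $X = \Aut(X)/P_X$, with $P_X$ determined by $X$, and we must decide for which $X$ the simple root $\delta_X$ of $\Aut(X)^\circ$ defining $P_X$ is long. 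I take ``associated to a long simple root'' to mean long in the Dynkin diagram of $\Aut(X)^\circ$, with simply-laced roots counted as long.

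First, if $\Aut(X)^\circ$ is simply laced (types $A$, $D$, $E$), every root is long and the condition holds. This already covers the three exceptional cases above: the automorphism groups of $\bP^{2n-1}$, of the spinor variety $\SO_{2n+1}/P_n$, and of $Q^5$ are of types $A_{2n-1}$, $D_{n+1}$ and $B_3$ respectively. For $Q^5$ we get $\SO_7/P_1$, and $\alpha_1$ is long in $B_3$. In these cases the statement holds because none of them appears in the excluded list.

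Second, in the remaining cases $\Aut(X)^\circ = G$ is of type $B_n$, $C_n$, $F_4$ or $G_2$, with the small-rank coincidences set aside. We read off from Bourbaki's conventions which $\alpha_i$ are short.
\begin{itemize}
\item In $B_n$ only $\alpha_n$ is short, and $\SO_{2n+1}/P_n$ was already treated as exceptional, so every $B_n/P_i$ with nontrivial $\Aut$-enlargement removed is long.
\item In $C_n$ the short roots are $\alpha_1,\dots,\alpha_{n-1}$. The case $\alpha_1$ gives $\bP^{2n-1}$, already treated, which leaves exactly $\Sp_{2n}/P_i$ for $2 \le i \le n-1$ as short.
\item In $F_4$ the short roots are $\alpha_3$ and $\alpha_4$, and there is no enlargement, so $F_4/P_3$ and $F_4/P_4$ are short.
\item In $G_2$ the root $\alpha_1$ is short but $G_2/P_1 = Q^5$ is exceptional, while $\alpha_2$ is long.
\end{itemize}

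The only delicate step is handling the isomorphisms between low-rank cases and the exceptional automorphism groups consistently. For instance, $\Sp_4/P_1$ and $\SO_5/P_2$ both equal $\bP^3$, and $\Sp_4/P_2 = \SO_5/P_1 = Q^3$ is long in $B_2$. The cleanest way is to always take $G = \Aut(X)^\circ$ and to invoke Onishchik's theorem (Demazure's computation of $\Aut(G/P)$) for the list of enlargements. After that, the fact reduces to the bookkeeping above.
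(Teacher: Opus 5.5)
Your proposal is correct and is essentially the argument the paper has in mind. The paper gives no proof and only invokes the well-known description of $\Aut(X)$ (Onishchik--Demazure: $\Aut(X)^\circ = G$ except for $\Sp_{2n}/P_1$, $\SO_{2n+1}/P_n$ and $G_2/P_1$), after which the statement is exactly your bookkeeping of short simple roots in Bourbaki's numbering. One small wording slip: type $B_3$ is not simply laced, so $Q^5$ is not covered by your ``simply laced'' step, but you handle it correctly right afterwards by noting that $\alpha_1$ is long in $B_3$.
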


%To introduce corollaries of Theorem~\ref{thm:main}, (only) in this section, we use the following notation:

\begin{notation} \label{notation:Aut-long}
    $X$ is a rational homogeneous space of Picard rank one, neither $\Sp_{2n}/P_{i}$ ($2 \le i \le n-1$) nor $F_{4}/P_{i}$ ($i \in \{3,\,4\}$) with notations as in \cite{bourbaki:elements*78}.
\end{notation}

In the rest of Subsection \ref{subsection:consequences}, we assume that we are in the situation of Notation~\ref{notation:Aut-long}. Theorem~\ref{thm:main} reads as follows:

\begin{cor} \label{cor:Aut-spherical-Hilb2}
In Notation~\ref{notation:Aut-long}, $\Hilb_2(X)$ is $\Aut(X)$-spherical.
\end{cor}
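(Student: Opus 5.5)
The plan is to deduce the statement directly from Theorem~\ref{thm:main} and the Fact on $\Aut(X)$. By the Fact, in the situation of Notation~\ref{notation:Aut-long} we can write $X = \Aut(X)/P_X$, where $P_X$ is a maximal parabolic subgroup of the semisimple group $\Aut(X)$ associated to a long simple root. Let $G' \coloneqq \Aut(X)^\circ$, or its simply connected cover acting through $\Aut(X)$. This $G'$ is a simple group, and $X = G'/P_X$ is a presentation of $X$ in the form used in Theorem~\ref{thm:main}, with $\delta$ long.

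Next we fix the ambient projective space. $X$ has Picard rank one, so the embedding $X \subset \bP(V_{\varpi})$ is the one given by the ample generator of $\mathrm{Pic}(X)$. This generator is independent of the chosen presentation of $X$ as a homogeneous space. Consequently $\Hilb_2(X) = \Hilb_{2t+1}(X \subset \bP(V_\varpi))$ is intrinsic to $X$. Moreover, $\Aut(X)$ acts on it: every automorphism preserves the ample generator, so it lifts to a projective linear automorphism of $\bP(V_\varpi)$ preserving $X$.

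We then apply Theorem~\ref{thm:main} to the presentation $X = G'/P_X$. It gives that $\Hilb_2(X)$ is $G'$-spherical: it is normal (indeed smooth and irreducible by the preceding Proposition), and a Borel subgroup $B' \subset G'$ has a dense orbit. Since $G' \subset \Aut(X)$ and $B'$ lies in a Borel subgroup of $\Aut(X)^\circ$, that Borel subgroup also has a dense orbit. Hence $\Hilb_2(X)$ is $\Aut(X)$-spherical, interpreting sphericality for $\Aut(X)$ via its identity component.

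The only point that needs some care is the identification step. One must check that the Hilbert scheme defined using $G$ in Theorem~\ref{thm:main} coincides with the one defined using $\Aut(X)$, that is, that the minimal equivariant embedding is the same. This follows from the Picard rank one argument above. No genuine obstacle is expected, since the corollary is essentially a reformulation of Theorem~\ref{thm:main}.
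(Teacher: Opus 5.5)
Your proposal is correct and matches the paper's argument: the paper obtains this corollary directly from Theorem~\ref{thm:main}, using the Fact to present $X$ as $\Aut(X)/P_X$ with $P_X$ attached to a long simple root. Your additional remarks make explicit what the paper leaves implicit: $\Hilb_2(X)$ and the $\Aut(X)$-action on it depend only on the ample generator of $\mathrm{Pic}(X)$, and sphericality is tested on $\Aut(X)^\circ$, which matters because $\Aut(X)$ can be disconnected (e.g.\ for even-dimensional quadrics).
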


Recall that B\"arligea (\cite{barligea:quasi-homogeneityI,barligea:quasi-homogeneityII}) proved existence of open dense $\Aut(X)$-orbit in $\overline{M}_{0,3}(X,d)$, the Kontsevich moduli space of genus zero stable maps with 3 marked points on $X$ (see \cite{fulton.pandharipande:notes}), for small $d$.
In particular, $\Hilb_{2}(X)$, which is birational to $\overline{M}_{0,0}(X,2)$, contains an open dense $\Aut(X)$-orbit.
In fact, since any $G$-spherical variety consists of only finitely many $B$-orbits, Theorem~\ref{thm:main} implies a stronger statement:
\begin{cor} \label{coro:finite num of orbits}
    In Notation~\ref{notation:Aut-long}, $\Hilb_{2}(X)$ has only finitely many $\Aut(X)$-orbits.
\end{cor}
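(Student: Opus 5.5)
The plan is to deduce the statement from Corollary~\ref{cor:Aut-spherical-Hilb2} together with the standard finiteness property of spherical varieties. Write $G' = \Aut(X)$. In Notation~\ref{notation:Aut-long}, the Fact gives $X = G'/P_X$ with $P_X$ attached to a long simple root. Theorem~\ref{thm:main} therefore applies with $G'$ in place of $G$, and Corollary~\ref{cor:Aut-spherical-Hilb2} says that $\Hilb_2(X)$ is $G'$-spherical. Here $G'$ is the adjoint group, which may have several connected components in principle. However, the Fact realises $X$ as a quotient of the connected simple group acting through its long-root parabolic, so we may work with the identity component $G'^\circ$. It suffices to prove finiteness of $G'^\circ$-orbits, since $\Aut(X)$-orbits are unions of these.

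The key input is the classical theorem (Brion, Vinberg) that a spherical variety, meaning a normal $G$-variety with an open dense $B$-orbit, contains only finitely many $B$-orbits. I would quote it as in \cite{timashev:homogeneous} or \cite{perrin:geometry}. For this to apply, $\Hilb_2(X)$ must be a normal variety. This follows from the Proposition stated above: $\Hilb_2(X)$ is smooth and irreducible, hence normal. Every $G'^\circ$-orbit is a union of $B$-orbits, so there are only finitely many $G'^\circ$-orbits, and therefore only finitely many $\Aut(X)$-orbits.

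I do not expect any real obstacle, since the content lies entirely in Theorem~\ref{thm:main} and in the Brion--Vinberg finiteness theorem. The one point needing care is that sphericality is required for a connected reductive group acting on an irreducible normal variety. I would check explicitly that the $G$-action on $\Hilb_2(X)$, with $G$ the simply connected cover, factors through $G'^\circ$. This holds because the centre of $G$ acts trivially on $X$, hence trivially on subschemes of $X$. A Borel subgroup of $G$ then maps onto a Borel subgroup of $G'^\circ$, and the open $B$-orbit is preserved.
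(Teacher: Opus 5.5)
Your proposal is correct and matches the paper's argument: the paper deduces the corollary from Theorem~\ref{thm:main} (through Corollary~\ref{cor:Aut-spherical-Hilb2}) together with the fact that a spherical variety has only finitely many $B$-orbits, so the orbits of $\Aut(X)^\circ$, and hence of $\Aut(X)$, are finite in number. Your added checks make explicit what the paper leaves implicit and are sound: normality follows from smoothness and irreducibility, and you may pass to the identity component of the possibly disconnected group $\Aut(X)$.
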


Finally, we record another corollary of Theorem~\ref{thm:main}, which is of practical importance from the birational geometric point of view.

\begin{cor}\label{cor:MDS}
    In Notation~\ref{notation:Aut-long}, $\Hilb_{2}(X)$ is a Mori dream space.
\end{cor}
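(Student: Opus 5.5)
The plan is to deduce Corollary~\ref{cor:MDS} from the general theorem that spherical varieties are Mori dream spaces. More precisely, we use the following known result (Brion; see also Perrin's survey \cite{perrin:geometry}). A projective $\Q$-factorial spherical variety has a finitely generated Cox ring, and is therefore a Mori dream space in the sense of Hu--Keel. So we must check three things for $\Hilb_2(X)$: it is spherical, it is projective, and it is $\Q$-factorial, with finitely generated class group and $\Pic\otimes\Q=N^1\otimes\Q$.

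First, sphericality. In Notation~\ref{notation:Aut-long}, Corollary~\ref{cor:Aut-spherical-Hilb2} says that $\Hilb_2(X)$ is $\Aut(X)$-spherical. Replace $\Aut(X)$ by the simple connected group $G' = \Aut(X)^\circ$, which acts through a finite cover. Here $X = G'/P_X$ with $P_X$ associated to a long simple root, so Theorem~\ref{thm:main} applies directly to $G'$. Sphericality only depends on the connected group acting, so the spherical-variety machinery is available.

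Second, the geometric hypotheses. $\Hilb_2(X)$ is a closed subscheme of the Hilbert scheme of $\bP(V_\varpi)$, hence projective. By the Proposition stated before Theorem~\ref{thm:main}, it is smooth and irreducible; in particular it is normal, as sphericality requires, and it is locally factorial, hence $\Q$-factorial.

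Third, the invariants. A spherical variety contains an open $B$-orbit, which is an affine variety isomorphic to $(\C^*)^r\times\C^s$. Hence its class group is generated by the finitely many $B$-stable prime divisors, and it is finitely generated. The rational variety $\Hilb_2(X)$ is smooth, projective and rational, so $H^1(\Hilb_2(X),\cO)=0$. Therefore $\Pic$ is finitely generated and numerical equivalence agrees with linear equivalence rationally.

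Finally, we invoke the theorem that the Cox ring of a complete spherical variety is finitely generated. Together with $\Q$-factoriality and projectivity, this gives the Mori dream space property by Hu--Keel's criterion. The only real input is thus Theorem~\ref{thm:main} and the smoothness proposition. The one step needing care is the passage from $\Aut(X)$ to a connected reductive group, which is harmless because $\Aut(X)^\circ$ is simple and acts on $\Hilb_2(X)$ with the same open Borel orbit.
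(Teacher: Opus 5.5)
Your proposal is correct and follows the paper's route. The paper deduces the corollary from Theorem~\ref{thm:main}, applied to $\Aut(X)$ as in Corollary~\ref{cor:Aut-spherical-Hilb2}, together with the smoothness of $\Hilb_2(X)$ (which gives $\Q$-factoriality) and Brion's theorem \cite{brion:Mori} that $\Q$-factorial spherical varieties are Mori dream spaces. Your extra checks of the Hu--Keel hypotheses (finitely generated class group, $H^1(\cO)=0$, finitely generated Cox ring) only spell out what is already contained in Brion's result.
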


Some particular cases of Corollary~\ref{cor:MDS} are known in the literature: see Chen--Coskun \cite{chen.coskun} when $X$ is a projective space of small dimension, and Corniani--Massarenti \cite{corniani.massarenti:complete} when $X$ is a Lagrangian Grassmannian.
Corollary~\ref{cor:MDS} follows from Theorem~\ref{thm:main} together with the well-known fact that any $\Q$-factorial spherical variety is a Mori dream space, as shown by Brion \cite{brion:Mori}.

\subsection{Outline of two proofs}

We present two independent proofs of Theorem~\ref{thm:main}.
A geometric proof based on the fact that we may replace conics by bigger quadrics in $X$ and obtain a fibration of the open $G$-orbit in $\Hilb_2(X)$ to a smaller homogeneous space (see Sections \ref{section:curves}--\ref{section:geometric}). More precisely, for $x,y \in X$, set $d(x,y) = \min\{d \ | \ \textrm{$x$ and $y$ are on a degree $d$ rational curve}\}$ and define $\Gamma_d(x,y)$ as the locus covered by degree $d$ rational curves passing through $x$ an $y$. B\"arligea's Theorem implies that $\Aut(X)$ acts transitively on pairs of elements $x,y \in X$ general with $d(x,y) = 2$. In particular for such $x,y$ the variety $\Gamma_2(x,y)$ does not depend on $x$ and $y$ general . We denote by $\Gamma_2$ this variety (see Section \ref{section:curves} for an alternative definition of $\Gamma_2$). In particular, we prove the following result.

\begin{prop}
Assume that $X$ is not a projective space.
\begin{enumerate}
\item A general conic is contained in a unique translate of $\Gamma_2$.
\item $\Gamma_2$ is a smooth quadric maximal for the inclusion.
\item $\Stab_G(\Gamma_2)$ is always connected and spherical except for $X = F_4/P_3$.
\end{enumerate}
\end{prop}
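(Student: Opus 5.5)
Since $X$ is not a projective space, two general points $x,y$ of a general conic satisfy $d(x,y)=2$: any two points on a line through them would force the conic to degenerate. I take as definition $\Gamma_2=\Gamma_2(x,y)$, which by B\"arligea's transitivity does not depend on the general pair, and set $\Gamma_2^g=g\Gamma_2$.

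\emph{Existence in (1).} A general conic $C$ through $x,y$ is a degree-two rational curve through both points, so $C\subset\Gamma_2(x,y)$.

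\emph{Uniqueness in (1).} Suppose $C\subset g\Gamma_2$. I will show that $g\Gamma_2=\Gamma_2(x',y')$ for every general pair $x',y'\in C$; since $(x',y')$ can be taken to be $(x,y)$, this pins down the translate. The key claim is that if $x',y'\in\Gamma_2$ are general, then $\Gamma_2(x',y')=\Gamma_2$. One inclusion is an intrinsic statement about the quadric: two general points of a smooth quadric of dimension at least $2$ are joined by a conic inside it, and the conics through them sweep out the whole quadric. For the reverse inclusion, use the description of $\Gamma_2$ via the geometry of the variety of minimal rational tangents (Section~\ref{section:curves}): lines through $x$ and lines through $y$ meet only inside $\Gamma_2$, so every conic through $x,y$ lies in $\Gamma_2$. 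Irreducibility of $\Hilb_2(X)$ then lets me pass from a general conic to a general point of $C$.

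\emph{Proof of (2).} I will identify $\Gamma_2$ case by case through the Freudenthal/Landsberg--Manivel description of $\Gamma_2(x,y)$ as the intersection of $X$ with the linear span of the union of lines through $x$ meeting lines through $y$. In the long-root cases one uses the grading $\fg=\fg_{-2}\oplus\cdots\oplus\fg_2$ given by the coweight of $\varpi$, together with $\mathfrak{sl}_2$-triples attached to $x$ and $y$. Then $\Gamma_2$ is the closed orbit of the centraliser, a Levi factor, acting on an irreducible module $\fg_1$-piece, and this orbit is a smooth quadric: $\Gr(2,2k)$ gives $\Gr(2,4)$, spinor varieties give $Q^6$ or $Q^8$, $E_6$ gives $Q^8$, and so on. Maximality means there is no larger quadric $Q'\subset X$ containing $\Gamma_2$. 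I argue this by noting that such a $Q'$ would contain two general points at distance $2$, so every conic of $Q'$ through them lies in $\Gamma_2(x,y)=\Gamma_2$, which forces $Q'=\Gamma_2$. The short-root cases I handle by the explicit tables.

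\emph{Proof of (3).} $\Stab_G(\Gamma_2)$ is a parabolic-type subgroup. Its Levi part is identified via the grading: it contains the reductive part $L$ acting on $\Gamma_2$ as $\SO$ of the quadric, times the complementary Levi. Connectedness follows because $\Stab_G(\Gamma_2)$ is the stabiliser of a linear subspace, and a parabolic $Q$ is self-normalising, so the stabiliser is $Q$ or an extension by connected pieces. Sphericality I check by Brion's and Mikityuk's classification of spherical Levi or reductive subgroups, or via dimension and the open $B$-orbit in $G/\Stab$. $F_4/P_3$ fails, matching Example~\ref{exam:F4/P3}.

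The main obstacle is the uniform identification of $\Gamma_2$ and its stabiliser. I expect to rely on case-by-case tables for the short-root and exceptional cases.
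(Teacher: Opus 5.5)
Your maximality argument is exactly Proposition~\ref{prop:max-quad}. Your derivation of uniqueness in (1) from (2) can be made to work, but only if you state the key claim for \emph{every} pair $x',y'\in g\Gamma_2$ in the dense $G$-orbit of pairs on conics, not for pairs general in $g\Gamma_2$; points of $C$ are not general there. Such a pair is non-orthogonal in the quadric $g\Gamma_2$, so conics of $g\Gamma_2$ through it sweep out $g\Gamma_2$ (trivially if $\dim\Gamma_2=1$). Hence $g\Gamma_2\subset\Gamma_2(x',y')$, and equality holds because both are $G$-translates of $\Gamma_2$. The VMRT step is unnecessary.

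The genuine gaps are in (2) and (3). In (2) you miss that whenever $d(\Theta)\ge 2$ (e.g.\ every $E_8/P_i$, or $F_4/P_3$), $\Gamma_2$ is simply the $T$-stable conic $C_{\gamma(2)}$; this is Theorem~\ref{thm:Gamma-small-d}. The argument is as follows. The fibre $F$ of $\ev:\overline{M}_{0,2}(X,2)\to X^2$ over $(1.P,s_{\gamma(2)}.P)$ is a projective $T$-variety, so it has at least $\dim F+1$ fixed points. For any $T$-stable chain joining these two points, write $\gamma(2)=\sum_i\frac{d_i}{2}\alpha_i$ and pair with the long coroot $\gamma(2)^\vee$; this shows $C_{\gamma(2)}$ is the only such chain. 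In particular no reducible conic passes through $x,y$ there. So your ``span of lines through $x$ meeting lines through $y$'' is empty, and the grading sketch, which you never carry out, cannot produce $\Gamma_2$. The cases with $d(\Theta)=1$ need the Chaput--Manivel--Perrin result for cominuscule $X$ together with explicit models: $\OG(n,2n+1)\simeq\OG(n+1,2n+2)$, $\Gamma_2\simeq\LG(2,4)$ in $\IG(p,2n)$, $F_4/P_4$ as a hyperplane section of $E_6/P_6$, and $G_2/P_1=Q_5$. Appealing to ``explicit tables'' does not supply these.

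In (3), the claim that $\Stab_G(\Gamma_2)$ is parabolic-type is false in general. For $d(\Theta)\ge 2$ it equals $\Ssf$, and $G/\Ssf$ is the open, non-closed orbit of $\Hilb_2(X)$; for $F_4/P_4$ it is the reductive group ${\rm Spin}_9$. Your connectedness argument therefore proves nothing, and stabilisers of linear subspaces can be disconnected (compare ${\rm S}({\rm O}_3\times{\rm O}_{n-1})$ in Remark~\ref{remark:connected}).

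The missing idea, outside $G_2/P_1$ where $\Stab_G(\Gamma_2)=G$, is the surjection ${\rm Spin}(\Gamma_2)\times(P_x\cap P_y)\to\Stab_G(\Gamma_2)$, $(s,h)\mapsto\varphi(s)h$. Given $g$ in the stabiliser, $(g.x,g.y)$ is again a non-orthogonal pair of $\Gamma_2$, so some $s$ moves $(x,y)$ to it. Then $\varphi(s)^{-1}g\in P_x\cap P_y$, which is connected as an intersection of two parabolic subgroups; for $d(\Theta)\ge 2$ this is Proposition~\ref{prop:stab-sd}.

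Sphericality likewise comes from the explicit fibrations of $G/\Stab_G(\Gamma_2)$ over flag varieties with symmetric fibres, not from a classification. Finally, the $F_4/P_3$ exception matches Example~\ref{exam:F4/P3} only once you know that $\Gamma_2$ is the general conic there, so that $G/\Stab_G(\Gamma_2)$ is the open orbit of $\Hilb_2(X)$.
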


\begin{remark}
The quadric $\Gamma_2$ is \emph{not} a quadric of maximal dimension contained in $X$ in general (see Example \ref{example:e6p4}) but there is no quadric $Q$ with $\Gamma_2 \subsetneq Q \subset X$. 
%It might still be a maximal quadric.
\end{remark}

\begin{remark}
For $X$ a projective space, we have $\Gamma_2 = X$ and $\Stab_G(\Gamma_2) = G$.
\end{remark}

Our stategy is then to pick a general conic $C_2$ contained in $\Gamma_2$ and to consider the map $G.C_2 \to G.\Gamma_2$. The source of this map is an open subset of $\Hilb_2(X)$ and the fiber over $\Gamma_2$ is an open subset of $\Hilb_2(\Gamma_2)$. The target is (almost) always spherical by the previous proposition. We then study the action of $\Stab_G(\Gamma_2)$ on $\Hilb_2(\Gamma_2)$ to conclude.

\begin{remark}
%The stabiliser $\Stab_G(\Gamma_2)$ is always connected (see Proposition \ref{prop:stab-sd} and AAA) however t
The stabiliser $\Stab_G(C_2)$ of a general conic is not always connected (see Remark \ref{remark:connected}).
\end{remark}

The second proof of Theorem \ref{thm:main} is based on representation theoretic computations of normal spaces of closed orbits in $\Hilb_{2}(X)$ (see Sections \ref{section:local}--\ref{section:representation}).
To be precise, consider a $B$-stable conic $C$ on $X$ and the $G$-orbit $G.C$ in $\Hilb_{2}(X)$.
Then a deformation theoretic argument shows that $\Hilb_{2}(X)$ is smooth (see Corollary \ref{cor:hilb-is-smooth}).
Moreover, by a local structure theorem for $G$-varieties and by Luna's \'etale slice theorem, the normal space $N_{C} := T_{C}\Hilb_{2}(X) / T_{C} (G . C)$, equipped with the isotropy representation, is spherical if and only if $\Hilb_{2}(X)$ is $G$-spherical (see Lemma \ref{lemma:loc-str-spherical}).
We use this fact to prove Theorem \ref{thm:main}, by computing the representation $N_{C}$ case by case.
In the process of the proof, we obtain the following result.

\begin{prop}
If $\delta$ is long, then $\Hilb_2(X)$ is a $G$-spherical variety of rank at most $4$.
 \end{prop}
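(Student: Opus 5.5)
We prove the statement through the local-structure approach outlined above: $\Hilb_2(X)$ is spherical of rank at most $4$ provided the slice representation at a closed orbit is a spherical module of rank at most $4$.

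First, by Borel's fixed point theorem, every closed $G$-orbit in $\Hilb_2(X)$ contains a $B$-stable conic $C$. Such conics are easy to classify. A $B$-stable subscheme is supported on $B$-stable points and lines, and $X$ has a unique $B$-fixed point $x_0=[P]$. The $B$-stable lines through $x_0$ correspond to $-\delta$ and, when $\delta$ is long, the $B$-stable lines in $\Hilb_1(X)$ are unique because $\Hilb_1(X)$ is homogeneous by Theorem~\ref{thm:landsberg-manivel}. So $C$ is either a double line (a nonreduced conic supported on the $B$-stable line $\ell$) or, in the few exceptional shapes, a union of two $B$-stable lines. In each case I would list the possibilities explicitly using the root description $T_{x_0}X=\bigoplus_{\scal{\alpha^\vee,\varpi}\ldots}\fg_{-\alpha}$. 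This yields finitely many closed orbits $G.C$.

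Second, by Lemma~\ref{lemma:loc-str-spherical} (local structure theorem plus Luna's slice theorem, using the smoothness of $\Hilb_2(X)$ from Corollary~\ref{cor:hilb-is-smooth}), near $G.C$ the variety $\Hilb_2(X)$ looks like $G\times^{\Stab_G(C)} N_C$. Sphericality is then equivalent to $N_C$ being a spherical $L$-module, where $L$ is a Levi factor of $\Stab_G(C)$; more precisely, $N_C$ must be spherical for the appropriate parabolic/Levi action after the local structure reduction. Moreover, the rank of $\Hilb_2(X)$ equals the rank of the spherical module $N_C$. So the task reduces to computing $T_C\Hilb_2(X)=H^0(N_{C/X})$, subtracting $T_C(G.C)=\fg/\fg_C$, and reading off $N_C$ as an $L$-representation.

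Third, I would compute $N_C$ case by case over the long-root cases $A_n$, $B_n/P_n$ is excluded (short), and similarly $C_n/P_n$, $D_n$, $E_{6,7,8}$, $F_4/P_{1,2}$, $G_2/P_2$. To organize this, I would use the grading of $\fg$ by $\varpi$ (depth at most $3$ for $P$ maximal) together with the normal-bundle sequence $0\to N_{\ell/\text{conic}}\to N_{C/X}\to\ldots$. With this, $N_C$ should decompose into a small number of irreducible $L$-modules, typically a sum of at most four pieces: the tangent directions smoothing the double line, plus $\fg_{-2}$-type pieces. I would then check sphericality against the Kac/Benson–Ratcliff/Leahy classification of spherical modules and read the rank off the number of summands and their known ranks. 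This gives both sphericality and the bound $\le 4$.

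The main obstacle will be the explicit identification of $H^0(N_{C/X})$ for the nonreduced conic $C$ as an $L$-module, and in particular the precise determination of $\Stab_G(C)$. This is harder than for lines because the scheme structure of the double line involves a choice of a $B$-stable normal direction. Short of that, I would first treat uniformly the cominuscule cases, where the grading has depth $1$ and $N_C$ is visibly a sum of standard modules. I would then handle the remaining adjoint and exceptional cases by explicit root computations, which is where the rank $4$ bound would be checked.
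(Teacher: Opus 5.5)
Your framework is the paper's second proof. You pass to a $B$-stable conic via Borel's fixed point theorem, apply Lemma~\ref{lemma:loc-str-spherical}, and recognise $N_C$ in the list of spherical modules. Note that one closed orbit suffices, and the lemma needs irreducibility as well as smoothness (Proposition~\ref{prop:Hilb-irreducible}).

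\textbf{The gap.} You stop exactly where the proof begins. You never compute $N_C$, and you call it ``the main obstacle''. Yet sphericality and the bound $\rank\le 4$ are nothing but the outcome of that computation. ``Typically a sum of at most four pieces'' is a guess. Reading the rank off the number of summands is also not legitimate in general, since rank is not additive over direct sums.

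What is missing is the paper's mechanism, in four steps.
\begin{enumerate}
\item Use the two sequences for $\ov C\subset C\subset X$ of Proposition~\ref{prop:basic-seq-for-double-line}. The conormal one is $0\to N_{\ov C/C}\to N_{\ov C/X}\to N_{C/X}|_{\ov C}\to N_{C/\Pi}|_{\ov C}\to 0$, not the sequence you wrote. They give the vanishing of $H^1$ and the formula of Lemma~\ref{lemma:computing-normal-sp}, $N_C\simeq\bigl(\Sym^2H^0(\ov C,N_{\ov C/\Pi})\oplus H^0(\ov C,N_{\ov C/X}\otimes N^*_{\ov C/\Pi})\bigr)/(\C_0\oplus\ov\fq/\fq)$. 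The term $N_{\ov C}$ vanishes because $\delta$ is long.
\item Identify the second term with $T_{\ov C}\cC_o\otimes\C_\alpha$ via the VMRT. Then decompose it by Kostant's lemma into pieces $\fg(f)$. These have $\delta$-coefficient $0$, so they are not ``$\fg_{-2}$-type''.
\item Use the splitting $\fl=\fz(\fl)\oplus\mathfrak{sl}_2(\delta)\oplus[\fr,\fr]$. The factor $\mathfrak{sl}_2(\delta)$ acts only on the first term $\C_{-(\delta+2\alpha)}\otimes\mathfrak{sl}_2(\delta)$, which has rank $2$. The centre $\fz(\fl)$ acts on the surviving pieces through characters independent of $-(\delta+2\alpha)$. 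This independence is what makes ranks add.
\item When $|N(\delta)|\ge 2$, choose $\alpha\in N(\delta)$ carefully: a long end node if there is one, otherwise a long $\alpha$ with $|N(\alpha)|=2$. Then $\ov\fq/\fq$ cancels part of the VMRT term, and at most two irreducible $[\fr,\fr]$-modules remain.
\end{enumerate}
Only after this does the case-by-case comparison with the table give rank $2$, $3$ or $4$.

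\textbf{Incorrect structural claims.} Several facts you intend to use are wrong and would derail the computation.
\begin{itemize}
\item The $\varpi$-grading of $\fg$ does not have depth at most $3$: the coefficient of $\delta$ in $\Theta$ is $6$ for $E_8/P_4$.
\item A $B$-stable conic is never a union of two lines. $\ov C=\bP(v_\varpi,v_{\varpi-\delta})$ is the unique $B$-stable line, and the connected group $B$ cannot swap components. So every $B$-stable conic is a double line on a plane $\bP(v_\varpi,v_{\varpi-\delta},v_{\varpi-\delta-\alpha})$ with $\alpha\in N(\delta)$. Its stabiliser is the explicit standard parabolic $Q=\ov Q\cap\Stab_G(\Pi)$.
\item The local model is not $G\times^{\Stab_G(C)}N_C$, because $\Stab_G(C)$ is parabolic rather than reductive. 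The reduction goes through the local structure theorem $R^u(P^-)\times W\hookrightarrow Z$ and Luna's slice theorem for $L$ acting on $W$.
\end{itemize}
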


Actually for $\delta$ long, the rank of $\Hilb_{2}(X)$ as a $G$-spherical variety is 2, 3 or 4. We refer to Corollary \ref{cor:rank} for a precise statement.

\subsection*{Acknowledgement}
We would like to thank Michel Brion, Kiryong Chung and Laurent Manivel for helpful discussions.
The first author was supported by the Institute for Basic Science (IBS-R032-D1) and by Morningside Center of Mathematics, Chinese Academy of Sciences.
The second author was supported by the project FanoHK ANR-20-CE40-0023.

\setcounter{tocdepth}{1}

\tableofcontents

%%%%%%%%%%%%%%%%%%%%%%%%%%%%%%%%%%%%%%%%%%%%%%%%%%%%%%
%\input{notations}
\section{Notations}
\label{section:notations}

Let $X = G/P$ be a homogeneous space defined by a connected %, simply connected, 
semisimple complex Lie group $G$ and a parabolic subgroup $P$. Fix a maximal torus $T$ and a Borel subgroup $B$ such that $T \subset B \subset P \subset G$. Let $R$ be the associated root system with positive roots $R^+$ and simple roots $\Delta \subset R^+$. For $\alpha \in R$, let $\fg_\alpha \subset \fg$ be the eigenspace of weight $\alpha$ and $U_\alpha \subset G$ be the $1$-dimensional %unipotent 
subgroup with Lie algebra $\fg_\alpha$. Let $\SLT{\alpha} \subset G$ be the subgroup of generated by $U_\alpha$ and $U_{-\alpha}$.

Let $W = N_G(T)/T$ be the Weyl group of $(G,T)$, and let $W_P = N_P(T)/T$ be the Weyl group of $(P,T)$. Set $\Delta_P = \{ \beta\in \Delta \ | \ s_\beta \in W_P\}$. The group $W_P$ is generated by the simple reflections $s_\beta$ for $\beta \in \Delta_P$. Set $R_P = R \cap \Z\Delta_P$ and $R_P^+ = R^+ \cap \Z\Delta_P$. For $\alpha \in \Delta$, we write $\varpi_\alpha$ for the fundamental weight associated to $\alpha$ and defined by $\scal{\beta^\vee,\varpi_\alpha} = \delta_{\alpha,\beta}$ for $\alpha,\beta\in \Delta$. Write $\Theta$ for the highest root in $R$ and fix $( \ , \ )$ an invariant scalar product on the root space. This gives an identification between the root and coroot spaces so that the following holds: ${\alpha^\vee = \frac{2 \alpha}{(\alpha,\alpha)}}$.

Each element $w \in W$ defines a Schubert variety $X_w = \overline{Bw.P}$ in $X$ and an opposite Schubert variety $X^w = \overline{B^-w.P}$, where $B^-$ is the opposite Borel subgroup defined by $B \cap B^- = T$. If $w$ is the minimal length representative in its coset in $W/W_P$, then we have $\dim X_w = \codim X^w = \ell(w)$. We denote by $W^X$ the set of these minimal coset representatives.

We define the Hecke product on $W$ as the unique associative monoid product such that for any simple reflection $s_\beta \in W$, we have $w \cdot s_\beta = ws_\beta$ if $\ell(ws_\beta) > \ell(w)$ and $s_\beta \cdot w =  w$ otherwise, we refer to \cite{buch.mihalcea:curve} for more results on the Hecke product.

Given a positive root $\alpha$ with $s_\alpha \not\in W_P$, let $C_\alpha \subset X$ be the unique $T$-stable curve that contains the $T$-fixed points $1.P$ and $s_\alpha.P$. We have $C_\alpha = \SLT{\alpha}.P$ (see \cite[Example 1.3.4]{brion:lectures}). The homology group $\HH_2(X) := H_2(X,\Z)$ can be identified with the quotient $\Z\Delta^\vee/\Z\Delta_P^\vee$ where $\Z\Delta^\vee$ is the coroot lattice of $G$ and $\Z\Delta_P^\vee$ is the coroot lattice of $P$. The degree $d(\alpha) := [C_\alpha] \in \HH_2(X)$ is equal to the image of the coroot $\alpha^\vee$ in  $\Z\Delta^\vee/\Z\Delta_P^\vee$. There is a dual identification $H^2(X,\Z) = \Z\{\varpi_\alpha \ | \ \alpha \in \Delta \setminus \Delta_P\}$ such that the intersection pairing is given by $\scal{\alpha^\vee,\varpi_\beta}$.

Assume now that $P \supset B$ is a maximal parabolic subgroup. There exists a fundamental weight $\varpi$ such that $P$ is the closed subgroup of $G$ containing $T$ and the root subgroups $U_\alpha$ for $\alpha$ with $\scal{\alpha^\vee,\varpi} \geq 0$. Let $\delta$ be the simple root such that $\scal{\delta^\vee,\varpi} = 1$ and recall that $\Theta$ is the highest root of $R$. Under the identification $\HH_2(X) \simeq \Z$, the degree $d(\alpha)$ is the coefficient of $\delta^\vee$ in the expression of $\alpha^\vee$ in terms of simple coroots. Since $d(\Theta)$ will play an important role, we give the value of $\Theta^\vee$ in terms of simple coroots in Table \ref{table-theta}.

\begin{table}[ht]
%\begin{tabular}{c|c}
%Type of $G$ & $d(\Theta) = \textrm{coeff. of } \delta^\vee \textrm{ in } \Theta^\vee$ \\
%\hline
%$A_n$ & $\dynkin[edge length=.7cm,
%        labels*={1,1,1,1,1},]A{***.**}$ \\
%$B_n$ & $\dynkin[edge length=.7cm,
%        labels*={1,2,2,2,1},]B{***.**}$ \\
%$C_n$ & $\dynkin[edge length=.7cm,
%        labels*={1,1,1,1,1},]C{***.**}$ \\
%$D_n$ &$\dynkin[edge length=.7cm,
%        labels*={1,2,2,2,1,1},]D{***.***}$ \\
%$E_6$ & $\dynkin[edge length=.7cm,
%        labels*={1,2,2,3,2,1},]E6$\\
%\hline
%\end{tabular}
%\begin{tabular}{c|c}
%Type of $G$ & $d(\Theta) = \textrm{coeff. of } \delta^\vee \textrm{ in } \Theta^\vee$ \\
%\hline
%$E_7$ &  $\dynkin[edge length=.7cm,
%        labels*={1,2,2,3,2,1},]E6$ \\
%$E_8$ &  $\dynkin[edge length=.7cm,
%        labels*={1,2,2,3,2,1},]E6$ \\
%$F_4$ &  $\dynkin[edge length=.7cm,
%        labels*={2,3,2,1},]F4$ \\
%$G_2$ & $\dynkin[edge length=.7cm,
%        labels*={1,2},]G2$ \\
%\hline
%\end{tabular}
\begin{tabular}{c|c}
Type of $G$ & $d(\Theta) = \textrm{coeff. of } \delta^\vee \textrm{ in } \Theta^\vee$ \\
\hline
$A_n$ & $\dynkin[edge length=.7cm,
        labels*={1,1,1,1,1},]A{***.**}$ \\
$B_n$ & $\dynkin[edge length=.7cm,
        labels*={1,2,2,2,1},]B{***.**}$ \\
$C_n$ & $\dynkin[edge length=.7cm,
        labels*={1,1,1,1,1},]C{***.**}$ \\
$D_n$ &$\dynkin[edge length=.7cm,
        labels*={1,2,2,2,1,1},]D{***.***}$ \\
$E_6$ & $\dynkin[edge length=.7cm,
        labels*={1,2,2,3,2,1},]E6$\\
$E_7$ &  $\dynkin[edge length=.7cm,
        labels*={2,2,3,4,3,2,1},]E7$ \\
$E_8$ &  $\dynkin[edge length=.7cm,
        labels*={2,3,4,6,5,4,3,2},]E8$ \\
$F_4$ &  $\dynkin[edge length=.7cm,
        labels*={2,3,2,1},]F4$ \\
$G_2$ & $\dynkin[%reverse arrows,
edge length=.7cm,
        labels*={1,2},]G2$ \\
\hline
\end{tabular}

~\\
\centering
~\\
\caption{\label{table-theta} The value of $d(\Theta)$.}
\end{table}

Denote by $\Gr(p,n)$ the grassmannian of $p$-dimensional subspaces in a vector space of dimension $n$. For $n \neq 2p$, let $\OG(p,n)$ be the subvariety of $\Gr(p,n)$ of subspaces isotropic for a non-degenerate quadratic form. For $n = 2p$ the previous isotropic grassmannian has two isomorphic connected components and $\OG(p,2p)$ denotes one of these connected components. Let $\IG(p,2n)$ be the subvariety of $\Gr(p,2n)$ of subspaces isotropic for a symplectic form.  For $p = n$, set $\LG(n,2n) = \IG(n,2n)$,.

%%%%%%%%%%%%%%%%%%%%%%%%%%%%%%%%%%%%%%%%%%%%%%%%%%%%%%
%\input{curves}

\section{Curves neighborhoods}
\label{section:curves}

\subsection{First definitions} Let $d \in \HH_2(X)$ be an effective degree and let $\Omega \subset X$ be any subset. Define the \emph{curve neighborhood} $\Gamma_d^X(\Omega)$ of $\Omega$ as the closure in $X$ of the  locus covered by smooth rational curves of degree $d$ meeting $\Omega$. For $\Omega,\Omega' \subset X$ two subsets, define the \emph{double curve neighborhood} $\Gamma_d^X(\Omega,\Omega')$ as the closure in $X$ of the locus covered by smooth rational curves of degree $d$ meeting $\Omega$ and $\Omega'$. If each of $\Omega$ and $\Omega'$ consists of a unique point: $\Omega = \{x\}$ and $\Omega' = \{y\}$, we set $\Gamma_d^X(x,y) := \Gamma_d^X(\Omega,\Omega')$. When $X$ is clear from the context, we will use $\Gamma_d$ in place of $\Gamma_d^X$.

\begin{defn}
Let $d \in \HH_2(X)$ be an effective degree. A maximal root for $d$ is a root $\alpha \in R^+$ maximal for the property $d(\alpha) \leq d$. A greedy decomposition of $d$ is a sequence $(\alpha_1,\cdots,\alpha_r)$ such that $\alpha_1$ is a maximal root of $d$ and  $(\alpha_1,\cdots,\alpha_r)$ is a greedy decomposition of $d - d(\alpha_1)$. The empty set is the unique greedy decomposition of $0$.
\end{defn}

We will need only few results on greedy decompositions.
\begin{prop}
Let $d \in \HH_2(X)$ be a non-zero effective degree.
\begin{enumerate}
\item There is, up to reordering, a unique greedy decomposition of $d$.
\item The first terms in all greedy decompositions are the same.
\end{enumerate}
\end{prop}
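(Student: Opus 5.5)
The plan is to show the stronger fact that for every effective degree $d>0$ the set
$$R_{\le d} := \{\alpha \in R^+ \ | \ d(\alpha) \le d\}$$
has a \emph{unique} maximal element; call it $\alpha_d$. Granting this, a greedy decomposition must begin with $\alpha_d$ and then continue with a greedy decomposition of $d - d(\alpha_d)$. Since $d(\alpha)\geq 1$ for every $\alpha\in R^+$ with $s_\alpha\notin W_P$ (only such roots can be maximal once $d>0$, as $\delta\in R_{\le d}$), the degree strictly decreases. Induction on $d$ then shows that the greedy decomposition is unique, even without reordering. Both (1) and (2) follow at once.

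The first step is to pass to coroots, where $d(\alpha)$ is a height-like function. Under the bijection $\alpha\mapsto\alpha^\vee$ between $R^+$ and the positive roots of the dual system $R^\vee$, the number $d(\alpha)$ is the coefficient of $\delta^\vee$ in $\alpha^\vee$. I will check that this bijection preserves the partial order. The order on $R^+$ is generated by the covering relations $\alpha < \alpha+\beta$ with $\beta\in\Delta$ and $\alpha+\beta\in R$. In this situation $(\alpha+\beta)^\vee$ is a positive integer combination of $\alpha^\vee$ and $\beta^\vee$, since it is a rescaling of $\alpha+\beta$ by a positive factor. Moreover, the coefficient of $\alpha^\vee$ is nonzero, so $\alpha^\vee < (\alpha+\beta)^\vee$. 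The symmetric argument gives the converse. Hence it suffices to prove that the set of positive coroots with $\delta^\vee$-coefficient at most $d$ has a unique maximal element.

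Work in $R^\vee$ with the $\Z$-grading by the $\delta^\vee$-coefficient, and let $R^\vee_k$ denote the coroots of level $k$. Set $k_0 = \min(d, d(\Theta))$, where $d(\Theta)$ is the maximal level. The argument has two steps.
\begin{itemize}
\item[(a)] No coroot $\gamma$ of level $k<k_0$ is maximal. Indeed $\gamma$ is not the highest coroot, so there is a simple coroot $\beta^\vee$ with $\gamma+\beta^\vee$ a coroot. This new coroot has level $k$ or $k+1\le k_0\le d$, so $\gamma$ is not maximal. Thus every maximal element lies in $R^\vee_{k_0}$.
\item[(b)] $R^\vee_{k_0}$ has a unique maximal element. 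This uses the classical fact (Azad--Barry--Seitz, or directly from the fact that $L$ acts irreducibly on graded pieces of $\fg$ for a maximal parabolic) that for $k\ge1$ the span of the root spaces of $R^\vee_k$ is an irreducible module over the Levi of the dual group. Its weights therefore have a unique highest element, and every weight is obtained from it by subtracting simple roots of the Levi.
\end{itemize}
Combining (a) and (b) gives the uniqueness of $\alpha_d$.

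The main obstacle is step (b), namely justifying irreducibility of the graded pieces. If I want to avoid citing it, I would instead argue combinatorially. Suppose $\gamma\ne\gamma'$ are both maximal in level $k_0$. Climbing within level $k_0$ by simple coroots of the Levi, and using that the sum of the Levi simple coroots together with $\delta^\vee$ forms a connected Dynkin diagram, I would try to produce a common upper bound. The representation-theoretic citation seems the cleaner route, and it is the one I would use.
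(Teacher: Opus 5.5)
\noindent Your reduction to coroots is where the argument breaks. When $G$ is not simply laced, $\alpha\mapsto\alpha^\vee$ is \emph{not} an isomorphism of posets $(R^+,\le)\to(R^{\vee+},\le)$. The justification fails because
$(\alpha+\beta)^\vee=\frac{(\alpha,\alpha)}{(\alpha+\beta,\alpha+\beta)}\,\alpha^\vee+\frac{(\beta,\beta)}{(\alpha+\beta,\alpha+\beta)}\,\beta^\vee$
has rational coefficients. These equal $\frac12$ (or $\frac13$ in type $G_2$) when $\alpha$ is short and $\alpha+\beta$ is long, and a coefficient in $(0,1)$ does not give $\alpha^\vee<(\alpha+\beta)^\vee$. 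For instance, in $B_2$ (with $\alpha_1$ long) we have $\alpha_1+\alpha_2<\alpha_1+2\alpha_2$, yet $(\alpha_1+2\alpha_2)^\vee=\alpha_1^\vee+\alpha_2^\vee<2\alpha_1^\vee+\alpha_2^\vee=(\alpha_1+\alpha_2)^\vee$.

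This changes the answer. Take $X=B_2/P_1=Q_3$ and $d=2$. Every positive root satisfies $d(\alpha)\le 2$, so the maximal root of $d$ is $\Theta=\alpha_1+2\alpha_2$; since $d(\Theta)=1$, the greedy decomposition is $(\Theta,\Theta)$. By contrast, the largest coroot of $\delta^\vee$-level at most $2$ is $(\alpha_1+\alpha_2)^\vee$, the coroot of a short root. So uniqueness in the coroot poset says nothing about maximal roots. The same example shows that $d(\Theta)$ is not the top level of $R^\vee$: the highest coroot is $\theta_s^\vee$, with $\theta_s$ the highest short root, and here it has level $2>d(\Theta)=1$. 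Hence your choice $k_0=\min(d,d(\Theta))$ is wrong even inside $R^\vee$.

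\medskip

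\noindent The underlying point is that, by Lemma~\ref{lemm:n}, $d(\alpha)=\frac{(\delta,\delta)}{(\alpha,\alpha)}n_\delta(\alpha)$ is not monotone along $(R^+,\le)$: above, $d(\alpha_1+\alpha_2)=2>1=d(\alpha_1+2\alpha_2)$. So root lengths must enter the proof somewhere. Your argument is correct as written for simply laced $G$, where $\alpha^\vee=\alpha$ and step (b) is Lemma~\ref{lemma:g(f)-is-irreducible}.

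A repair in general is to run your steps (a)--(b) in $R$ itself, graded by $n_\delta$. They show that for $1\le k\le n_\delta(\Theta)$ the set $\{\alpha\in R^+ \mid n_\delta(\alpha)\le k\}$ has a greatest element $h_k$, the Levi-highest root of level $k$.
\begin{itemize}
\item For $d\ge d(\Theta)$, the maximal root is simply $\Theta$.
\item For $1\le d<d(\Theta)$, set $k=\frac{(\Theta,\Theta)}{(\delta,\delta)}d$. Then $\{\alpha \mid d(\alpha)\le d\}\subseteq\{\alpha \mid n_\delta(\alpha)\le k\}$, so it suffices to know that $h_k$ is \emph{long}. Otherwise $d(h_k)$ is a proper multiple of $d$ and $h_k$ is excluded.
\end{itemize}
This length statement is exactly what your proposal does not supply. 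It can be checked directly in the non-simply-laced cases with $d(\Theta)\ge 2$, namely $B_n/P_p$ with $2\le p\le n-1$, $F_4/P_1$, $F_4/P_2$, $F_4/P_3$ and $G_2/P_2$.

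For comparison, the paper does not prove the proposition itself: it cites Buch--Mihalcea (Section 4.2) for (1) and B\"arligea (Proposition 3.16) for (2).
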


\begin{proof}
For (1) see \cite[Section 4.2]{buch.mihalcea:curve} and see  \cite[Proposition 3.16]{barligea:curve} for (2).
\end{proof}

\begin{cor}
There is a unique maximal root $\gamma(d)$ of $d$.
\end{cor}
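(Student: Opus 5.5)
This follows directly from part (2) of the preceding proposition; the only work is to identify ``maximal root of $d$'' with ``possible first term of a greedy decomposition of $d$''. By the definition of a greedy decomposition, a sequence $(\alpha_1,\dots,\alpha_r)$ qualifies exactly when $\alpha_1$ is a maximal root for $d$ and the tail is a greedy decomposition of $d-d(\alpha_1)$. So the first step is to check that every maximal root $\alpha$ of $d$ really is the first term of some greedy decomposition.

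For this, set $d' = d - d(\alpha)$. We need $d'\geq 0$, and this holds because $d(\alpha)\leq d$ by the definition of a maximal root. If $d'=0$, the empty sequence is the greedy decomposition of $d'$. If $d'>0$, we argue by induction on $d$, which makes sense since $\HH_2(X)$ is a lattice of effective degrees and $d'<d$ because $d(\alpha)>0$ for every positive root $\alpha$. We want a maximal root of $d'$ to exist, and the set of positive roots with $d(\cdot)\leq d'$ is nonempty: any simple root outside $\Delta_P$ has degree $1$ in the Picard rank one case (in general, some simple coroot appearing in $d'$ does). This set is finite, so it has maximal elements for the root order. Hence, inductively, $d'$ has a greedy decomposition $(\alpha_2,\dots,\alpha_r)$, and $(\alpha,\alpha_2,\dots,\alpha_r)$ is then a greedy decomposition of $d$ with first term $\alpha$.

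Now let $\alpha$ and $\beta$ be two maximal roots for $d$. Each is the first term of some greedy decomposition, and part (2) of the preceding proposition says that all greedy decompositions have the same first term, so $\alpha=\beta$. Existence is the same finiteness argument: $d$ is non-zero and effective, so some positive root has degree at most $d$, and a maximal one exists. We set $\gamma(d)$ to be this unique root.

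I do not expect any real obstacle. The only care needed is the existence of a greedy decomposition starting at a prescribed maximal root, that is, the nonemptiness of $\{\alpha\in R^+ : d(\alpha)\leq d'\}$ for $d'$ non-zero effective. In Picard rank one this is immediate from $d(\delta)=1$.
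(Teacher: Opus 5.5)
Your argument is correct and is exactly the reasoning the paper leaves implicit: the corollary is read off from part (2) of the preceding proposition, once one knows, as you check by induction, that every maximal root of $d$ is the first term of some greedy decomposition. One small slip: the claim that $d(\alpha)>0$ for every positive root fails for $\alpha\in R_P^+$, so to get $d'<d$ you should instead note that a maximal root of a non-zero $d$ lies outside $R_P^+$. This is built into Buch--Mihalcea's convention, and in Picard rank one it can also be checked directly; after that, your induction goes through unchanged.
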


\begin{defn}
For $d \in \HH_2(X)$ effective, define $z_d^P \in W^P$ by the equation $z_d^Pw_P = s_{\alpha_1} \cdot \ldots \cdot s_{\alpha_r} \cdot w_P$, where $(\alpha_1,\cdots,\alpha_r)$ is a greedy decomposition of~$d$.
\end{defn}

The fact that $z_d^P$ is well defined follows from the uniqueness of the greedy decomposition and results in \cite[Section 4]{buch.mihalcea:curve}. The fact that $z_d^P$ lies in $W^P$ follows from \cite[Proposition 2.4(8)]{barligea:curve}. The importance of this definition comes from the following result.

\begin{thm}[see \cite{buch.mihalcea:curve}]
\label{thm:curve-neigh-1}
For  $d \in \HH_2(X)$ effective, we have $\Gamma_d(X_w) = X_{z_d^P\cdot w}$
\end{thm}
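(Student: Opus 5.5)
The argument follows Buch--Mihalcea. It rests on two ingredients: an orbit-closure description of the curve neighborhood of the base point, and an equivariance argument that propagates it to arbitrary Schubert varieties through the Hecke product.

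\emph{Step 1: reduction.} Since $X_w = \overline{Bw.P}$ and the family of degree $d$ rational curves is $G$-stable, $\Gamma_d(X_w)$ is $B$-stable, closed and irreducible. Irreducibility holds because it is the image of an evaluation map from the irreducible space $\ov{M}_{0,2}(X,d)\times_X X_w$, using irreducibility of the moduli of stable maps of fixed degree to $G/P$ by Kim--Pandharipande and Thomsen. Hence $\Gamma_d(X_w)$ is a Schubert variety $X_u$, and it suffices to identify $u$ through its $T$-fixed points. I would first treat $w=1$, i.e.\ $\Gamma_d(1.P)=X_{z_d^P}$, and then obtain the general case from
\[
\Gamma_d(X_w)=\overline{B w\,\Gamma_d(1.P)} .
\]
This identity holds because every curve meeting $Bw.P$ is a $Bw$-translate of a curve through $1.P$. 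Combined with the standard rule $\overline{BwX_{v}}=X_{w\cdot v}$ for Hecke products, after commuting the order appropriately via $P$-stability of $\Gamma_d(1.P)$, this gives $X_{z_d^P\cdot w}$.

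\emph{Step 2: the point case, by induction on $d$ along the greedy decomposition.}
\begin{itemize}
\item \emph{Base.} For $d=d(\alpha)$ with $\alpha=\gamma(d)$ maximal, the curve $C_\alpha=\SLT{\alpha}.P$ has degree $d(\alpha)$ and passes through $s_\alpha.P$. So $X_{s_\alpha}\subseteq\Gamma_{d(\alpha)}(1.P)$.
\item \emph{Induction.} Chaining $T$-stable curves, $\Gamma_{d}(1.P)\supseteq \Gamma_{d-d(\alpha_1)}(\Gamma_{d(\alpha_1)}(1.P))$. This holds because a connected chain of rational curves of total degree $d$ through $1.P$ deforms to a smooth degree $d$ rational curve, by smoothability of trees of rational curves on homogeneous spaces via unobstructedness ($H^1(f^*T_X)=0$). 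With the inductive hypothesis this yields the containment $X_{z_d^P}\subseteq\Gamma_d(1.P)$.
\end{itemize}

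\emph{Step 3: the reverse inclusion.} This is the main obstacle. One must show that every $T$-fixed point $v.P\in\Gamma_d(1.P)$ satisfies $v\le z_d^P$ in Bruhat order. I would use torus degeneration. Any degree $d$ curve through $1.P$ and $v.P$ degenerates under a generic one-parameter subgroup of $T$ to a connected chain of $T$-stable curves $C_{\beta_1},\dots$ translated by Weyl elements, with total degree $d$, joining $1.P$ to $v.P$. Then $v.P\in X_{s_{\beta_1}\cdots s_{\beta_k}}$ with $\sum d(\beta_i)=d$, and $s_{\beta_1}\cdot\ldots\cdot s_{\beta_k}\le s_{\beta_1}\cdot\ldots$ in Hecke order. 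What remains is a purely combinatorial inequality: for any roots with $\sum d(\beta_i)\le d$, the product $s_{\beta_1}\cdot\ldots\cdot s_{\beta_k}\cdot w_P\le z_d^Pw_P$. I would prove it by induction on $k$, using that $s_\beta\le s_{\gamma(d(\beta))}$ whenever $d(\beta)\le d$, by maximality of $\gamma$ and the fact that $\alpha\le\alpha'$ in root order implies $s_\alpha\le s_{\alpha'}$ for the relevant cosets. Monotonicity of the Hecke product in each factor, together with the uniqueness of greedy decompositions in the preceding Proposition, then lets the greedy choice dominate. This combinatorial domination is the delicate part, and it is precisely where the Buch--Mihalcea results would be invoked rather than reproved.
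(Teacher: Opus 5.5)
The paper gives no argument beyond citing Buch--Mihalcea, and your outline follows their strategy: reduce to $w=1$ by $B$-equivariance and Hecke products, get the lower bound by chaining $T$-stable curves along the greedy decomposition, and get the upper bound by $T$-degeneration. However, the last move of your Step 1 is wrong. Your identity $\Gamma_d(X_w)=\overline{Bw\,X_{z_d^P}}$, combined with $\overline{BwB}\,\overline{BzB}=\overline{B(w\cdot z)B}$, gives $\Gamma_d(X_w)=X_{w\cdot z_d^P}$, which is the form Buch--Mihalcea prove. $P$-stability of $X_{z_d^P}$ only lets you absorb elements of $W_P$ on the left of $z_d^P$; it does not let you move $w$ past $z_d^P$. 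In fact the order $z_d^P\cdot w$ printed in the statement is false for general $w$.

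Here is a counterexample. Take $X=\Gr(2,4)$ with $B$ stabilising $E_1\subset E_2\subset E_3$ and $d=1$. Then $z_1^P=2413$ (one-line notation), and $X_{z_1^P}$ is the divisor $\{V : V\cap E_2\neq 0\}$. For $w=s_2$, $X_{s_2}$ is the line $\{E_1\subset V\subset E_3\}$, and $\Gamma_1(X_{s_2})=\Gr(2,4)$ because every $V$ meets $E_3$. Since $z_1^Ps_2=2143$ is shorter than $z_1^P$, we get $z_1^P\cdot s_2=z_1^P$, so $X_{z_1^P\cdot s_2}$ is only the divisor. By contrast, $s_2\cdot z_1^P=3412$ gives all of $\Gr(2,4)$, as it should.

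Your own Step 2 induction already uses the correct order: it needs $\Gamma_{d-d(\alpha_1)}(X_{s_{\alpha_1}})=X_{s_{\alpha_1}\cdot z_{d-d(\alpha_1)}}$ to match the recursive definition of $z_d$. The two orders agree for $w=1$, the only case the paper uses. So you should conclude $X_{w\cdot z_d^P}$ and drop the ``commuting'' step.

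Step 3 is where the real content lies, and your sketch does not establish it. Bounding each $s_{\beta_i}$ by $s_{\gamma(d(\beta_i))}$ and using monotonicity of the Hecke product only compares the chain with a product of maximal-root reflections attached to some non-greedy decomposition of $d$. Showing that every such product is dominated by $z_d^Pw_P$ is precisely the nontrivial exchange argument of Buch--Mihalcea, and uniqueness of the greedy decomposition says nothing about non-greedy ones.

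The ingredient ``$\alpha\le\alpha'$ implies $s_\alpha\le s_{\alpha'}$'' also fails in $W$ in general. In type $B_2$, $\alpha_1+\alpha_2\le\alpha_1+2\alpha_2$, yet $s_{\alpha_1+\alpha_2}=s_1s_2s_1$ and $s_{\alpha_1+2\alpha_2}=s_2s_1s_2$ are Bruhat-incomparable. Any coset version of this claim would therefore need its own proof.

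A smaller point: the chain extracted from a $T$-fixed stable map has total degree $\le d$, not $=d$. Since you explicitly defer the domination statement to Buch--Mihalcea, what you have is a reduction to their theorem, at the same level as the paper's citation. It is not a self-contained proof, and the termwise bound should not be presented as closing the argument.
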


\subsection{Curve neighborhoods of points} The previous result in particular implies that the degree $d$ curve neighborhood of the $B$-fixed point in $X$ is a Schubert variety. Explicitly, we have $\Gamma_d(\{1.P\}) = X_{z_d^P}$. Our aim is to describe $\Gamma_d(x,y)$ the degree $d$ double curve neigborhoods of $x,y \in X$ two general points on a degree $d$ rational curve. Since $G$ acts with finitely many orbits on $X^2$ (with orbits indexed by $W^X$), the set of pairs of points on a degree $d$ rational curve contains a dense $G$-orbit. In particular, using the $G$-action, we have a unique $T$-stable choice for the pair $(x,y)$, namely $(x,y) = (1.P,z_d^P.P)$. Set 
$$\Gamma_d^X := \Gamma_d(1.P,z_d^P.P).$$
When $X$ is clear from the context, we will use $\Gamma_d$ in place of $\Gamma_d^X$.

%$x = 1.P$ and $y = z_d^P.P$. Set $\Gamma_d := \Gamma_d(\{x\},\{y\})$.

We give a description of $\Gamma_d$ for small degrees. Recall that $X$ has Picard rank $1$ \emph{i.e.} that $P$ is a maximal parabolic subgroup. Recall the definition of the simple root $\delta \in \Delta$ such that $\Delta_P = \Delta \setminus \{\delta\}$ and of $\varpi = \varpi_\delta$. We have an identification $\HH_2(X) \simeq \Z$ given by $d(\alpha) \mapsto\scal{\alpha^\vee,\varpi}$. For any root $\alpha \in R$, let $n_\delta(\alpha)$ be the coefficient of $\delta$ in the expansion of $\alpha$ as linear combination of simple roots. For the reader's convenience we give a proof of the following well known result.

\begin{lemma}
\label{lemm:n}
For $\alpha \in R$, we have $(\alpha,\alpha)\scal{\alpha^\vee,\varpi} = (\delta,\delta)n_\delta(\alpha)$. 
\end{lemma}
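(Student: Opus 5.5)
Since both sides of the identity are linear in $\alpha$ after suitable rescaling, the plan is to expand $\alpha^\vee$ in simple coroots and read off the $\delta^\vee$-coefficient. First, because $\scal{\beta^\vee,\varpi} = \delta_{\beta,\delta}$ for $\beta \in \Delta$ and the pairing is linear in the coroot, $\scal{\alpha^\vee,\varpi}$ equals the coefficient of $\delta^\vee$ in the expansion of $\alpha^\vee$ in the basis of simple coroots. This is exactly the description of $d(\alpha)$ recalled in Section~\ref{section:notations}.

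Next, write $\alpha = \sum_{\beta\in\Delta} n_\beta(\alpha)\,\beta$. Using the identification $\gamma^\vee = \frac{2\gamma}{(\gamma,\gamma)}$ for every root $\gamma$, we get
$$\alpha^\vee = \frac{2\alpha}{(\alpha,\alpha)} = \sum_{\beta\in\Delta} n_\beta(\alpha)\,\frac{2\beta}{(\alpha,\alpha)} = \sum_{\beta\in\Delta} n_\beta(\alpha)\,\frac{(\beta,\beta)}{(\alpha,\alpha)}\,\beta^\vee.$$
The simple coroots are linearly independent, so the coefficient of $\delta^\vee$ is $n_\delta(\alpha)(\delta,\delta)/(\alpha,\alpha)$. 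Equivalently, pair the displayed expansion with $\varpi$ and use $\scal{\beta^\vee,\varpi}=\delta_{\beta,\delta}$.

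Combining the two steps gives $\scal{\alpha^\vee,\varpi} = n_\delta(\alpha)(\delta,\delta)/(\alpha,\alpha)$. Multiplying by $(\alpha,\alpha)$, which is nonzero because the form is positive definite on the real span of the roots, yields the claim. There is no real obstacle here. The only point needing care is that the identification of roots and coroots via $(\,,\,)$ is compatible with the pairing $\scal{\cdot,\cdot}$, i.e. $\scal{\gamma^\vee,\lambda} = \frac{2(\gamma,\lambda)}{(\gamma,\gamma)}$. This is the standard convention fixed in Section~\ref{section:notations}, and it is what makes the linear-algebra manipulation above legitimate. The argument is uniform in $\alpha$, so negative roots are covered as well, with both sides changing sign.
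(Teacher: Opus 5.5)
Your proof is correct and is essentially the paper's argument: the paper writes $(\alpha,\alpha)\alpha^\vee = 2\alpha = \sum_{\beta\in\Delta} n_\beta(\alpha)(\beta,\beta)\beta^\vee$ and pairs with $\varpi$, which is your expansion multiplied through by $(\alpha,\alpha)$. Your extra step through the $\delta^\vee$-coefficient and the linear independence of the simple coroots is harmless but unnecessary, since pairing with $\varpi$ already isolates that term.
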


\begin{proof}
Write $\alpha = \sum_{\beta \in \Delta}n_\beta(\alpha) \beta$. Then we have
$$(\alpha,\alpha)\alpha^\vee = 2 \alpha = \sum_{\beta \in \Delta} n_\beta(\alpha) (\beta,\beta) \beta^\vee.$$
Evaluating with $\varpi$ gives the result.
\end{proof}

\begin{lemma}
\label{lemm:root}
Let $d \in \HH_2(X)$ effective and non trivial with $d \leq d(\Theta)$. Then there exists a long root $\gamma$ with $\scal{\gamma^\vee,\varpi} = d$.
\end{lemma}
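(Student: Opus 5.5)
By Lemma~\ref{lemm:n}, for a long root $\gamma$ we have $\scal{\gamma^\vee,\varpi} = \frac{(\delta,\delta)}{(\gamma,\gamma)} n_\delta(\gamma)$. The plan is to find long roots whose $\delta$-coefficients run through enough values. The highest root $\Theta$ is long. A standard fact is that every positive root can be joined to $\Theta$ by a chain of steps adding simple roots, with all intermediate sums being roots. So I will work with long roots and move by simple reflections, which preserve length.

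\textbf{Case 1: $\delta$ long.} Then $\scal{\gamma^\vee,\varpi} = n_\delta(\gamma)$ for every long root $\gamma$. Both $\delta$ and $\Theta$ are long. I will use the following connectivity fact: the long positive roots, ordered by the relation $\gamma < s_\beta(\gamma) = \gamma + k\beta$ with $\beta$ simple and $k>0$, form a poset whose minimal element is the long simple root $\delta$ and whose maximal element is $\Theta$. This holds because $\Theta$ is the unique dominant long root, and one can go up from any long positive root by reflections $s_\beta$ with $\scal{\beta^\vee,\gamma} < 0$.

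The needed observation concerns the $\delta$-coefficient along such a chain. A step with $\beta \neq \delta$ leaves $n_\delta$ unchanged. A step with $\beta = \delta$ changes it by $k = -\scal{\delta^\vee,\gamma}$. When $\gamma$ and $\delta$ are both long and $\gamma \neq \pm\delta$, this $k$ lies in $\{0,\pm1\}$.

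So along a chain from $\delta$ to $\Theta$ inside the long roots, $n_\delta$ increases in steps of exactly $1$. Therefore every value $1,\dots,n_\delta(\Theta) = d(\Theta)$ is attained by some long root.

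\textbf{Case 2: $\delta$ short.} Set $r = (\delta,\delta)/(\gamma,\gamma)$ for $\gamma$ long, so $r = 1/2$ or $1/3$. For a long root, $n_\delta(\gamma)$ is divisible by $1/r$, and $\scal{\gamma^\vee,\varpi} = r\, n_\delta(\gamma)$. I would run the same argument on the dual root system. The coroots $\gamma^\vee$ of long roots $\gamma$ are exactly the short coroots. The pairing $\scal{\gamma^\vee,\varpi}$ is the coefficient of $\delta^\vee$ in $\gamma^\vee$, and $\delta^\vee$ is a long simple coroot.

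This reduces the problem to showing that the coefficient of a long simple root $\delta^\vee$ on the short roots of $R^\vee$ takes all values from $1$ up to its maximum $d(\Theta)$. Here $\Theta^\vee$ is the highest short root of $R^\vee$. I would handle this reduced statement by a direct check in types $B$, $C$, $F_4$, $G_2$, using Table~\ref{table-theta}. In these cases $d(\Theta) \le 2$ except in type $F_4$, where the relevant short-root coefficients are small, so explicit roots can be exhibited.

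\textbf{Main obstacle.} The main difficulty is justifying that steps change $n_\delta$ by exactly one in Case 1; the $\pm1$ bound on $\scal{\delta^\vee,\gamma}$ for distinct long roots settles it. In Case 2 the difficulty is avoiding gaps, and I would settle it by the explicit finite check in the multiply-laced types.
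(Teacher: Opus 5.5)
Your proof is correct, but it is organised differently from the paper's.

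The paper gives one uniform reduction. Passing to coroots, long roots of $R$ become short roots of $R^\vee$, $\scal{\gamma^\vee,\varpi}$ becomes the $\delta^\vee$-coefficient, and $\Theta^\vee$ is the highest short coroot. The lemma is then read off from a fact quoted without proof: the coefficients of a fixed simple root on the short roots fill out the whole range up to their value on the highest short root.

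Your Case~2 is exactly this dualization, closed by a finite check. Your Case~1 actually proves the needed fact: you climb from $\delta$ to the unique dominant long root $\Theta$ through simple reflections, using $|\scal{\delta^\vee,\gamma}|\le 1$ for non-proportional long roots. So your route is more self-contained, at the cost of a case split.

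Two small corrections:
\begin{itemize}
\item The minimal elements of your poset are all the long simple roots, not only $\delta$. This is harmless, since you only need a chain starting at $\delta$.
\item For $\delta$ short, one has $d(\Theta)\le 2$ in type $F_4$ as well, contrary to what you wrote. Your check therefore reduces to $\gamma=\Theta$ for $d=d(\Theta)$, plus $\gamma=\alpha_2+2\alpha_3$ for $F_4/P_3$ and $d=1$.
\end{itemize}

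In fact the split can be avoided. Track $\scal{\gamma^\vee,\varpi}$ instead of $n_\delta(\gamma)$. A step $\gamma\mapsto s_\beta(\gamma)$ changes it by $-\scal{\gamma^\vee,\beta}\scal{\beta^\vee,\varpi}$. This is $0$ for $\beta\neq\delta$, and equals $-\scal{\gamma^\vee,\delta}=1$ for $\beta=\delta$. The reason is that $|\scal{\gamma^\vee,\delta}|\le 1$ whenever $\gamma$ is long and $\delta\neq\pm\gamma$, whatever the length of $\delta$. Starting the chain at any long simple root, where the value is $0$ or $1$, then gives the lemma in one stroke. Read in $R^\vee$, this also supplies the argument behind the fact the paper invokes.
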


\begin{proof}
Taking coroots, the result follows from the fact that if $R$ is an irreducible root system, $\delta \in R$ is a simple root and $\theta \in R$ is the highest short root, then there exists a short root $\gamma$ with $n_\delta(\gamma) = d$ for all $d \in [0,n_\delta(\theta)]$.
%If $d = d(\Theta)$, take $\gamma = \Theta$. This implies the result for type $C$ since $n_\delta(\Theta) = 1$ for any simple root $\delta$. If $G$ is simply laced the result follows from the fact that for any $d \in [0,n_\delta(\Theta)]$, there is a root $\gamma$ with $n_\delta(\gamma) = d$. For type $B$, since $d(\Theta) \leq 2$, we can take $\gamma = \Theta$ if $d = 2$ and $\gamma = \delta$ if $d= 1$ except for $\delta$ short in which case we can take $\gamma = \Theta$. Finally it is an easy check in type $F_4$.
\end{proof}

Recall that for $d \in \HH_2(X)$, the root $\gamma(d)$ is the unique maximal root of $d$.

\begin{prop}
Let $d \in \HH_2(X)$ such that $d \leq d(\Theta)$. Then
\begin{enumerate}
\item $d(\gamma(d)) = d$.
\item $\gamma(d)$ is a long root.
\end{enumerate}
\end{prop}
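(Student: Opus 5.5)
The plan is to compare $\gamma(d)$ with the long root supplied by Lemma~\ref{lemm:root}, and to translate degrees into coefficients of $\delta$ via Lemma~\ref{lemm:n}. Write $L$ for the squared length of a long root, so $(\alpha,\alpha) \le L$ for every $\alpha \in R$, with equality exactly when $\alpha$ is long.

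First, I use the uniqueness of the maximal root (the corollary to the proposition on greedy decompositions) to show that $\gamma(d)$ dominates every root of degree at most $d$. Consider the finite set $R_d = \{\alpha \in R^+ \mid d(\alpha) \le d\}$, partially ordered by the root order ($\alpha \le \beta$ iff $\beta - \alpha \in \Z_{\geq 0}\Delta$). Every element of $R_d$ lies below some maximal element of $R_d$. Since the maximal element is unique, every $\alpha \in R_d$ satisfies $\alpha \le \gamma(d)$.

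Next, Lemma~\ref{lemm:root} gives a long root $\gamma$ with $d(\gamma) = \scal{\gamma^\vee,\varpi} = d$, since $d \le d(\Theta)$. Then $\gamma \in R_d$, so $\gamma \le \gamma(d)$. In particular $n_\delta(\gamma(d)) \ge n_\delta(\gamma)$, because $\gamma(d) - \gamma$ has nonnegative coefficients. By Lemma~\ref{lemm:n} applied to $\gamma$, we have $(\delta,\delta)\, n_\delta(\gamma) = L d$.

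Applying Lemma~\ref{lemm:n} to $\gamma(d)$ then yields
$$d(\gamma(d)) = \frac{(\delta,\delta)\, n_\delta(\gamma(d))}{(\gamma(d),\gamma(d))} \ \ge\ \frac{L d}{(\gamma(d),\gamma(d))} \ \ge\ d.$$
The last inequality holds because $(\gamma(d),\gamma(d)) \le L$ and $d > 0$. On the other hand, $d(\gamma(d)) \le d$ by the definition of a maximal root for $d$. So all the inequalities are equalities. This gives $d(\gamma(d)) = d$, which is (1), and $(\gamma(d),\gamma(d)) = L$, which says $\gamma(d)$ is long, which is (2).

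The only delicate point is the first step. Maximality in the definition is only local, so we need uniqueness of the maximal root to know that $\gamma(d)$ lies above $\gamma$ in the root order. Once that is in hand, the rest is the length bookkeeping above. Note that the degree $d(\cdot)$ itself is not monotone in the root order, because of the length factor in Lemma~\ref{lemm:n}. For this reason the comparison must go through the coefficient $n_\delta$, which is monotone, rather than through $d(\cdot)$ directly.
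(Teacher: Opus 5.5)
Your proof is correct and follows the paper's argument essentially step for step. You take the long root $\gamma$ of degree $d$ from Lemma~\ref{lemm:root}, use uniqueness of the maximal root to get $\gamma \le \gamma(d)$, and compare $n_\delta$ via Lemma~\ref{lemm:n}, so that $d(\gamma(d)) \le d$ forces equality of both degrees and root lengths; your explicit remark that the unique maximal element of the finite poset $R_d$ is its maximum fills in a step the paper leaves implicit.
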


\begin{proof}
By Lemma \ref{lemm:root}, there exists a long root $\gamma$ such that $\scal{\gamma^\vee,\varpi} = d$. By maximality of $\gamma(d)$, we have $\gamma \leq \gamma(d)$. Using Lemma \ref{lemm:n}, we get 
$$d = \scal{\gamma^\vee,\varpi} = \frac{(\delta,\delta)}{(\gamma,\gamma)} n_\delta(\gamma) \leq \frac{(\delta,\delta)}{(\gamma,\gamma)} n_\delta(\gamma(d)) = \frac{(\gamma(d),\gamma(d))}{(\gamma,\gamma)} \scal{\gamma(d)^\vee,\varpi}.$$
By definition, we have $\scal{\gamma(d)^\vee,\varpi} \leq d$, thus $(\gamma,\gamma) \leq (\gamma(d),\gamma(d))$. Since $\gamma$ is long, so is $\gamma(d)$ and we have an equality in all above inequalities, proving the result.
\end{proof}

\begin{cor}
\label{cor:z_d=gamma}
For $d \in \HH_2(X)$ with $d \leq d(\Theta)$, we have $z_d^P.P = s_{\gamma(d)}.P$.
\end{cor}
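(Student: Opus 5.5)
The plan is to compute $z_d^P$ directly from a greedy decomposition of $d$. By the preceding proposition, for $d \leq d(\Theta)$ the unique maximal root $\gamma(d)$ satisfies $d(\gamma(d)) = d$. The greedy decomposition therefore starts with $\alpha_1 = \gamma(d)$. The remaining degree is $d - d(\gamma(d)) = 0$, whose unique greedy decomposition is empty. By uniqueness up to reordering, the greedy decomposition of $d$ is the one-term sequence $(\gamma(d))$. The defining equation of $z_d^P$ then reads
$$z_d^P w_P = s_{\gamma(d)} \cdot w_P,$$
where $\cdot$ is the Hecke product.

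The key step is to pass from this Hecke product to cosets. I will use the standard property of the Hecke product that for any $u \in W$, the element $u \cdot w_P$ lies in the coset $u W_P$. This is the usual fact that $u \cdot w_P$ is the maximal element of $uW_P$; it is in \cite[Section 4]{buch.mihalcea:curve}. It can also be checked by induction on a reduced word for $w_P$: multiplying on the right by a simple reflection $s_\beta$ with $\beta \in \Delta_P$ either gives $ws_\beta$ or leaves $w$ unchanged, and in both cases the coset modulo $W_P$ is unchanged. Taking $u = s_{\gamma(d)}$, we get $z_d^P w_P \in s_{\gamma(d)} W_P$. Hence $z_d^P \in s_{\gamma(d)}W_P$, so $z_d^P.P = s_{\gamma(d)}.P$.

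The argument has no real obstacle. The only point needing care is the convention for the Hecke product. The paper's text reads "$s_\beta \cdot w = w$ otherwise", which presumably means $w \cdot s_\beta = w$ when $\ell(ws_\beta) < \ell(w)$. With that reading, right Hecke multiplication by elements of $W_P$ preserves right $W_P$-cosets, which is exactly what is used above.
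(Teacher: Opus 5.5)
Your proposal is correct and follows essentially the same route as the paper: the greedy decomposition of $d$ reduces to the single root $\gamma(d)$, so $z_d^P w_P = s_{\gamma(d)} \cdot w_P$. You additionally spell out the step the paper leaves implicit, namely that right Hecke multiplication by $w_P$ preserves the coset $s_{\gamma(d)}W_P$; this gives $z_d^P.P = s_{\gamma(d)}.P$.
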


\begin{proof}
Since $d(\gamma(d)) = d$, there is a unique greedy decomposition of $d$ given by the root $\gamma(d)$. This implies that $z_d^Pw_P = s_{\gamma(d)} \cdot w_P$ and $z_d^P.P = s_{\gamma(d)}.P$.
\end{proof}

For a root $\alpha \in R$, recall the definition of the curve $C_\alpha$ from Section \ref{section:notations}.

\begin{thm}
\label{thm:Gamma-small-d}
Let $d \in \HH_2(X)$ be effective with $d \leq d(\Theta)$. Then $\Gamma_d = C_{\gamma(d)}$.
\end{thm}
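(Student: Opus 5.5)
The plan is to show that only finitely many degree $d$ rational curves pass through $x = 1.P$ and $y = s_{\gamma}.P$, where $\gamma = \gamma(d)$, and then to use the $T$-action to identify them with $C_\gamma$. By Corollary~\ref{cor:z_d=gamma}, $(x,y)$ is exactly the pair used to define $\Gamma_d$.

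The inclusion $C_\gamma \subset \Gamma_d$ is immediate. The curve $C_\gamma = \SLT{\gamma}.P$ is a smooth rational curve containing $x$ and $s_\gamma.P = y$. Its degree is $d(\gamma) = d$ by the Proposition preceding Corollary~\ref{cor:z_d=gamma}. It therefore appears among the curves whose union defines $\Gamma_d$.

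For the reverse inclusion, I consider the moduli space $\overline{M}_{0,2}(X,d)$ and its fibre $F_x = \ev_1^{-1}(x)$. Since $X$ is homogeneous, $F_x$ has dimension $\dim X + c_1(X)d - 1 - \dim X = c_1(X)d - 1$. By Theorem~\ref{thm:curve-neigh-1} and Corollary~\ref{cor:z_d=gamma}, $\ev_2(F_x) = \Gamma_d(\{1.P\}) = X_{s_\gamma}$, of dimension $\ell(s_\gamma W_P)$ (minimal representative). The key step, and the one I expect to be the main obstacle, is the equality
$$\ell(s_\gamma W_P) = \scal{\gamma^\vee, 2\rho - 2\rho_P} - 1 = c_1(X)\, d - 1 .$$
Here $c_1(X) = \scal{\cdot, 2\rho-2\rho_P}$ expressed through $\varpi$, and $\scal{\gamma^\vee,\varpi} = d$. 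I would prove this equality using that $\gamma$ is long and maximal for $d$. The inequality $\ell(s_\gamma) \le \scal{\gamma^\vee,2\rho}-1$ always holds. Equality modulo $W_P$ should amount to the following: no positive root $\beta \neq \gamma$ with $s_\gamma(\beta) < 0$ has $\scal{\beta^\vee,\gamma}$ contributing beyond $1$, and none lies in $R_P$ in a way that shortens the coset. This uses maximality of $\gamma$ together with $\gamma$ long (so $\scal{\beta^\vee,\gamma}\in\{0,\pm1\}$ for $\beta\neq\pm\gamma$ when lengths agree, with short $\beta$ handled via Lemma~\ref{lemm:n}). This is essentially the dimension computation from Buch--Mihalcea's work on curve neighborhoods, which I would cite or reprove.

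Granting this equality, $\ev_2\colon F_x \to X_{s_\gamma}$ is generically finite between varieties of the same dimension. The locus of $X_{s_\gamma}$ over which the fibre is finite is open and stable under $B = \Stab$-compatible action (the map is $B$-equivariant since $B$ fixes $x$). Hence it contains the open $B$-orbit $Bs_\gamma.P$, in particular $y$. So only finitely many degree $d$ stable maps through $x$ and $y$ exist, and $\Gamma_d$ is a finite union of curves.

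The torus $T$ fixes $x$ and $y$, and it is connected. It therefore preserves each of these finitely many curves, so each is an irreducible $T$-stable curve through the two distinct $T$-fixed points $x$ and $y$. An irreducible $T$-stable curve through $1.P$ is of the form $C_\alpha$, with other fixed point $s_\alpha.P$. If $s_\alpha.P = s_\gamma.P$ and $d(\alpha) = d$, then $\alpha$ is a root with $d(\alpha)\le d$, so $\alpha \le \gamma$ by maximality. Since the coset $s_\alpha W_P = s_\gamma W_P$, I would argue that $\alpha=\gamma$; for instance, compare $s_\alpha(\varpi) = \varpi - d\,\alpha = s_\gamma(\varpi) = \varpi - d\,\gamma$. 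Hence every such curve is $C_\gamma$, and $\Gamma_d = C_{\gamma(d)}$.
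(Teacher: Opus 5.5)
Your proof is correct, but it reaches the conclusion by a different mechanism than the paper.

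\textbf{The paper's route.} The paper computes no dimensions. It takes the whole two-point fibre $F=\ev^{-1}(1.P,z_d^P.P)\subset\overline{M}_{0,2}(X,d)$ and uses Borel's bound that a projective $T$-variety has at least $\dim F+1$ fixed points. It then shows by a weight computation that the only $T$-fixed stable map in $F$ is $C_{\gamma(d)}$. A chain of $T$-stable curves of degrees $d_i$ with roots $\alpha_i$ joining the two points gives $d\,\gamma(d)=\sum_i d_i\alpha_i$. Pairing with $\gamma(d)^\vee$, which is long, forces $\alpha_i=\gamma(d)$ for all $i$, so $F$ is a point.

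\textbf{Your route.} You get finiteness of the fibre from $\dim X_{s_\gamma}=c_1(X)d-1=\dim\ev_1^{-1}(x)$, together with $B$-equivariance and semicontinuity. After that you only need to classify single irreducible $T$-stable curves through $1.P$. Your comparison of $s_\alpha(\varpi)=\varpi-d\alpha$ with $s_\gamma(\varpi)=\varpi-d\gamma$ does this in one line; the appeal to maximality there is unnecessary.

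\textbf{What each buys.} Your route leans on the Buch--Mihalcea description of $\ev_2(\ev_1^{-1}(1.P))$ and on a Weyl group length identity. In return it shows, as a by-product, that the degree $d$ curve neighbourhood of a point has the expected dimension $c_1(X)d-1$. The paper's route avoids both inputs. It directly shows that the whole stable-map fibre, reducible maps and multiple covers included, is one point, and this is what is used later (birationality of $\ev$, Proposition~\ref{prop:stab-sd}). You can recover that stronger statement as well:
\begin{enumerate}
\item Boundary maps in $\ev_1^{-1}(x)$ form a closed $B$-stable set of dimension at most $c_1(X)d-2$. Its $\ev_2$-image is closed, $B$-stable and too small to contain $\overline{B.y}=X_{s_\gamma}$, so it misses $y$.
\item A $T$-fixed $m$-fold cover of $C_\alpha$ through $y$ gives $d(\alpha)\alpha=d\gamma$, hence $\alpha=\gamma$ and $m=1$.
\end{enumerate}

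\textbf{Tightening the key identity.} Your justification of the length identity needs correcting. The relevant pairing is $\scal{\gamma^\vee,\beta}$, not $\scal{\beta^\vee,\gamma}$. It lies in $\{0,\pm1\}$ for every $\beta\neq\pm\gamma$ simply because $\gamma$ is long, whatever the length of $\beta$, so Lemma~\ref{lemm:n} plays no role. Split the roots $\beta\in R^+\setminus R_P^+$, $\beta\neq\gamma$, with $\scal{\gamma^\vee,\beta}=1$ according to whether $\beta<\gamma$ or $\beta>\gamma$. Use the bijection $\beta\mapsto\beta+\gamma$ from positive roots with $\scal{\gamma^\vee,\beta}=-1$ onto roots $\beta'>\gamma$ with $\scal{\gamma^\vee,\beta'}=1$; all such $\beta'$ lie outside $R_P$. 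One gets
\[
\scal{\gamma^\vee,2\rho-2\rho_P}-1-\ell(s_\gamma W_P)=\#\{\beta\in R_P^+ \ | \ \scal{\gamma^\vee,\beta}=-1\}.
\]
The right-hand side vanishes because maximality makes $\gamma$ dominant for $W_P$. For $\alpha\in\Delta_P$, $s_\alpha(\gamma)$ is a root of the same degree, since $s_\alpha(\varpi)=\varpi$. It would be strictly larger than $\gamma$ if $\scal{\alpha^\vee,\gamma}<0$, contradicting maximality. With this filled in, your argument is complete.
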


\begin{proof}
Consider the evaluation map $\ev : \overline{M}_{0,2}(X,d) \to X^2$. Then $(1.P,z_d^P.P)$ lies in the image of this map and the fiber $F$ is a projective $T$-stable variety. By \cite[Proposition 13.5]{borel:linear}, the fiber $F$ must contain at least $\dim(F) + 1$ fixed points for the action of $T$. It is therefore enough to prove that $C_{\gamma(d)}$ is the unique $T$-stable curve joining $1.P$ and $z_d^P.P$.

An irreducible $T$-stable curve is of the form $\SLT{\alpha}w.P$ for $\alpha$ a positive root and $w \in W$. This curve has two $T$-fixed points $w.P$ and $s_\alpha w.P$ and is of degree $\scal{w^{-1}(\alpha)^\vee,\varpi}$. If $(C_i)_{i \in [1,r]}$ is a degree $d$ chain of $T$-stable curves with $d_i := \deg(C_i)$ joining $1.P$ and $z_d^P.P$, then there exist positive roots $(\alpha_i)_{i \in [1,r]}$ such that $C_i = \SLT{\alpha_i}s_{\alpha_{i-1}} \cdots s_{\alpha_1}.P$, $d_i = \scal{s_{\alpha_1} \cdots s_{\alpha_{i-1}}(\alpha_i)^\vee,\varpi}$ and $z_d^P.P = s_{\alpha_r}  \cdots s_{\alpha_1}.P$. Computing the weights of these $T$-fixed points, we get the equality $s_{\gamma(d)}(\varpi) = z_d^P(\varpi) = s_{\alpha_r}  \cdots s_{\alpha_1}(\varpi)$. An easy induction gives $\varpi - s_{\alpha_r}  \cdots s_{\alpha_1}(\varpi) = \sum_{i = 1}^r d_i \alpha_i$ while $\varpi - s_{\gamma(d)}(\varpi) = d \gamma(d)$. We therefore get the equality
$$\gamma(d) = \sum_{i = 1}^r \frac{d_i}{d} \alpha_i.$$
Evaluating against the coroot $\gamma(d)^\vee$ and since $\gamma(d)$ is long, we get
$$2 = \sum_{i = 1}^r \frac{d_i}{d} \scal{\gamma(d)^\vee,\alpha_i} \leq 2 \sum_{i = 1}^r \frac{d_i}{d} = 2.$$
In particular we have $\scal{\gamma(d)^\vee,\alpha_i} = 2$ for all $i$ and thus $\alpha_i = \gamma(d)$ for all $i$ (recall that $\gamma(d)$ is long). This in turn implies that the chain of curves $(C_i)_{i \in [1,r]}$ is made of the unique curve $C_{\gamma(d)}$ proving the claim and the theorem.
\end{proof}

For $d = 2 \leq d(\Theta)$, this result implies that $\Gamma_2$ is a maximal quadric.

\begin{prop}
\label{prop:max-quad}
If $\Gamma_2$ is a quadric, then it is a maximal quadric.
\end{prop}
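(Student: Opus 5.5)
Suppose $\Gamma_2 = \Gamma_2(x,y)$, with $x = 1.P$ and $y = z_2^P.P$, is a quadric, and let $Q$ be a quadric with $\Gamma_2 \subseteq Q \subset X$. We show $Q = \Gamma_2$. Throughout, "quadric" means a quadric hypersurface in its linear span, linearly embedded in $X \subset \bP(V_\varpi)$. The strategy is to compare $Q$ with the double curve neighborhood of $x,y$ inside $Q$.

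The first step is to show that $x$ and $y$ are not joined by a line of $Q$. They are not joined by a line of $X$ at all. Indeed, Corollary~\ref{cor:z_d=gamma} and the identity $d(\gamma(2)) = 2$ give $z_2^P.P \neq z_1^P.P$, so $y \notin \Gamma_1(x)$; equivalently $d(x,y) = 2$.

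The second step is to compare the two double curve neighborhoods. In a smooth quadric, and also in a possibly singular one, two points $x,y$ not on a common line of $Q$ span a line $\ell = \langle x,y\rangle$ that is not contained in $Q$. In the smooth case every point $z \in Q$ lies on a conic through $x$ and $y$: take the plane $\langle x,y,z\rangle$. If this plane is not contained in $Q$, it cuts $Q$ in a conic through $x,y,z$, and that conic is not a pair of lines through both points unless $z$ lies on a line through $x$ or through $y$. Such $z$ are a closed subset, so we pass to the closure. Hence $\Gamma_2^Q(x,y) = Q$ by density.

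Since conics in $Q$ are conics in $X$, we get
$$Q = \Gamma_2^Q(x,y) \subseteq \Gamma_2^X(x,y) = \Gamma_2 \subseteq Q,$$
so $Q = \Gamma_2$ and $\Gamma_2$ is maximal.

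The main obstacle is the second step when $Q$ is singular. There I would first reduce to the case where $x$ and $y$ lie outside the vertex. This holds because the line $\langle x,y\rangle$ meets $X$ only in $\{x,y\}$, while any point of the vertex is joined by lines of $Q$ to every point of $Q$, so $x$ and $y$ cannot be vertex points without lying on a common line of $Q$. For a general point $z$, the plane argument then goes through as in the smooth case. Alternatively, one could restrict to smooth quadrics, depending on the convention used in the sequel. I would also check the degenerate case $\dim Q = 1$, where $Q$ is itself a conic through $x$ and $y$ and the statement is immediate.
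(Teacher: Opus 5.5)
Your argument is the paper's. The paper only notes that $x,y\in Q$ and that every $z\in Q$ lies on a conic of $Q$ through $x,y,z$, so $Q\subseteq\Gamma_2^X(x,y)=\Gamma_2$. Your extra care is a legitimate refinement the paper leaves implicit, since $\Gamma_2$ is defined through \emph{smooth} conics: you check that $x,y$ are not on a common line, so the conic through a general $z$ is smooth, and then take closures.

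One step is justified incorrectly, however. Corollary~\ref{cor:z_d=gamma} and the identity $d(\gamma(2))=2$ are only available when $2\le d(\Theta)$. That is exactly the case where $\Gamma_2$ is just the conic $C_{\gamma(2)}$. The other cases are those with $d(\Theta)=1$: cominuscule $X$, $\OG(n,2n+1)$, $\IG(p,2n)$, $F_4/P_4$ and $G_2/P_1$, where $\Gamma_2$ is a quadric of dimension at least $3$. There one has $\gamma(2)=\Theta$ with $d(\Theta)=1$, so the identity fails, and applying the corollary would even wrongly give $z_2^P.P=z_1^P.P$.

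The claim of your Step~1 is still true and easy to prove directly:
\begin{itemize}
\item If $\Gamma_2$ is a quadric, then $X$ is not a projective space, since there $\Gamma_2=X$. Hence $\Gamma_1(x)\neq X$.
\item If $\Gamma_1(x)=\Gamma_2(x)$, then $\Gamma_1(\Gamma_1(x))\subseteq\Gamma_2(x)=\Gamma_1(x)$ would make $\Gamma_1(x)$ stable under lines. Since $X$ is chain-connected by lines, this forces $\Gamma_1(x)=X$, a contradiction.
\item So $X_{z_1^P}=\Gamma_1(x)\subsetneq\Gamma_2(x)=X_{z_2^P}$, hence $z_2^P\not\le z_1^P$ in the Bruhat order, and $y=z_2^P.P\notin\Gamma_1(x)$.
\end{itemize}
You need this same Bruhat-order step even when $d(\Theta)\ge 2$, to pass from $z_2^P.P\neq z_1^P.P$ to $y\notin\Gamma_1(x)$. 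With Step~1 repaired this way, the rest of your argument goes through.
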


\begin{proof}
Assume that there exists a quadric $Q \subset X$ with $\Gamma_2 \subset Q$. Recall that $\Gamma_2 = \Gamma_2^X(x,y)$ with  $(x,y) = (1.P,z_2^P.P)$. In particular $x,y \in Q$ and, for any $z \in Q$, there exists a conic contained in $Q$ passing through $x$, $y$ and $z$. This implies $z \in \Gamma_2^X(x,y)$ by definition thus $Q \subset \Gamma_2$ proving the claim.
\end{proof}

\begin{cor}
\label{cor:max-quad}
If $d(\Theta) \geq 2$, then $\Gamma_2$ is a maximal quadric.
\end{cor}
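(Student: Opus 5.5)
By Proposition~\ref{prop:max-quad}, it suffices to show that $\Gamma_2$ is a quadric whenever $d(\Theta)\ge 2$. Maximality then follows formally from that proposition.

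The plan is to identify $\Gamma_2$ with an explicit smooth quadric attached to the long root $\gamma := \gamma(2)$. Since $2 \le d(\Theta)$, Theorem~\ref{thm:Gamma-small-d} and Corollary~\ref{cor:z_d=gamma} give $y = z_2^P.P = s_\gamma.P$ and $d(\gamma) = 2$. Here $\gamma$ is long with $\scal{\gamma^\vee,\varpi}=2$.

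First I build a quadric through $x=1.P$ and $y$. Let $L_\gamma$ be the Levi-type subgroup generated by $T$ and the root subgroups $U_\alpha$ for all roots $\alpha$ with $(\alpha,\gamma)=0$ or $\alpha=\pm\gamma$. A more natural choice is the subgroup generated by the $U_{\pm\beta}$ for roots $\beta$ with $\scal{\beta^\vee,\varpi}\in\{0,1\}$ and $\gamma-\beta$ a root. I then consider the orbit closure $Q := \overline{H.x}$, where $H$ is the subgroup generated by $T$ and the $U_{\pm\beta}$ for $\beta$ ranging over positive roots with $\gamma-\beta\in R^+$, together with $\beta=\gamma$.

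Next I show that $Q$ is a quadric. Its $T$-weights in $\bP(V_\varpi)$ are $\varpi$, $s_\gamma(\varpi)=\varpi-2\gamma$, and the weights $\varpi-\beta$ and $\varpi-(\gamma-\beta)$ with $\scal{\beta^\vee,\varpi}=1$. Pairing $\beta$ with $\gamma-\beta$ produces the quadratic equation $e_\varpi e_{\varpi-2\gamma}=\sum \pm e_{\varpi-\beta}e_{\varpi-\gamma+\beta}$. This is the standard description of the quadric spanned by the weight vectors of $V_\varpi$ lying in the $\gamma$-string of weights between $\varpi$ and $\varpi-2\gamma$. Concretely, $Q$ is the image under $G\to G.[v_\varpi]$ of the simple subgroup whose root system is $\{\alpha : \alpha\in \Z\gamma+\Z\Delta_P,\ \text{weights stay in the string}\}$. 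It is a homogeneous space of Picard rank one embedded by a representation with exactly the weights $\varpi-k\cdot(\text{stuff})$ for $k\in\{0,1,2\}$, that is, a minuscule quadric. Since every point of $Q$ lies on a conic of $Q$ through $x$ and $y$ (two points of a smooth quadric not on a common line), we get $Q\subset \Gamma_2$.

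For the reverse inclusion $\Gamma_2\subset Q$, I use the $T$-fixed point argument of Theorem~\ref{thm:Gamma-small-d}. The variety $\Gamma_2$ is $\Stab(x)\cap\Stab(y)$-stable, hence $T$-stable. Each $T$-fixed point $w.P$ of $\Gamma_2$ lies on a $T$-stable conic or chain of two lines through $x$ and $y$. The weight computation of that proof gives $2\gamma = d_1\alpha_1 + d_2\alpha_2$ with $d_1+d_2=2$. For a chain of two lines this forces $\alpha_1+\alpha_2=2\gamma$ in the appropriate twisted sense, i.e.\ $w(\varpi)=\varpi-\beta$ with $\gamma-\beta$ a root. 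These are exactly the $T$-fixed points of $Q$. A dimension comparison then finishes: $\dim\Gamma_2$ equals the dimension of the fibre of $\ev$ plus one, and that fibre dimension is $\dim Q-1$.

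The main obstacle is this last step: showing that $\Gamma_2$ has no $T$-fixed points outside $Q$ and that the dimensions agree. I would first try to compute $\dim \Gamma_2$ via $\dim\overline{M}_{0,3}(X,2)-2\dim X = 2c_1(X)-\dim X-2+\ldots$. Failing that, I would use irreducibility of $\Gamma_2$ (as the image of the irreducible fibre) together with the fact that the tangent space of $\Gamma_2$ at $x$ is spanned by the $T$-weights $-\beta$ found above, which coincide with those of $T_xQ$.
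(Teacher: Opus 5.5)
Your reduction to Proposition~\ref{prop:max-quad} is right. However, you cite Theorem~\ref{thm:Gamma-small-d} only to get $y=s_\gamma.P$, and its actual conclusion for $d=2\le d(\Theta)$ is $\Gamma_2=C_{\gamma(2)}$. Its proof shows that the fibre of $\ev\colon\overline{M}_{0,2}(X,2)\to X^2$ over $(x,y)$ has a single $T$-fixed point, hence is a point. So $C_\gamma=\SLT{\gamma}.P$ is the \emph{unique} conic through $x$ and $y$. Since $C_\gamma$ is a $T$-stable curve of degree $d(\gamma)=2$, it is a smooth conic, i.e.\ a one-dimensional quadric, and Proposition~\ref{prop:max-quad} gives maximality. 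This is the paper's whole proof.

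The route you take instead is not merely incomplete; it is wrong. Suppose $Q\subset X$ were a smooth quadric of dimension $m\ge 2$ through $x$ and $y$. The line $xy$ is not in $X$ since $d(x,y)=2$, so the sections of $Q$ by planes through that line would give an $(m-1)$-dimensional family of conics through $x$ and $y$. That contradicts the uniqueness just recalled, so $Q\subset\Gamma_2$ with $\dim Q\ge 2$ cannot hold.

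The specific false step is your $T$-fixed point analysis. For a chain of two $T$-stable lines from $x$ to $y$, the weight identity gives $2\gamma=\alpha_1+\alpha_2$. Pairing with $\gamma^\vee$ yields $4=\scal{\gamma^\vee,\alpha_1}+\scal{\gamma^\vee,\alpha_2}$. Since $\gamma$ is long, each term is at most $2$, with equality only for $\alpha_i=\gamma$. This forces $\alpha_1=\alpha_2=\gamma$, contradicting that the first component has degree $1$ while $\scal{\gamma^\vee,\varpi}=2$. So no such chain exists, and the points $[v_{\varpi-\beta}]$ do not lie in $\Gamma_2$; this is exactly the computation in the proof of Theorem~\ref{thm:Gamma-small-d}.

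Your group $H$ does not produce a quadric either. For the adjoint variety $E_8/P_8$, where $d(\Theta)=2$ and $\gamma=\Theta$, the roots $\beta$ with $\Theta-\beta\in R^+$ together with $\pm\Theta$ generate $\fg$, so $\overline{H.x}=X$. The picture of $\Gamma_2$ as a quadric of dimension at least $2$ belongs to the case $d(\Theta)=1$ treated in Section~\ref{section:geometric}, not to this corollary.
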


\begin{proof}
By Theorem \ref{thm:Gamma-small-d}, $\Gamma_2 = C_{\gamma(2)}$ is a conic. Apply Proposition \ref{prop:max-quad}. 
\end{proof}

\begin{example}
\label{example:e6p4}
For $X = E_6/P_4$, the quadric $\Gamma_2$ is reduced to the conic $ C_{\gamma(2)}$ but is maximal quadric by Corollary \ref{cor:max-quad}. However, there are quadrics of larger dimension in $X$. Indeed, if $H \subset G$ is the subgroup generated by the root subgroups $U_\alpha$ for $\alpha \in \{\pm \alpha_i \ | \ i \in [3,5]\}$ where $\alpha_i$ are the simple roots with notations as in \cite{bourbaki:elements*78}, then $Y := H/(H \cap P) \subset X$ is a smooth quadric of dimension $4$. The family of such quadrics has dimension $30$. The family of conics contained in a translate of $Y$ has dimension 39 while the dimension of all quadrics has dimension $40$.
\end{example}

\section{Low degree curves}

We use the previous results to describe the spaces of curves with low degree. 

\subsection{Moduli spaces of curves}

First note that Theorem \ref{thm:Gamma-small-d} recovers a (weaker) special case of B\"arligea's result (see \cite{barligea:quasi-homogeneityI,barligea:quasi-homogeneityII}).

\begin{prop}
For $d \in \HH_2(X)$ with $d \leq d(\Theta)$, $M_{0,2}(X,d)$ has a dense $G$-orbit.
\end{prop}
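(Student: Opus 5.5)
The plan is to show that the evaluation map $\ev : M_{0,2}(X,d) \to X^2$ is injective over a dense $G$-stable open subset of its image. Since $G$ acts on $X^2$ with finitely many orbits, $G$-equivariance will then give a dense orbit in $M_{0,2}(X,d)$. We may assume $M_{0,2}(X,d)$ is irreducible, which is standard for homogeneous spaces (for instance, by irreducibility of $\overline{M}_{0,2}(X,d)$ due to Kim--Pandharipande and Thomsen); its image $\ev(M_{0,2}(X,d))$ is then irreducible. Because $X^2$ has finitely many $G$-orbits, this image contains a dense $G$-orbit $\mathcal{O} = G.(x,y)$. As in the paper, the pair $(x,y) = (1.P,z_d^P.P)$ is a $T$-stable representative of the generic orbit.

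The key step is to analyse the fibre $F = \ev^{-1}(1.P,z_d^P.P)$ in $\overline{M}_{0,2}(X,d)$. The argument in the proof of Theorem~\ref{thm:Gamma-small-d} shows that the only $T$-stable degree $d$ chain of $T$-stable curves joining $1.P$ and $z_d^P.P = s_{\gamma(d)}.P$ (Corollary~\ref{cor:z_d=gamma}) is the single curve $C_{\gamma(d)}$. I would then identify the $T$-fixed points of $F$. A $T$-fixed stable map has $T$-stable image, which is therefore a connected union of $T$-stable curves of total degree $d$ containing both points, so the image must be $C_{\gamma(d)}$. Its degree equals $d(\gamma(d)) = d$, so the map is birational onto $C_{\gamma(d)}$, since otherwise the degree would be a proper multiple. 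Hence there is exactly one $T$-fixed point in $F$, namely the isomorphism $\bP^1 \to C_{\gamma(d)}$ with the two marked points sent to $1.P$ and $s_{\gamma(d)}.P$.

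By \cite[Proposition 13.5]{borel:linear}, a projective $T$-variety with finitely many fixed points has at least $\dim+1$ of them. Applying this to $F$ gives $\dim F = 0$. Since every connected component of the projective $T$-stable variety $F$ contains a $T$-fixed point, $F$ is a single point, represented by the embedded curve $C_{\gamma(d)}$ with its two marked points. In particular $F$ already lies in $M_{0,2}(X,d)$, and the fibre of $\ev$ restricted to $M_{0,2}(X,d)$ over $(1.P,z_d^P.P)$ is this single point. By $G$-equivariance, every fibre over $\mathcal{O}$ is a single point, and $\ev^{-1}(\mathcal{O}) \cap M_{0,2}(X,d)$ is a single $G$-orbit.

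It remains to check that this orbit is dense. The preimage $\ev^{-1}(\mathcal{O})$ is open in $M_{0,2}(X,d)$ because $\mathcal{O}$ is open in the irreducible image, and it is nonempty. Since $M_{0,2}(X,d)$ is irreducible, this open set is dense, and it is one $G$-orbit. I expect the main obstacle to be the justification of irreducibility, together with the careful claim that $T$-fixed stable maps have image a union of $T$-stable curves and are multiple-cover-free. I would handle the latter by the degree count above, and the former by citing known irreducibility of genus-zero moduli spaces of stable maps to $G/P$.
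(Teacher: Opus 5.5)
Your proposal is correct and follows essentially the paper's argument. The paper likewise deduces the statement from two facts: $\ev$ is birational onto its image (Theorem~\ref{thm:Gamma-small-d}, proved by the same count of $T$-fixed points in the fibre over $(1.P,z_d^P.P)$), and this image has the dense $G$-orbit $G.(1.P,z_d^P.P)$ (Theorem~\ref{thm:curve-neigh-1}). What you add is to make explicit two steps the paper leaves implicit: the irreducibility of $M_{0,2}(X,d)$, and the passage from a one-point generic fibre to a dense orbit.

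Two small details should be tightened. First, the image of a $T$-fixed stable map a priori contains only a path of $T$-stable curves of degree $d'\le d$ joining the two points, not necessarily of degree exactly $d$. The same pairing with $\gamma(d)^\vee$ rules out $d'<d$, since it yields $2d\le 2d'$. Second, no contracted components can occur: a contracted tail would have to carry both marked points, but these have distinct images.
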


\begin{proof}
By Theorem \ref{thm:Gamma-small-d}, the map $\ev : M_{0,2}(X,d) \to X^2$ is birational onto its image. By Theorem \ref{thm:curve-neigh-1}, the image is the $G$-schubert variety $G \times^P \Gamma_d(\{1.P\}) = G \times^P X_{z_d^P}$ which has a dense $G$-orbit.
\end{proof}

To give a precise description of the dense $G$-orbit $\Hilb_d^\circ(X)$ in $\Hilb_d(X)$, we compute the stabiliser $\Ssf = \Stab_G(C_{\gamma(d)})$ of the curve $C_{\gamma(d)}$ in $G$. We start with the following general result.

\begin{lemma}
\label{lemm:stab-inclusion}
Let $\gamma \in R$. Then $\Stab_G(C_\gamma)$ is contained in the subgroup of $G$ generated by $\SLT{\gamma}$ and $P \cap P^{s_\gamma}$.
\end{lemma}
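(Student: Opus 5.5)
The plan is to exploit the two $T$-fixed points of $C_\gamma$ together with the action of the stabiliser on the curve. We may assume $\gamma$ is positive with $s_\gamma \notin W_P$; otherwise $C_\gamma$ is not defined. Set $S = \Stab_G(C_\gamma)$ and $H = \langle \SLT{\gamma}, P\cap P^{s_\gamma}\rangle$, where $P^{s_\gamma} = s_\gamma P s_\gamma^{-1}$ is the stabiliser of $s_\gamma.P$. Since $C_\gamma = \SLT{\gamma}.P$, we have $\SLT{\gamma} \subset S$, and $\SLT{\gamma}$ acts transitively on $C_\gamma \simeq \bP^1$.

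Now let $g \in S$. Then $g.(1.P) \in C_\gamma$, so by transitivity there is $h \in \SLT{\gamma}$ with $h.(1.P) = g.(1.P)$. Replacing $g$ by $h^{-1}g$, we may assume $g \in P \cap S$. It therefore suffices to show that $P \cap S \subset H$.

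The group $P\cap S$ acts on $C_\gamma\simeq\bP^1$ fixing the point $1.P$. The action factors through $\Aut(C_\gamma) = \PGL_2$, and the image of $\SLT{\gamma}\cap P$ (the Borel subgroup of $\SLT{\gamma}$ containing $U_{-\gamma}$, or $U_\gamma$ depending on conventions) already surjects onto the full stabiliser in $\PGL_2$ of $1.P$. The point is that $\SLT{\gamma}\to\PGL_2$ is surjective (it is the full $\SL_2$ or $\PGL_2$ acting on its own $\bP^1$), and point stabilisers correspond. Hence for $g \in P\cap S$ there is $b \in \SLT{\gamma}\cap P$ such that $b^{-1}g$ acts on $C_\gamma$ fixing $1.P$ and also agreeing with the identity at the second point. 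Indeed, the stabiliser of $1.P$ in $\PGL_2$ acts transitively on $C_\gamma\setminus\{1.P\}$, so we can choose $b$ with $b.(s_\gamma.P) = g.(s_\gamma.P)$. Then $b^{-1}g$ fixes both $1.P$ and $s_\gamma.P$, so $b^{-1}g \in P \cap P^{s_\gamma}$ and $g \in H$.

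The main care needed is the verification that the stabiliser in $\SLT{\gamma}$ of $1.P$ acts transitively on $C_\gamma \setminus\{1.P\}$. This holds because $C_\gamma = \SLT{\gamma}/(\SLT{\gamma}\cap P)$ is $\bP^1$, with $\SLT{\gamma}\cap P$ a Borel subgroup of $\SLT{\gamma}$ (as $s_\gamma\notin W_P$, exactly one of $U_{\pm\gamma}$ lies in $P$). A Borel subgroup of $\SL_2$ acts transitively on the complement of its fixed point in $\bP^1$ via its unipotent radical. This gives the inclusion $S\subset (\SLT{\gamma})(\SLT{\gamma}\cap P)(P\cap P^{s_\gamma}) \subset H$. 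The argument does not need $X$ to have Picard rank one or $\gamma$ to be long.
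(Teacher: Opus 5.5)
Your proof is correct and follows essentially the same route as the paper. The paper says only that, after left multiplication by an element of $\SLT{\gamma}$, one may assume $g$ fixes both $1.P$ and $s_\gamma.P$, hence $g \in P \cap P^{s_\gamma}$; you make explicit the two-transitivity of $\SLT{\gamma}$ on $C_\gamma \simeq \bP^1$ behind that step, via the Borel subgroup $\SLT{\gamma}\cap P$ acting transitively on $C_\gamma \setminus \{1.P\}$.
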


\begin{proof}
Let $g \in \Stab_G(C_\gamma)$. Then up to left multiplying by an element of $\SLT{\gamma}$, we may assume that $g$ fixes $1.P$ and $s_\gamma.P$ and thus belongs to $P \cap P^{s_\gamma}$.
\end{proof}

Set $\Rsf = P \cap \Stab_G(z_d^P.P)$. Let $\Qsf \subset G$ be the parabolic subgroup containing $T$ and the root subgroups $U_\alpha$ for $\alpha$ such that $\scal{\alpha^\vee,\varpi - \frac{d}{2}\gamma(d)} \geq 0$. Let $L_{\Qsf} \subset \Qsf$ be the Levi subgroup of $\Qsf$ containing $T$.

\begin{prop}
\label{prop:stab-sd}
Assume that $d \leq d(\Theta)$. 
\begin{enumerate}
\item Then $\Ssf$ is generated by $\Rsf$ and $\SLT{\gamma(d)}$, in particular $\Ssf$ is connected.
\item The group $\Ssf$ is contained in the parabolic subgroup $\Qsf$.
\item $\Ssf \cap L_{\Qsf}$ is a connected symmetric subgoup of $L_{\Qsf}$.
\end{enumerate}
\end{prop}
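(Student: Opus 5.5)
Write $\gamma = \gamma(d)$; by the results above it is a long root with $\scal{\gamma^\vee,\varpi} = d$. Throughout, $x = 1.P$ and $y = s_\gamma.P = z_d^P.P$ (Corollary~\ref{cor:z_d=gamma}), so $\Rsf = P\cap P^{s_\gamma}$ is the stabiliser of the pair $(x,y)$. It is the intersection of two parabolics containing $T$, hence connected, generated by $T$ and the $U_\alpha$ with $\scal{\alpha^\vee,\varpi}\ge 0$ and $\scal{\alpha^\vee,s_\gamma\varpi}\ge 0$.

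For (1), Lemma~\ref{lemm:stab-inclusion} already gives $\Ssf \subset \scal{\SLT{\gamma},\Rsf}$, so only the reverse inclusion is needed. Clearly $\SLT{\gamma}$ stabilises $C_\gamma = \SLT{\gamma}.P$. For $\Rsf$, an element $g\in\Rsf$ fixes $x$ and $y$, so $g.C_\gamma$ is a degree $d$ rational curve through $x,y$. By Theorem~\ref{thm:Gamma-small-d} every such curve lies in $\Gamma_d = C_\gamma$, hence $g.C_\gamma = C_\gamma$. Thus $\Ssf = \scal{\SLT{\gamma},\Rsf}$, and it is connected because it is generated by connected subgroups.

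For (2), set $\lambda = \varpi - \frac d2\gamma$. Note that $s_\gamma(\lambda) = \varpi - d\gamma + \frac d2\gamma = \lambda$, so $\scal{\gamma^\vee,\lambda}=0$. Hence $\pm\gamma$ both satisfy the defining inequality and $\SLT{\gamma}\subset L_{\Qsf}$. For a root subgroup $U_\alpha\subset\Rsf$ we have $\scal{\alpha^\vee,\varpi}\ge0$ and $\scal{\alpha^\vee,s_\gamma\varpi}\ge0$. Averaging gives $\scal{\alpha^\vee,\frac12(\varpi+s_\gamma\varpi)}\ge 0$, and $\frac12(\varpi+s_\gamma\varpi) = \varpi-\frac d2\gamma=\lambda$. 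So $U_\alpha\subset\Qsf$, and together with $T\subset \Qsf$ this gives $\Rsf\subset\Qsf$. Part (1) then yields $\Ssf\subset \Qsf$.

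For (3), I would show that $\Ssf\cap L_{\Qsf}$ is the fixed-point group of an involution of $L_{\Qsf}$. The candidate is conjugation by a lift $\dot s_\gamma$ of $s_\gamma$ lying in $\SLT{\gamma}$, possibly twisted by an element of $T$ to make it an involution, for instance by $\gamma^\vee(\pm i)$. This normalises $L_{\Qsf}$, since $s_\gamma$ fixes $\lambda$ and so permutes the roots of $L_{\Qsf}$, namely those $\alpha$ with $\scal{\alpha^\vee,\lambda}=0$.

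Next I would identify $\Ssf\cap L_{\Qsf}$. Its root subgroups are $U_{\pm\gamma}$ together with the $U_\alpha$ having $\scal{\alpha^\vee,\lambda}=0$ and $\scal{\alpha^\vee,\varpi}\ge0$, $\scal{\alpha^\vee,s_\gamma\varpi}\ge 0$. Given $\scal{\alpha^\vee,\lambda}=0$, these two quantities sum to zero, so they both vanish. Therefore $\scal{\alpha^\vee,\gamma}=0$, i.e.\ $\alpha\perp\gamma$ and $\alpha\perp\varpi$. Hence $\Ssf\cap L_{\Qsf}$ should equal $\SLT{\gamma}\cdot Z_{L_{\Qsf}}(\SLT\gamma)^\circ$-type data: generated by $T$, $\SLT\gamma$, and the root groups of roots orthogonal to both $\gamma$ and $\varpi$.

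The main obstacle is comparing this with the fixed points of the involution. The roots of $L_{\Qsf}$ not orthogonal to $\gamma$ come in pairs $\alpha, s_\gamma\alpha$, and these must contribute only "diagonal" (not root-subgroup) fixed directions, together with a torus part of the right dimension. I would first try to show that $L_{\Qsf}$ decomposes so that the relevant piece looks like $\GL_2$ or $\SO$-type containing $\SLT\gamma$. If that proves messy, I would instead take as involution conjugation by the central cocharacter $\gamma^\vee(-1)$ composed with the above, which acts by $(-1)^{\scal{\gamma^\vee,\alpha}}$, and then compare dimensions. Connectedness of $\Ssf\cap L_{\Qsf}$ would follow from it being generated by $T$ and root subgroups, as in the description above.
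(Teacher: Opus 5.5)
\noindent Parts (1) and (2) of your proposal are correct and match the paper's argument: Lemma~\ref{lemm:stab-inclusion}, plus uniqueness of the degree $d$ curve through $1.P$ and $s_\gamma.P$ from Theorem~\ref{thm:Gamma-small-d}, then averaging the two inequalities cutting out $\Rsf$. Part (3), however, has a genuine gap, because your main candidate involution cannot work. Any lift $n$ of $s_\gamma$, twisted by an element of $T$ or not, acts on $T$ through $s_\gamma$. It therefore inverts $\gamma^\vee(\C^*)\subset T\subset \Ssf\cap L_{\Qsf}$ and sends $U_\gamma$ to $U_{-\gamma}$, so $\Ssf\cap L_{\Qsf}$ is never contained in its fixed points. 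In general it is not even an involution of $L_{\Qsf}$: for $n\in\SLT{\gamma}$ one has $n^2=\gamma^\vee(-1)$, which is not central in $L_{\Qsf}$, and twisting gives $(nt)^2=\gamma^\vee(-1)\,s_\gamma(t)\,t$. The ``diagonal fixed directions in $\fg_\alpha\oplus\fg_{s_\gamma\alpha}$'' you expect are a symptom of this. $\Ssf\cap L_{\Qsf}$ contains $T$, so its Lie algebra is $\mathrm{Lie}(T)$ plus root spaces, with no diagonal directions. Your fallback is also muddled. Composing with the lift reintroduces the same problem, and $\gamma^\vee(-1)$ is not central; if it were, the involution would be trivial.

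\noindent The missing idea is to drop $s_\gamma$ entirely and let $\sigma$ be conjugation by $x=\gamma^\vee(-1)$, as the paper does. Since $x\in T$ and $x^2=1$, $\sigma$ is an involution preserving $L_{\Qsf}$. It fixes $T$ pointwise and acts on $U_\alpha$ by the sign $(-1)^{\scal{\gamma^\vee,\alpha}}$. Because $\gamma$ is long, $\scal{\gamma^\vee,\alpha}\in\{-1,0,1\}$ for $\alpha\neq\pm\gamma$. Hence $(L_{\Qsf}^\sigma)^\circ$ is generated by $T$, $U_{\pm\gamma}$, and the $U_\alpha$ in $L_{\Qsf}$ with $\alpha\perp\gamma$. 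For a root of $L_{\Qsf}$, i.e.\ one with $2\scal{\alpha^\vee,\varpi}=d\scal{\alpha^\vee,\gamma}$, orthogonality to $\gamma$ forces $\scal{\alpha^\vee,\varpi}=0$. This is exactly the root set you correctly computed for $\Ssf\cap L_{\Qsf}$; your remark that the two nonnegative quantities sum to $2\scal{\alpha^\vee,\lambda}=0$ is the right argument. That computation implicitly uses that the roots of $\Ssf$ are those of $\Rsf$ together with $\pm\gamma$. This follows from $\Ssf=\SLT{\gamma}\cdot\Rsf$, as in the proof of Lemma~\ref{lemm:stab-inclusion}, together with a dimension count. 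Finally, your connectedness argument is circular: a root-by-root description only determines the identity component. Instead, use $T\subset\Ssf$ to get $\Ssf=(\Ssf\cap L_{\Qsf})\ltimes(\Ssf\cap U_{\Qsf})$. To see this, conjugate by a cocharacter of $Z(L_{\Qsf})$ that contracts $U_{\Qsf}$, and use that $\Ssf$ is closed. Then $\Ssf\cap L_{\Qsf}$ is the image of the connected group $\Ssf$ under $\Qsf\to L_{\Qsf}$, hence connected, and so equal to $(L_{\Qsf}^\sigma)^\circ$.
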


\begin{proof}
For simplicity, set $\gamma = \gamma(d)$. Note that $\Rsf = P \cap P^{s_\gamma}$ by Corollary \ref{cor:z_d=gamma}.

(1) Recall that $C_{\gamma} = \SLT{\gamma}.P$. In particular $\SLT{\gamma} \subset \Ssf$. Theorem \ref{thm:Gamma-small-d} implies that $C_{\gamma}$ is the unique degree $d$ rational curve passing through $1.P$ and $z_d^P.P$. Thus it is stable under the stabiliser $\Rsf$ of this pair of points and $\Rsf \subset \Ssf$. The converse inclusion follows from Lemma \ref{lemm:stab-inclusion}. Since the intersection of two parabolic subgroups is connected (see \cite[Proposition 2.1]{digne.michel:representations}), the group $\Rsf$ is connected. The group $\Ssf$ is connected since it is generated by two connected groups.
%the group generated by $\SLT{\gamma(d)}$ and $\Rsf$ is connected.

%Conversely, let $U_\alpha \subset S_d$ and set $I = \{ \beta \in R^+ \ | \scal{\beta^\vee,s_{\gamma}(\varpi)} < 0\}$. We will need the facts that the maps $U_P^- \to B^-.P, u\mapsto u.P$ and $U_I \to Bs_{\gamma}.P, u \mapsto us_{\gamma}.P$ are isomorphisms. Note that $C_{\gamma} = s_{\gamma}.P \coprod U_{-\gamma}.P = U_{\gamma}s_{\gamma}. P \coprod 1.P$ and that we have $C_{\gamma} \cap B^-.P = U_{-\gamma}.P$ and $C_{\gamma} \cap Bs_\gamma.P = U_{\gamma}s_\gamma.P$. 

%First assume that $\alpha > 0$ with $\alpha \neq \gamma$. If $\alpha \in I$, then $U_\alpha U_\gamma \subset U_I$ and the map $U_\alpha U_\gamma \to Bs_\gamma .P$ is injective. This surface should however be contained in the curve $C_\gamma$, a contradiction. This implies that $U_\alpha \subset P^{s_\gamma}$ and thus $U_\gamma \subset R$.

%Assume now that $\alpha < 0$ with $\alpha \neq \gamma$. If $U_\alpha \subset U_P^-$, then $U_\alpha U_{-\gamma} \subset U_P^-$ and the map $U_\alpha U_{-\gamma} \to B^-.P$ is injective. This surface should however be contained in the curve $C_\gamma$, a contradiction. This implies that $U_\alpha \subset P$ and thus $U_\alpha \subset R$.

(2) We only need to check that both $\SLT{\gamma}$ and $\Rsf$ are contained in $\Qsf$. We have $\scal{\gamma^\vee,\varpi - \frac{d}{2}\gamma} = 0$ proving the inclusion $\SLT{\gamma} \subset \Qsf$. Let $\alpha$ be a root such that $U_\alpha \subset \Rsf$. The condition  $U_\alpha \subset P$ implies $\scal{\alpha^\vee,\varpi} \geq 0$, while the condition  $U_\alpha \subset \Stab(z_d^P.P) = \Stab(s_{\gamma}.P)= P^{s_{\gamma}}$ implies $\scal{\alpha^\vee,s_{\gamma}(\varpi)} \geq 0$ thus $\scal{\alpha^\vee,\varpi - d\gamma} \geq 0$. Taking the mean of these two inequalities, we get $\scal{\alpha^\vee,\varpi - \frac{d}{2}\gamma} \geq 0$ proving the claim.

(3) Let $x = \gamma^\vee(-1) \in G$ and define $\sigma : G \to G$ by $\sigma(g) = xgx^{-1}$. Since $x^2 = 1$, the interior automorphism $\sigma$ is an involution. Furthermore, since $x \in T \subset \Qsf$, the involution $\sigma$ preserves $\Qsf$ and $L_{\Qsf}$. For any root $\alpha$, let $u_\alpha : \bG_a \to U_\alpha$ be the group isomorphism identifying $U_\alpha$ to the additive one-dimensional group. Then $\sigma(u_\alpha(z)) = u_\alpha((-1)^{\scal{\gamma^\vee,\alpha}}z)$. The root space $U_\alpha$ is therefore contained in $G^\sigma$ if and only if $\scal{\gamma^\vee,\alpha}$ is even. Since $\gamma$ is a long root, this occurs if and only if $\alpha = \pm \gamma$ or $\scal{\gamma^\vee,\alpha} = 0$. If we assume that $U_\alpha \subset L_{\Qsf}$, which is equivalent to $2\scal{\alpha^\vee,\varpi} = d \scal{\alpha^\vee,\gamma}$, we have that $U_\alpha \subset G^\sigma$ if and only if $\alpha = \pm \gamma$ or $\scal{\alpha^\vee,\varpi} = 0 = \scal{\alpha^\vee,\gamma}$. This last condition is equivalent to $2\scal{\alpha^\vee,\varpi} = d \scal{\alpha^\vee,\gamma}$, $\scal{\alpha^\vee,\varpi} \geq 0$ and $\scal{\alpha^\vee,s_\gamma(\varpi)} \geq 0$ proving the claim. Let $U_{\Qsf}$ be the unipotent radical of $\Qsf$. Note that since $T \subset \Ssf$, we have $ \Ssf = ( \Ssf \cap L_{\Qsf}) \times ( \Ssf \cap U_{\Qsf})$ and $ \Ssf \cap U_{\Qsf}$ is a product of root subgroup therefore connected. Since $ \Ssf$ is connected, it follows that $ \Ssf \cap L_{\Qsf}$ is also connected.
\end{proof}

We now specialise to the cases $d = 1$ or $d = 2$. Recall that $U_{\Qsf}$ is the unipotent radical of $\Qsf$ and $L_{\Qsf}$ its Levi subgroup containing $T$. Recall also the definition of $\delta$.

\subsection{Lines} We first consider the case of lines \emph{i.e.} $d = 1 \leq d(\Theta)$.

\begin{prop}
Assume that $d = 1 \leq d(\Theta)$. 
\begin{enumerate}
\item If $\delta$ is long, then $\Ssf = \Qsf$.
\item If $G$ is not simply laced and $\delta$ is short, then $\Ssf \subsetneq \Qsf$.
\end{enumerate}
\end{prop}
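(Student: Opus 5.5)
Both parts come down to comparing root subgroups. By Proposition \ref{prop:stab-sd}(1), $\Ssf$ is generated by $\Rsf = P \cap P^{s_\gamma}$ and $\SLT{\gamma}$, where $\gamma = \gamma(1)$. By Proposition \ref{prop:stab-sd}(2), $\Ssf \subset \Qsf$. Both groups contain $T$ and are connected, so $\Ssf = \Qsf$ holds exactly when every root subgroup $U_\alpha \subset \Qsf$ lies in $\Ssf$. The plan is to determine $\gamma$ for $d=1$ and then compare the root sets.

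\emph{Identifying $\gamma$.} By the proposition above, $\gamma(1)$ is a long root with $\scal{\gamma^\vee,\varpi} = 1$.
\begin{itemize}
\item If $\delta$ is long, then $\delta$ itself is such a root. Moreover, any long $\gamma$ with $\scal{\gamma^\vee,\varpi}=1$ has $n_\delta(\gamma) = 1$ by Lemma \ref{lemm:n}. I expect $\gamma$ to be the maximal such root; the exact choice will not matter for the argument below.
\item If $\delta$ is short, then Lemma \ref{lemm:n} gives $n_\delta(\gamma) = (\gamma,\gamma)/(\delta,\delta) \geq 2$.
\end{itemize}

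\emph{Part (1), $\delta$ long.} The roots of $\Qsf$ are those $\alpha$ with $\scal{\alpha^\vee,\varpi - \tfrac12\gamma} \geq 0$. Since $\delta$ and $\gamma$ are long, Lemma \ref{lemm:n} gives $\scal{\alpha^\vee,\varpi} = \tfrac{(\delta,\delta)}{(\alpha,\alpha)} n_\delta(\alpha)$, and this value has the same sign as $n_\delta(\alpha)$.

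I will show that every root $\alpha \neq \pm\gamma$ of $\Qsf$ satisfies $\scal{\alpha^\vee,\varpi} \ge 0$ and $\scal{\alpha^\vee,\varpi-\gamma}\ge 0$. These are exactly the conditions $U_\alpha \subset P$ and $U_\alpha \subset P^{s_\gamma}$, so $U_\alpha \subset \Rsf$. The key tool is that $\varpi$ is minuscule-like along long roots: for long $\gamma$ and $\alpha \ne \pm\gamma$ we have $\scal{\alpha^\vee,\gamma} \in \{-1,0,1\}$ up to the length ratio. Set $a = \scal{\alpha^\vee,\varpi}$ and $b = \scal{\alpha^\vee,\gamma}$, so the $\Qsf$-condition reads $a \ge b/2$.
\begin{itemize}
\item Since $\gamma$ is long and $\alpha \neq \pm\gamma$, we have $|b| \le 1$.
\item Because $\delta$ is long, $a$ is an integer. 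When $\alpha$ is short, $a$ is a multiple of the length ratio.
\end{itemize}
A short case check on the integers $a$ and $b$ then shows that $a \ge b/2$ forces $a \ge 0$ and $a \ge b$, which gives the two required inequalities. The case $\alpha = \pm\gamma$ is covered by $\SLT{\gamma} \subset \Ssf$.

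\emph{Part (2), $\delta$ short.} It suffices to exhibit a single root subgroup lying in $\Qsf$ but not in $\Ssf$. I would take $U_{-\delta}$. We have $\scal{\delta^\vee,\varpi - \tfrac12\gamma} = 1 - \tfrac12\scal{\delta^\vee,\gamma}$. Since $\delta$ is short and $\gamma$ is long, $\scal{\delta^\vee,\gamma}$ can equal $\pm 2$ or $\pm 3$, so I instead look for a root $\alpha$ with $\scal{\alpha^\vee, \varpi - \tfrac12\gamma} = 0$ that is not $\pm\gamma$, not orthogonal to $\gamma$, and has $\scal{\alpha^\vee,\varpi} \ne 0$. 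Such an $U_\alpha$ lies in $L_{\Qsf}$ but, by the description of $\Ssf \cap L_{\Qsf}$ in the proof of Proposition \ref{prop:stab-sd}(3), it is not in $\Ssf$.

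For $\delta$ short, the natural candidate is $\alpha = \delta$ itself, or a short root in the same $\SLT{\gamma}$-string. The equation needed is $2\scal{\alpha^\vee,\varpi} = \scal{\alpha^\vee,\gamma}$, which by Lemma \ref{lemm:n} says $\scal{\alpha^\vee,\gamma} = 2n_\delta(\alpha)$ when $\alpha$ is short. I would check this case by case in types $B_n$, $C_n$, $F_4$ and $G_2$ using the explicit $\gamma(1)$. Example: in $C_n$ with $\delta = \alpha_i$ ($i<n$), take $\gamma = \varepsilon_i+\varepsilon_{i+1}$ and $\alpha = \varepsilon_i - \varepsilon_{i+1}$ or a similar root.

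\emph{Main obstacle.} I expect the hardest step to be the clean uniform verification of the integer inequalities in part (1) when $\alpha$ is short. The fallback is a type-by-type check, which is only needed for $B$, $C$, $F_4$ and $G_2$.
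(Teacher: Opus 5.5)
\textbf{Part (1).} This is the paper's argument, except for one slip. For $\alpha$ short and $\gamma$ long, it is $\scal{\gamma^\vee,\alpha}$ that lies in $\{-1,0,1\}$, not $b=\scal{\alpha^\vee,\gamma}$. In fact $b\in\{-r,0,r\}$ with $r=(\gamma,\gamma)/(\alpha,\alpha)\in\{2,3\}$, so your bullet $|b|\le 1$ is false for short $\alpha$. The case check still closes exactly as in the paper. Lemma~\ref{lemm:n} gives $a=r\,n_\delta(\alpha)$, so $a$ and $a-b$ both lie in $r\Z$. Both are $\geq -r/2$, hence both are $\geq 0$.

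\textbf{Part (2) has a genuine gap.} Your reduction is correct: it suffices to find a root $\alpha\neq\pm\gamma$ with $2\scal{\alpha^\vee,\varpi}=\scal{\alpha^\vee,\gamma}\neq 0$. But you never exhibit one, and your candidates fail.
\begin{itemize}
\item Your worked example uses the wrong $\gamma$. In $C_n$ the root $\varepsilon_i+\varepsilon_{i+1}$ is short, and it has $n_\delta=1$, contradicting your own computation $n_\delta(\gamma)=2$. In fact $\gamma(1)=\Theta=2\varepsilon_1$.
\item With this $\gamma$ and $\delta=\alpha_i$, $i\ge 2$, one has $\delta\perp\gamma$. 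So $\delta$ is not a root of $L_{\Qsf}$, and its $\gamma$-string is just $\{\delta\}$.
\item The same orthogonality $\delta\perp\gamma$ occurs for $B_n/P_n$ ($n\ge 3$), $F_4/P_3$, $F_4/P_4$ and $G_2/P_1$.
\end{itemize}

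\textbf{The missing idea (non-$G_2$ cases).} The paper takes $\alpha$ to be the sum of all simple roots. This root is short, with $\scal{\alpha^\vee,\varpi}=1$ and $\scal{\alpha^\vee,\gamma}=2$. For example, $\alpha=\varepsilon_1+\varepsilon_n$ against $\gamma=2\varepsilon_1$ in $C_n$, and $\alpha=\varepsilon_1$ against $\gamma=\varepsilon_1+\varepsilon_2$ in $B_n$.

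\textbf{$G_2/P_1$ cannot be handled by your approach.} Your strategy looks for a witness inside $L_{\Qsf}$, and for $G_2/P_1$ none exists. There $\gamma=\Theta$ and $\varpi-\frac12\gamma=\frac12\alpha_1$, so the only roots of $L_{\Qsf}$ are $\pm\gamma$, and $L_{\Qsf}\subset\Ssf$. You must look in $U_{\Qsf}$ instead, for example at $-\alpha_1-\alpha_2$:
\begin{itemize}
\item Its coroot pairs to $\frac12>0$ with $\varpi-\frac12\gamma$, so $U_{-\alpha_1-\alpha_2}\subset\Qsf$.
\item $\scal{(-\alpha_1-\alpha_2)^\vee,\varpi}=-1<0$, so it is not in $P$, hence not in $\Rsf$.
\item It is not $\pm\gamma$, so it is not a root of $\Ssf$.
\end{itemize}
The root $2\alpha_1+\alpha_2$ works similarly, since it fails the $P^{s_\gamma}$ condition.
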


\begin{remark}
 If $\delta$ is short, we will prove that we have the alternative:
\begin{enumerate}
\item $X \neq G_2/P_1$ in which case $U_{\Qsf} \subset \Ssf$ and $L_{\Qsf} \not\subset  S$,
\item $X = G_2/P_1$, in which case $L_{\Qsf} \subset S$ and $U_{\Qsf} \not\subset S$.
\end{enumerate}

\end{remark}

\begin{proof}
For simplicity, set $\gamma = \gamma(d)$. 

(1) Let $\alpha$ such that $U_\alpha \subset \Qsf$. Then $\scal{\alpha^\vee,\varpi} - \frac{1}{2} \scal{\alpha^\vee,\gamma} \geq 0$. 
If $\alpha$ is long, then $\scal{\alpha^\vee,\gamma} \in \{-1,0,1\}$. We get $\scal{\alpha^\vee,\varpi} = \scal{\alpha^\vee,\varpi} - \frac{1}{2}\scal{\alpha^\vee,\gamma} + \frac{1}{2}\scal{\alpha^\vee,\gamma} \geq -\frac{1}{2}$ and $\scal{\alpha^\vee,\varpi} - \scal{\alpha^\vee,\gamma} = \scal{\alpha^\vee,\varpi} - \frac{1}{2}\scal{\alpha^\vee,\gamma} - \frac{1}{2}\scal{\alpha^\vee,\gamma} \geq - \frac{1}{2}$. Since $\scal{\alpha^\vee,\varpi}$ and $\scal{\alpha^\vee,\varpi} - \scal{\alpha^\vee,\gamma}$ are integers, both are non-negative and $U_\alpha \subset \Ssf$. 

If $\alpha$ is short, set $a = (\delta,\delta)/(\alpha,\alpha) \in \{2,3\}$. Then $\scal{\alpha^\vee,\gamma} \in \{-a,0,a\}$ and the same argument shows that $\scal{\alpha^\vee,\varpi} \geq -\frac{a}{2}$ and $\scal{\alpha^\vee,\varpi} - \scal{\alpha^\vee,\gamma} \geq -\frac{a}{2}$. The value of $\scal{\alpha^\vee,\varpi}$ is the coefficient of $\delta^\vee$ in the expansion of $\alpha^\vee$ in terms of simple coroots. Since $\alpha^\vee$ is long and $\delta^\vee$ short this coefficient is a multiple of $a$. This implies that $\scal{\alpha^\vee,\varpi}$ and $\scal{\alpha^\vee,\varpi} - \scal{\alpha^\vee,\gamma}$ are non-negative and $U_\alpha \subset \Ssf$.

(2)  Let $\alpha$ such that $U_\alpha \subset U_{\Qsf}$. If $G$ is not of type $G_2$, then $|\scal{\alpha^\vee,\gamma}| \leq 2$ and we get $\scal{\alpha^\vee,\varpi} = \scal{\alpha^\vee,\varpi} - \frac{1}{2}\scal{\alpha^\vee,\gamma} + \frac{1}{2}\scal{\alpha^\vee,\gamma} \geq 0$ and $\scal{\alpha^\vee,\varpi} - \scal{\alpha^\vee,\gamma} = \scal{\alpha^\vee,\varpi} - \frac{1}{2}\scal{\alpha^\vee,\gamma} - \frac{1}{2}\scal{\alpha^\vee,\gamma} \geq 0$, thus $U_\alpha \subset \Ssf$. Furthermore, if $\alpha$ is the sum of all simple roots (in which case $\alpha$ is short), then $\scal{\alpha^\vee,\gamma} = 2$ and $\scal{\alpha^\vee,\varpi} = 1$, thus $\alpha$ is a root of $L_{\Qsf}$ but not a root of $S$. 
If $G = G_2$ and $\varpi = \varpi_1$ (with notations as in \cite{bourbaki:elements*78}), we have $\gamma = \Theta$ and an easy inspection shows that $L_\Qsf \subset \Ssf$ and that the roots $\alpha$ with ($U_\alpha \subset U_{\Qsf}$ and $U_\alpha \not\subset \Ssf$) are exactly  $2\alpha_1 + \alpha_2$ and  $-\alpha_1 - \alpha_2$.
\end{proof}

\begin{thm}
\label{thm:small-d=1}
$\Hilb_1(X)$ is $G$-spherical and homogeneous iff $\delta$ is long.
\end{thm}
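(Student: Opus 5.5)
The plan is to study the open $G$-orbit of $\Hilb_1(X)$ through the stabiliser $\Ssf$ of the line $C_{\gamma(1)}$, which the previous section computes. First observe that $d = 1 \le d(\Theta)$ always holds, so Theorem~\ref{thm:Gamma-small-d}, Proposition~\ref{prop:stab-sd} and the last proposition apply with $\gamma = \gamma(1)$.

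\textbf{Geometric input.} I would check that $\Hilb_1(X)$ is smooth and irreducible. For a line $L \subset X = G/P$ the tangent bundle $T_X|_L$ is globally generated, hence so is $N_{L/X}$. Therefore $H^1(L,N_{L/X}) = 0$, and the Hilbert scheme is smooth of dimension $h^0(N_{L/X})$ at every point.

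For irreducibility I would use Theorem~\ref{thm:landsberg-manivel}: there are at most two $G$-orbits. The orbit $G.C_\gamma$ is dense, since by Theorem~\ref{thm:Gamma-small-d} lines through two general collinear points are $G$-conjugate to $C_\gamma$. Any second orbit is then contained in its closure, or one argues by dimension. In particular $\Hilb_1(X)$ is normal, and sphericality reduces to sphericality of the homogeneous space $G/\Ssf$.

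\textbf{Long case.} If $\delta$ is long, the last proposition gives $\Ssf = \Qsf$, a parabolic subgroup. Then $G/\Ssf$ is projective, so the dense orbit $G.C_\gamma$ is closed, hence equal to $\Hilb_1(X)$. Thus $\Hilb_1(X) \simeq G/\Qsf$ is homogeneous, and flag varieties are spherical by the Bruhat decomposition.

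Conversely, if $\delta$ is short then $\Ssf \subsetneq \Qsf$. Since $\Ssf \subset \Qsf$ and every parabolic subgroup containing $\Ssf$ must contain a Borel subgroup, $\Ssf$ is not parabolic. Hence $G/\Ssf$ is not complete and $\Hilb_1(X)$ is not homogeneous. This proves the equivalence for homogeneity.

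\textbf{Short case, sphericality.} Consider the fibration $G/\Ssf \to G/\Qsf$ with fiber $\Qsf/\Ssf$. Sphericality of $G/\Ssf$ is equivalent to $\Qsf/\Ssf$ being a spherical $L_{\Qsf}$-variety, because $B^-$-orbits on $G/\Qsf$ have an open cell $U^-_{\Qsf}$ and parabolic induction reduces to the Levi.

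If $X \neq G_2/P_1$, the remark says $U_{\Qsf} \subset \Ssf$. Then $\Qsf/\Ssf = L_{\Qsf}/(\Ssf \cap L_{\Qsf})$, and by Proposition~\ref{prop:stab-sd}(3) this is a symmetric space. It is therefore spherical by Vust's theorem.

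The main obstacle is $X = G_2/P_1$, the $5$-dimensional quadric, where instead $L_{\Qsf} \subset \Ssf$ but $U_{\Qsf} \not\subset \Ssf$. Here $\Qsf/\Ssf \simeq U_{\Qsf}/(U_{\Qsf} \cap \Ssf)$ is a $2$-dimensional affine space, on which $L_{\Qsf}$ acts through the weights $2\alpha_1 + \alpha_2$ and $-\alpha_1 - \alpha_2$ found in the proof above. These two weights are linearly independent, so the torus $T$ already has an open orbit on this $2$-dimensional fiber. I would then verify that a Borel subgroup of $L_{\Qsf}$ has an open orbit on the fiber. A fallback is the direct dimension count: $\dim \Hilb_1 = 7 < 8 = \dim B$, combined with exhibiting an explicit point with finite (or $7$-dimensional-orbit) $B$-stabiliser.
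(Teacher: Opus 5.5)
Your route is the paper's. For $\delta$ long, $\Ssf=\Qsf$ is parabolic, so the dense orbit is closed. For $\delta$ short, you use the fibration $G/\Ssf\to G/\Qsf$, whose fibre is the symmetric space $L_{\Qsf}/(\Ssf\cap L_{\Qsf})$, or, for $X=G_2/P_1$, a $2$-dimensional space on which $T$ acts with the independent weights $2\alpha_1+\alpha_2$ and $-\alpha_1-\alpha_2$. The sphericality half is fine. In the $G_2/P_1$ case nothing remains to verify and no dimension-count fallback is needed. You can choose a Borel subgroup $B'$ with $B'\Qsf$ open and $B'\cap\Qsf$ a Borel subgroup of $L_{\Qsf}$ containing $T$. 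Since $T$ already has an open orbit on the fibre, $B'$ has one on $G/\Ssf$. Your smoothness and irreducibility paragraph is extra care the paper omits.

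The gap is in the non-homogeneity step. From $\Ssf\subsetneq\Qsf$ you cannot conclude that $\Ssf$ is not parabolic: a Borel subgroup is strictly contained in parabolic subgroups. The remark that every parabolic subgroup containing $\Ssf$ contains a Borel subgroup says nothing about $\Ssf$ itself. What is needed is the structure of $\Ssf$, which you only establish in your next paragraph.

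The paper argues that the fibre $\Qsf/\Ssf$ is affine and positive-dimensional. In the cases $X\neq G_2/P_1$ it is $L_{\Qsf}/(\Ssf\cap L_{\Qsf})$, where $\Ssf\cap L_{\Qsf}$ is a proper symmetric, hence reductive, subgroup, so the quotient is affine. For $G_2/P_1$ it is $\bG_a^2$. If $G/\Ssf$ were complete, this closed fibre would be complete, hence a point; so the open orbit cannot be all of the projective variety $\Hilb_1(X)$.

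Alternatively, in your own language: a parabolic subgroup of $G$ contained in $\Qsf$ contains a Borel subgroup of $\Qsf$, hence contains $U_{\Qsf}$, and its image in $L_{\Qsf}$ is parabolic. This fails for $G_2/P_1$ because $U_{\Qsf}\not\subset\Ssf$. It fails in the other short cases because $\Ssf\cap L_{\Qsf}$ is a proper reductive subgroup of $L_{\Qsf}$. Placing the homogeneity conclusion after your fibre analysis repairs the proof.
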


\begin{proof}
If $\delta$ is long, then $\Ssf = \Qsf$ is a parabolic subgroup thus the dense orbit $G/\Ssf$ is compact and $\Hilb_1(X) = G/\Ssf$ is homogeneous. Thus we may assume that $\delta$ is short. If $X \neq G_2/P_1$, then $\Hilb_1(X)$ has an open subset of the form $G/\Ssf$ with $\Ssf \subset \Qsf$ and $U_{\Qsf} \subset \Ssf$. We thus have a map $G/\Ssf \to G/\Qsf$ with fiber $\Qsf/\Ssf \simeq L_{\Qsf}/(\Ssf \cap L_{\Qsf})$, the last isomorphism comes from the inclusion $U_{\Qsf} \subset \Ssf$. The quotient $L_{\Qsf}/(\Ssf \cap L_{\Qsf})$ is a symmetric space for $L_{\Qsf}$ and therefore spherical and affine. The quotient $G/\Ssf$ is therefore spherical by parabolic induction and not projective proving that $\Hilb_1(X)$ is not $G$-homogeneous. If $X = G_2/P_1$, we have a map $G/\Ssf \to G/\Qsf$ from the open orbit to a projective homogeneous space. The fiber is isomorphic to the product $U_{2\alpha_1 + \alpha_2} \times U_{-\alpha_1 - \alpha_2}$ (which is again affine). Since the two weights are linearly independent, the maximal torus has a dense orbit in this product, proving the result.
\end{proof}

For $d = 1$ and $\delta$ short, we describe the maps $G/\Ssf \to G/\Qsf$ and their fibers $\Qsf/\Ssf$.

\begin{example}
Let $X = \IG(p,2n)$ and $d = 1$. Then %In this case 
$\gamma = \Theta$ and $G/\Qsf = \IG(p-1,2n)$. We get a morphism $\Hilb_1^\circ(X) \to \IG(p-1,2n)$ defined by $f \mapsto \ker(f) := \bigcap_{x\in \bP^1} f(x)$ whose fiber over $V_{p-1}$ is the open $\Sp(V_{p-1}^\perp/V_{p-1})$-orbit in $\Gr(2,V^\perp_{p-1}/V_{p-1})$. This fiber is isomorphic to the symmetric space $\Sp_{2(n-p+1)}/(\Sp_2 \times \Sp_{2(n-p)})$. The complement is a hyperplane section which is isomorphic to $\IG(2,V^\perp_{p-1}/V_{p-1})$.
\end{example}

\begin{example}
Let $X = \OG(n,2n+1)$ and $d = 1$. Then %In this case 
$\gamma = \Theta$ and $G/\Qsf = \OG(n-2,2n+1)$. We get a morphism $\Hilb_1^\circ(X) \to \OG(n-2,2n+1)$ defined by $f \mapsto \ker(f) := \bigcap_{x\in \bP^1} f(x)$ whose fiber over $V_{n-2}$ is the open $\SO(V_{n-2}^\perp/V_{n-2})$-orbit of the Hilbert scheme of lines in $\OG(2,V^\perp_{n-2}/V_{n-2}) \simeq \bP^3$. This fiber is  isomorphic to the open $\SO_5$-orbit in $\bP^4$ thus isomorphic to the symmetric space $\SO_5/{\textrm{O}}_4$. The complement is a smooth quadric of dimension $3$.
%a $2$-to-$1$ cover of the open $\SO(V_{n-2}^\perp/V_{n-2})$-orbit in $\Gr(4,V^\perp_{n-2}/V_{n-2}) \simeq \bP^4$. 
%This fiber is isomorphic to the symmetric space $\SO_5/\SO_4$ and is an open subset of a smooth $4$-dimensional quadric.
\end{example}

\begin{example}
Let $X = F_4/P_4$ and $d = 1$. Then %In this case 
$\gamma = \Theta$ and we get a morphism $\Hilb_1^\circ(X) \to F_4/P_1$ whose fiber is the symmetric space $\Sp_6/(\Sp_4 \times \Sp_2)$ which is an open subset in $\Gr(2,6)$. The complement is a hyperplane section which is isomorphic to $\IG(2,6)$.
\end{example}

\begin{example}
Let $X = F_4/P_3$ and $d = 1$. Then %In this case 
$\gamma = \alpha_1 + \alpha_2 + 2\alpha_3 + 2\alpha_4$ with notation as in \cite{bourbaki:elements*78}. We get a morphism $\Hilb_1^\circ(X) \to F_4/(P_1 \cap P_4)$ whose fiber is the symmetric space $\Sp_4/(\Sp_2 \times \Sp_2)$ which is an open subset in $\Gr(2,4)$. The complement is a hyperplane section which is isomorphic to $\IG(2,4)$.
\end{example}

\begin{example}
Let $X = G_2/P_1$ and $d = 1$. Then %In this case 
$\gamma = \Theta$. We get a morphism $\Hilb_1^\circ(X) \to G_2/P_1$ whose fiber is $\bG_a^2$.
\end{example}

\subsection{Conics} We now turn to the case $d = 2 \leq d(\Theta)$.

\begin{prop}
If $d = 2 \leq d(\Theta)$, then $U_{\Qsf} \subset \Ssf$ except if $X = F_4/P_3$.
\end{prop}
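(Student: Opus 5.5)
We work with roots. Set $\gamma = \gamma(2)$, which is long by the earlier Proposition. By Proposition \ref{prop:stab-sd}, $\Ssf$ is generated by $T$, $\SLT{\gamma}$ and the root subgroups $U_\alpha$ with $\scal{\alpha^\vee,\varpi}\ge 0$ and $\scal{\alpha^\vee,s_\gamma(\varpi)}\ge 0$. Since $\scal{\gamma^\vee,\varpi}=2$, we have $s_\gamma(\varpi)=\varpi-2\gamma$. The roots of $U_\Qsf$ are the $\alpha$ with $\scal{\alpha^\vee,\varpi-\gamma}>0$. We must show that every such $\alpha$ satisfies
$$a:=\scal{\alpha^\vee,\varpi}\ge 0 \quad\text{and}\quad a-2b\ge 0,\qquad b:=\scal{\alpha^\vee,\gamma},$$
given that $a-b>0$. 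Writing $a=(a-b)+b$ and $a-2b=(a-b)-b$, it suffices to have $|b|\le a-b$. Here $a-b$ is a positive integer.

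\textbf{Case $\alpha$ long.} Then $b\in\{-1,0,1\}$ if $\alpha\neq\pm\gamma$. The roots $\pm\gamma$ lie in $L_\Qsf$, since $\scal{\gamma^\vee,\varpi-\gamma}=0$. So $|b|\le 1\le a-b$ and we are done.

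\textbf{Case $\alpha$ short.} Let $k=(\gamma,\gamma)/(\alpha,\alpha)\in\{2,3\}$. Then $b\in\{-k,0,k\}$. Since $\alpha^\vee$ is long in the dual root system, $b$ is a multiple of $k$, and $a-b\ge 1$ is an integer. The only problematic situations are $b=\pm k$ with $a-b<k$.
\begin{itemize}
\item If $b=-k$, then $a\ge 0$ is automatic, because $a-b\ge 1>0$ gives $a> -k$; but we need $a\ge 0$. So the failures are $a\in[-k+1,-1]$ with $b=-k$.
\item If $b=k$, we need $a\ge 2k$, while $a-b>0$ only gives $a\ge k+1$. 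So the failures are $a\in[k+1,2k-1]$ with $b=k$.
\end{itemize}
This is only possible when $\delta$ is short relative to $\alpha$'s dual picture. If $\delta$ is long, Lemma \ref{lemm:n} shows that $a$ is a multiple of $k$ when $\alpha$ is short, since $a=(\delta,\delta)n_\delta(\alpha)/(\alpha,\alpha)=k\,n_\delta(\alpha)$. Then $a\in[-k+1,-1]$ and $a\in[k+1,2k-1]$ are impossible, and the claim follows.

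If $\delta$ is short, then $a=n_\delta(\alpha)$ for $\alpha$ short, and $a$ is a multiple of $k$ for $\alpha$ long. A short root with $b=k$ and $a=n_\delta(\alpha)\in[k+1,2k-1]$ requires $n_\delta(\alpha)\ge 3$. Similarly $b=-k$ with $a<0$ would require a short negative root. The hypothesis $d(\Theta)\ge 2$ together with $\delta$ short leaves the following cases:
\begin{itemize}
\item $C_n$: $d(\Theta)=1$ from Table \ref{table-theta}, so this case is excluded.
\item $B_n$ with $\delta=\alpha_n$: here $d(\Theta)=1$, so it is excluded.
\item $F_4/P_3$ and $F_4/P_4$: $d(\Theta)$ is $2$ and $1$ respectively, so only $F_4/P_3$ remains.
\item $G_2/P_1$: here $d(\Theta)=1$, so it is excluded.
\end{itemize}
Hence the only remaining case is $X=F_4/P_3$, which is exactly the exception in the statement.

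The main obstacle is pinning down $\gamma$ and verifying the short-root bound carefully for the negative-$b$ case. Ruling out $a\in[-k+1,-1]$ when $\delta$ is long is again handled by divisibility of $a$ by $k$. Optionally, one can exhibit a root in $F_4$ for $\delta=\alpha_3$ that violates the inclusion, to confirm the exception is genuine, but the statement does not require this.
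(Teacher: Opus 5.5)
Your proof is correct. For short roots it takes a different and more uniform route than the paper.

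Your treatment of long $\alpha$ is the same as the paper's: there $|\scal{\alpha^\vee,\gamma}|\le 1$ because $\alpha\neq\pm\gamma$.

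For short $\alpha$ with $\scal{\alpha^\vee,\gamma}\neq 0$, the paper works case by case:
\begin{enumerate}
\item It disposes of $G_2/P_2$ by observing that $\Qsf=G$.
\item It shows the bad configuration cannot occur when $|\scal{\beta^\vee,\varpi}|\le 2$ for all roots $\beta$. This leaves only $F_4$ with $\varpi=\varpi_2$ or $\varpi_3$.
\item It excludes $\varpi_2$ by a parity argument.
\end{enumerate}

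You instead apply the divisibility from Lemma~\ref{lemm:n} uniformly. If $\delta$ is long, then $a=k\,n_\delta(\alpha)$ for short $\alpha$, so $a,b\in k\Z$. Then $a-b>0$ already forces $a-b\ge k\ge |b|$. This settles every $X$ with $\delta$ long at once, including $G_2/P_2$. Table~\ref{table-theta} then shows that $\delta$ short together with $d(\Theta)\ge 2$ occurs only for $F_4/P_3$.

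What each approach buys: your version avoids the bound on $\scal{\beta^\vee,\varpi}$ and the separate $G_2$ discussion. It also makes visible that the exception is exactly the unique short-$\delta$ case permitted by $d(\Theta)\ge 2$. The paper additionally records the four roots of $U_{\Qsf}$ whose root subgroups are not contained in $\Ssf$ when $X=F_4/P_3$, which shows the exception is genuine. Your argument does not address this, but the statement as phrased does not require it.

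A few harmless slips:
\begin{itemize}
\item In the $b=-k$ bullet, it is $a-2b$, not $a$, that is automatically nonnegative.
\item For $\delta$ short and $\alpha$ long, it is $n_\delta(\alpha)$, not $a$, that is divisible by $k$.
\item The sentence about $n_\delta(\alpha)\ge 3$ and short negative roots is unnecessary once the case list has reduced $\delta$ short to $F_4/P_3$.
\end{itemize}
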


\begin{proof}
Set $\gamma = \gamma(d)$. Let $\alpha \in R$ such that $U_\alpha \subset U_{\Qsf}$ \emph{i.e.} $\scal{\alpha^\vee,\varpi} > \frac{d}{2}\scal{\alpha^\vee,\gamma}$. Note also that this implies that $\alpha \neq \pm \gamma$. If $\alpha$ is long (for example if $G$ is simply laced), then $|\scal{\alpha^\vee,\gamma}| \leq 1$ and we get $\scal{\alpha^\vee,\varpi} \geq 0$ and  $\scal{\alpha^\vee,\varpi - 2\gamma} \geq 0$. Therefore $U_\alpha \subset \Ssf$. If $\scal{\alpha^\vee,\gamma} = 0$, then $\scal{\alpha^\vee,\varpi} = \scal{\alpha^\vee,\varpi -2\gamma} = \scal{\alpha^\vee,\varpi - \gamma} > 0$ thus $U_\alpha \subset \Ssf$. We may therefore assume that $\alpha$ is short such that $\scal{\alpha^\vee,\gamma} \neq 0$.

We now proceed case by case. All simply laced cases were treated by the previous argument. Note that the condition $d(\Theta) \geq 2$ implies that $G$ is not of type $C_n$. 

Assume that $G$ is of type $G_2$, then $\varpi = \varpi_2$ and $X = G_2/P_2$. But in this case $\gamma = \Theta = \varpi$ and $\Qsf = G$. The unipotent subgroup $U_\Qsf$ is trivial proving the result.

Assume that $G$ is not of type $G_2$. Since $\alpha$ is short, $\gamma$ long and $\scal{\alpha^\vee,\gamma} \neq 0$, we have $\scal{\alpha^\vee,\gamma} = \pm 2$. Assume that $\scal{\alpha^\vee,\varpi - \gamma} \geq 2$, then $\scal{\alpha^\vee,\varpi} \geq 2 + \scal{\alpha^\vee,\gamma} \geq 0$ and $\scal{\alpha^\vee,\varpi - 2\gamma} \geq 2 - \scal{\alpha^\vee,\gamma} \geq 0$, therefore $U_\alpha \subset \Ssf$. Assume that $|\scal{\beta^\vee,\varpi}| \leq 2$ for any root $\beta$. If $\scal{\alpha^\vee,\gamma} = 2$, then $\scal{\alpha^\vee,\varpi} > \scal{\alpha^\vee,\gamma} = 2$. This is impossible. If $\scal{\alpha^\vee,\gamma} = -2$, then $\scal{s_{\gamma}(\alpha^\vee),\varpi} = \scal{\alpha^\vee,\varpi} - 2\scal{\alpha^\vee,\gamma} > - \scal{\alpha^\vee,\gamma} = 2$. This is impossible.

We may therefore assume that $G$ is not simply laced and that there exists a root $\beta$ with $\scal{\beta^\vee,\varpi} > 2$. This only occurs if $G$ is of type $F_4$ and $\varpi = \varpi_2$ or $\varpi_3$. Furthermore, we may assume that $\scal{\alpha^\vee,\varpi - \gamma} = 1$ and $|\scal{\alpha^\vee,\gamma}| = 2$. This implies that $\scal{\alpha^\vee,\varpi}$ is odd. Since $\alpha$ is short this is only possible if $\varpi = \varpi_3$.

Note that if $X = F_4/P_3$, the group $U_{\Qsf}/(\Ssf \cap U_{\Qsf})$ has dimension $4$ and is generated by the groups $U_\alpha$ for $\alpha$ in the following list of roots (with notation as in \cite{bourbaki:elements*78}): $\alpha_1 + 2\alpha_2 + 3\alpha_3 + 2\alpha_4$, $\alpha_1 + 2\alpha_2 + 3\alpha_3 + \alpha_4$, $-(\alpha_1 + \alpha_2 + \alpha_3)$, $-(\alpha_1 + \alpha_2 + \alpha_3 +\alpha_4)$.
\end{proof}

\begin{thm}
\label{thm:small-d}
If $d(\Theta) \geq 2$, then $\Hilb_2(X)$ is $G$-spherical except if $X = F_4/P_3$.
\end{thm}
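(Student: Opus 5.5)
The argument mirrors the proof of Theorem~\ref{thm:small-d=1}. Take $d = 2 \leq d(\Theta)$. Since $\Hilb_2(X)$ is smooth and irreducible, it is normal, so we only need a dense $B$-orbit in its open $G$-orbit. By the preceding discussion (Theorem~\ref{thm:Gamma-small-d} and the dense orbit in $M_{0,2}(X,2)$), this open orbit is $\Hilb_2^\circ(X) = G.C_{\gamma(2)} \simeq G/\Ssf$, where $\Ssf = \Stab_G(C_{\gamma(2)})$. Sphericality of $\Hilb_2(X)$ therefore reduces to sphericality of the homogeneous space $G/\Ssf$.

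By Proposition~\ref{prop:stab-sd}(2) we have $\Ssf \subset \Qsf$, which gives a $G$-equivariant fibration $G/\Ssf \to G/\Qsf$ onto a projective homogeneous space. Its fiber is $\Qsf/\Ssf$. If $X \neq F_4/P_3$, the preceding proposition gives $U_{\Qsf} \subset \Ssf$. Since $T \subset \Ssf$, we can write $\Ssf = (\Ssf \cap L_{\Qsf}) \ltimes U_{\Qsf}$, and hence
\[ \Qsf/\Ssf \simeq L_{\Qsf}/(\Ssf \cap L_{\Qsf}). \]
By Proposition~\ref{prop:stab-sd}(3), $\Ssf \cap L_{\Qsf}$ is a (connected) symmetric subgroup of the reductive group $L_{\Qsf}$. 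By Vust's theorem, symmetric spaces are spherical, so $L_{\Qsf}/(\Ssf\cap L_{\Qsf})$ is $L_{\Qsf}$-spherical.

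We then conclude by parabolic induction. Let $B_L = B \cap L_{\Qsf}$, and arrange $B$ so that $B \supset B_L \ltimes U_{\Qsf}$, which holds up to replacing $\Qsf$ by a conjugate containing $B^-$ or $B$. Take $b \in B_L$ such that $B_L b(\Ssf \cap L_{\Qsf})$ is open in $L_{\Qsf}$. Then
\[ B^- \cdot \Qsf \cdot b\,\Ssf \supset U^-_{\Qsf} B_L b\,\Ssf \]
is open in $G$, because $U^-_{\Qsf}\Qsf$ is open in $G$. Hence the opposite Borel subgroup has an open orbit in $G/\Ssf$, and so $B$ has one as well after conjugation. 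This is exactly the parabolic induction statement already used for Theorem~\ref{thm:small-d=1}, and we can cite it as the standard fact that $G\times^{\Qsf}(L_{\Qsf}/H)$ is spherical when $L_{\Qsf}/H$ is.

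The main obstacle is the step that was deferred to the previous proposition: the inclusion $U_{\Qsf}\subset \Ssf$, together with the claim that the open orbit of $\Hilb_2(X)$ is exactly $G/\Ssf$ with $\Ssf$ as described. Granting those, the only delicate point is that the parabolic subgroup $\Qsf$ defined via $\varpi - \gamma(2)$ need not contain $B$. To handle this, I would first replace $\gamma(2)$ by a $W$-conjugate, equivalently conjugate the whole configuration, so that $\varpi - \gamma(2)$ becomes dominant. Parabolic induction then applies verbatim.
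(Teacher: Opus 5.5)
Your proposal is essentially the paper's proof: the open orbit $G/\Ssf$ fibres over $G/\Qsf$ with fibre $L_{\Qsf}/(\Ssf\cap L_{\Qsf})$, which is a symmetric space because $U_{\Qsf}\subset\Ssf$, and parabolic induction then gives sphericality. Your worry about $B\not\subset\Qsf$ is harmless, since any Borel subgroup of $\Qsf$ containing $T$ will do. In the induction step, take $l\in L_{\Qsf}$ with $(B^-\cap L_{\Qsf})\,l\,(\Ssf\cap L_{\Qsf})$ open, rather than $b\in B_L$.

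The one thing you omit is the exceptional case. The paper also records that $\Hilb_2(F_4/P_3)$ is not spherical, by the dimension count of Example~\ref{exam:F4/P3} ($\dim\Hilb_2(X)=31>28=\dim B$).
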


\begin{proof}
The fact that $\Hilb_2(F_4/P_3)$ is not spherical was proved in Example \ref{exam:F4/P3}. In all other cases, $\Hilb_2(X)$ has an open subset of the form $G/\Ssf$ with $\Ssf \subset \Qsf$ and $U_{\Qsf} \subset \Ssf$. We thus have a map $G/\Ssf \to G/\Qsf$ with fiber $\Qsf/\Ssf \simeq L_{\Qsf}/(\Ssf \cap L_{\Qsf})$, the last isomorphism comes from the inclusion $U_{\Qsf} \subset \Ssf$. The quotient $L_{\Qsf}/(\Ssf \cap L_{\Qsf})$ is a symmetric space for $L_{\Qsf}$ and therefore spherical. The quotient $G/\Ssf$ is therefore spherical by parabolic induction.
\end{proof}

For $d = 2$, we describe some of the maps $G/\Ssf \to G/\Qsf$ and their fibers $\Qsf/\Ssf$.

\begin{example}
\label{ex:E8P1}
Let $X = E_8/P_1$ and $d = 2$. In this case $\gamma = \Theta$. We get a morphism $\Hilb_2^\circ(X) \to E_8/P_8$ whose fiber is the symmetric space $E_7/(D_6 \times \SL_2)$. Geometrically, a general conic is contained in a unique subvariety $Y$ of $X$ isomorphic to $E_7/P_1$ and varying $Y$ in its family (which is isomorphic to $E_8/P_8)$, we get that $\Hilb_2^\circ(X)$ is covered by the Hilbert schemes $\Hilb_2^\circ(Y) \simeq E_7/(D_6 \times \SL_2)$. 
\end{example}

\begin{example}
Let $X = E_6/P_4$ and $d = 2$. In this case $\gamma = \alpha_1 + \alpha_2 + 2\alpha_3 + 2\alpha_4 + 2 \alpha_5 + \alpha_6$ with notation as in \cite{bourbaki:elements*78}. We get a morphism $\Hilb_2^\circ(X) \to E_6/(P_1 \cap P_6)$ whose fiber is the symmetric space $\SO_8/(\SO_4 \times \SO_4)$.
%{\rm S}(\rm{O}(4) \times {\rm O}(4))$. 
The geometric interpretation of Example \ref{ex:E8P1} adapts easily in this case.
\end{example}

%%%%%%%%%%%%%%%%%%%%%%%%%%%%%%%%%%%%%%%%%%%%%%%%%%%%%%
%\input{geometry}

\section{Geometric method}
\label{section:geometric}

We present a geometric method to deal with the case $d(\Theta) < 2 \leq d_X$. It covers all cases not covered by Theorem \ref{thm:small-d} and works when $G$ is classical or when $X$ is a hermitian symmetric space (\emph{i.e.} cominuscule). For a smooth quadric $Q$, let $\SO(Q)$ and ${\rm Spin}(Q)$ be the special orthogonal and the spinor groups acting on $Q$.

\subsection{Projective spaces and quadrics} We first prove the following easy results. 

\begin{prop}
\label{prop:proj-space}
The Hilbert scheme $\Hilb_2(\bP^n)$ is $\SL_{n+1}$-spherical.
\end{prop}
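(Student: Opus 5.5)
The plan is to fibre the open orbit of $\Hilb_2(\bP^n)$ over a Grassmannian and reduce to a rank-two question on a symmetric fibre. Every conic $C\subset\bP^n$ spans a unique plane $\langle C\rangle\simeq\bP^2$. This gives a $\SL_{n+1}$-equivariant morphism from the open subset of smooth conics to $\Gr(3,n+1)$, sending $C$ to its linear span. (The Hilbert scheme $\Hilb_2(\bP^n)$ is a $\check\bP^5$-bundle over $\Gr(3,n+1)$, but only the open orbit matters.) Since $\SL_{n+1}$ acts transitively on pairs (plane, smooth conic in that plane), the smooth conics form a single orbit $G/\Ssf$, which is dense. Sphericality depends only on this open orbit, and $\Hilb_2(\bP^n)$ is smooth, hence normal. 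It therefore suffices to show that $G/\Ssf$ is spherical.

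For the stabiliser, fix the plane $V_3=\langle e_1,e_2,e_3\rangle$ and let $\Qsf$ be the maximal parabolic stabilising $V_3$, with Levi factor $L_\Qsf=S(\GL_3\times\GL_{n-2})$. A matrix in $\Qsf$ acts on $V_3$ through its $\GL_3$ block. Hence $\Ssf$ is the preimage in $\Qsf$ of the subgroup of $\GL(V_3)$ preserving the conic $C=\{x_1x_3=x_2^2\}$, namely $\C^*\cdot{\rm O}_3$ (or equivalently $\C^*\cdot\SO_3$ with $\SO_3\simeq\PGL_2$). In particular $U_\Qsf\subset\Ssf$ and $\GL_{n-2}\subset\Ssf$, so
\[
G/\Ssf\to G/\Qsf=\Gr(3,n+1)
\]
has fibre $\Qsf/\Ssf\simeq \GL_3/(\C^*\cdot{\rm O}_3)$. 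This fibre is the space of smooth conics in $\bP^2$, equivalently of nondegenerate quadratic forms up to scalar, i.e.\ the symmetric space $\PGL_3/{\rm PO}_3$. It is spherical, since symmetric spaces are spherical, or directly because the diagonal forms give the open Borel orbit.

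Parabolic induction, as used in the proof of Theorem~\ref{thm:small-d}, then shows that $G/\Ssf$ is spherical. A Borel subgroup $B\subset\Qsf$ has an open orbit in $G/\Qsf$, namely the big cell $U^-_\Qsf.\Qsf$. The stabiliser in $B$ of the base point is $B\cap\Qsf=B$, which surjects onto a Borel subgroup of $L_\Qsf$, and that Borel has an open orbit on the fibre. The main point needing care is the identification $\Qsf/\Ssf\simeq L_\Qsf/(\Ssf\cap L_\Qsf)$, and that the relevant $L_\Qsf$-action on the fibre is through $\GL_3$. This follows from $U_\Qsf\subset\Ssf$, exactly as in Theorem~\ref{thm:small-d}. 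For $n=2$ the statement is simply the sphericality of $\PGL_3/{\rm PO}_3$.
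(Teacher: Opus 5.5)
Your proof is correct and follows the paper's route: send a conic to its span in $\Gr(3,n+1)$ and identify the fibre with the symmetric space $\PGL_3/\PSO_3$. Your explicit computation of $\Ssf$ and of the inclusion $U_{\Qsf}\subset\Ssf$ simply makes precise the parabolic-induction step that the paper leaves implicit. One slip to fix: a Borel subgroup $B\subset\Qsf$ fixes the base point of $G/\Qsf$, so the open orbit $U^-_{\Qsf}.\Qsf$ and the stabiliser $B^-\cap\Qsf=B^-\cap L_{\Qsf}$ (a Borel subgroup of $L_{\Qsf}$) must be taken for the opposite Borel subgroup $B^-$, not for $B$.
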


\begin{proof}
A general conic determines a unique plane. We therefore have a rational map $\Hilb_2(X) \dasharrow \Gr(3,n+1)$ whose fiber over $V_3$ is the Hilbert scheme $\Hilb_2(\bP^2)$ of conics on $\bP(V_3)$. The group $\SL(V_3)$ is contained in $\Stab_{\SL_{n+1}}(V_3)$ and acts on $\Hilb_2(\bP^2)$. We are left with the case $n = 2$ for which the result is classical: $\Hilb_2(\bP^2)$ is birational to the symmetric space $\SL_3/\textrm{Z}(\SL_3) \times \SO_3 \simeq \PGL_3/\PSO_3$.
\end{proof}
 
 \begin{prop}
\label{prop:conic-sp}
 The Hilbert scheme $\Hilb_2(\bP^{2n-1})$ is not $\Sp_{2n}$-spherical.
 \end{prop}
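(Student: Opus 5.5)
The plan is to combine the dimension count of Example~\ref{exam:F4/P3}(1) with a reduction to the case $n=3$, using fixed points of a subtorus. For $n \in \{2,3\}$ the statement is already established there: $\dim \Hilb_2(\bP^{2n-1}) = 6n-4$ exceeds $\dim B = n(n+1)$ (since $8>6$ and $14>12$), so $B$ cannot have a dense orbit. So we may assume $n \geq 4$ and argue by contradiction, supposing that $\Hilb_2(\bP^{2n-1})$ is $\Sp_{2n}$-spherical.

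\emph{Decomposition and subtorus.} Write the symplectic space as an orthogonal sum $V = V_6 \oplus V'$, where $V_6$ and $V'$ are symplectic subspaces of dimensions $6$ and $2n-6$. Choose a subtorus $S \subset T \cap \Sp(V')$ acting trivially on $V_6$ and with nonzero weights on all of $V'$. This is possible by taking the maximal torus of $\Sp(V')$, whose weights on $V'$ are $\pm\epsilon_i$. The centraliser $C_G(S)$ then contains $\Sp(V_6) \times S$, and on the $V_6$ factor it acts through $\Sp(V_6)$.

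\emph{Fixed-point component.} Consider the locus of conics contained in $\bP(V_6)$. This is a copy of $\Hilb_2(\bP^5)$, it is pointwise fixed by $S$, and it is connected. I will check that it is a whole connected component of $\Hilb_2(\bP^{2n-1})^S$, or at least an irreducible component of it.
\begin{itemize}
\item The tangent space at a conic $C \subset \bP(V_6)$ is $H^0(N_{C/\bP(V)})$. This decomposes as $H^0(N_{C/\bP(V_6)}) \oplus H^0(\cO_C(1)) \otimes V'$.
\item The second summand carries the nonzero $S$-weights of $V'$. Hence the $S$-invariant part of the tangent space is exactly $T_C \Hilb_2(\bP(V_6))$.
\item Since $\Hilb_2$ is smooth by the earlier Proposition, the component of $\Hilb_2^S$ through $C$ is smooth with this tangent space. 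It therefore coincides with $\Hilb_2(\bP(V_6))$.
\end{itemize}

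\emph{Conclusion via Brion.} Now invoke the standard result of Brion: if $Z$ is a spherical $G$-variety and $S$ is a subtorus, then every irreducible component of $Z^S$ is $C_G(S)$-spherical. Applied here, it makes $\Hilb_2(\bP^5)$ spherical for $C_G(S)$. Since $C_G(S)$ acts on $\Hilb_2(\bP(V_6))$ only through its image in $\Sp(V_6)$ (the remaining factors act trivially on $V_6$), the variety $\Hilb_2(\bP^5)$ would be $\Sp_6$-spherical. This contradicts the case $n=3$.

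\emph{Main obstacle.} I expect the hardest part to be justifying the last two steps carefully. The first point is the precise form of the fixed-point result: it should be stated for components of $S$-fixed loci, and I need it in the right generality, namely for normal, here smooth, varieties. The second point is identifying $C_G(S)$ and its action on the component. $C_G(S)$ is a Levi subgroup, roughly $\Sp(V_6) \times T_{\Sp(V')}$ up to larger factors if $S$ is chosen non-generically, so $S$ should be chosen generic to keep it this small. If Brion's result is awkward to cite, the fallback is a direct argument. On the $S$-fixed component, the $B \cap C_G(S)$-orbits are unions of intersections with $B$-orbits of $Z$, and a spherical variety has finitely many $B$-orbits. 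Then a Białynicki-Birula-type argument should show that $Z^S$ has finitely many $B_{C_G(S)}$-orbits, which again yields sphericality of $\Hilb_2(\bP^5)$ for $\Sp_6$.
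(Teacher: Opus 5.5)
Your proof is correct, and it takes a genuinely different route from the paper.

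\textbf{The paper's route.} The paper treats all $n$ at once. It sends a general conic to its span $\Pi\in\Gr(3,2n)$. It observes that $\Stab_{\Sp_{2n}}(\Pi)\simeq(\GL_1\times\Sp_2\times\Sp_{2n-4})\ltimes U_{\Qsf}$ lies in the parabolic $\Qsf=\Stab(\ell)$, where $\ell=\ker(\omega|_\Pi)$. It then shows that for a Borel subgroup $B'$ opposite to $U_\Qsf$ and in general position, $B'\cap\Stab(\Pi)$ acts on the $5$-dimensional fibre $\Hilb_2(\Pi)$ through a solvable subgroup of $\GL_1\times\Sp_2$. That subgroup is too small to have an open orbit.

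\textbf{Your route.} You settle $n\le 3$ by the dimension count of Example~\ref{exam:F4/P3}(1). You then propagate this to all $n$ by passing to the fixed locus of the maximal torus $S$ of $\Sp(V')$. Your tangent-space computation does identify $\Hilb_2(\bP(V_6))$ as a full connected component of the fixed locus. With your choice of $S$, one has exactly $C_G(S)=\Sp(V_6)\times S$, so no larger Levi factors appear.

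\textbf{What each buys.} The paper's argument shows where sphericality fails, namely in the fibre directions over the spherical variety $\Gr(3,2n)$, in the spirit of its geometric method. Yours needs no analysis of stabilisers or of the Satake diagram. The same mechanism could also propagate other small-rank dimension counts, such as Example~\ref{exam:F4/P3}(2), to all ranks.

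\textbf{The step you flagged.} The fixed-point principle you identify as the main obstacle is elementary, and no Bia\l{}ynicki-Birula decomposition is needed.
\begin{enumerate}
\item Choose a cocharacter $\lambda$ of $S$ with $C_G(\lambda)=C_G(S)=:L$, and a Borel subgroup $B\supset T$ contained in $P_\lambda$.
\item Then $b\mapsto b_L:=\lim_{t\to 0}\lambda(t)b\lambda(t)^{-1}$ maps $B$ onto the Borel subgroup $B_L=B\cap L$ of $L$.
\item If $z$ and $bz$ both lie in $Z^S$, then $bz=\lambda(t)bz=(\lambda(t)b\lambda(t)^{-1})z$ for all $t\neq 0$. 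Letting $t\to 0$ gives $bz=b_Lz$.
\item Hence $B.z\cap Z^S=B_L.z$ for every $z\in Z^S$. Since a spherical variety has finitely many $B$-orbits, each component of $Z^S$ has finitely many $B_L$-orbits, and so an open one.
\end{enumerate}
This is the precise form of your fallback, and it should replace the sentence about ``unions of intersections''. Smoothness of the fixed component gives normality, so $\Hilb_2(\bP^5)$ would be $\Sp_6$-spherical, contradicting the case $n=3$ as you say.
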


\begin{proof}
Let $C_2$ be a general conic in $\bP^{2n-1}$. Then $C_2$ is contained in a unique plane $\Pi$ and we have a rational map $\Hilb_2(X) \dasharrow \Gr(3,2n)$ that maps a conic to its span. The fiber over $\Pi$ is given by $\Hilb_2(\Pi)$ acted on by $H = \Stab_{\Sp_{2n}}(\Pi)$. Let $\ell$ be the kernel of the symplectic form on $\Pi$. Then $\dim\ell = 1$ and $H \subset \Qsf$ with $\Qsf = \Stab_{\Sp_{2n}}(\ell)$. Note that $\Qsf$ is a parabolic subgroup. Let $L_\Qsf$ be the levi subgroup of $\Qsf$ and $U_\Qsf$ be its unipotent radical. Then $\Qsf \simeq L_\Qsf \ltimes U_\Qsf$ with $L_\Qsf \simeq \GL_1 \times \Sp_{2n-2}$ and $H \simeq (\GL_1 \times \Sp_2 \times \Sp_{2n-4}) \ltimes U_\Qsf$. Let $B'$ be a Borel subgroup opposite to $U_\Qsf$ and such that $(B' \cap L_\Qsf).(\Sp_2 \times \Sp_{2n-4})$ is dense in $L_\Qsf/\Sp_2 \times \Sp_{2n-4}$. Then $B' \cap \Sp_{2n-4}$ acts trivially on $\Hilb_2(\Pi)$. Thus $B'$ acts on $\Hilb_2(\Pi)$ via $B' \cap (\GL_1 \times \Sp_2)$ which has dimension $2$, while $\Hilb_2(\Pi)$ has dimension $5$, proving the result.
\end{proof}

Let $X = Q_n$ be a smooth quadric of dimension $n$.% and let $G$ be the special orthogonal group acting on $X$.

\begin{prop}
\label{prop:quadrics}
The Hilbert scheme $\Hilb_2(Q_n)$ is $\SO_{n+2}$-spherical.
\end{prop}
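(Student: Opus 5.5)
Following the proof of Proposition \ref{prop:proj-space}, we fibre $\Hilb_2(Q_n)$ over the space of linear planes and reduce to the action of a small stabiliser on the conics in one fixed plane. Write $Q_n \subset \bP(V)$ with $\dim V = n+2$ and $q$ the quadratic form. A general conic $C \subset Q_n$ spans a unique plane $\Pi = \bP(V_3)$, and $C = \Pi \cap Q_n$ scheme-theoretically. For $C$ general, the restriction $q|_{V_3}$ is non-degenerate. So $C \mapsto V_3$ gives a $G$-equivariant birational isomorphism between the open $G$-orbit in $\Hilb_2(Q_n)$ and the open $\SO_{n+2}$-orbit in $\Gr(3,V)$, namely the non-degenerate $3$-planes. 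Thus the conic is determined by its plane, and the fibre is a point.

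It therefore suffices to show that $\SO(V)/\Stab(V_3)$ is spherical for a non-degenerate $V_3$. The stabiliser is
$$\Stab(V_3) = S(\mathrm{O}(V_3) \times \mathrm{O}(V_3^\perp)).$$
Its identity component is $\SO_3 \times \SO_{n-1}$, the fixed points of the involution given by conjugation by $-\mathrm{id}_{V_3} \oplus \mathrm{id}_{V_3^\perp}$. Hence $G/\Stab(V_3)$ is a symmetric space. By Vust's theorem symmetric spaces are spherical, the same fact used in Theorems \ref{thm:small-d=1} and \ref{thm:small-d}. Passing to a finite extension of the stabiliser preserves sphericality.

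It remains to pass from the open orbit to the whole Hilbert scheme. Sphericality depends only on the open orbit together with normality, and normality is provided by the smoothness of $\Hilb_2(X)$ stated in the introduction, which we take as given. For small $n$ the open orbit should be checked separately. For $n = 1$ the scheme $\Hilb_2(Q_1)$ is a point. For $n = 2$, one checks that $G/\Stab(V_3) = \SO_4/\mathrm{O}_3$ is still symmetric, e.g.\ via $\SO_4 \sim \SL_2 \times \SL_2$ acting on $\bP^3$ minus a quadric, where the stabiliser is the diagonal $\SL_2$.

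The main obstacle is the claim that a general conic on $Q_n$ has non-degenerate span and equals $\Pi \cap Q_n$. This needs a short argument: $\Pi \not\subset Q_n$ because the conic is not contained in a line pair, and a smooth plane section forces $q|_{V_3}$ to be non-degenerate. Everything after this is formal.
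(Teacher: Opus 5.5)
Your proposal is correct and is essentially the paper's argument written out in more detail. A general conic is the section of $Q_n$ by its non-degenerate plane, so $\Hilb_2(Q_n)$ is $\SO_{n+2}$-birational to $\Gr(3,n+2)$, whose open orbit is the symmetric space $\SO_{n+2}/{\rm S}({\rm O}_3\times{\rm O}_{n-1})$, and normality comes from smoothness.

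The one thing to fix is your justification that the plane of a general conic is not contained in $Q_n$; it has nothing to do with line pairs. It follows from a dimension count: conics lying in planes contained in $Q_n$ form a family of dimension $\dim\OG(3,n+2)+5=3n-4<3n-3$, and $Q_n$ contains no planes at all for $n\le 3$. Alternatively, you can bypass it by starting from a non-degenerate $V_3$ and using the irreducibility of $\Hilb_2(Q_n)$.
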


\begin{proof}
A general conic is defined by a plane in the minimal embedding of $X$. Thus $\Hilb_2(X)$ is birational to $\Gr(3,n+2)$ which is $G$-birational to the symmetric space $\SO_{n+2}/{\rm S}({\rm O}_3 \times {\rm O}_{n-1})$.
\end{proof}

\begin{remark}
\label{remark:connected}
Note that for $X = Q_n$, the stabiliser of a general conic $C_2$ in $\SO_{n+2}$ is $\Stab_{\SO_{n+2}}(C_2) = {\rm S}({\rm O}_3 \times {\rm O}_{n-1})$ which is never connected. Replacing $\SO_{n+2}$ with $\PSO_{n+2}$ it is easy to check that $\Stab_{\SO_{n+2}}(C_2)$ is connected if and only if $n$ is even.
\end{remark}

\begin{remark}
For $X$ a projective space or a quadric, we have $\Gamma_2 = X$.
\end{remark}

\subsection{Quantum to classical principle}

This construction was first introduced by Buch in \cite{buch:quantum} and developed by Buch, Kresch and Tamvakis for classical groups (see   \cite{buch.kresch.ea:gromov-witten,buch.kresch.ea:quantum} and references therein). It was later generalised by Chaput, Manivel and the second author in a series of papers \cite{chaput.manivel.ea:quantum*1,chaput.manivel.ea:quantum,chaput.manivel.ea:affine,chaput.manivel.ea:quantum*2}. We start with the following definition.

\begin{defn} The homogeneous space $X$ is \emph{cominuscule} if the coefficient of $\delta$ in the expansion of $\Theta$ in terms of simple roots is equal to $1$.
\end{defn}

For $X$ cominuscule, we have $d(\Theta) = 1$. We summarise some results of \cite{chaput.manivel.ea:quantum*1}. 

\begin{prop}
\label{prop:geom-comin}
Assume that $X$ is cominuscule different from a projective space.
\begin{enumerate}
\item $\Gamma_2$ is a smooth quadric $Q$ given in Table \ref{table}.
\item $\Stab_G(\Gamma_2)$ contains $\SO(\Gamma_2)$.
%\item $\Gamma_2(x,y)$ is a translate of $\Gamma_2$.
\item A general conic is contained in a unique translate of $\Gamma_2$.
\item The stabiliser of $\Gamma_2$ is a parabolic subgroup of $G$  containing $\SO(Q)$.
\end{enumerate}
\begin{table}[ht]
\begin{tabular}{c|c|c}
Type of $G$ & $X$ & $\Gamma_2$ \\
\hline
$A_n$ & $\Gr(p,n+1)$ & $\Gr(2,4) \simeq Q_4$ \\
$B_n$ & $Q_{2n-1}$ & $Q_{2n-1}$ \\
$C_n$ & $\LG(n,2n)$ & $\LG(2,4) \simeq Q_3$ \\
$D_n$ & $Q_{2n-2}$ & $Q_{2n-2}$ \\
$D_n$ & $\OG(n,2n)$ & $\OG(4,8) \simeq Q_6$ \\
$E_6$ & $E_6/P_6$ & $Q_8$\\
$E_7$ & $E_7/P_7$ & $Q_{10}$. \\
\hline
\end{tabular}
~\\
\centering
~\\
\caption{\label{table} Double curve neighborhoods $\Gamma_2$ for cominuscule spaces $X$.}
\end{table}
%\FloatBarrier
\end{prop}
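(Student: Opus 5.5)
Since the statement is presented as a summary of \cite{chaput.manivel.ea:quantum*1}, the plan is to reconstruct it from the structure of cominuscule spaces rather than prove anything new. The central input is that for $X$ cominuscule, $d(\Theta)=1$, so two general points $x,y$ have $d(x,y)=2$. The subvariety $\Gamma_2(x,y)$ can then be identified with a homogeneous subvariety $G_{x,y}.x$. Here $G_{x,y}$ is the semisimple part of the Levi factor of $\Stab_G(x)\cap\Stab_G(y)$. In root-theoretic terms this is done at the $T$-fixed pair $(1.P,z_2^P.P)$: take $\alpha=\gamma(1)=\Theta$. The pair is then $(1.P, s_\Theta s_{\Theta'}.P)$ for a suitable strongly orthogonal root $\Theta'$ (the highest root of the subsystem orthogonal to $\Theta$ in $R\setminus R_P$). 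Equivalently, $z_2^P$ is computed via the greedy decomposition $(\Theta,\Theta')$, using Theorem~\ref{thm:curve-neigh-1}. I would define $\Qsf_2$ as the parabolic associated with the cocharacter of $\varpi-\Theta-\Theta'$ analogue, as in Proposition~\ref{prop:stab-sd}. Its Levi $L$ then contains a simple factor $L'$ whose roots are those $\beta$ with $\scal{\beta^\vee,\varpi}$ and $\scal{\beta^\vee,s_\Theta s_{\Theta'}\varpi}$ both of appropriate sign.

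The key steps, in order, are as follows. (a) Compute $L'$ case by case from the Dynkin diagram of $G$ with $\delta$ removed, taking the subdiagram connected to $\delta$ in the extended diagram after removing the nodes adjacent to $-\Theta$. Concretely, $L'$ should be of type $A_3$ for $\Gr(p,n+1)$, $C_2$ for $\LG$, $D_4$ for $\OG(n,2n)$, $D_5$ for $E_6/P_6$, $D_6$ for $E_7/P_7$, and $G$ itself for quadrics. (b) Show that $Y:=L'.P\subset X$ is the cominuscule space $L'/(L'\cap P)$, which is a quadric in each case, matching Table~\ref{table}. (c) Prove $\Gamma_2=Y$. For $\supseteq$: any point of $Y$ lies on a conic in $Y$ through $x,y$, since $Y$ is a quadric. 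For $\subseteq$: use the $T$-fixed point argument of Theorem~\ref{thm:Gamma-small-d}. A degree-2 chain of $T$-stable curves from $1.P$ to $z_2^P.P$ consists of two lines $C_{\alpha_1}$, $s_{\alpha_1}C_{\alpha_2}$. The weight identity $\varpi-z_2^P\varpi=\alpha_1+s_{\alpha_1}\alpha_2$ forces all middle $T$-fixed points into $Y$. Since $\Gamma_2$ is $T$-stable and irreducible, with every $T$-fixed point in $Y$, the Borel fixed point lemma together with dimension count gives $\Gamma_2\subseteq Y$.

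Items (2) and (4) then follow. $\Stab_G(Y)$ contains $L'$, which surjects onto $\SO(Q)$ up to isogeny. It also contains $T$ and all root subgroups $U_\beta$ with $\beta$ orthogonal to $L'$ or positive on the relevant cocharacter. Hence it contains a parabolic, and by Lemma~\ref{lemm:stab-inclusion}-type reasoning it is exactly that parabolic. For (3), note that a general conic $C$ passes through two general points $x,y$ with $d(x,y)=2$, so $C\subset\Gamma_2(x,y)$, which is a translate $gY$. For uniqueness: if $C\subset g'Y$, then $x,y\in g'Y$, and since $g'Y$ is a quadric containing $x,y$ at distance $2$, we get $g'Y\subseteq\Gamma_2(x,y)=gY$, with equality by dimension.

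I expect step (c), the inclusion $\Gamma_2\subseteq Y$, to be the main obstacle. It requires either the quantum-to-classical description (the kernel/span construction, e.g.\ $V_{p-2}\subset V_{p+2}$ for Grassmannians), or a uniform root-system argument that every degree-2 $T$-stable chain stays in $Y$. I would try the uniform argument first. If it fails, I would fall back on the case-by-case geometric descriptions for the classical types, and cite \cite{chaput.manivel.ea:quantum*1} for $E_6$ and $E_7$.
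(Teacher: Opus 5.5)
\paragraph{Main gap: the inclusion $\Gamma_2\subseteq Y$.}
The paper does not reprove this statement; it simply quotes \cite[Propositions 18 and 19]{chaput.manivel.ea:quantum*1}. Your reconstruction has a genuine gap exactly where you expect trouble: the inclusion $\Gamma_2\subseteq Y$ in step (c). Items (1), (3) and (4) all depend on it.

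The argument you give for it does not work. Knowing that every $T$-fixed point of $\Gamma_2$ lies in $Y$ does not imply $\Gamma_2\subseteq Y$, because a $T$-stable irreducible subvariety of $G/P$ is not controlled by its fixed points. For example, two distinct generic $T$-orbit closures in $\Gr(2,4)$ each contain all six $T$-fixed points, yet neither contains the other.

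The Borel lemma does not rescue this. It only gives $\dim F\le |F^T|-1$ for a projective $T$-variety $F$ with finitely many fixed points. In Theorem~\ref{thm:Gamma-small-d} that suffices because the fibre of $\ev$ has a single fixed point. Here the fibre of $\ev_{1,2}$ over $(x,y)$ has at least one fixed point for every $T$-fixed point of $Y$ other than $x,y$, with several placements of the third marking. So no bound $\dim\Gamma_2\le\dim Y$ comes out. The irreducibility of $\Gamma_2$ is also asserted without argument.

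\paragraph{A possible repair.}
One way to close the gap uniformly is to count conics instead of fixed points. Let $F$ be the fibre over $(x,y)$ of $\ev_{1,2}$ restricted to two-pointed smooth conics. It is irreducible, since the source is irreducible and $P_x\cap P_y$ is connected. Its dimension is $2c_1(X)-1-\dim X_{z_2^P}$. The conics of $Y$ through $x,y$ form a closed subset of $F$ of dimension $\dim Y-1$.

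One then checks $\dim Y+\dim X_{z_2^P}=2c_1(X)$ case by case, for instance:
\begin{itemize}
\item $4+(2n-2)=2(n+1)$ for $\Gr(p,n+1)$;
\item $3+(2n-1)=2(n+1)$ for $\LG(n,2n)$;
\item $10+26=36$ for $E_7/P_7$.
\end{itemize}
This forces every conic through $x,y$ into $Y$, hence $\Gamma_2=Y$. Without such an argument you are left with your fallback: the kernel/span descriptions for classical types plus the citation for $E_6$ and $E_7$. That is essentially what the paper does.

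\paragraph{Smaller points to correct.}
\begin{itemize}
\item Two general points of $X$ are at distance $2$ only when the diameter is $2$; this fails for $\Gr(3,6)$, for instance. What is true is that two general points of a general conic are at distance $2$.
\item The recipe in (a), deleting the neighbours of $-\Theta$ in the extended diagram, produces the subsystem orthogonal to $\Theta$. That gives types $A_{n-2}$, $C_{n-1}$, $D_{n-2}$, and $A_5$ for $E_6$, not $A_3$, $C_2$, $D_4$, $D_5$. The correct $L'$ is the simple factor containing $\Theta$ of the Levi of the parabolic attached to $2\varpi-\Theta-\Theta'$.
\item For $B_n/P_1$ there are $T$-stable conics, so degree-$2$ chains of $T$-stable curves are not always pairs of lines. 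This is harmless there, since $Y=X$.
\end{itemize}
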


\begin{proof}
See \cite[Proposition 18 and Proposition 19]{chaput.manivel.ea:quantum*1}. 
\end{proof}

\begin{cor}
\label{cor:geometric-comin}
If $X$ is cominuscule, then $\Hilb_2(X)$ is $G$-spherical.
\end{cor}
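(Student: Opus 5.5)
The plan is to use the fibration strategy from the outline, based on Proposition~\ref{prop:geom-comin}. If $X$ is a projective space, the claim is Proposition~\ref{prop:proj-space}, and if $X$ is a quadric it is Proposition~\ref{prop:quadrics}. We may therefore assume that $X$ is cominuscule and neither a projective space nor a quadric. Let $C_2$ be a general conic. By Proposition~\ref{prop:geom-comin}(3), $C_2$ lies in a unique translate of $Q := \Gamma_2$. Up to translating, we may take $C_2 \subset Q$.

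This gives a $G$-equivariant morphism $G.C_2 \to G.Q \simeq G/\Stab_G(Q)$ sending a conic to the unique translate of $\Gamma_2$ containing it. By Proposition~\ref{prop:geom-comin}(4), $\Stab_G(Q) =: \Qsf'$ is a parabolic subgroup, so the base $G/\Qsf'$ is a projective homogeneous space. The fiber over $Q$ is the $\Qsf'$-orbit of $C_2$ in $\Hilb_2(X)$. Every conic in this fiber is contained in $Q$, so the fiber is a locally closed subset of $\Hilb_2(Q)$, and it is open there because a general conic of $Q$ is a general conic of $X$. Hence $G.C_2 \simeq G \times^{\Qsf'} F$, where $F$ is an open $\Qsf'$-stable subset of $\Hilb_2(Q)$.

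By the standard parabolic induction criterion, $G \times^{\Qsf'} F$ is $G$-spherical as soon as $F$ is spherical for a Levi subgroup $L'$ of $\Qsf'$. One should act through $L'$, since the unipotent radical may act nontrivially. The key point is that the image of $\Qsf'$ in $\Aut(Q)$ contains $\SO(Q)$, by Proposition~\ref{prop:geom-comin}(2) and (4). It is a connected subgroup of $\Aut(Q)$ containing $\SO(Q)$. Moreover, $\Qsf'$ stabilises $Q$ inside $\bP(V_\varpi)$ and therefore acts linearly on the span of $Q$. Consequently the image is $\SO(Q)$, up to scalars and finite center.

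The step that needs care is to show that a Levi subgroup $L'$, and not only $\Qsf'$, surjects onto $\SO(Q)$. I would handle this as follows. The unipotent radical $U'$ acts on $\Aut(Q)$ through a normal unipotent subgroup of the image. Since $\SO(Q)$ is reductive, that normal unipotent subgroup is trivial. So $U'$ acts trivially on $Q$, and $L' \to \SO(Q)$ is surjective. Concretely, in the cases of Table~\ref{table} one can identify $\SO(Q)$ with a simple factor of $L'$: $\SL_4$ for $\Gr(2,4)$, $\Sp_4$ for $\LG(2,4)$, $\mathrm{Spin}_8$, $\mathrm{Spin}_{10}$, $\mathrm{Spin}_{12}$, and so on, corresponding to a connected sub-Dynkin diagram. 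This gives a check independent of the abstract argument.

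It then follows that $F$ is $L'$-spherical if it is $\SO(Q)$-spherical. This holds because $F$ is open in $\Hilb_2(Q)$, and $\Hilb_2(Q)$ is $\SO(Q)$-spherical by Proposition~\ref{prop:quadrics}. Indeed, a Borel subgroup of $L'$ maps onto a Borel subgroup of $\SO(Q)$, which has a dense orbit in $F$. Parabolic induction then makes $G.C_2$ spherical. Since $\Hilb_2(X)$ is smooth and irreducible, hence normal, and $G.C_2$ is open and dense in it, $\Hilb_2(X)$ is $G$-spherical.
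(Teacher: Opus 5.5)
Your proposal is correct and follows the paper's proof. Both use the equivariant map $G.C_2 \to G/\Stab_G(\Gamma_2)$ onto a projective homogeneous space, whose fiber is an open subset of $\Hilb_2(\Gamma_2)$, and then conclude by Proposition~\ref{prop:quadrics} and parabolic induction. Your additional argument is a useful one: the unipotent radical of $\Stab_G(\Gamma_2)$ acts trivially on $\Gamma_2$, so a Levi subgroup surjects onto $\PSO(\Gamma_2)$ and its Borel subgroups have a dense orbit in the fiber. This makes explicit a step the paper leaves implicit in the phrase ``$\Stab_G(\Gamma_2)$ contains $\SO(Q)$''.
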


\begin{proof}
We may assume that $X$ is not a projective space (Proposition \ref{prop:proj-space}).
Let $C_2$ be a general conic passing through $1.P$ and $z_2^P.P$. Then $C_2 \subset \Gamma_2$ and $G.C_2$ is an open orbit in $\Hilb_2(X)$. Proposition \ref{prop:geom-comin} implies that we have a map $G.C_2 \to G.\Gamma_2^X = G/\Stab_G(\Gamma_2)$ and the quotient $G/\Stab_G(\Gamma_2)$ is projective. The fiber of this map is the Hilbert scheme $\Hilb_2(\Gamma_2)$ of conics in $\Gamma_2$. Since $\Gamma_2$ is a smooth quadric $Q$ and since $\Stab_G(\Gamma_2)$ contains $\SO(Q)$, the result follows from Proposition \ref{prop:quadrics}.
\end{proof}

If $d(\Theta) < 2$ and $X$ is not cominuscule, then $G$ is non-simply laced. Table \ref{table-non-comin} gives the list of possible $X$. We deal with each case separately. Note that Example \ref{exam:F4/P3} and Proposition \ref{prop:conic-sp} prove that $\Hilb_2(X)$ is not $G$-spherical for $X = \SO_{7}/P_{3}$, $\Sp_{2n}/P_{1}$, $F_4/P_4$ and $G_2/P_1$. 

\begin{table}[ht]
\begin{tabular}{c|c}
Type of $G$ & $X$  \\
\hline
$B_n$ & $\OG(n,2n+1)$ \\
$C_n$ & $\IG(p,2n)$, $p < n$. \\
$F_4$ & $F_4/P_4$ \\
$G_2$ & $G_2/P_1$ \\
\hline
\end{tabular}
~\\
\centering
~\\
\caption{\label{table-non-comin} Non-cominuscule varieties $X$ with $d(\Theta) = 1$.}
\end{table}

\begin{prop}
\label{prop:geom-B}
Assume that $X = \OG(n,2n+1)$ with $G = \SO_{2n+1}$ for $n \ge 4$.
\begin{enumerate}
\item $\Gamma_2$ is a smooth quadric of dimension $6$.
\item A general conic is contained in a unique translate of $\Gamma_2$.
\item $\Stab_G(\Gamma_2)$ contains $\SO(\Gamma_2)$.
\item We have a $G$-map $G.\Gamma_2 \to \OG(n-4,2n+1)$ with fiber $\Gr(8,9)$.
\item $\Hilb_2(X)$ is not $G$-spherical.
\item $\Hilb_2(X)$ is $\Aut(X)$-spherical.
\end{enumerate}
\end{prop}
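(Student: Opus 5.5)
The key observation is that $\OG(n,2n+1) \simeq \OG(n+1,2n+2)$, with $\Aut(X) = \SO_{2n+2}$ (more precisely $\PSO_{2n+2}$), and $X$ is cominuscule for $\Aut(X)$ of type $D_{n+1}$. So (1), (2), (3) and (6) should come from Proposition \ref{prop:geom-comin} and Corollary \ref{cor:geometric-comin} applied to $\OG(n+1,2n+2)$. Parts (4) and (5) will be handled by restricting from $\SO_{2n+2}$ to $\SO_{2n+1}$.

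\textbf{Parts (1), (2), (3) and (6).} By Table \ref{table}, the variety $\Gamma_2$ for $\OG(n+1,2n+2)$ is $\OG(4,8) \simeq Q_6$. Concretely, it is the set of maximal isotropic $V_{n+1}$ in the family containing a fixed isotropic $V_{n-3} \subset \C^{2n+2}$. The curve neighborhood $\Gamma_2(x,y)$ is defined intrinsically by conics in $X$, so it does not depend on which group acts. Hence (1) and (2) follow from Proposition \ref{prop:geom-comin}. In $\SO_{2n+1}$-language, $\Gamma_2$ is the set of isotropic $V_n \subset \C^{2n+1}$ containing a fixed $V_{n-4}$, namely $\OG(4, V_{n-4}^\perp/V_{n-4}) = \OG(4,9) \simeq Q_6$. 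Its stabiliser contains $\SO(V_{n-4}^\perp/V_{n-4}) = \SO_9$, which acts on $\OG(4,9)$ through $\mathrm{Spin}_9 \to \SO_8$ (the spin representation). This gives (3): $\SO(\Gamma_2)$ is the image of $\mathrm{Spin}_9$ acting transitively on $Q_6$. Part (6) is then Corollary \ref{cor:geometric-comin} for the cominuscule space $\OG(n+1,2n+2)$.

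\textbf{Part (4).} The translates of $\Gamma_2$ under $\SO_{2n+2}$ are parametrised by isotropic $W_{n-3} \subset \C^{2n+2}$. Under $\SO_{2n+1}$, where $\C^{2n+1}$ is a hyperplane, there is a $G$-map $W_{n-3} \mapsto W_{n-3} \cap \C^{2n+1}$. On the open $\SO_{2n+1}$-orbit of such $W_{n-3}$ (those not contained in $\C^{2n+1}$), this intersection is an isotropic $V_{n-4}$. The fiber over $V_{n-4}$ consists of the $(n-3)$-dimensional isotropic $W \supset V_{n-4}$ not contained in the hyperplane. This amounts to choosing an isotropic line in $V_{n-4}^\perp/V_{n-4} \simeq \C^{10}$ not in the hyperplane $\C^9$, that is, the open part of $Q_8$ complementary to a hyperplane section. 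This open part is $\SO_{10}$-isomorphic to $\SO_9/\SO_8$, an affine quadric. Identifying it with (the open part of) $\Gr(8,9)$ through the orthogonal complement in $\C^9$ of the projected line should give (4). I will need to check that the paper's $\Gr(8,9)$ means exactly this open set of nondegenerate hyperplanes.

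\textbf{Part (5), the main obstacle.} The open $G$-orbit of $\Hilb_2(X)$ fibers over $G.\Gamma_2$ with fiber the open part of $\Hilb_2(Q_6)$, which is $\SO_8/{\rm S}({\rm O}_3\times {\rm O}_5)$ by Proposition \ref{prop:quadrics}. The total space therefore fibers over $\OG(n-4,2n+1)$ with fiber $F$, an open subset of a fibration over $\SO_9/\SO_8$ with fiber $\SO_8/{\rm S}({\rm O}_3\times{\rm O}_5)$. By parabolic induction, $G$-sphericality is equivalent to sphericality of $F$ under the Levi factor $\GL_{n-4}\times \SO_9$. The $\GL_{n-4}$ factor acts trivially on the fiber (its centre acts only by scalars, which are harmless), so the question is whether $F = \SO_9/H$ is $\SO_9$-spherical. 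Here $H \subset \mathrm{Spin}_7 \subset \SO_9$ (the stabiliser of a nondegenerate hyperplane, i.e. $\SO_8$, seen through triality) is the preimage of ${\rm S}({\rm O}_3\times{\rm O}_5) \subset \SO_8$. I would show non-sphericality in one of two ways. The first is a dimension count: $\dim F = 8 + 15 = 23$ while $\dim B_{\SO_9} = 20$, so $F$ is not spherical. The second is to use Krämer's classification: $\mathrm{Spin}_7$ is spherical in $\SO_9$, but the triality image of ${\rm S}({\rm O}_3\times{\rm O}_5)$ in $\mathrm{Spin}_7$, acting on $\SO_9$, is too small. The dimension count suffices and is the cleanest argument. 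The delicate point is verifying that the $\GL_{n-4}$-part contributes no extra Borel directions acting on $F$, which follows from the parabolic induction structure. This matches Example \ref{exam:F4/P3}(2) for $n=4,5$ and extends it to all $n \geq 4$.
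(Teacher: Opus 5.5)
Your reduction of (1), (2) and (6) to the cominuscule space $\OG(n+1,2n+2)$ follows the paper's proof, and so does your dimension count $\dim F = 8+15 = 23 > 20$ for (5). Your parametrisation of translates by isotropic $W_{n-3}\subset\C^{2n+2}$ in (4) is a valid variant of the paper's pair $(E_{n-4},E_{n+4})$.

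The genuine error is the $\SO_{2n+1}$-description of $\Gamma_2$ on which you base (3). The isotropic $V_n\subset\C^{2n+1}$ containing a fixed isotropic $V_{n-4}$ form $\OG(4,9)\simeq\OG(5,10)$. This is the $10$-dimensional spinor variety, not $Q_6$. Moreover, there is no nontrivial homomorphism $\mathrm{Spin}_9\to\SO_8$: the spin representation of $\mathrm{Spin}_9$ is $16$-dimensional, and the $8$-dimensional one belongs to $\mathrm{Spin}_7$. So the stabiliser of $\Gamma_2$ does not contain the whole $\SO_9$ Levi factor. Your description also contradicts your own (4): if $V_{n-4}$ alone determined the translate, the fiber of $G.\Gamma_2\to\OG(n-4,2n+1)$ would be a point, not $8$-dimensional.

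The correct description comes out of your (4). Take $W_{n-3}\not\subset\C^{2n+1}$, and set $E_{n-4}=W_{n-3}\cap\C^{2n+1}$ and $E_{n+4}=W_{n-3}^\perp\cap\C^{2n+1}$. Then $E_{n+4}$ is a hyperplane of $E_{n-4}^\perp$, and $E_{n+4}/E_{n-4}$ is nondegenerate. The restriction $V_{n+1}\mapsto V_{n+1}\cap\C^{2n+1}$ sends $\{V_{n+1}\supset W_{n-3}\}$ onto a connected component of $\{V_n\in X \mid E_{n-4}\subset V_n\subset E_{n+4}\}\simeq\OG(4,E_{n+4}/E_{n-4})\simeq\OG(4,8)$. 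That component is $Q_6$, as in the paper.

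With this, (3) holds for the following reason. The group $\SO(E_{n+4}/E_{n-4})\simeq\SO_8$, sitting inside the $\SO_9$ factor of the Levi subgroup of $\Stab_G(E_{n-4})$, stabilises $\Gamma_2$. Being connected, it preserves each component. Via triality it acts on that component through $\PSO_8=\Aut(Q_6)^\circ$.

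A smaller point in (4): writing the fiber as $\{u\in\C^9 \mid q(u)=-1\}\simeq\SO_9/\SO_8$ is right. However, $u\mapsto u^\perp$ is $2:1$ onto the nondegenerate hyperplanes, and the sign of $u$ records which component of $\OG(4,E_{n+4}/E_{n-4})$ is taken. So the fiber is a double cover of an open subset of $\Gr(8,9)$, not isomorphic to it. Only its dimension $8$ enters (5), so the conclusion is unaffected.

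Finally, drop the Kr\"amer/$\mathrm{Spin}_7$ alternative in (5). The stabiliser of a nondegenerate hyperplane in $\SO_9$ is $\SO_8$, not $\mathrm{Spin}_7$, and the dimension count already suffices.
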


\begin{proof}
It is a classical result that $\Aut(X)$ is of type $D_{n+1}$ and that $X$ is isomorphic to $\OG(n+1,2n+2)$. This remark and Corollary \ref{cor:geometric-comin} prove (6). Proposition \ref{prop:geom-comin} implies that (1) and (2) hold: $\Gamma_2 = \OG(4,8)$ is a smooth quadric of dimension~$6$. Let $E$ be a vector space of dimension $2n+1$ endowed with a non-degenerate quadratic form. $\Gamma_2$ is determined by an isotropic subspace $E_{n-4} \subset E$ of dimension $n-4$ together with a (general) subspace $E_{n+4} \subset E$ of dimension $n+4$ such that $E_{n-4} \subset E_{n+4} \subset E_{n-4}^\perp$. Then $\Gamma_2 = \Gamma_2(x,y)$ is the connected component containing $x$ (and $y$) of 
$\Gamma_\circ = \{ V_n \in X \ | \ E_{n-4} \subset V_n \subset E_{n+4} \}$
and is isomorphic to $\OG(4,E_{n+4}/E_{n-4}) \simeq \OG(4,8)$.
The map $G.\Gamma_2 \to \OG(n-4,2n+1), g.\Gamma_2 \mapsto g.E_{n-4}$ satisfies assertion~(4). We are left with assertions (3) and (5). Let $C_2$ be a general conic passing through $x = 1.P$ and $y = z_d^P.P$. We have a map $G.C_2 \to G.\Gamma_2$ whose fiber is an open subset of $\Hilb_2(\Gamma_2) \simeq \Hilb_2(Q_6)$. Let $\Qsf = \Stab_G(E_{n-4})$. The Levi subgroup of $\Qsf$ is isomorphic to $\GL_{n-4}\times \SO_9$ proving (3). Furthermore, the $\GL_{n-4}$ part of this Levi subgroup acts trivially on the fiber $F$ of the map $G.C_2 \to \OG(n-4,2n+1)$. Therefore it is enough to prove that $F$ is not $\SO_9$-spherical. But $\dim F = \dim \Hilb_2(X) - \dim \OG(n-4,2n+1) = \dim \Gr(8,9) + \dim \Hilb_2(\Gamma_2) = 23$ while the dimension of a Borel subgroup in $\SO_9$ is $20$, proving the claim.
\end{proof}

\begin{prop}
\label{prop:geom-C}
Assume that $X = \IG(p,2n)$ with $G = \Sp_{2n}$ and $1<p < n$.
\begin{enumerate}
\item $\Gamma_2$ is a smooth quadric of dimension $3$.
\item A general conic is contained in a unique translate of $\Gamma_2$.
\item $\Stab_G(\Gamma_2)$ contains $\SO(\Gamma_2)$.
\item We have a $G$-map $G.\Gamma_2 \to \IG(p-2,2n)$ with fiber $\Gr(4,2(n+2-p))$.
\item $\Hilb_2(X)$ is not $G$-spherical.
\end{enumerate}
\end{prop}

\begin{proof}
Let $E$ be a symplectic vector space of dimension $2n$. We view $\IG(p,2n)$ as a subvariety of $\Gr(p,E)$. By Proposition \ref{prop:geom-comin}, a general conic in $X$ is contained in a unique subvariety $\{ V_p \in X \ | \ E_{p-2} \subset V_p \subset E_{p+2} \}$ where $E_{p-2}$ and $E_{p+2}$ are subspaces of $E$ of dimension $p - 2$ and $p + 2$. Furthermore, since all subspaces of the conic are isotropic, we get that $E_{p-2} \in \IG(p-2,2n)$ while $E_{p+2}/E_{p-2} \in \Gr(4,E_{p-2}^\perp/E_{p-2})$. In that case $\Gamma_2 = \Gamma_2(x,y)$ is a translate  of the following set
$$\Gamma_\circ = \{ V_p \in X \ | \ E_{p-2} \subset V_p \subset E_{p+2} \} \simeq \LG(2,E_{p+2}/E_{p-2}) \simeq \LG(2,4).$$
This proves (1) and (2). The map $G.\Gamma_2 = G.\Gamma_\circ \to \IG(p-2,2n), g.\Gamma_\circ \mapsto g.E_{p-2}$ satisfies assertion (4). 
Let $C_2$ be a general conic passing through $x = 1.P$ and $y = z_d^P.P$. Then $\Gamma_2$ is the only translate of $\Gamma_\circ$ containing $C_2$ and we have a map $G.C_2 \to G.\Gamma_2$ whose fiber is an open subset of $\Hilb_2(\Gamma_2) \simeq \Hilb_2(Q_3)$. Let $\Qsf = \Stab_G(E_{p-2})$. The Levi subgroup of $\Qsf$ is isomorphic to $\GL_{p-2}\times \Sp_{2(n+2-p)}$. Furthermore, the $\GL_{p-2}$ part of this Levi subgroup acts trivially on the fiber $F$ of the map $G.C_2 \to \IG(p-2,2n)$. Therefore it is enough to prove that $F$ is not $\Sp_{2(n+2-p)}$-spherical. We have a map $F \to \Gr(4,2(n+2-p))$ with fiber an open subset of $\Hilb_2(\Gamma_2)$. The variety $\Gr(4,2(n+2-p))$ is a symmetric space for $\Sp_{2(n+2-p)}$ whose open orbit is $\Sp_{2(n+2-p)}/\Sp_4 \times \Sp_{2(n-p)}$. The group $\Sp_{2(n-p)}$ acts trivially on $\Hilb_2(\Gamma_2)$ and $\SO_5 \simeq \Sp_4$ acts on $\Gamma_2$, proving (3). Let $B'$ be a Borel subgroup such that $B'.(\Sp_4 \times \Sp_{2(n-p)})$ is dense in $\Sp_{2(n+2-p)}/\Sp_4 \times \Sp_{2(n-p)}$, then $\dim B' \cap (\Sp_4 \times \Sp_{2(n-p)}) = 8(n-p)$. Assume that $2(n-p) > 4$. The Satake diagram of this symmetric space proves that $B'$ contains the Borel subgroup of a subgroup $\Sp_{2(n-p-2)} \subset \Sp_{2(n-p)}$. In particular, the projection of $B' \cap (\Sp_4 \times \Sp_{2(n-p)})$ to $\Sp_4$ has dimension at most $4$. If $2(n-p) \leq 4$, then $\dim B' \cap (\Sp_4 \times \Sp_{2(n-p)}) \leq 4$. In any case, the projection of $B' \cap (\Sp_4 \times \Sp_{2(n-p)})$ to $\Sp_4$ has dimension at most $4$ while $\Hilb_2(\Gamma_2)$ has dimension $6$ and therefore $B'$ has no dense orbits in $\Hilb_2(X)$.
\end{proof}

A geometric construction also exists for $X = F_4/P_4$.

\begin{prop}
\label{prop:geom-F}
Assume that $X = F_4/P_4$ and $G = F_4$. 
\begin{enumerate}
\item Then $\Gamma_2$ is a smooth quadric of dimension $7$.
\item A general conic is contained in a unique translate of $\Gamma_2$.
\item $\Stab_G(\Gamma_2) = {\rm Spin}(\Gamma_2)$.
\item $\Hilb_2(X)$ is not $G$-spherical.
\end{enumerate}
\end{prop}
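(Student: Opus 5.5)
We plan to use the standard realisation of $X = F_4/P_4$ as a hyperplane section of the Cayley plane $\mathbb{OP}^2 = E_6/P_6 \subset \bP(J_3(\mathbb{O}))$. Concretely, $X = E_6/P_6 \cap \bP(e^\perp)$ for a general (non-isotropic) vector $e$, and $F_4 = \Stab_{E_6}(e)$.

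\textbf{(1) and (2).} A general conic of $X$ is a general conic of $E_6/P_6$, since a smooth conic through two general points of $X$ lies in $X$ exactly when its span does. By Proposition~\ref{prop:geom-comin}, such a conic lies in a unique translate $Q_8$ of $\Gamma_2^{E_6/P_6}$, which is an $8$-dimensional quadric spanning a $\bP^9$. For general $e$ this $\bP^9$ is not contained in $\bP(e^\perp)$, so $Q_8 \cap \bP(e^\perp)$ is a hyperplane section $Q_7$. For general data it is smooth: $e$ restricted to the $10$-dimensional span is non-isotropic.

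We then check that $\Gamma_2^X = Q_7$:
\begin{itemize}
\item Every conic of $X$ through the two general points $x,y$ is a conic of $E_6/P_6$ through $x,y$, hence lies in $Q_8$ and therefore in $Q_7$.
\item Conversely, conics of $Q_7$ through $x,y$ sweep out $Q_7$.
\end{itemize}
So $\Gamma_2^X(x,y) = Q_7$, which gives (1). Uniqueness in (2) follows from uniqueness of $Q_8$: any translate of $\Gamma_2^X$ containing the conic is cut out from a translate of $Q_8$ containing it.

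\textbf{(3).} Realise the $10$-dimensional space $V_{10}$ spanned by $Q_8$ via the decomposition of $J_3(\mathbb{O})$ under $\mathrm{Spin}_{10}$. Stabilising $Q_7$ inside $F_4$ amounts to stabilising $V_{10} \cap e^\perp$ and $e$. The intersection of $\Stab_{E_6}(Q_8)$ (a parabolic with Levi containing $\mathrm{Spin}_{10}$) with $F_4$ is then the stabiliser in $F_4$ of the decomposition $J_3(\mathbb{O}) = \C e \oplus V_9 \oplus \mathbb{O}^2$, which is the maximal rank subgroup $\mathrm{Spin}_9 \subset F_4$.

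Dimension check: the family $G.\Gamma_2$ has dimension $52-36 = 16 = \dim \mathbb{OP}^2$. This is consistent with $\Hilb_2(X)$ having dimension $34 = 16 + \dim \Hilb_2(Q_7) = 16 + 18$.

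The main obstacle is showing the stabiliser is exactly $\mathrm{Spin}(\Gamma_2)$, in particular that it is connected and contains no extra components. We would handle this via Lemma~\ref{lemm:stab-inclusion}-style arguments: the stabiliser is reductive because $G/\Stab$ is the affine variety $F_4/\mathrm{Spin}_9$ (the open orbit of non-isotropic points), and $\mathrm{Spin}_9$ is its own normaliser in $F_4$.

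\textbf{(4).} We argue by dimension, or more precisely via a Borel count. The open orbit fibres over $F_4/\mathrm{Spin}_9$, a rank-one symmetric space, with fiber $\Hilb_2(Q_7)$, which is $\mathrm{Spin}_9$-birational to $\SO_9/{\rm S}({\rm O}_3\times{\rm O}_6)$. Sphericality of $G/\Stab_G(C_2)$ would force the generic stabiliser $B'$ of $B$ on $F_4/\mathrm{Spin}_9$ to have a dense orbit on this fiber.

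Counting dimensions:
\begin{itemize}
\item $B$ has dimension $28$.
\item The base $F_4/\mathrm{Spin}_9$ has dimension $16$, so $\dim B' = 12$.
\item The fiber $\Hilb_2(Q_7)$ has dimension $18 > 12$.
\end{itemize}
So there is no dense orbit, and $\Hilb_2(X)$ is not $G$-spherical. Alternatively, this already follows from Example~\ref{exam:F4/P3}(4), since $34 > 28$.
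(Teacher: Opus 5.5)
Your proposal follows the paper's proof essentially step for step: realise $X$ as a hyperplane section of $E_6/P_6$, and cut the unique $8$-dimensional quadric containing a general conic by that hyperplane to get $\Gamma_2 = Q_7$. Then identify $G.\Gamma_2$ with the open $F_4$-orbit $F_4/\mathrm{Spin}_9$ in $E_6/P_1$, which gives the stabiliser directly so no separate connectedness argument is needed, and compare $\dim(B' \cap \mathrm{Spin}_9) = 12$ with $\dim \Hilb_2(Q_7) = 18$.

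Only minor slips need fixing. A general conic of $X$ is not a general conic of $E_6/P_6$, since conics in $X$ form a codimension-$3$ family; the fact you actually use is that two general points of $X$ are not joined by a line of $E_6/P_6$. Also, your decomposition $\C e \oplus V_9 \oplus \mathbb{O}^2$ is only $26$-dimensional, so a one-dimensional summand is missing.
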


\begin{proof}
(1) We use the fact that $F_4/P_4$ can be realised as a general hyperplane section $H$ of $E_6/P_6$ (see for example \cite{benedetti.perrin:cohomology}). A general conic in $E_6/P_6$ is contained in a unique translate of $\Gamma_2^{E_6/P_6}$ which is a smooth quadric of dimension $8$. This proves that a general conic in $X$ is contained in a translate of a general hyperplane section of $\Gamma_2^{E_6/P_6}$ which is a smooth quadric of dimension $7$. Furthermore, $\Gamma_2^X \subset \Gamma_2^{E_6/P_6} \cap H$ and since $\Gamma_2^{E_6/P_6} \cap H$ is a smooth quadric, any point in $\Gamma_2^{E_6/P_6} \cap H$ is contained on a conic passing through $1.P$ and $z_d^P.P$ thus $\Gamma_2^X = \Gamma_2^{E_6/P_6} \cap H$.

Let $C_2$ be a general conic passing through  $x = 1.P$ and $y = z_d^P.P$. We have a map $G.C_2 \to G.\Gamma_2^X$. Since $E_6.\Gamma_2^{E_6/P_6} \simeq E_6/P_1$, we get that $G.\Gamma_2^X$ is the open $G$-orbit in $E_6/P_1$. This orbit is isomorphic to $F_4/{\rm Spin_9}$. We thus have $\Stab_G(\Gamma_2^X) = {\rm Spin_9}$, proving (3), and an equivariant map $G.C_2 \to F_4/{\rm Spin_9}$ with fiber given by an open subset of $\Hilb_2(\Gamma_2^X) = \Hilb_2(Q_7)$. If $B'$ is a Borel subgroup of $G$ such that $B'.{\rm Spin}_9$ is dense in $F_4/{\rm Spin}_9$, then $\dim(B' \cap {\rm Spin}_9) = 12$ while $\dim\Hilb_2(Q_7) = 18$ proving that $\Hilb_2(X)$ is not $G$-spherical.
\end{proof}

\begin{remark}
\label{remark:G2}
If $X = G_2/P_1$, then $\Gamma_2 = X$.
\end{remark}

We give a result on conics on the $G_2$-quadric $X = G_2/P_1$ which is probably well known to experts. We thank Laurent Manivel for provinding the following proof.

\begin{prop}
The group $G_2$ has no dense orbit on $\Hilb_2(X)$. More precisely, the complexity of the action is $1$.
\end{prop}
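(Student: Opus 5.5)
The plan is to work with the standard model of $X = G_2/P_1$. Let $V_7 = \mathrm{Im}\,\mathbb{O}$ be the $7$-dimensional representation of $G_2$. It carries a $G_2$-invariant nondegenerate quadratic form $q$ and an invariant alternating $3$-form $\phi(u,v,w) = q(uv,w)$, and $X = Q_5 \subset \bP(V_7)$ is the quadric $\{q = 0\}$. A general conic in $Q_5$ is the intersection $\bP(V_3) \cap Q_5$ with its linear span $\bP(V_3)$. This gives a $G_2$-equivariant birational map $\Hilb_2(X) \dasharrow \Gr(3,V_7)$. Since both the existence of a dense orbit and the codimension of a general orbit are birational invariants of the action, the statement reduces to this claim: $G_2$ acts on $\Gr(3,7)$ (of dimension $12$) with general orbits of codimension exactly $1$.

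The first step shows that there is no dense orbit by producing a nonconstant invariant rational function. For $V_3 \in \Gr(3,V_7)$, the restriction $\phi|_{V_3}$ lies in the line $\Lambda^3 V_3^*$. The discriminant $\det(q|_{V_3})$ lies in $(\Lambda^3 V_3^*)^{\otimes 2}$. Hence
$$f(V_3) = \frac{(\phi|_{V_3})^2}{\det(q|_{V_3})}$$
is a well-defined $G_2$-invariant rational function on $\Gr(3,7)$. To see that $f$ is nonconstant, I evaluate it on two planes. On an associative $3$-plane (for example $\mathrm{Im}\,\mathbb{H}$) it is nonzero. On a $3$-plane $V_3$ with $q|_{V_3}$ nondegenerate but $\phi|_{V_3} = 0$ it vanishes; such a plane exists, for instance $\langle e_1, e_2, e_4 \rangle$ in a basis where $e_1 e_2 = e_3$. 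So $f$ is a nonconstant invariant, general orbits have codimension $\geq 1$, and $G_2$ has no dense orbit on $\Hilb_2(X)$.

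The second step proves that the codimension is exactly $1$. It suffices to exhibit a general $V_3$ whose stabiliser has dimension $3 = 14 - 11$, since then its orbit has dimension $11$. I would use the subgroup $\SO_4 = (\SL_2 \times \SL_2)/\pm 1 \subset G_2$ stabilising an associative plane $A$. Under it, $V_7 = A \oplus A^\perp$, where $A = \mathfrak{sl}_2$ is the adjoint representation of one factor and $A^\perp = \C^2 \otimes \C^2$. For the diagonal $\SL_2$, we get $A \simeq S^2\C^2$ and $A^\perp \simeq S^2\C^2 \oplus \C$. So for an isomorphism $\iota : A \to S^2\C^2 \subset A^\perp$ and $t \in \C$, the graph $V_3(t)$ of $t\iota$ is stable under the diagonal $\SO_3$. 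Computing $f(V_3(t))$ should give a nonconstant function of $t$, so the curve $t \mapsto V_3(t)$ is transverse to the orbits and passes through general points.

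The remaining point, which I expect to be the main obstacle, is to check that $\dim \Stab_{G_2}(V_3(t)) = 3$ for general $t$, i.e.\ no larger than the $3$-dimensional $\SO_3$ already found. I would do this with a Lie algebra computation, using the decomposition of $\fg_2$ under the diagonal $\SL_2$:
$$\fg_2 = \mathfrak{so}_4 \oplus (A \otimes A^\perp)_{\text{part}} = (S^2 \oplus S^2) \oplus (S^4 \oplus S^2).$$
The condition $X\cdot V_3(t) \subset V_3(t)$ is $\SL_2$-equivariant, so it can be tested one isotypic component at a time. The claim to verify is that for $t \neq 0$ and generic, only a single $S^2$ copy, namely the diagonal $\mathfrak{so}_3$, preserves the graph. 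Once this holds, general orbits have dimension $11$, hence codimension $1$ in $\Hilb_2(X)$, which is the statement that the complexity of the action is $1$.
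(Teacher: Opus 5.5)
Your first step is correct, and it takes a different route from the paper. The function $f=(\phi|_{V_3})^2/\det(q|_{V_3})$ is a well-defined $G_2$-invariant rational function on $\Gr(3,V_7)$, and your two test planes show it is nonconstant, so every orbit has codimension at least $1$. The paper never writes down an invariant; it gets both bounds from one computation of the generic stabiliser. (As in the paper, ``complexity'' must be read as the codimension of a general $G_2$-orbit, since a general $B$-orbit has codimension at least $12-8=4$.)

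The gap is in the second step, which carries all of the ``more precisely''. The bound $\dim\Stab_{G_2}(V_3(t))\le 3$ is only announced as the claim to verify, so as written you have proved codimension $\ge 1$ and nothing more. This step is not a formality. By Schur, the infinitesimal action $\fg_2\simeq S^4\oplus 3S^2\to T_{V_3(t)}\Gr(3,V_7)\simeq\Hom(S^2,S^2\oplus S^0)\simeq S^4\oplus 2S^2\oplus S^0$ is a scalar on $S^4$ plus a $2\times 3$ matrix on the $S^2$-multiplicity spaces. You need the scalar nonzero and the matrix of rank $2$. Both depend on $t$, and each fails somewhere on your pencil:
\begin{itemize}
\item at $t=0$ the plane is associative, with stabiliser $\SO_4$ whose Lie algebra is $2S^2$, so the matrix has rank $1$;
\item at the two values where $q|_{V_3(t)}$ vanishes, the plane is isotropic and stabilised by a copy of $\SL_3$ whose Lie algebra is $S^2\oplus S^4$, so the scalar vanishes.
\end{itemize}
So actual structure constants of $\fg_2$ on $V_7$ must enter, and the proposal never supplies them.

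The missing idea is the one the paper uses. For a $3$-plane $\Pi$, the space $\Pi^\perp$ is $4$-dimensional, so the $3$-form $\phi|_{\Pi^\perp}$ (the paper's $\Omega$) has a kernel line $K$, and $\Stab_{G_2}(\Pi)\subset\Stab_{G_2}(K)$. Here is how it applies to your family.
\begin{itemize}
\item Let $v_0$ span the trivial summand $S^0\subset A^\perp$. It is non-isotropic and $V_3(t)\subset v_0^\perp$.
\item Then $V_3(t)^\perp=\C v_0\oplus W_t$ with $W_t\simeq S^2$. The form $\phi(v_0,\cdot,\cdot)|_{W_t}$ is an $\SO_3$-invariant element of $\Lambda^2S^2\simeq S^2$, hence zero. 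So $v_0\in K$, and $K=\C v_0$ for generic $t$.
\item The identity component of $\Stab_{G_2}(K)$ is $\Stab_{G_2}(v_0)\simeq\SL_3$. Write $v_0^\perp=E\oplus E^\vee$ for its decomposition; your diagonal $\SO_3$ is the orthogonal group of an invariant symmetric form $f_0:E\to E^\vee$.
\item By Schur, for generic $t$ the plane $V_3(t)$ is the graph of $sf_0$ with $s\neq 0$. Its stabiliser in $\SL_3$ is $\SO(f_0)$, of dimension $3$.
\end{itemize}
This is the paper's computation specialised to your pencil. Finally, your claims that the curve is transverse to the orbits and meets general orbits are not needed. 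Orbit dimension is lower semicontinuous, so one plane with $3$-dimensional stabiliser, together with your first step, already gives a general orbit of codimension exactly $1$.
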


\begin{proof}
We prove that a general $G_2$-orbit has codimension $1$ in $\Hilb_2(X)$. Recall some notations, we refer to \cite[Subsection 2.1.2]{manivel:cayley} for more details. Let $V_7$ be the unique irreducible $7$-dimensional $G_2$-representation. If $T$ is a maximal torus, its weights are given by short roots and $0$. Fix some set of positive roots and let $(\alpha_i)_{i \in [1,3]}$ be the positive short roots. For all $i \in [1,3]$, choose $T$-eigenvectors $v_{\pm i} \in V_7$ of weight $\pm\alpha_i$ and $v_0$ of weight $0$. We may choose these vectors so that the $G_2$-invariant bilinear form $q$ (we will also denote by $q$ the associated bilinear form) is given by 
$$q\left(\sum_{i \in [1,3]}(\lambda_i v_i + \lambda_{-i}v_{-i}) + \lambda_0v_0\right) = \sum_{i = 0}^3 \lambda_i\lambda_{-i}$$ 
and the $G_2$-invariant trilinear form $\Omega$ is given by
$$\Omega = v_0 \wedge \left( \sum_{i = 1}^3 v_i \wedge v_{-i} \right) + v_1 \wedge v_2 \wedge v_3 +  v_{-1} \wedge v_{-2} \wedge v_{-3}.$$
 
Since $X = Q_5 \subset \bP(V_7)$ is a $5$-dimensional quadric, the Hilbert scheme $\Hilb_2(X)$ is birational to $Y = \Gr(3,V_7)$. We prove that a general $G_2$-orbit in $Y$ has codimension $1$. If $\Pi \in \Gr(3,V_7)$ is a general $3$-dimensional subspace in $V_7$, then the $G_2$-invariant trilinear form $\Omega$ has a non-trivial $1$-dimensional kernel $K \subset \Pi^\perp$ (since $\dim(\Pi^\perp) = 4$). Note that $\Stab_{G_2}(\Pi) \subset \Stab_{G_2}(K)$. Since $\Pi$ is general, this kernel is generated by a non-isotropic vector and up to $G_2$-action, we may assume that $K = \scal{v_0}$. We now want to understand the subspaces $\Pi \subset v_0^\perp \subset V_7$ for which $v_0 \in \Ker(\Omega\vert_{\Pi^\perp})$ and compute $\Stab_{G_2}(\Pi) \subset \Stab_{G_2}(v_0)$.

The stabiliser $S$ of $v_0$ is, up to a finite group, isomorphic to $\SL_3$. Explicitly $S$ is the subgroup associated to the subsystem of long roots. The representation $V_7$ decomposes as $S$-representation as $V_7 = \scal{v_0} \oplus V_3 \oplus V_3^\vee$ where $v_0^\perp = V_3 \oplus V_3^\vee$, $V_3 =  \scal{v_3,v_{-1},v_{-2}}$ and $V_3^\vee = \scal{v_{-3},v_1,v_2}$ (here we assume that $\alpha_3 = \alpha_1 + \alpha_2$ is the highest short root). Note that both $V_3$ and $V_3^\vee$ are isotropic for $q$. We identify $3$-dimensional subspaces $\Pi \subset v_0^\perp = V_3 \oplus V_3^\vee$ not meeting $V_3$ to linear maps $f : V_3 \to V_3^\vee$ via $f \mapsto \Pi_f := \scal{v + f(v) \ | \ v \in V_3}$. If $f^\vee$ is the adjoint of $f$ with respect to $q$, then $\Pi_f^\perp = \scal{v - f^\vee(v) \ | \ v \in V_3}$. The subspace $\Pi_f$ associated to $f$ satisfies $v_0 \in \Ker(\Omega\vert_{\Pi_f^\perp})$ if the $2$-form $\omega = \Omega(v_0,-,-)$ vanishes on $\Pi_f^\perp$. We have
$$\omega = \sum_{i = 1}^3 v_i \wedge v_{-i},$$
thus $\omega(V_3,V_3) = 0 = \omega(V_3^\vee,V_3^\vee)$ and $\omega(v,\varphi) = q(v,\varphi)$ for $v \in V_3$ and $\varphi \in V_3^\vee$. The condition $\omega(v - f^\vee(v),w - f^\vee(w)) = 0$ therefore translates to $q(v,f^\vee(w)) = q(f^\vee(v),w) = q(v,f(w))$ for all $v,w \in V_3$. This is equivalent to $f^\vee = f$. The subspaces $\Pi_f$ satisfying $v_0 \in \Ker(\Omega_{\Pi_f^\perp})$ are thus exactly given by symmetric maps $f : V_3 \to V_3^\vee$. The stabiliser in $\SL_3$ of a general element of this form is the stabiliser of $q : V_3 \to V_3^\vee$ and is thus isomorphic to $\SO(q) \cap \SL_3 = \SO_3$. The general stabiliser is therefore of dimension $3$ and the general $G_2$-orbit is of dimension $\dim(G_2) - \dim(\SO_3) = 11$ which has codimension $1$ in $\Gr(3,V_7)$ proving the claim.
\end{proof}

The following statement is a consequence of Theorem \ref{thm:Gamma-small-d}, Propositions \ref{prop:geom-comin}, \ref{prop:geom-B}, \ref{prop:geom-C}, \ref{prop:geom-F} and Remark \ref{remark:G2}.
%is the following statement.

\begin{prop}
Assume that $X$ is not a projective space.
\begin{enumerate}
\item A general conic is contained in a unique translate of $\Gamma_2$.
\item $\Gamma_2$ is a smooth quadric.
\item $\Stab_G(\Gamma_2)$ is always connected and spherical except for $X = F_4/P_3$.
\end{enumerate}
\end{prop}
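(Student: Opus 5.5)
The proof is a case-by-case assembly of the earlier results, organised by the value of $d(\Theta)$. Throughout we assume $X$ is not a projective space.

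\textbf{Case $d(\Theta)\ge 2$.} By Theorem~\ref{thm:Gamma-small-d}, $\Gamma_2=C_{\gamma(2)}$ is a smooth conic, which is a one-dimensional smooth quadric.

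For (1), Theorem~\ref{thm:Gamma-small-d} says more: $C_{\gamma(2)}$ is the unique degree two curve through $1.P$ and $z_2^P.P$. A general conic contains a pair of points in the dense $G$-orbit of pairs at distance two. Hence, after translating, it passes through $1.P$ and $z_2^P.P$ and equals $C_{\gamma(2)}$. In other words, it is its own unique translate of $\Gamma_2$.

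For (3), $\Stab_G(\Gamma_2)=\Ssf$ is connected by Proposition~\ref{prop:stab-sd}. Since $\Ssf$ is the stabiliser of a general conic, $G/\Ssf$ is the open orbit of $\Hilb_2(X)$. It is spherical by Theorem~\ref{thm:small-d} except for $X=F_4/P_3$, where it fails by Example~\ref{exam:F4/P3}. I read ``$\Stab_G(\Gamma_2)$ spherical'' as meaning that $G/\Stab_G(\Gamma_2)$ is a spherical homogeneous space.

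\textbf{Case $d(\Theta)=1$.} Here I go through the list: the cominuscule spaces and Table~\ref{table-non-comin}.

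For cominuscule $X$, Proposition~\ref{prop:geom-comin} gives (1) and (2). It also shows that $\Stab_G(\Gamma_2)$ is a parabolic subgroup, hence connected and spherical.

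For $\OG(n,2n+1)$ with $n\ge4$ and for $\IG(p,2n)$, Propositions~\ref{prop:geom-B} and~\ref{prop:geom-C} give (1) and (2). In these cases $\Stab_G(\Gamma_2)$ is the stabiliser of a flag $E_{n-4}\subset E_{n+4}$, respectively $E_{p-2}\subset E_{p+2}$, with isotropy conditions. Concretely, it is the preimage, in the parabolic $\Qsf=\Stab_G(E_{k})$, of the stabiliser in the Levi factor $\SO_9$ (respectively $\Sp_{2(n+2-p)}$) of a non-degenerate subspace. Two facts need checking:
\begin{itemize}
\item Connectedness: the stabiliser is $\SO_8$, or $\Sp_4\times\Sp_{2(n-p)}$, extended by the connected unipotent radical. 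One should also verify that the component-swapping element of ${\rm O}_8$ does not preserve the chosen connected component $\Gamma_2$. Alternatively, one can argue that the stabiliser of the component inside $\SO_9$ is connected, because the stabiliser of the hyperplane is ${\rm S}({\rm O}_8\times{\rm O}_1)\simeq {\rm O}_8$ and ${\rm O}_8$ swaps the two families.
\item Sphericality: $G/\Stab_G(\Gamma_2)$ is parabolically induced from the symmetric spaces $\SO_9/\SO_8$ and $\Sp_{2m}/(\Sp_4\times\Sp_{2m-4})$, hence spherical.
\end{itemize}
The small cases $\OG(2,5)$ and $\OG(3,7)$ are handled directly, since they are $Q_3$ and $\OG(4,8)$ via the $\Aut(X)$ identification.

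For $F_4/P_4$, Proposition~\ref{prop:geom-F} gives $\Stab_G(\Gamma_2)={\rm Spin}_9$. This group is connected (simply connected), and $F_4/{\rm Spin}_9$ is a symmetric space, hence spherical.

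For $G_2/P_1$, Remark~\ref{remark:G2} gives $\Gamma_2=X=Q_5$. Then (1) and (2) are immediate, and $\Stab_G(\Gamma_2)=G$ is trivially connected and spherical.

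\textbf{Main obstacle.} The delicate step is connectedness in the type $B$ case. There $\Gamma_2$ is one component of $\Gamma_\circ$, and one must be sure that passing to the stabiliser of that component (rather than of $\Gamma_\circ$) yields $\SO_8$ times a connected group, with no disconnected piece left over. I would settle this by computing $\Stab_{\SO_9}$ of a component of $\OG(4,8)$ inside a non-degenerate hyperplane directly.
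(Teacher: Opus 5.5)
Your proof is correct. For (1), (2) and the sphericality half of (3) you use the same ingredients as the paper, namely Theorem~\ref{thm:Gamma-small-d}, Theorem~\ref{thm:small-d} and the propositions of Section~\ref{section:geometric}.

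The genuine difference is in how you prove connectedness. The paper computes no stabiliser. It shows once and for all that $(s,h)\mapsto\varphi(s)h$ is a surjection ${\rm Spin}(\Gamma_2)\times(P_x\cap P_y)\to\Stab_G(\Gamma_2)$. Given $g$ in the stabiliser, ${\rm Spin}(\Gamma_2)$ contains an element sending $(x,y)$ to $(g.x,g.y)$, and correcting $g$ by it lands in $P_x\cap P_y$. Connectedness then follows from that of ${\rm Spin}(\Gamma_2)$ and of $P_x\cap P_y$ (Digne--Michel).

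You instead go case by case:
\begin{itemize}
\item Proposition~\ref{prop:stab-sd} when $d(\Theta)\ge 2$;
\item parabolicity in the cominuscule case;
\item ${\rm Spin}_9$ for $F_4/P_4$;
\item $G$ itself for $G_2/P_1$;
\item an explicit flag-stabiliser computation in types $B$ and $C$.
\end{itemize}

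Your route gives explicit stabilisers such as $(\GL_{n-4}\times\SO_8)\ltimes U$. This makes the parabolic-induction argument for sphericality transparent. It also treats $\OG(3,7)\simeq Q_6$ explicitly, which Proposition~\ref{prop:geom-B} (stated for $n\ge 4$) does not cover.

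The price is twofold. You must handle the component-swapping issue in type $B$ by hand. You also need the easy but necessary check that $\Stab_G(\Gamma_2)$ preserves $E_{n-4}=\bigcap_{V\in\Gamma_2}V$ and $E_{n+4}=\sum_{V\in\Gamma_2}V$; you assert this ("is the stabiliser of a flag") rather than justify it. The paper's uniform argument absorbs both points automatically, since anything outside the image of ${\rm Spin}(\Gamma_2)$ is corrected by $P_x\cap P_y$.

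One slip: $\OG(2,5)$ is $\bP^3$, not $Q_3$; the quadric is $\OG(1,5)\simeq\LG(2,4)$. So that case is excluded by the hypothesis rather than handled.
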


\begin{proof}
We only need to prove the connectedness of $\Stab_G(\Gamma_2)$. For $X = G_2/P_1$, we have $\Gamma_2 = X$ and $\Stab_G(\Gamma_2) = G$ is connected. In the other cases, we claim that there is a surjective group morphism ${\rm Spin}(\Gamma_2) \times (P_x \cap P_y) \to \Stab_G(\Gamma_2)$. This proves the connectedness of $\Stab_G(\Gamma_2)$ since ${\rm Spin}(\Gamma_2)$ and $P_x \cap P_y$ are connected (for the later apply  \cite[Proposition 2.1]{digne.michel:representations}).

We now prove the claim. We have a group morphism $\varphi : {\rm Spin}(\Gamma_2) \to \Stab_G(\Gamma_2)$ with finite kernel and an inclusion $P_x \cap P_y \subset \Stab_G(\Gamma_2(x,y)) = \Stab_G(\Gamma_2)$. This gives a group morphism ${\rm Spin}(\Gamma_2) \times (P_x \cap P_y) \to \Stab_G(\Gamma_2)$, $(s,h) \mapsto \varphi(s)h$. Now for $g \in \Stab_G(\Gamma_2)$, both $g.x$ and $g.y$ lie in $\Gamma_2$. Since ${\rm Spin}(\Gamma_2) \to \Aut(\Gamma_2)^\circ$ is surjective, there exists $s \in {\rm Spin}(\Gamma_2)$ with $s.x = g.x$ and $s.y = g.y$. In particular $h = \varphi(s)^{-1}g \in P_x \cap P_y$ and $g = \varphi(s)h$ proving the claim.
\end{proof}

%%%%%%%%%%%%%%%%%%%%%%%%%%%%%%%%%%%%%%%%%%%%%%%%%%%%%%
%\input{local-structure-hilb}

\section{Local structure of Hilbert schemes of conics} \label{section:local}
We now present the second proof of Theorem \ref{thm:main}.
It consists of three steps: 
\begin{enumerate}
\item Show that $\Hilb_{2}(X)$ is smooth (see Corollary \ref{cor:hilb-is-smooth}),  
\item compute the normal space of $\Hilb_{2}(X)$ along a closed orbit (see Section \ref{section:representation}), 
\item  apply the local structure theorem for spherical varieties (see Lemma \ref{lemma:loc-str-spherical}).
\end{enumerate}
We keep notations of Section \ref{section:notations}, but the arguments are independent of Section \ref{section:curves}-\ref{section:geometric}.

\subsection{Local structure of spherical varieties}

Let $H$ be a connected reductive group and choose $T_{H} \subset B_H \subset H$ a maximal torus and a Borel subgroup of $H$. Recall that an $H$-variety is spherical if it is normal and contains a dense $B_H$-orbit. Denote by $\mathfrak{X}(T_{H}) := \Hom_{\Z}(T_{H},\, \bG_m) (\simeq \Hom_{\Z}(B_{H},\, \bG_m))$ the character lattice.

\begin{defn}
Let $Z$ be $H$-spherical.
\begin{enumerate}
        \item The \emph{weight lattice $\Lambda_{H}(Z)$ of $Z$} is the sublattice of $\mathfrak{X}(T_{H})$ defined by
        \[
         \Lambda_{H}(Z) = \left\{\chi_{f} \in \mathfrak{X}(T_{H}) \ | \  b.f = \chi_{f}(b) f,\, \forall b \in B_{H} \text{ for some } 0\not= f \in \C(Z)^{(B_{H})} \right\}.
        \]
%The \emph{weight lattice $\Lambda_{H}(Z)$ of $Z$} is a sublattice of the character lattice $\mathfrak{X}(T_{H}) := \Hom_{\Z}(T_{H},\, \C^{\times}) (\simeq \Hom_{\Z}(B_{H},\, \C^{\times}))$, defined as
%        \[
%           \{\chi_{f} \in \mathfrak{X}(T_{H}) : b.f = \chi_{f}(b) \cdot f,\, \forall b \in B_{H} \text{ for some } 0\not= f \in \C(Z)^{(B_{H})} \}.
 %       \]
\item The rank $\rank_H(Z)$ of $Z$ is by definition the rank of the weight lattice $\Lambda_{H}(Z)$. % as a free abelian group.
    \end{enumerate}
\end{defn}

We thank Michel Brion for suggesting a proof of Theorem \ref{thm:main} using the following lemma.

\begin{lemma} \label{lemma:loc-str-spherical}
Let $Z$ be a smooth irreducible $H$-variety such that there exists $z \in Z$ with $H. z$ projective and let $L_{H}$ be a Levi subgroup of $\Stab_{H}(z)$.

The variety $Z$ is $H$-spherical if and only if the normal space $N_{z} := T_{z}Z / T_{z} Y$ is $L_{H}$-spherical.
    In this case, $\rank_{H}(Z) = \rank_{L_{H}}(N_{z})$.
\end{lemma}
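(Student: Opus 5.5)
We use the local structure theorem together with Luna's étale slice theorem at the point $z$. Here $Y = H.z$ (the notation in the statement), and we set $P_H := \Stab_H(z)$. Since $Y$ is projective, $P_H$ is a parabolic subgroup of $H$. Replacing $B_H$ by a conjugate, we may assume that $B_H^-$ is opposite to $P_H$, and we take $L_H$ to be the Levi subgroup $P_H \cap P_H^-$. Let $U_H^-$ be the unipotent radical of the opposite parabolic $P_H^-$.

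\textbf{Step 1 (local structure).} Since $Z$ is smooth and $z$ is $L_H$-fixed ($L_H$ is reductive), Luna's slice theorem applied to $L_H$ gives an $L_H$-stable locally closed smooth subvariety $S \ni z$ transversal to $Y$ at $z$. Moreover there is an $L_H$-equivariant étale morphism $(S,z) \to (N_z,0)$ whose differential at $z$ is the isomorphism $T_zS \simeq N_z$.

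To get a global product decomposition, we apply the local structure theorem of Brion--Luna--Vust. Choose an $H$-linearised ample line bundle on an $H$-stable quasi-projective neighbourhood, and a $B_H^-$-eigensection $s$ that does not vanish at $z$ and has weight trivial on $L_H$ up to a character. The resulting open set $Z_s$ is $P_H^-$-stable, contains $z$, and satisfies
\[
Z_s \simeq U_H^- \times S'
\]
for an $L_H$-stable closed subvariety $S'$ through $z$. We take $S = S'$ near $z$. Since $U_H^-.z$ is open in $Y$ and $T_zY = \mathfrak u_H^-$, the subvariety $S'$ is transversal to $Y$ at $z$.

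\textbf{Step 2 (transfer of sphericality).} The Borel subgroup of $H$ contained in $P_H^-$ is $B_H^- = U_H^- \rtimes B_{L_H}^-$. Therefore $B_H^-$ has an open orbit in $Z_s \simeq U_H^- \times S'$ if and only if $B_{L_H}^-$ has an open orbit in $S'$. Because $Z$ is irreducible, any open orbit in $Z_s$ is dense in $Z$.

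Next we pass from $S'$ to $N_z$. The map $S' \to N_z$ is $L_H$-equivariant and étale at $z$, hence dominant, and in characteristic zero it induces an inclusion of function fields $\C(N_z) \subset \C(S')$ which is a finite extension. By Rosenlicht, the existence of a dense $B_{L_H}$-orbit is equivalent to $\C(\cdot)^{B_{L_H}} = \C$ together with a condition on the dimension of a generic orbit. Both conditions are preserved under a finite equivariant dominant map between varieties of the same dimension, since generic orbit dimensions agree there. Hence $S'$ is $L_H$-spherical if and only if $N_z$ is. Normality holds automatically because $Z$ and $N_z$ are smooth.

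\textbf{Step 3 (rank).} For the rank statement, $B$-eigenfunctions on $Z_s$ restrict to $B_{L_H}$-eigenfunctions on $S'$ with the same weights, and conversely they extend via the product structure. This gives $\Lambda_H(Z) = \Lambda_{L_H}(S')$. Comparing $S'$ with $N_z$ is the delicate point: the étale map only shows that the two lattices are commensurable, so the ranks agree, and equality of ranks is all that the statement requires.

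\textbf{Main obstacle.} The main obstacle is Step 1. One must produce a $P_H^-$-stable open affine-type neighbourhood with a product structure whose slice is $L_H$-equivariantly étale over the normal space. I would handle this by first applying Sumihiro to get a quasi-projective $H$-stable neighbourhood of $Y$, then the local structure theorem, and finally Luna's fundamental lemma for the $L_H$-fixed point $z$ in the smooth affine slice.
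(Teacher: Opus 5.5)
Your proof is correct. For the equivalence it is the paper's argument. The Brion--Luna--Vust local structure theorem for $P_H^-$ gives $Z_s\simeq U_H^-\times S'$ with $S'$ smooth, affine, $L_H$-stable and containing $z$. Luna's slice theorem at the $L_H$-fixed point $z$ of $S'$ gives an $L_H$-equivariant \'etale map from a neighbourhood of $z$ to $T_zS'\simeq N_z$. Open Borel orbits then transfer along \'etale (hence open, quasi-finite) maps, so your Rosenlicht/function-field detour is harmless but not needed. Two small corrections to Step 1. The condition on $s$ should read: $s(z)\neq 0$ and the stabiliser of the line $\C s$ is exactly $P_H^-$; for example, take $s$ to be the lowest weight vector dual to the highest weight line representing $z$ in an equivariant projective embedding. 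Also, Luna's theorem must be applied in the affine $S'$, not directly in $Z$, as your final paragraph indicates.

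The rank is where you diverge from the paper. The paper first cites the local structure theorem for spherical varieties to get $\rank_H(Z)=\rank_{L_H}(W)$. It then uses Luna's linearisation corollary: $W$ is smooth affine with an $L_H$-fixed point $z$ and $\C[W]^{L_H}=\C$ (because of the open orbit), so $W\simeq T_zW\simeq N_z$ $L_H$-equivariantly, and the weight lattices are identified outright.

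You instead read off $\Lambda_{L_H}(S')=\Lambda_H(Z)$ directly from the product decomposition: every $B_H^-$-eigenfunction is $U_H^-$-invariant, hence pulled back from $S'$. This is more elementary. You then settle for commensurability of lattices along the \'etale map, but you only assert it, and it needs a line of proof. Pullback gives $\Lambda_{L_H}(N_z)\subset\Lambda_{L_H}(S')$. Conversely, if $d=[\C(S'):\C(N_z)]$, the norm of an eigenfunction of weight $\chi$ is an eigenfunction of weight $d\chi$, because the norm commutes with the field automorphisms induced by $B$. Alternatively, in the spherical case the generic stabilisers $B_x\subset B_{f(x)}$ have the same dimension, hence finite index.

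So your route gives only equality of ranks, which is all the lemma claims, while the paper's route gives the stronger equivariant isomorphism $W\simeq N_z$. Computing weights with respect to $B_H^-$ rather than $B_H$ only twists the lattices by $w_0$, so the ranks are unaffected.
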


\begin{proof}
Set $Y = H.z$ and $P_{H} = \text{Stab}_{H}(z)$. Up to conjugation, we may assume that $B_{H}$ fixes $z$. Let $B_{H}^{-}$ and $P_{H}^{-}$ be the parabolic subgroups opposite to $B_H$ and $P_H$ with respect to $T_H$. We may assume that $L_{H} = P^{-}_{H} \cap P_{H}$.
    Denote by $R^{u}(P_{H})$ and $R^{u}(P^{-}_{H})$ the unipotent radicals of $P_{H}$ and $P^{-}_{H}$, respectively.
    By the local structure theorem \cite[Th\'eor\`eme 1.4]{brion.luna.vust:espaces}, there exists an $L_{H}$-stable smooth affine subvariety $W \subset Z$ such that $z \in W$ and the map
    \[
        R^{u}(P^{-}_{H}) \times W \rightarrow P^{-}_{H} . W \subset Z, \quad (p,\,w) \mapsto p . w
    \]
is an open embedding. This implies that $W$ is $L_{H}$-spherical if and only if $Z$ is $H$-spherical: $B^{-}_{L_H} := B^{-}_{H} \cap L_{H}$ has an open orbit in $W$ if and only if $B_H^- = B_{L_H}^-R^u(P_H^-)$ has a dense orbit in $R^u(P_H^-).W$ and thus in $Z$. Moreover, by Luna's \'etale slice Theorem \cite[p. 97]{luna:slices}, there is an $L_{H}$-equivariant \'etale morphism $S \rightarrow T_{z}W \simeq N_{z}$ from an $L_{H}$-stable affine open neighborhood $S$ of $z \in W$ onto an $L_{H}$-stable open subset of $N_{z}$.
    Therefore $N_{z}$ is $L_{H}$-spherical if and only if so is $W$, or equivalently $Z$ is $H$-spherical (recall that \'etale morphisms are open).

Assume that $Z$ is $H$-spherical, 
%   By \cite[Lemma 7.5]{knop:luna}, $P^{-}_{H}$ is the stabiliser of the union of prime divisors on $Z$ which are not $H$-stable but $B_{H}^{-}$-stable, and not containing $Y$. Therefore by 
and then the local structure theorem for spherical varieties \cite[Theorem 3.2.2]{perrin:sanya} implies that $W$ above satisfies $\rank_{H}(Z) = \rank_{L_{H}}(W) - \rank_H(Y) = \rank_{L_H}(W)$.
% there is an $L_{H}$-stable smooth affine subvariety $W' \subset Z$ such that $\rank_{H}(Z) = \rank_{L_{H}}(N_{z})$,
%    \[
%        R^{u}(P^{-}_{H}) \times W' \rightarrow P^{-}_{H} . W' \subset Z, \quad (p,\,w) \mapsto p. w
%    \]
%    is an open embedding and $W' \cap Y$ is an $L_{H}$-orbit.
%    In fact, 
Since $Y$ is projective, $W \cap Y$ is affine and projective thus consists of the single $L_{H}$-fixed point $z$.
%    Since $R^{u}(P^{-}_{H}) . z' = P^{-}_{H} . (W' \cap Y) = (P^{-}_{H} . W') \cap Y$ is open in $Y$, $z'=z$.
Furthermore, since $L_{H}$ has an open orbit in $W$, we have a $L_{H}$-isomorphism $W \simeq T_{z}W$ (see \cite[Corollaire III.1.2]{luna:slices}), hence $N_{z} (\simeq T_{z}W)$ is an $L_{H}$-spherical variety of rank $\rank_H(Z)$.
\end{proof}

A classification of spherical representations is well known, see for example \cite{leahy:classification}.
In Table \ref{table-spherical}, following \cite[Table 1]{leahy:classification}, we give a list of some spherical representations which are used in the sequel.
Here, we denote by $V_{H}$ the standard representation of a simple Lie group $H$ of classical type, and by $\C^{1}$ a nontrivial irreducible representation of $\C^{\times}$.
Also, $\Stt_{\text{Spin}_{10}}$ means the spin representation of $\text{Spin}_{10}$.

\begin{table}[ht]
\begin{tabular}{c|c|c}
$H$ & Spherical $H$-representation $V$ & $\rank_{H}(V)$ \\
\hline
$\SL_{n} \times \C^{\times}$ & $V_{\SL_{n}} \otimes \C^{1}$ & 1 \\
$\SL_{m} \times \SL_{n} \times \C^{\times}$ & $V_{\SL_{m}} \otimes V_{\SL_{n}} \otimes \C^{1}$ & $\min(m,\,n)$ \\
$\SL_{n} \times \C^{\times}$ & $(\Sym^{2} V_{\SL_{n}}) \otimes \C^{1}$ & $n$ \\
$\SL_{n} \times \C^{\times}$ & $(\bigwedge^{2}V_{\SL_{n}}) \otimes \C^{1}$ & $\lfloor n/2 \rfloor$ \\
$\SO_{n} \times \C^{\times}$ & $V_{\SO_n} \otimes \C^{1}$ & 2 \\
$\text{Spin}_{10} \times \C^{\times}$ & $\Stt_{\text{Spin}_{10}} \otimes \C^{1}$ & 2 \\
\hline
\end{tabular}

~\\
\centering
~\\
\caption{\label{table-spherical} Some spherical representations and their rank.}
\end{table}

\subsection{\texorpdfstring{$B$}{B}-stable double lines}

We discuss $B$-stable conics on $X$.
Let $V$ be the irreducible representation associated to the fundamental weight $\varpi$.
We denote by $v_{\varpi} \in V$ a nonzero weight vector with weight $\varpi$, and choose the base point $o := [v_{\varpi}]$ so that $X = G . o \subset \bP(V)$.

Let $C$ be a conic on $\bP(V)$, i.e. a closed subscheme of $\bP (V)$ with Hilbert polynomial $2t+1$.
Then there are three possibilities: $C$ is smooth, $C$ is a union of two distinct lines, or $C$ is a non-reduced quadric on a plane in $\bP(V)$.
In the last two cases, we say that $C$ is a \emph{reducible} conic and a \emph{double line}, respectively. 
In any case, $C$ is contained in a unique plane $\Pi \subset \bP(V)$ as a closed subscheme.

\begin{prop} \label{prop:Hilb-irreducible}
    Let $C$ be a $B$-stable conic on $\bP(V)$.
    \begin{enumerate}
        \item $C$ is a double line on a $B$-stable plane $\Pi := \bP (v_{\varpi}, \, v_{\varpi - \delta},\, v_{\varpi - \delta - \alpha})$ supported on a $B$-stable line $\ov{C} := \bP (v_{\varpi}, \, v_{\varpi - \delta})$ for some $\alpha \in \Delta$ satisfying $\langle \alpha^{\vee},\, \delta \rangle < 0$.
        \item $C$ is a closed subscheme of $X$, and $\Hilb_{2}(X)$ is irreducible.
    \end{enumerate}
\end{prop}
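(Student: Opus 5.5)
Since $C$ is contained in a unique plane $\Pi$, the plan is to reduce everything to the $B$-stable flags in $\bP(V)$. By uniqueness, $B$ stabilises $\Pi$ and, by Borel's fixed point theorem applied inside $\Pi$, also stabilises a line and a point. The $B$-stable point is $o=[v_\varpi]$, because $V$ is irreducible with highest weight $\varpi$. A $B$-stable line through $o$ has the form $\bP(v_\varpi,v)$ with $v$ spanning a $B$-stable line in $V/\C v_\varpi$. Such $v$ must be a highest weight vector there, hence of weight $\varpi-\beta$ with $\beta$ simple and $e_\beta v_\varpi$-compatible. Since $f_\beta v_\varpi\ne0$ only for $\beta=\delta$, we get $v=v_{\varpi-\delta}$. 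The same argument applied to $V/\langle v_\varpi,v_{\varpi-\delta}\rangle$ (its $B$-highest weight vectors have weight $\varpi-\delta-\alpha$ with $\alpha$ simple and $f_\alpha v_{\varpi-\delta}\ne0$, i.e. $\langle\alpha^\vee,\varpi-\delta\rangle>0$) gives $\Pi=\bP(v_\varpi,v_{\varpi-\delta},v_{\varpi-\delta-\alpha})$. Since $\langle\alpha^\vee,\varpi\rangle=0$ for $\alpha\ne\delta$, the condition is $\langle\alpha^\vee,\delta\rangle<0$. The case $\alpha=\delta$ is excluded because $\varpi-2\delta$ is not a weight when $\langle\delta^\vee,\varpi\rangle=1$.

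Next I determine which conics in $\Pi$ are $B$-stable. In coordinates $(x_0,x_1,x_2)$ with respect to the three weight vectors, $T$ acts by three distinct characters, and the $B$-stable conics form a closed $B$-stable subset of the $\bP^5$ of conics in $\Pi$. A $T$-stable conic is given by a weight vector in $\Sym^2\Pi^\vee$. A reducible $T$-stable conic $\{x_ix_j=0\}$ with $i\ne j$ is $B$-stable only if both lines are $B$-stable, but $\Pi$ contains a unique $B$-stable line. Thus the possibilities are $x_2^2$ or $x_1^2$, or $x_0^2$ or $x_0x_2-cx_1^2$ if weights coincide. The equality $2(\varpi-\delta)=\varpi+(\varpi-\delta-\alpha)$ would force $\alpha=\delta$, so the weights are distinct. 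The only candidate containing the $B$-stable line $\bP(v_\varpi,v_{\varpi-\delta})$ that is invariant under $U_\alpha$ and $U_\delta$ acting by $x_2\mapsto$ lower terms is $x_2^2=0$. The double lines $x_1^2$ and $x_0^2$ are supported on lines that are not $B$-stable. This yields (1).

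For (2), the key step, and the main obstacle, is to show that the double line $\{x_2^2=0\}\subset\Pi$ lies in $X$. I would exhibit it as the scheme-theoretic limit of smooth conics in $X$. The plane $\Pi$ lies in $\bP(V_{\mathfrak l})$ for the $\SL_2\times\SL_2$ (or rank-two Levi) generated by $\delta$ and $\alpha$. The orbit of $o$ under the Levi $L_{\{\delta,\alpha\}}$ is a rank-two homogeneous space: $\bP^2$ when $\delta,\alpha$ form type $A_2$, or a quadric / $B_2$ space otherwise. Its tangent directions at $o$ are $v_{\varpi-\delta}$ and $v_{\varpi-\delta-\alpha}$, so one can write down the explicit curve $t\mapsto \exp(s f_\delta+s^2 c f_\alpha f_\delta).o$, which is a conic in $X$ contained in $\Pi$ tangent to $\ov C$. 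Its $T$-limit under a one-parameter subgroup contracting toward the $B$-fixed point is a $B$-stable subscheme of $X$ of Hilbert polynomial $2t+1$, hence equal to $C$ by (1), using closedness of $\Hilb_2(X)$ in $\Hilb_2(\bP(V))$.

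Irreducibility then follows from a standard argument. $\Hilb_2(X)$ is projective, and every $G$-stable closed subset, in particular every irreducible component (which is $G$-stable because $G$ is connected), contains a $B$-fixed point by Borel's theorem, namely one of the finitely many conics $C_\alpha$ from (1). I would show that the tangent space at $C_\alpha$, namely $H^0(N_{C/X})$, has dimension equal to the expected dimension $\dim X+2c_1(X)-3$, using $H^1(N)=0$, which follows from the vanishing for $B$-stable subschemes via the Bott--Borel--Weil decomposition. Hence each $C_\alpha$ is a smooth point and lies on a unique component. Finally, the component containing the smooth conics degenerates to every $C_\alpha$ by the limit construction above, so there is only one component.
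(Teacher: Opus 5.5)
Part (1) is correct and follows the paper's argument; your flag argument supplies the details of why a $B$-stable plane must be $\bP(v_\varpi,v_{\varpi-\delta},v_{\varpi-\delta-\alpha})$.

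The gap is the degeneration step in part (2). A smooth conic of $X$ inside $\Pi$ exists only when $\Pi\subset X$, that is, when $\langle\alpha^\vee,\delta\rangle=-1$. Now suppose $\langle\alpha^\vee,\delta\rangle\le -2$, i.e.\ $\delta$ long and $\alpha$ short. Examples are $\LG(n,2n)$, $B_n/P_{n-1}$ with $\alpha=\alpha_n$, $F_4/P_2$ with $\alpha=\alpha_3$, and $G_2/P_2$. Then $\varpi-\delta-\alpha$ is strictly shorter than $\varpi$, so $[v_{\varpi-\delta-\alpha}]\notin X$ and $\Pi\not\subset X$. Since $X$ is cut out by quadrics, and $x_2^2$ is up to scalar the only $B$-eigenvector in $\Sym^2\Pi^\vee$, it follows that $\Pi\cap X\subseteq C$. (For instance, $\LG(2,4)\simeq Q_3$ contains no plane.) So the curve you want cannot exist.

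Your formula is also not defined as written, since $f_\alpha f_\delta\notin\fg$. Its natural repair $\exp(sf_\delta+cs^2f_{\delta+\alpha}).o$ acquires the nonzero term $\tfrac{c^2s^4}{2}f_{\delta+\alpha}^2v_\varpi$, because $\langle(\delta+\alpha)^\vee,\varpi\rangle=-\langle\alpha^\vee,\delta\rangle\ge 2$. So that curve leaves $\Pi$ and has degree at least $4$. Consequently, in these cases you prove neither $C\subset X$ nor that the smooth-conic component passes through $C$, and your irreducibility argument fails with it.

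Even when $\Pi\subset X$, the limit step is wrong as stated. A one-parameter-subgroup limit is only $T$-fixed, and the $T$-limits of $\{x_0x_2=cx_1^2\}$ are $\{x_0x_2=0\}$ or $\{x_1^2=0\}$, never $C=\{x_2^2=0\}$. You need Borel's fixed point theorem applied to a $B$-orbit closure confined to a $B$-stable linear space whose only $B$-stable conic is $C$.

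That is how the paper proceeds, with a case split:
\begin{itemize}
\item If $\delta$ is an end node, $C$ is the unique $B$-stable conic in $\bP(V)$, hence it is the $B$-fixed point of the closure $\cH$ of smooth conics.
\item If $\langle\alpha^\vee,\delta\rangle=-1$, then $[v_{\varpi-\delta-\alpha}]=n_\alpha.[v_{\varpi-\delta}]\in X$ forces $\Pi\subset X$. The irreducible $\bP^5$ of conics in $\Pi$ then lies in $\Hilb_2(X)$.
\item Otherwise $\langle\alpha^\vee,\delta\rangle=-2$, and the paper uses $C^{\text{sm}}=\overline{U_{-\alpha}.[v_{\varpi-\delta}]}$. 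This is a smooth conic of $X$ lying \emph{outside} $\Pi$ but inside the $B$-stable $\bP(v_\varpi,v_{\varpi-\delta},v_{\varpi-\delta-\alpha},v_{\varpi-\delta-2\alpha})$, whose only $B$-stable conic is $C$. Hence $C\in\overline{B.C^{\text{sm}}}\subset\cH$.
\end{itemize}

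Two further inputs are missing from your irreducibility step. First, you tacitly use the irreducibility of the family of smooth conics on $X$; the paper takes this from Thomsen, Kim--Pandharipande and Perrin. Second, you need an actual proof of $H^1(C,N_{C/X})=0$; ``via Bott--Borel--Weil'' is not one. The paper proves it afterwards in Corollary~\ref{cor:hilb-is-smooth}, from the sequences of Proposition~\ref{prop:basic-seq-for-double-line}, and those already require $C\subset X$. Granting these and the corrected degeneration, your smoothness-at-the-$B$-fixed-points argument does correctly yield irreducibility.
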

\begin{proof}
(1) Let $\Pi \subset \bP(V)$ be the plane containing $C$.
    Since $C$ is $B$-stable, $\Pi$ is also $B$-stable, and so $\Pi = \bP (v_{\varpi}, \, v_{\varpi - \delta},\, v_{\varpi - \delta - \alpha})$ for some $\alpha \in \Delta$ satisfying $\langle \alpha^{\vee},\, \delta \rangle < 0$.
    Observe that a defining equation of $C$ on $\Pi$ is an element of a $B$-representation $H^{0}(\Pi,\, \cO_{\Pi}(2))$, the space of quadratic equations.
    If $[x_{\varpi}:x_{\varpi - \delta}:x_{\varpi - \delta - \alpha}] = [x_{\varpi} v_{\varpi}+x_{\varpi - \delta}v_{\varpi - \delta}+x_{\varpi - \delta - \alpha}v_{\varpi - \delta - \alpha}]$ is a homogeneous coordinate, then $H^{0}(\Pi,\, \cO_{\Pi}(2))$ is generated by weight vectors $x_{\omega}x_{\omega'}$ with weight $-(\omega + \omega')$.
    It is straightforward to show that their weights are pairwise different, and so a defining equation of $C$ is of form $x_{\omega} x_{\omega'}$.
    Since its support $C^{\text{red}}$ is $B$-stable, and since $\ov{C} := \bP (v_{\varpi}, \, v_{\varpi - \delta})$ is a unique $B$-stable line on $X$, $C$ is defined by an equation $x_{\varpi - \delta - \alpha}^{2}$.
    That is, $C$ is a double line lying on $\Pi$ with $C^{\text{red}} = \ov{C}$.

(2) In the normalization of $\Hilb_{2}(X)$, let $\cH$ be the Zariski closure of the locus of smooth conics, endowed with the reduced scheme structure.
    Since $P$ is a maximal parabolic, $\cH$ is irreducible (see \cite{thomsen:irreducibility,kim.pandharipande:connectedness,perrin:courbes}).
    By the Borel fixed point theorem, each component of the normalization of $\Hilb_{2}(X)$ contains a $B$-fixed point, which corresponds to a $B$-stable conic on $X$.
    Therefore it suffices to show that $C$ represents a point in $\cH$.

    Assume that $\delta$ represents an end node in the Dynkin diagram of $\fg$.
    That is, there is unique $\alpha \in \Delta$ satisfying $\langle \alpha^{\vee},\, \delta \rangle < 0$.
    Then $C$ is a unique $B$-stable conic on $\bP(V)$, hence by the Borel fixed point theorem, it represents a point in $\cH$.

    Thus we may assume that $\delta$ is not an end node, in particular $G \not= G_{2}$.
    If $\langle \alpha^{\vee},\, \delta \rangle = -1$, then the plane $\Pi$ is contained in $X$.
    In fact, if $\Pi \not\subset X$, then since the ideal of $X$ in $\bP(V)$ is generated by quadratic equations, the scheme-theoretic intersection $\Pi \cap_{\text{sch}} X$ is a $B$-stable conic $C$, which is a contradiction since $[v_{\varpi - \delta - \alpha}] = [n_{\alpha}. v_{\varpi - \delta}] \in X$.
    Here, $n_{\alpha}$ means an element of $N(T)$ representing $s_{\alpha} \in W \simeq N_{G}(T)/T$, and $s_{\alpha}(\varpi - \delta) = \varpi - \delta - \alpha$ since $\langle \alpha^{\vee},\, \delta \rangle = -1$.
    Therefore every conic on $\Pi$ is a closed subscheme of $X$.
    Since the space of conics on $\Pi$ is irreducible (in fact, $\simeq \bP^{5}$), $C$ represents a point in $\cH$.

    Now assume that $\langle \alpha^{\vee},\, \delta \rangle \not= -1$.
    Since $G \not=G_{2}$, $\langle \alpha^{\vee},\, \delta \rangle = -2$.
    Thus the orbit closure of $[v_{\varpi - \delta}]$ under the action of the root subgroup $\exp(\fg_{-\alpha})$ is a smooth conic contained in a plane $\bP(v_{\varpi - \delta},\, v_{\varpi - \delta - \alpha},\, v_{\varpi - \delta - 2\alpha})$.
    Denote this smooth conic by $C^{\text{sm}} \in \cH$.
    Observe that $C^{\text{sm}}$ is lying on a $B$-stable 3-plane $\bP(v_{\varpi},\, v_{\varpi - \delta},\, v_{\varpi - \delta - \alpha},\, v_{\varpi - \delta - 2\alpha})$.
    Since the only $B$-stable conic on the 3-plane is $C$, it represents a point in the closure of $B.C^{\text{sm}}$ in $\cH$.
\end{proof}

From now on, $C$ denotes a $B$-stable conic on $\bP(V)$ (hence on $X$), and we keep the notation of Proposition \ref{prop:Hilb-irreducible}.

For a scheme $X'$ and its closed subscheme $X''$, we denote by $\cI_{X''/X'}$ the associated ideal sheaf, and by $N^{*}_{X''/X'} := \cI_{X''/X'}|_{X''}$ the conormal sheaf.
In the case where $N^{*}_{X''/X'}$ is locally free, we denote by $N_{X''/X'}$ its dual $\cO_{X''}$-module, and we call it the normal bundle of $X''$ in $X'$.
Since $C$ is a local complete intersection (being a hypersurface in $\Pi$), the conormal sheaves $N^{*}_{C/\Pi}$ and $N^{*}_{C/X}$ are locally free, and hence the normal bundles are well defined.
In fact, since $C$ is a quadric in $\Pi$, $N^{*}_{C/\Pi}|_{\ov{C}} \simeq \cO_{\bP^{1}}(-2)$.
Moreover, since $(\cI_{\ov{C}/C})^{2} = 0$, $\cI_{\ov{C}/C}$ is indeed an $\cO_{\ov{C}}$-module, isomorphic to $\cO_{\bP^{1}}(-1)$, and hence $N_{\ov{C}/C} \simeq \cO_{\bP^{1}}(1)$.

For a subset $\Sigma \subsetneq \Delta$, let $P_{\Sigma}$ be the standard parabolic subgroup containing $B$ and generated by $\Delta \setminus \Sigma$.
In other words, $\Delta_{P_{\Sigma}} = \Delta \setminus \Sigma$.
For example, $P = P_{\delta}$.
For $\beta \in \Delta$, put $N(\beta) := \{\gamma \in \Delta : \langle \gamma^{\vee},\, \beta \rangle < 0\}$ and
$\ov{Q} := \text{Stab}_{G}(\ov{C}) (= P_{N(\delta)})$.
Define $Q:= \text{Stab}_{G}(C) (= \ov{Q} \cap \text{Stab}_{G}(\Pi))$.
The standard Levi subgroups of $P$, $\ov{Q}$ and $Q$ are denoted by $P^{\text{Levi}}$, $\ov{L}$ and $L$, respectively.
The Lie algebras of $\ov{Q}$, $Q$, $P^{\text{Levi}}$, $\ov{L}$ and $L$ are denoted by $\ov{\fq}$, $\fq$, $\fp^{\text{Levi}}$, $\ov{\fl}$ and $\fl$, respectively.
In this notation, it is easy to show that
\[
    \text{Stab}_{G}(\Pi) = \left\{
    \begin{array}{cc}
        P_{(N(\delta) \setminus \{\alpha\}) \cup (N(\alpha) \setminus \{\delta\})} & \text{if }|\delta| = |\alpha|, \\
         P_{N(\delta) \cup (N(\alpha) \setminus \{\delta\})} & \text{if }|\delta| > |\alpha|, \\
         P_{(N(\delta) \setminus \{\alpha\}) \cup N(\alpha)} & \text{if }|\delta| < |\alpha|,
    \end{array}
    \right.
\]
and so
\[
    Q = \left\{
    \begin{array}{cc}
        P_{N(\delta) \cup (N(\alpha) \setminus \{\delta\})} & \text{if }|\delta| \ge |\alpha|, \\
         P_{N(\delta) \cup N(\alpha)} & \text{if }|\delta| < |\alpha|,
    \end{array}
    \right.
\]
and
\[
    P \cap Q = P_{N(\delta) \cup N(\alpha)}.
\]
Here, $|\cdot|$ means the length of a given root with respect to the Killing form $(\,,\,)$.
Let $\fr$ be the Lie algebra of the standard Levi subgroup of $P\cap Q$.
Then we have $\fr = \fp^{\text{Levi}} \cap \fl$ and
\[
    [\fl,\, \fl] = \left\{ \begin{array}{cc}
        {[\fr,\, \fr]} \oplus \mathfrak{sl}_{2}(\delta) & \text{if }|\delta| \ge |\alpha|, \\
        {[\fr,\, \fr]} & \text{if }|\delta| < |\alpha|,
    \end{array} \right.
\]
where $\mathfrak{sl}_{2}(\delta)$ is the 3-dimensional subalgebra generated by $\pm \delta$.
Finally, for a standard Levi subgroup $L'$, a 1-dimensional $L'$-representation where its center acts via the character $\chi$ is denoted by $\C_{\chi}$.
By a slight abuse of notation, if $\chi$ is the restriction of a character $\tilde{\chi}$ of $T$, we write the $L'$-representation $\C_{\chi}$ as $\C_{\tilde{\chi}}$.

\subsection{Infinitesimal structure of Hilbert schemes of conics}

Next, we introduce natural exact sequences involving the normal bundle $N_{C/X}$, and then show the smoothness of $\Hilb_{2}(X)$.

The following sequences can be also found in \cite[Lemma~A.2.4]{kuznetsov.prokhorov.shramov:hilbert} where a more general setup is discussed. We thank Kiryong Chung for pointing it out.

\begin{prop}\label{prop:basic-seq-for-double-line}
    There are natural exact sequences
    \begin{equation} \label{eqn:normal-bundle-double-in-X}
    0 \rightarrow  N_{C/X} |_{\ov{C}} \otimes_{\ov{C}} \cI_{\ov{C}/C} \rightarrow N_{C/X} \rightarrow N_{C/X}|_{\ov{C}} \rightarrow 0
    \end{equation}
    of $\cO_{C}$-modules, induced by the defining sequence for $\ov{C} \subset C$, and
    \begin{equation} \label{eqn:normal-seq-double}
    0 \rightarrow N_{\ov{C}/C} \rightarrow N_{\ov{C}/X} \rightarrow N_{C/X}|_{\ov{C}} \rightarrow N_{C/\Pi}|_{\ov{C}} \rightarrow 0
    \end{equation}
    of $\cO_{\ov{C}}$-modules, induced by the conormal sequence for $\ov{C} \subset C \subset X$.
\end{prop}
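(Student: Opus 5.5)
The first sequence comes from tensoring the short exact sequence defining $\ov{C}\subset C$,
\[
0 \rightarrow \cI_{\ov{C}/C} \rightarrow \cO_C \rightarrow \cO_{\ov{C}} \rightarrow 0,
\]
with the locally free $\cO_C$-module $N_{C/X}$. Since $N_{C/X}$ is locally free (because $C$ is a local complete intersection in $X$), the tensor product is exact. This gives
\[
0 \rightarrow N_{C/X}\otimes_{\cO_C}\cI_{\ov{C}/C} \rightarrow N_{C/X} \rightarrow N_{C/X}|_{\ov{C}} \rightarrow 0 .
\]
We have $(\cI_{\ov{C}/C})^2=0$, so $\cI_{\ov{C}/C}$ is an $\cO_{\ov{C}}$-module. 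Hence $N_{C/X}\otimes_{\cO_C}\cI_{\ov{C}/C} \simeq (N_{C/X}\otimes_{\cO_C}\cO_{\ov{C}})\otimes_{\cO_{\ov{C}}}\cI_{\ov{C}/C} = N_{C/X}|_{\ov{C}}\otimes_{\ov{C}}\cI_{\ov{C}/C}$. This yields \eqref{eqn:normal-bundle-double-in-X}, and all maps are canonical.

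For \eqref{eqn:normal-seq-double}, I would start from the conormal sequence of the tower $\ov{C}\subset C\subset X$. Writing $\cI_1=\cI_{C/X}$ and $\cI_2=\cI_{\ov{C}/X}$, we have $\cI_1\subset\cI_2$ and $\cI_2/\cI_1=\cI_{\ov{C}/C}$. This gives a right exact sequence of $\cO_{\ov{C}}$-modules
\[
N^*_{C/X}|_{\ov{C}} \rightarrow N^*_{\ov{C}/X} \rightarrow N^*_{\ov{C}/C} \rightarrow 0 .
\]
Here $N^*_{\ov{C}/C}=\cI_{\ov{C}/C}/\cI_{\ov{C}/C}^2=\cI_{\ov{C}/C}\simeq\cO_{\bP^1}(-1)$. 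All three sheaves are locally free on $\ov{C}\simeq\bP^1$: $N^*_{C/X}$ and $N^*_{\ov{C}/X}$ because $C$ and $\ov{C}$ are local complete intersections in $X$ (the line $\ov{C}$ is smooth in the smooth $X$). The key step is to identify the kernel of the first map. I would do this by a parallel comparison with the same tower inside $\Pi$, namely $\ov{C}\subset C\subset\Pi$. There the conormal sequence reads
\[
N^*_{C/\Pi}|_{\ov{C}}\simeq\cO(-2) \rightarrow N^*_{\ov{C}/\Pi}\simeq\cO(-1) \rightarrow N^*_{\ov{C}/C}\simeq\cO(-1)\rightarrow 0 .
\]
The first map is zero, since the local equation $x^2$ of $C$ lies in the square of the ideal $(x)$ of $\ov{C}$. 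Hence the image of $N^*_{C/X}|_{\ov{C}}\rightarrow N^*_{\ov{C}/X}$ is the kernel of $N^*_{\ov{C}/X}\rightarrow N^*_{\ov{C}/C}$, and the kernel $\cK$ of the first map is the image of $(\cI_1\cap\cI_2^2)/\cI_1\cI_2$. This is a subbundle with quotient the image.

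To make this concrete, I would compute in local coordinates. Since $X$ is smooth, locally near a point of $\ov{C}$ we may choose coordinates $x, y_1,\dots,y_m, t$ such that $\ov{C}=\{x=y_i=0\}$, $\Pi$ is cut out by the $y_i$ modulo higher order terms, and $\cI_1=(x^2,y_1,\dots,y_m)$. In these coordinates the map $\cI_1/\cI_1\cI_2\to\cI_2/\cI_2^2$ sends the class of $x^2$ to $0$ and sends the $y_i$ injectively. So $\cK$ is the line subbundle generated by $x^2$, which is exactly $N^*_{C/\Pi}|_{\ov{C}}$, and the sequence
\[
0\rightarrow N^*_{C/\Pi}|_{\ov{C}}\rightarrow N^*_{C/X}|_{\ov{C}}\rightarrow N^*_{\ov{C}/X}\rightarrow N^*_{\ov{C}/C}\rightarrow 0
\]
is exact. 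Globally, the map $N^*_{C/\Pi}|_{\ov{C}}\to N^*_{C/X}|_{\ov{C}}$ is induced by the surjection $\cI_{C/X}\to\cI_{C/\Pi}$ dualized; more precisely, it comes from the exact sequence $0\to N^*_{\Pi\cap\cdot}\dots$, and I would check naturality via this local description. Dualizing the resulting exact sequence of vector bundles on $\bP^1$ gives \eqref{eqn:normal-seq-double}.

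The main obstacle is this kernel identification and the existence of such adapted coordinates. The issue is that $\Pi$ need not lie in $X$, so $C\subset X$ is only known to be cut out by $x^2$ together with transversal equations $y_i$. I would justify this as follows. $C$ is a local complete intersection of codimension $\dim X-1$ in $X$ with $C^{\rm red}=\ov{C}$ smooth, and the first-order neighbourhood of $\ov{C}$ in $C$ has length two in the normal direction. Choosing local parameters, the ideal is therefore $(f,y_1,\dots,y_m)$ with $f\equiv ux^2$ modulo $(y_i)$ for a unit $u$. The local computation above then applies.
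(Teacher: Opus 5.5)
Your proof of the first sequence is the paper's argument. Your local computation is also sound. Locally $\cI_{C/X}=(x^2,y_1,\dots,y_m)$; the cleanest justification is that $C$ has $2$-dimensional Zariski tangent spaces, so it lies locally in a smooth surface germ of $X$, in which it is the divisor $2\ov C$. This shows that $N^*_{C/X}|_{\ov C}\to N^*_{\ov C/X}$ has image a subbundle and kernel $\cK$ a line subbundle.

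\textbf{The gap.} The missing step is the one that carries the content of \eqref{eqn:normal-seq-double}: identifying $\cK$ \emph{naturally} with $N^*_{C/\Pi}|_{\ov C}$. Naturality matters because the sequence is used $L$-equivariantly in Lemma~\ref{lemma:computing-normal-sp}.
\begin{enumerate}
\item Saying that $\cK$ is locally generated by $x^2$ matches local generators only up to units. It does not glue to a global isomorphism without a canonical map.
\item The global map you propose does not exist in general. When $\Pi\not\subset X$, which is the case you yourself flag, there is no surjection $\cI_{C/X}\to\cI_{C/\Pi}$.
\item When $\Pi\subset X$, restriction of functions gives a map $N^*_{C/X}|_{\ov C}\to N^*_{C/\Pi}|_{\ov C}$, not a map out of $N^*_{C/\Pi}|_{\ov C}$. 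Its composite with $\cK\hookrightarrow N^*_{C/X}|_{\ov C}$ would indeed be an isomorphism, but this covers only that special case.
\end{enumerate}

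\textbf{How the paper closes it.} It avoids coordinates. From the four-term sequence it gets $\cK\simeq\det(N^*_{C/X}|_{\ov C})\otimes\det(N_{\ov C/X})\otimes N^*_{\ov C/C}$. The two adjunction isomorphisms $\omega_C\simeq\omega_X\otimes\det N_{C/X}\simeq\omega_\Pi\otimes N_{C/\Pi}$ hold whether or not $\Pi\subset X$. They turn the right-hand side canonically into $N^*_{C/\Pi}|_{\ov C}$.

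\textbf{An intrinsic fix within your approach.} You were one line away from this.
\begin{enumerate}
\item Your local form shows $\cI_{\ov C/X}^2\subset\cI_{C/X}$. Globally this holds because $C$ lies in the first-order neighbourhood of $\ov C$ in $\bP(V)$.
\item Hence $\cK=\cI_{\ov C/X}^2/\cI_{C/X}\cI_{\ov C/X}$.
\item Multiplication then induces a well-defined $\cO_{\ov C}$-linear surjection $\cI_{\ov C/C}\otimes_{\cO_{\ov C}}\cI_{\ov C/C}\to\cK$. Being a surjection of line bundles, it is an isomorphism.
\item Compose with the canonical isomorphism $\cI_{\ov C/C}^{\otimes 2}\simeq N^*_{C/\Pi}|_{\ov C}$, which comes from $\cI_{C/\Pi}=\cI_{\ov C/\Pi}^2$. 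This gives the natural identification, and dualizing then finishes as you say.
\end{enumerate}
Without one of these two steps, your argument identifies $\cK$ only abstractly, for instance via a degree count. That is not the natural sequence the statement asserts.
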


\begin{proof}
Recall the exact sequence of $\cO_{C}$-modules
\[
    0 \rightarrow \cI_{\ov{C}/C} \rightarrow \cO_{C} \rightarrow \cO_{\ov{C}} \rightarrow 0.
\]
Since $\cI_{\ov{C}/C}$ is an $\cO_{\ov{C}}$-module, by taking the tensor product with a locally free $\cO_{C}$-module $N_{C/X}$, we obtain the sequence (\ref{eqn:normal-bundle-double-in-X}).

On the other hand, we have the conormal exact sequence over $\ov{C}$
\[
    N^{*}_{C/X}|_{\ov{C}} \rightarrow N^{*}_{\ov{C}/X} \rightarrow N^{*}_{\ov{C}/C} (\simeq \cI_{\ov{C}/C}) \rightarrow 0.
\]
Let $\cK$ be the kernel of the morphism $N^{*}_{C/X}|_{\ov{C}} \rightarrow N^{*}_{\ov{C}/X}$.
Then $\cK$ is an invertible $\cO_{\ov{C}}$-module.
Thus by comparing the determinant bundles, we see that there is a natural isomorphism
\[
    \cK \simeq \det \left( N^{*}_{C/X}|_{\ov{C}} \right) \otimes \det \left( N_{\ov{C}/X} \right) \otimes N^{*}_{\ov{C}/C}.
\]
Furthermore, the adjunction formula implies that $\omega_{\Pi} \otimes N_{C/\Pi}$ and $\omega_{X} \otimes \det N_{C/X}$ are naturally isomorphic to the dualizing sheaf of $C$ as $\cO_{C}$-modules.
It implies that
\begin{align*}
    &\det \left( N^{*}_{C/X}|_{\ov{C}} \right) \otimes \det \left( N_{\ov{C}/X} \right) \\
    &\simeq \omega_{X}|_{\ov{C}} \otimes (\omega_{\Pi}  \otimes N_{C/\Pi})^{*}|_{\ov{C}} \otimes \det \left( N_{\ov{C}/X} \right)\\
    &\simeq (T^{*}\ov{C}) \otimes \det \left( N^{*}_{\ov{C}/X} \right) \otimes (\det T\Pi)|_{\ov{C}}  \otimes N^{*}_{C/\Pi}|_{\ov{C}} \otimes \det \left( N_{\ov{C}/X} \right)\\
    &\simeq N_{\ov{C}/\Pi} \otimes N^{*}_{C/\Pi}|_{\ov{C}}.
\end{align*}
Note that the quotient map $\cO_{\Pi} \rightarrow \cO_{C}$ induces a surjective morphism $\cI_{\ov{C}/\Pi} \rightarrow \cI_{\ov{C}/C}$, which again induces an isomorphism $N^{*}_{\ov{C}/\Pi} \rightarrow N^{*}_{\ov{C}/C}$.
Thus we have a natural identification $\cK \simeq N^{*}_{C/\Pi}|_{\ov{C}} (\simeq \cO_{\ov{C}}(-2))$, and hence the sequence (\ref{eqn:normal-seq-double}).
\end{proof}

\begin{cor} \label{cor:hilb-is-smooth}
    $H^{1}(C,\, N_{C/X}) = 0$.
    In particular, $\Hilb_{2}(X)$ is smooth and its tangent space at $C$ is isomorphic to $H^{0}(C,\, N_{C/X})$.
\end{cor}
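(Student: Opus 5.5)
We show $H^1(C,N_{C/X})=0$ by combining the two exact sequences of Proposition~\ref{prop:basic-seq-for-double-line}. The vanishing then gives the smoothness statement by standard deformation theory. Since $C$ is a local complete intersection, the obstruction space of $\Hilb_2(X)$ at $[C]$ is contained in $H^1(C,N_{C/X})$ and the tangent space is $H^0(C,N_{C/X})$. Hence $\Hilb_2(X)$ is smooth at every $B$-stable point. Every $G$-orbit closure in $\Hilb_2(X)$ contains a $B$-fixed point by Borel's fixed point theorem, and the smooth locus is open and $G$-stable, so it is everything. Irreducibility comes from Proposition~\ref{prop:Hilb-irreducible}.

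\textbf{Step 1: reduce to $\ov{C}$.} By sequence \eqref{eqn:normal-bundle-double-in-X} and $\cI_{\ov{C}/C}\simeq\cO_{\bP^1}(-1)$, it suffices to show that both $H^1(\ov{C},N_{C/X}|_{\ov{C}})$ and $H^1(\ov{C},N_{C/X}|_{\ov{C}}(-1))$ vanish. On $\ov{C}\simeq\bP^1$ this amounts to showing that every summand of the vector bundle $E:=N_{C/X}|_{\ov{C}}$ (of rank $\dim X-1$) has degree $\ge 0$.

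\textbf{Step 2: control $E$ via sequence \eqref{eqn:normal-seq-double}.} Since $X=G/P$ is homogeneous and $\ov{C}$ is a line, $N_{\ov{C}/X}$ is a quotient of $TX|_{\ov{C}}$, which is globally generated. Concretely, $N_{\ov{C}/X}\simeq\cO(2)^{a}\oplus\cO(1)^{b}\oplus\cO^{c}$, because $T_{\ov{C}}\simeq\cO(2)$ is a direct summand (the standard splitting of lines on homogeneous spaces). The sequence reads
\[0\to\cO(1)\to N_{\ov{C}/X}\to E\to \cO(2)\to 0,\]
using $N_{C/\Pi}|_{\ov{C}}\simeq\cO(2)$. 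Let $F$ be the image of the middle map. It is a quotient of the globally generated bundle $N_{\ov{C}/X}$, hence globally generated, so all its summands have degree $\ge0$. Then $E$ is an extension of $\cO(2)$ by $F$, so every summand of $E$ also has degree $\ge 0$. This gives $H^1(E)=0$ and $H^1(E(-1))=0$. Note that $F$ is a quotient of a vector bundle on a curve by a subsheaf; it could have torsion in principle, so I should check it is locally free. That holds because $E$ is locally free and $F\subset E$ on a smooth curve. Global generation of the torsion-free quotient is automatic.

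\textbf{Expected obstacle.} The main point is to justify that $N_{\ov{C}/X}$ is globally generated. This follows because $G$ acts transitively on $X$, so $TX$ is globally generated, and its quotient $N_{\ov{C}/X}$ inherits this. Alternatively, one can write the $P$-weights of $N_{\ov{C}/X}$ explicitly. With this in hand the remaining steps are formal cohomology on $\bP^1$. I would also double-check that the line bundle identifications in sequence \eqref{eqn:normal-seq-double} have the claimed degrees, namely $N_{\ov{C}/C}\simeq\cO(1)$ and $N_{C/\Pi}|_{\ov{C}}\simeq\cO(2)$, as stated before Proposition~\ref{prop:basic-seq-for-double-line}.
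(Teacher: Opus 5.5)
Your proposal is correct and follows the same route as the paper. Homogeneity makes $N_{\ov{C}/X}$ globally generated, so sequence \eqref{eqn:normal-seq-double} kills $H^1(\ov{C},N_{C/X}|_{\ov{C}})$ and $H^1(\ov{C},N_{C/X}|_{\ov{C}}\otimes\cI_{\ov{C}/C})$; sequence \eqref{eqn:normal-bundle-double-in-X} then gives $H^1(C,N_{C/X})=0$, and lci deformation theory plus the Borel fixed point theorem give smoothness everywhere. (Minor point: the $\cO(2)$ summand of $TX|_{\ov{C}}$ is $T\ov{C}$ itself, so $a=0$ in your splitting of $N_{\ov{C}/X}$; this is harmless because only global generation is used.)
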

\begin{proof}
    Since $X$ is homogeneous, the normal bundle $N_{\ov{C}/X}$ is globally generated, and hence $H^{1}(\ov{C},\, N_{\ov{C}/X}) = H^{1}(\ov{C},\,N_{\ov{C}/X} \otimes \cI_{\ov{C}/C}) = 0$.
    Thus by the sequence (\ref{eqn:normal-seq-double}),
    \[
        H^{1}(\ov{C},\,N_{C/X}|_{\ov{C}}) = H^{1}(\ov{C},\,N_{C/X}|_{\ov{C}} \otimes \cI_{\ov{C}/C}) = 0,
    \]
    and hence the sequence (\ref{eqn:normal-bundle-double-in-X}) implies that $H^{1}(C,\,N_{C/X}) = 0$.
    Since $C \subset X$ is a local complete intersection, by \cite[Theorem I.2.8, Lemma I.2.12.1 and Proposition I.2.14.2]{kollar:rational}, $\Hilb_{2}(X)$ is smooth at the point $C$ and its tangent space is $H^{0}(C,\, N_{C/X})$.
    Finally, since $C$ is an arbitrary $B$-stable conic, the Borel fixed point theorem implies that $\Hilb_{2}(X)$ is smooth.
\end{proof}

    Consider the normal spaces at $C$ and $\ov{C}$ of the $G$-orbits $G.C$ and $G.\ov{C}$ in their respective Hilbert schemes:
    \[
        N_{C} := N_{G . C / \Hilb_{2}(X),\, C}, \quad N_{\ov{C}} := N_{G . \ov{C} / \Hilb_{1}(X),\, \ov{C}}.
    \]
\begin{lemma} \label{lemma:computing-normal-sp}
We have an $L$-equivariant isomorphism
        \[
            N_{C} \simeq \frac{\Sym^{2}H^{0}(\ov{C},\,N_{\ov{C}/\Pi}) \oplus 
            N_{\ov{C}} \oplus H^{0}(\ov{C},\, N_{\ov{C}/X} \otimes N^{*}_{\ov{C}/\Pi}) }{\C_{0} \oplus (\ov{\fq} / \fq)}.
        \]
\end{lemma}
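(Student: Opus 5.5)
We compute $T_C\Hilb_2(X) = H^0(C,N_{C/X})$ (Corollary \ref{cor:hilb-is-smooth}) as an $L$-representation, using the two sequences of Proposition \ref{prop:basic-seq-for-double-line}. Then we quotient by the tangent space $T_C(G.C) \simeq \fg/\fq$. Since $L$ is reductive, every exact sequence of finite-dimensional $L$-modules splits. So it suffices to identify each graded piece up to isomorphism and then compare.

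\textbf{Step 1 (the sheaves).} Take cohomology of (\ref{eqn:normal-bundle-double-in-X}). All $H^1$ terms vanish by the proof of Corollary \ref{cor:hilb-is-smooth}, so
\[
H^0(C,N_{C/X}) \simeq H^0(\ov{C},N_{C/X}|_{\ov{C}}) \oplus H^0(\ov{C},N_{C/X}|_{\ov{C}}\otimes \cI_{\ov{C}/C}).
\]
Split (\ref{eqn:normal-seq-double}) into two short exact sequences via its image $\cE := N_{\ov{C}/X}/N_{\ov{C}/C}$. This gives
\[
0\to\cE\to N_{C/X}|_{\ov{C}}\to N_{C/\Pi}|_{\ov{C}}\to 0 .
\]
Here $N_{C/\Pi}|_{\ov{C}}\simeq \cO(2)\simeq N_{\ov{C}/\Pi}^{\otimes 2}$, so its $H^0$ is $\Sym^2 H^0(\ov{C},N_{\ov{C}/\Pi})$ as an $L$-module. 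This is the first summand.

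Next, $\cI_{\ov{C}/C}\simeq N^*_{\ov{C}/\Pi}\simeq\cO(-1)$. Hence $N_{C/X}|_{\ov{C}}\otimes\cI_{\ov{C}/C}$ is an extension of $N_{C/\Pi}|_{\ov C}\otimes N^*_{\ov C/\Pi}\simeq N_{\ov{C}/\Pi}$ by $\cE\otimes N^*_{\ov{C}/\Pi}$. Tensoring $0\to N_{\ov C/C}\to N_{\ov C/X}\to \cE\to 0$ with $N^*_{\ov{C}/\Pi}$ gives the term $H^0(N_{\ov{C}/X}\otimes N^*_{\ov{C}/\Pi})$ modulo $H^0(N_{\ov C/C}\otimes N^*_{\ov C/\Pi})=H^0(\cO)=\C_0$. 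Both identifications $N_{\ov{C}/C}\simeq N_{\ov{C}/\Pi}\simeq \cO(1)$ are canonical and hence $\ov Q\cap \Stab(\Pi)$-equivariant, so the trivial line is genuinely $\C_0$. All the $H^1$ terms vanish on $\bP^1$ by the degree bounds and global generation, so every sequence stays exact on $H^0$.

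\textbf{Step 2 (recombining).} Using $0\to N_{\ov C/C}\to N_{\ov C/X}\to \cE\to0$ again, the remaining pieces are $H^0(N_{\ov C/X})$ modulo $H^0(N_{\ov C/\Pi})$, together with the copy of $H^0(N_{\ov C/\Pi})$ that came out of the twisted term. These cancel, leaving $H^0(\ov C,N_{\ov C/X}) = T_{\ov C}\Hilb_1(X)$. This is a point to double-check carefully: the two copies of $H^0(N_{\ov C/\Pi})$ must cancel in the Grothendieck group of $L$-modules. We then obtain
\[
T_C\Hilb_2(X)\simeq \Sym^2H^0(N_{\ov C/\Pi})\oplus T_{\ov C}\Hilb_1(X)\oplus H^0(N_{\ov C/X}\otimes N^*_{\ov C/\Pi})\ \big/\ \C_0 .
\]

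\textbf{Step 3 (orbit tangent spaces).} By definition $T_{\ov C}\Hilb_1(X)\simeq N_{\ov C}\oplus \fg/\ov{\fq}$ as $L$-modules (split by reductivity). Also $T_C(G.C)=\fg/\fq$, and $\fg/\fq$ is an extension of $\fg/\ov\fq$ by $\ov\fq/\fq$. Subtracting $\fg/\fq$ from the expression in Step 2 therefore removes $\fg/\ov\fq$ and the extra $\ov\fq/\fq$, which yields exactly the claimed formula.

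\textbf{Main obstacle.} The delicate point is equivariance and well-definedness. The isomorphisms $N_{C/\Pi}|_{\ov C}\simeq N_{\ov C/\Pi}^{\otimes2}$ and $\cK\simeq N^*_{C/\Pi}|_{\ov C}$ from Proposition \ref{prop:basic-seq-for-double-line} must be natural, hence $Q$-equivariant rather than just abstract, so that the central characters come out right. One must also check that $T_C(G.C)\to H^0(N_{C/X})$ is injective modulo $\fq$, which holds by definition of $Q$ as the stabiliser.
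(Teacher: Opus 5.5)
Your proposal is correct and follows essentially the same route as the paper. Both arguments split $H^0(C,N_{C/X})$ using (\ref{eqn:normal-bundle-double-in-X}) and push both pieces through (\ref{eqn:normal-seq-double}); the paper packages this as a diagram whose columns are the two four-term sequences, rather than through your image sheaf $\cE$. Both then cancel the two copies of $H^0(\ov{C},N_{\ov{C}/C})\simeq H^0(\ov{C},N_{\ov{C}/\Pi})$ and subtract $\fg/\fq\simeq\fg/\ov{\fq}\oplus\ov{\fq}/\fq$ using complete reducibility of $L$-modules. The paper handles the equivariance points you flag exactly as you suggest, via the natural maps $\cI_{\ov{C}/\Pi}\to\cI_{\ov{C}/C}$ and $\cI_{\ov{C}/C}\otimes\cI_{\ov{C}/C}\simeq N^{*}_{C/\Pi}|_{\ov{C}}$.
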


\begin{remark}
If $\delta$ is long, then $\Hilb_{1}(X)$ is homogeneous by \cite{landsberg.manivel:on} (or by Theorem \ref{thm:small-d=1}), and hence $N_{\ov{C}} = 0$.
\end{remark}

\begin{proof}%[Proof of Lemma \ref{lemma:computing-normal-sp}]
    Since $X$ is homogeneous, $N_{\ov{C}/X}$ is globally generated, and hence
    \[
        H^{1}(\ov{C},\, N_{\ov{C}/X}) = H^{1}(\ov{C},\, N_{\ov{C}/X} \otimes \cI_{\ov{C}/C}) = 0.
    \]
    In particular, $\Hilb_{1}(X)$ is smooth, $T_{\ov{C}} \Hilb_{1}(X) = H^{0}(\ov{C},\, N_{\ov{C}/X})$ and $N_{\ov{C}}$ is well defined.
    By the sequence (\ref{eqn:normal-seq-double}) in Proposition \ref{prop:basic-seq-for-double-line},
    \[
        H^{1}(\ov{C},\, N_{C/X}|_{\ov{C}}) = H^{1}(\ov{C},\, N_{C/X}|_{\ov{C}} \otimes \cI_{\ov{C}/C}) = 0,
    \]
    and using both sequences of Proposition \ref{prop:basic-seq-for-double-line}, we have a diagram
    \begin{center}
        \begin{tikzcd}[column sep=tiny, row sep=small]
&0 \arrow[d]&&0 \arrow[d]&\\
&H^{0}(\ov{C},\, N_{\ov{C}/C} \otimes \cI_{\ov{C}/C}) \arrow[d] &&H^{0}(\ov{C},\, N_{\ov{C}/C}) \arrow[d]&\\
&H^{0}(\ov{C},\, N_{\ov{C}/X} \otimes \cI_{\ov{C}/C}) \arrow[d] && H^{0}(\ov{C},\, N_{\ov{C}/X}) \arrow[d] &\\
0 \arrow[r] &H^{0}(\ov{C},\, N_{C/X} \otimes \cI_{\ov{C}/C}) \arrow[r] \arrow[d] &H^{0}(C,\, N_{C/X}) \arrow[r] &H^{0}(\ov{C},\, N_{C/X}|_{\ov{C}})  \arrow[r]\arrow[d]&0\\
&H^{0}(C,\, N_{C/\Pi}|_{\ov{C}} \otimes \cI_{\ov{C}/C}) \arrow[d]&&H^{0}(\ov{C},\, N_{C/\Pi}|_{\ov{C}}) \arrow[d]&\\
&0&&0&
        \end{tikzcd}
    \end{center}
of $L$-representations with exact row and columns. As $L$-representations, we get
\begin{align*}
    N_{C}& \simeq \frac{H^{0}(\ov{C},\, N_{C/X}|_{\ov{C}}) \oplus H^{0}(\ov{C},\, N_{C/X} \otimes \cI_{\ov{C}/C})}{T_{C} (G . C)} \\
    &\simeq \frac{\{H^{0}(\ov{C},\, N_{C/\Pi}|_{\ov{C}}) \oplus H^{0}(\ov{C},\, N_{\ov{C}/X})\} \oplus H^{0}(\ov{C},\, N_{C/X} \otimes \cI_{\ov{C}/C})}{H^{0}(\ov{C},\, N_{\ov{C}/C}) \oplus T_{C} (G . C)} \\
    & \simeq \frac{H^{0}(\ov{C},\, N_{C/\Pi}|_{\ov{C}}) \oplus (N_{\ov{C}} \oplus \fg/\ov{\fq}) \oplus H^{0}(\ov{C},\, N_{C/X} \otimes \cI_{\ov{C}/C})}{H^{0}(\ov{C},\, N_{\ov{C}/C}) \oplus (\fg / \fq)} \\
    & \simeq \frac{H^{0}(\ov{C},\, N_{C/\Pi}|_{\ov{C}}) \oplus N_{\ov{C}} \oplus H^{0}(\ov{C},\, N_{C/X} \otimes \cI_{\ov{C}/C})}{H^{0}(\ov{C},\, N_{\ov{C}/C}) \oplus (\ov{\fq} / \fq)} \\
    & \simeq \frac{H^{0}(\ov{C},\, N_{C/\Pi}|_{\ov{C}}) \oplus N_{\ov{C}} \oplus H^{0}(\ov{C},\, N_{\ov{C}/X} \otimes \cI_{\ov{C}/C}) \oplus H^{0}(\ov{C},\, N_{C/\Pi}|_{\ov{C}} \otimes \cI_{\ov{C}/C})}{H^{0}(\ov{C},\, N_{\ov{C}/C}) \oplus (\ov{\fq} / \fq) \oplus H^{0}(\ov{C},\, N_{\ov{C}/C} \otimes \cI_{\ov{C}/C})}.
\end{align*}
Since $N_{\ov{C}/C} (\simeq \cO_{\ov{C}}(1))$ and $\cI_{\ov{C}/C}(\simeq \cO_{\ov{C}}(-1))$ are dual to each other as $\cO_{\ov{C}}$-modules, $H^{0}(\ov{C},\, N_{\ov{C}/C} \otimes \cI_{\ov{C}/C}) (\simeq \C)$ is the trivial $L$-representation $\C_{0}$.
    Moreover, the natural morphisms of $\cO_{\Pi}$-modules
    \[
        \cI_{\ov{C}/C} \otimes \cI_{\ov{C}/C} \twoheadleftarrow \cI_{\ov{C}/\Pi} \otimes \cI_{\ov{C}/\Pi} \xrightarrow{\simeq} \cI_{C/\Pi}
    \]
    induce an isomorphism $\cI_{\ov{C}/C} \otimes \cI_{\ov{C}/C} \xrightarrow{\simeq} N^{*}_{C/\Pi}|_{\ov{C}}$ of $\cO_{\ov{C}}$-modules.
    Thus we have natural identifications $N_{\ov{C}/C} \otimes N_{\ov{C}/C} \simeq N_{C/\Pi}|_{\ov{C}}$ and $N_{C/\Pi}|_{\ov{C}} \otimes \cI_{\ov{C}/C} \simeq N_{\ov{C}/C}$, which imply
    \[
        \text{Sym}^{2}H^{0}(\ov{C},\, N_{\ov{C}/C}) \simeq H^{0}(\ov{C},\, N_{C/\Pi}|_{\ov{C}})
    \]
    and
    \[
        H^{0}(\ov{C},\, N_{C/\Pi}|_{\ov{C}} \otimes \cI_{\ov{C}/C}) \simeq H^{0}(\ov{C},\, N_{\ov{C}/C})
    \]
    as $L$-representations, respectively.
    Finally, since $N^{*}_{\ov{C}/\Pi} \simeq \cI_{\ov{C}/C}$ as $\cO_{\ov{C}}$-modules, the formula follows.
\end{proof}

%%%%%%%%%%%%%%%%%%%%%%%%%%%%%%%%%%%%%%%%%%%%%%%%%%%%%%
%\input{representation}

\section{Representation theoretic method}
\label{section:representation}

Now we prove Theorem \ref{thm:main} via a representation theoretic method, based on Lemma \ref{lemma:loc-str-spherical} and Lemma \ref{lemma:computing-normal-sp}.
Namely, from Section \ref{subsection:The case of end nodes} to Section \ref{subsection:remaining-cases}, we consider the case where $\delta$ is long, compute $N_{C}$ using Lemma \ref{lemma:computing-normal-sp}, and show that $N_{C}$ is $L$-spherical by Table \ref{table-spherical}.
Then it follows that $\Hilb_{2}(X)$ is $G$-spherical by Lemma \ref{lemma:loc-str-spherical} and Corollary \ref{cor:hilb-is-smooth}.
The case where $\delta$ is short is considered in Section \ref{subsection:short-roots}.

Before going into more detail, we explain how to compute the representations given in Lemma \ref{lemma:computing-normal-sp}.
We start with $\text{Sym}^{2} H^{0}(\ov{C},\, N_{\ov{C}/\Pi})$ which is easier to compute.

\begin{prop} \label{prop:normal-of-line-in-plane}
    Let $[x:y:z]$ be the homogeneous coordinate of $\Pi$, given by $[x:y:z] = [x v_{\varpi} + y v_{\varpi - \delta} + z v_{\varpi-\delta - \alpha}]$.
    As $L$-representations,
    \[
        H^{0}(\Pi,\, \cO_{\Pi}(\ov{C})) = \C \langle x/z,\, y/z,\, 1 \rangle
    \]
    where $x/z$ and $y/z$ are weight vectors of weights $-(\delta + \alpha)$ and $-\alpha$, respectively, and
    \[
        H^{0}(\ov{C},\, N_{\ov{C}/\Pi}) \simeq \C\langle x/z,\, y/z \rangle \subset H^{0}(\Pi,\, \cO_{\Pi}(\ov{C})).
    \]
\end{prop}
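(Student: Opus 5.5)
The plan is to argue directly in the coordinates of $\Pi$. Since $\ov{C} \subset \Pi$ is the line $\{z = 0\}$, the line bundle $\cO_{\Pi}(\ov{C}) \simeq \cO_{\Pi}(1)$ has global sections equal to the rational functions on $\Pi$ with at most a simple pole along $\{z=0\}$. These are exactly the functions $\ell/z$ with $\ell$ a linear form in $x,y,z$. Hence $H^{0}(\Pi,\, \cO_{\Pi}(\ov{C})) = \C\langle x/z,\, y/z,\, 1\rangle$ as a vector space.

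Next I determine the weights. The torus $T$, and more generally $L$ (which stabilises $\Pi$ and $\ov{C}$, since $Q \subset \Stab_G(\Pi) \cap \ov{Q}$), acts on functions by $(g.f)(p) = f(g^{-1}p)$. The coordinate functions $x,y,z$ are the dual basis vectors of $v_{\varpi}, v_{\varpi-\delta}, v_{\varpi-\delta-\alpha}$, so they have weights $-\varpi$, $-(\varpi-\delta)$ and $-(\varpi-\delta-\alpha)$. This is the same convention as in the proof of Proposition \ref{prop:Hilb-irreducible}, where $x_\omega x_{\omega'}$ has weight $-(\omega+\omega')$. Taking quotients, $x/z$ has weight $-\varpi + (\varpi - \delta - \alpha) = -(\delta+\alpha)$ and $y/z$ has weight $-\alpha$, while $1$ has weight $0$.

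The $L$-action is more than a torus action, so I check that the span is $L$-stable. It is, because $L$ acts on $\Pi$ by projective linear maps preserving the line $\{z=0\}$, hence preserves the space of linear forms and the line spanned by $z$, up to a character. This also takes care of the isomorphism $\cO_{\Pi}(\ov{C}) \simeq \cO_\Pi(1)$ only being canonical up to twist: working with the subsheaf $\cO_{\Pi}(\ov{C}) \subset \mathcal{K}_{\Pi}$ of rational functions, the structure is canonically $L$-linearised. The subspace $\C\langle x/z, y/z\rangle$ need not be $L$-stable by itself. In general only the flag $\C \cdot 1 \subset H^0$ is $L$-stable, so I identify the quotient rather than a canonical complement.

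For the second assertion I use the exact sequence $0 \to \cO_{\Pi} \to \cO_{\Pi}(\ov{C}) \to N_{\ov{C}/\Pi} \to 0$. Since $H^{1}(\Pi, \cO_\Pi) = 0$, taking global sections gives an $L$-equivariant surjection $H^{0}(\Pi,\, \cO_\Pi(\ov{C})) \to H^0(\ov{C},\, N_{\ov{C}/\Pi})$ with kernel $\C \cdot 1$. Therefore
\[
H^{0}(\ov{C},\, N_{\ov{C}/\Pi}) \simeq H^{0}(\Pi,\, \cO_{\Pi}(\ov{C}))/\C\cdot 1,
\]
and this quotient has basis the images of $x/z$ and $y/z$, with the stated weights. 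As $L$ is reductive, the quotient splits equivariantly. By weights, the complement of $\C\cdot 1$ is the span of the $T$-weight vectors $x/z, y/z$ whenever $-\alpha$ and $-(\delta+\alpha)$ are nonzero, which they are. So the span is indeed $L$-stable: the weight-$0$ space of $\C\langle x/z, y/z, 1\rangle$ is just $\C\cdot 1$, and an $L$-stable complement must consist of the nonzero weight spaces. This gives the stated identification $H^{0}(\ov{C},\, N_{\ov{C}/\Pi}) \simeq \C\langle x/z, y/z\rangle$.

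The only delicate point is bookkeeping of sign and weight conventions, namely that functions carry the negatives of the weights of the vectors. That is fixed by consistency with Proposition \ref{prop:Hilb-irreducible}.
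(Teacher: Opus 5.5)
Your proposal is correct and follows the same route as the paper: identify $H^0(\Pi,\cO_\Pi(\ov{C}))$ with the functions $\ell/z$ ($\ell$ linear), twist the ideal sequence of $\ov{C}\subset\Pi$ by $\cO_\Pi(\ov{C})$ to get $H^0(\ov{C},N_{\ov{C}/\Pi})\simeq H^0(\Pi,\cO_\Pi(\ov{C}))/\C\cdot 1$, and read off the weights. The only difference is how you show that $\C\langle x/z,\,y/z\rangle$ is $L$-stable. You use complete reducibility and the fact that the weights $-(\delta+\alpha)$, $-\alpha$, $0$ are distinct. The paper simply notes that $L$, as the standard Levi subgroup of $\Stab_G(\ov{C})\cap\Stab_G(\Pi)$, preserves $\C\langle x,y\rangle$ and $\C z$, so your earlier hedge that this span ``need not be $L$-stable'' is unnecessary.
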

\begin{proof}
    The expression of $H^{0}(\Pi,\, \cO_{\Pi}(\ov{C}))$ follows from the definition of the line bundle $\cO_{\Pi}(\ov{C})$.
    For the expression of $H^{0}(\ov{C},\, N_{\ov{C}/\Pi})$, observe that $N_{\ov{C}/\Pi} \simeq \cO_{\Pi}(\ov{C})|_{\ov{C}}$.
    Then by applying the tensor product with $\cO_{\Pi}(\ov{C})$ to the short exact sequence
    \[
        0 \rightarrow \cI_{\ov{C}/\Pi} \rightarrow \cO_{\Pi} \rightarrow \cO_{\ov{C}} \rightarrow 0,
    \]
    we see that $H^{0}(\ov{C},\, N_{\ov{C}/\Pi}) \simeq H^{0}(\Pi,\, \cO_{\Pi}(\ov{C}))/\C_{0}$.
    Finally, note that the $L$-action preserves $\C\langle x,\,y\rangle$ and $\C \cdot z$, since it is the standard Levi subgroup of $\Stab_{G}(\ov{C}) \cap \Stab_{G}(\Pi)$.
\end{proof}

\begin{cor} \label{coro:sym2-normal-of-line-in-plane}
$\Sym^{2} H^{0}(\ov{C},\, N_{\ov{C}/\Pi})$ is isomorphic to
    \[
        \left\{ \begin{array}{cc}
            \C_{-(\delta+2\alpha)} \otimes \mathfrak{sl}_{2}(\delta) & \text{if }|\delta| \ge |\alpha|, \\
             \C_{-2(\delta+\alpha)} \oplus \C_{-(\delta+2\alpha)} \oplus \C_{-2\alpha}& \text{if }|\delta| < |\alpha|
        \end{array} \right.
    \]
    as an $L$-representation.
    Here, $\mathfrak{sl}_{2}(\delta)$ means the adjoint representation of $\mathfrak{sl}_{2}(\delta)$.
\end{cor}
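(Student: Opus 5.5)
Starting from Proposition~\ref{prop:normal-of-line-in-plane}, $H^{0}(\ov{C},\, N_{\ov{C}/\Pi}) = \C\langle x/z,\, y/z\rangle$, where $x/z$ has $T$-weight $-(\delta+\alpha)$ and $y/z$ has $T$-weight $-\alpha$. So $\Sym^{2}$ is spanned by $(x/z)^2$, $(x/z)(y/z)$ and $(y/z)^2$, with weights $-2(\delta+\alpha)$, $-(\delta+2\alpha)$ and $-2\alpha$. It remains to determine how the derived part of $L$ acts, using the description of $[\fl,\fl]$ given before Proposition~\ref{prop:basic-seq-for-double-line}.

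\emph{Case $|\delta| < |\alpha|$.} Here $[\fl,\fl] = [\fr,\fr]$. Both $x$ and $y$ are coordinates of $T$-weights $\varpi$ and $\varpi-\delta$ of $V$. No root $\beta$ of $\fr$ can move $v_{\varpi}$ to $v_{\varpi-\delta}$, because such a $\beta$ would have to equal $\pm\delta$, and $\pm\delta$ is not a root of $\fr$ (recall $\fr \subset \fp^{\text{Levi}}$). Hence $[\fr,\fr]$ acts on each of the lines $\C x$, $\C y$, $\C z$ by a character of the semisimple group, which must be trivial. The three monomials are therefore one-dimensional $L$-representations on which the centre acts through the stated $T$-weights. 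This gives $\C_{-2(\delta+\alpha)} \oplus \C_{-(\delta+2\alpha)} \oplus \C_{-2\alpha}$.

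\emph{Case $|\delta| \ge |\alpha|$.} Here $[\fl,\fl] = [\fr,\fr] \oplus \mathfrak{sl}_{2}(\delta)$. As above, $[\fr,\fr]$ acts trivially on $\C\langle x/z, y/z\rangle$. For $\mathfrak{sl}_2(\delta)$:
\begin{itemize}
\item $\langle v_\varpi, v_{\varpi-\delta}\rangle$ is its standard $2$-dimensional representation, since $\langle \delta^\vee,\varpi\rangle = 1$.
\item $v_{\varpi-\delta-\alpha}$ is $\mathfrak{sl}_2(\delta)$-invariant. Indeed $\varpi-\alpha$ is not a weight of $V$ (its pairing with $\alpha^\vee$ is $-2$ while $\varpi$ is highest), and $\varpi - 2\delta-\alpha$ is not a weight of $V$. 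The latter uses $\langle\delta^\vee,\varpi-\delta-\alpha\rangle = 1-2-\langle\delta^\vee,\alpha\rangle = 0$, because $\langle \delta^\vee,\alpha\rangle = -1$ when $|\delta|\ge|\alpha|$. This is the main point to check.
\end{itemize}
Consequently $\C\langle x/z,y/z\rangle \simeq \C_{\chi}\otimes V_{\text{std}}$ as a representation of $L$, for a suitable central character $\chi$. Taking $\Sym^2$ gives $\C_{2\chi}\otimes \Sym^2 V_{\text{std}} \simeq \C_{2\chi}\otimes\mathfrak{sl}_2(\delta)$. The middle weight $0$ of the adjoint representation corresponds to the monomial $(x/z)(y/z)$, so the twisting character is $-(\delta+2\alpha)$ as stated. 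Finally, $\mathfrak{sl}_2(\delta)$-equivariance is automatic once one notes that $L$ preserves the flag $\C z\subset$ and the span $\C\langle x,y\rangle$, as observed in the proof of Proposition~\ref{prop:normal-of-line-in-plane}.
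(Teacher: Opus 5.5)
Your proof is correct and follows the paper's argument. You read off the three $T$-weights from Proposition~\ref{prop:normal-of-line-in-plane}, note that only the centre (together with $\mathfrak{sl}_{2}(\delta)$ when $|\delta| \ge |\alpha|$) acts non-trivially on $\Pi$, and identify $\Sym^{2}$ of the standard $\mathfrak{sl}_{2}(\delta)$-representation with the adjoint one twisted by the middle weight $-(\delta+2\alpha)$. Your explicit checks that $[\fr,\fr]$ acts trivially and that $v_{\varpi-\delta-\alpha}$ is $\mathfrak{sl}_{2}(\delta)$-invariant (via $\langle \delta^{\vee},\alpha\rangle = -1$ and $\varpi-\alpha$ not being a weight) spell out what the paper leaves implicit.
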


\begin{proof}%[Proof of Corollary \ref{coro:sym2-normal-of-line-in-plane}]
    Recall that
    \[
    Q = \left\{
    \begin{array}{cc}
        P_{N(\delta) \cup (N(\alpha) \setminus \{\delta\})} & \text{if }|\delta| \ge |\alpha|, \\
         P_{N(\delta) \cup N(\alpha)} & \text{if }|\delta| < |\alpha|,
    \end{array}
    \right.
\]
and so $\delta$ is a root of $L$ if and only if $|\delta| \ge |\alpha|$.
If $\delta$ is not a root of $L$, \emph{i.e.} when $|\delta| < |\alpha|$, then the only simple factor of $L$ acting non-trivially on $\Pi$ is its center.
Thus to obtain the decomposition, it suffices to compute the weights, which follows from Proposition \ref{prop:normal-of-line-in-plane}.
If $\delta$ is a root of $L$, \emph{i.e.} when $|\delta| \ge |\alpha|$, then $\mathfrak{sl}_{2}(\delta)$ is a simple factor of $\fl$, and the simple factors of $\fl$ different from the center of $\fl$ and $\mathfrak{sl}_{2}(\delta)$ act trivially on $\Pi$.
Since $\mathfrak{sl}_{2}(\delta)$ acts non-trivially on $H^{0}(\ov{C},\, N_{\ov{C}/\Pi}) \simeq \C \langle x/z,\, y/z\rangle$, we can write
\[
    \Sym^{2} H^{0}(\ov{C},\, N_{\ov{C}/\Pi})\simeq \C_{\chi} \otimes \mathfrak{sl}_{2}(\delta)
\]
where $\chi$ is a character of the center of $L$.
In fact, by Proposition \ref{prop:normal-of-line-in-plane}, the weights of $\Sym^{2} H^{0}(\ov{C},\, N_{\ov{C}/\Pi})$ are $-2(\delta + \alpha)$, $-(\delta+2\alpha)$ and $-2\alpha$, and so $\C_{\chi} = \C_{- (\delta +2\alpha)}$.
\end{proof}

\begin{cor}
\label{coro:sym2-not-normal}
$\Sym^{2} H^{0}(\ov{C},\, N_{\ov{C}/\Pi})$ is $L$-spherical if and only if $|\delta| \ge |\alpha|$ in which case it has rank $2$.
\end{cor}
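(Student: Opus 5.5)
We split into the two cases of Corollary \ref{coro:sym2-normal-of-line-in-plane} and analyse the action of $L$ through its image in $\GL$ of the three-dimensional space $\Sym^{2} H^{0}(\ov{C},\, N_{\ov{C}/\Pi})$. Only two pieces of $L$ act non-trivially there: the center $Z(L)$, and, when $|\delta| \ge |\alpha|$, the factor $\SL_2(\delta)$. Every other simple factor of $L$ acts trivially. So sphericality and the rank can be computed for the image group alone. This image is either a torus, or $\SL_2$ (up to isogeny $\SO_3$) times a torus.

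\emph{Case $|\delta| < |\alpha|$.} The representation is $\C_{-2(\delta+\alpha)} \oplus \C_{-(\delta+2\alpha)} \oplus \C_{-2\alpha}$, and only a torus acts. A torus acting on a $3$-dimensional space with these weights has a dense orbit if and only if the weights are linearly independent. Here they are dependent:
\[
-2(\delta+\alpha) + (-2\alpha) = 2\bigl(-(\delta+2\alpha)\bigr).
\]
So the generic torus orbit has dimension at most $2$, and $\Sym^2 H^0(\ov{C},\, N_{\ov{C}/\Pi})$ is not $L$-spherical. Concretely, $(x/z)^2\cdot(y/z)^2/((x/z)(y/z))^2$ gives a nonconstant invariant rational function; more intrinsically, it is the discriminant of the quadratic form $a(x/z)^2+b(x/z)(y/z)+c(y/z)^2$.

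\emph{Case $|\delta| \ge |\alpha|$.} The representation is $\C_{-(\delta+2\alpha)} \otimes \mathfrak{sl}_2(\delta)$. We identify $\mathfrak{sl}_2$ with $V_{\SO_3}$ (the adjoint representation of $\SL_2$ factors through $\SO_3$). The center acts on it by a nontrivial character. To see the character is nontrivial, pair $\delta + 2\alpha$ with a suitable cocharacter of $Z(L)$; it is also clear because the weight $-(\delta+2\alpha)$ is nonzero on the center, as $Z(L)$ separates it from roots of $[\fl,\fl]$. Then the image of $L$ is $\SO_3 \times \C^\times$ acting on $V_{\SO_3} \otimes \C^1$. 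By Table \ref{table-spherical} this is spherical of rank $2$.

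As a sanity check, directly: on the space of binary quadratic forms, $\GL_2$ has a dense orbit (the nondegenerate forms). The Borel subgroup $B_{\SL_2}\times\C^\times$ has dimension $3$. Its weight monoids are generated by the highest weight vector coordinate and the discriminant, which gives rank $2$.

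\emph{Main obstacle.} The delicate point is justifying that the center of $L$ really acts through a nontrivial scalar on the $\mathfrak{sl}_2(\delta)$-part, and not trivially. If it acted trivially, $\SO_3$ alone would not be spherical on $\C^3$, since the quadratic form would be an invariant. I would settle this by exhibiting a cocharacter of $Z(L)$ on which $\delta+2\alpha$ is nonzero. This is possible since $\delta + 2\alpha$ is not in the span of the simple roots of $L$ when $\alpha \notin \Delta_L$. Indeed, in this case $\alpha \in N(\delta)$, and $Q = P_{N(\delta) \cup \cdots}$ excludes $\alpha$ from the Levi.
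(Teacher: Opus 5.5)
Your argument is correct and is essentially the paper's proof: for $|\delta|<|\alpha|$ only a torus acts and the three weights are linearly dependent, while for $|\delta|\ge|\alpha|$ the rank $2$ is read off from the entry $V_{\SO_3}\otimes\C^1$ of Table~\ref{table-spherical}. Your check that $-(\delta+2\alpha)$ restricts nontrivially to the center of $L$ (because $\alpha\notin\Delta_L$) makes explicit a step the paper leaves implicit. One aside is slightly off: $b^2-4ac$ is only a semi-invariant and a ratio such as $ac/b^2$ is the actual invariant, but nothing in your proof depends on this.
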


\begin{proof}
If $|\delta| \ge |\alpha|$ the result follows by Table \ref{table-spherical}. Otherwise, $L$ acts via by characters its center. Since the three characters are linearly dependent, we get that $\Sym^{2} H^{0}(\ov{C},\, N_{\ov{C}/\Pi})$ is not $L$-spherical.
\end{proof}

Computing the term $H^{0}(\ov{C},\, N_{\ov{C}/X} \otimes N^{*}_{\ov{C}/\Pi})$ in Lemma \ref{lemma:computing-normal-sp} is more difficult. For this, consider $\cC_o$ the variety of minimal rational tangents (the so-called VMRT; see \cite{hwang:geometry} for details) in $X$ defined by
\[
    \cC_{o} := \{[T_{o}l] \in \bP(T_{o}X) : l \text{ is a line on $X$ passing through $o$}\},
\]
which can be considered as the space of lines on $X$ through $o$.
Since $X$ is homogeneous, $\cC_{o}$ is smooth, and there is a natural $P \cap \overline{Q}$-equivariant isomorphism
\[
    T_{\ov{C}} \cC_{o} \simeq H^{0}(\ov{C},\, N_{\ov{C}/X} \otimes \cI_{o/ \ov{C}}).
\]
Note that $\cC_{o}$ is $P$-homogeneous if and only if $\delta$ is long, see \cite{landsberg.manivel:on} (or Theorem~\ref{thm:small-d=1}).

\begin{prop} \label{prop:TC-tensor-charcter}
    As an $\fr$-representation,
        \[
            H^{0}(\ov{C},\, N_{\ov{C}/X} \otimes N^{*}_{\ov{C}/\Pi}) \simeq (T_{\ov{C}} \cC_{o}) \otimes \C_{\alpha}.
        \]
\end{prop}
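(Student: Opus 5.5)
We compare the two line bundles $\cI_{o/\ov{C}}$ and $N^{*}_{\ov{C}/\Pi}$ on $\ov{C} \simeq \bP^{1}$. Both are isomorphic to $\cO_{\bP^{1}}(-1)$ as abstract $\cO_{\ov{C}}$-modules, and both carry natural $(P \cap Q)$-linearisations. The goal is to show that these linearisations differ by a twist by a character, and that the character is $\C_{\alpha}$ when restricted to the Levi factor with Lie algebra $\fr$. Once this is done, tensoring with $N_{\ov{C}/X}$ and taking $H^{0}$ gives
\[
H^{0}(\ov{C},\, N_{\ov{C}/X} \otimes N^{*}_{\ov{C}/\Pi}) \simeq H^{0}(\ov{C},\, N_{\ov{C}/X} \otimes \cI_{o/\ov{C}}) \otimes \C_{\alpha},
\]
and the right hand side is $(T_{\ov{C}}\cC_{o}) \otimes \C_{\alpha}$ by the $P\cap\ov{Q}$-equivariant identification recalled just before the statement.

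\textbf{Step 1.} I would show that the semisimple part $[\fr,\fr]$ acts trivially on both line bundles. The roots of $\fr$ lie in $\Delta \setminus (N(\delta)\cup N(\alpha)\cup\{\delta\})$, so they are orthogonal to both $\delta$ and $\alpha$. Hence the corresponding root subgroups fix $v_{\varpi}$, $v_{\varpi-\delta}$ and $v_{\varpi-\delta-\alpha}$ up to scalar; this holds because these vectors are highest weight vectors for such $\fr$-roots, since $\varpi-\delta+\beta$ and the like are not weights. So these subgroups act trivially on $\Pi$, and therefore trivially on $\ov{C}$, on $N_{\ov{C}/\Pi}$ and on $\cI_{o/\ov{C}}$. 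It then remains only to compare the actions of the torus $T$, that is, to compare $T$-weights on fibers over a $T$-fixed point.

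\textbf{Step 2.} I would compute the $T$-weights at the fixed point $p=[v_{\varpi-\delta}] \in \ov{C}$, where both bundles are naturally defined. For $\cI_{o/\ov{C}}$, the fiber at $p$ is spanned by the local coordinate $x/y$, which vanishes at $o=[v_{\varpi}]$. Its weight is $-\delta$, up to the sign convention of Proposition \ref{prop:normal-of-line-in-plane}, where $x/z$ has weight $-(\delta+\alpha)$ and $y/z$ has weight $-\alpha$. For $N^{*}_{\ov{C}/\Pi}$, the fiber at $p$ is spanned by the class of $z/y$, which has weight $-\alpha$ relative to $y/z$, so its weight is $+\alpha$ in that convention. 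As a sanity check, I would verify the same difference at the other fixed point $o$: there $\cI_{o/\ov{C}}$ has generator $y/x$ and $N^{*}_{\ov{C}/\Pi}$ has generator $z/x$, and their weights again differ by $\alpha$. The ratio is therefore the constant character $\alpha$ on both fibers, so $N^{*}_{\ov{C}/\Pi} \simeq \cI_{o/\ov{C}} \otimes \C_{\alpha}$ equivariantly for the connected group with Lie algebra $\fr$. I would then tensor with $N_{\ov{C}/X}$ and take global sections to conclude.

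\textbf{Main obstacle.} The delicate part is keeping sign and normalisation conventions consistent. The weight of a function is the negative of the weight of a vector, and the isomorphism $T_{\ov{C}}\cC_{o} \simeq H^{0}(N_{\ov{C}/X}\otimes \cI_{o/\ov{C}})$ also has to be checked to be equivariant with no hidden character twist. I would fix conventions exactly as in Proposition \ref{prop:normal-of-line-in-plane} and check the final twist against one explicit weight vector, for instance the image of $\fg_{-\alpha}$ direction data, to confirm that the character is $+\alpha$ and not $-\alpha$.
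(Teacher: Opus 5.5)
Your strategy is essentially the paper's. Both arguments use that $\cI_{o/\ov{C}}$ and $N^{*}_{\ov{C}/\Pi}$ are two linearised copies of $\cO_{\bP^{1}}(-1)$ whose linearisations differ by a single character, and then compute that character.

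\begin{itemize}
\item The paper works with global sections. The line bundle $\cI_{o/\ov{C}}\otimes N_{\ov{C}/\Pi}$ is trivial, and its one-dimensional space of sections has weight $-\alpha$. This comes from comparing the weights $-\delta-\alpha,\,-\alpha$ of $H^{0}(\ov{C},N_{\ov{C}/\Pi})$ with the weight $-\delta-\alpha$ of the fibre at $o$.
\item You work with fibres at $T$-fixed points instead.
\end{itemize}

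Your Step~1 is harmless but unnecessary. A one-dimensional representation of a connected group is automatically given by a character, and that is all the paper uses.

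Your computation at $p=[v_{\varpi-\delta}]$ is wrong, and as written it contradicts your conclusion.

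\begin{itemize}
\item The function $x/y$ vanishes at $p$, not at $o$; it has a pole at $o$. It is the local coordinate at $p$, i.e.\ it generates $\cI_{p/\ov{C}}$ there.
\item Since $p\neq o$, the sheaf $\cI_{o/\ov{C}}$ coincides with $\cO_{\ov{C}}$ near $p$. Its fibre at $p$ is spanned by the invariant function $1$, of weight $0$, not $-\delta$.
\item With your stated values ($-\delta$ for $\cI_{o/\ov{C}}$ and $+\alpha$ for $N^{*}_{\ov{C}/\Pi}$ at $p$), the ratio at $p$ would be $\alpha+\delta$, not $\alpha$.
\item So your ``sanity check'' at $o$ does not confirm the computation at $p$; it contradicts your own numbers. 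Trusting $p$ alone would give the wrong twist $\C_{\alpha+\delta}$.
\end{itemize}

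With the correct weight $0$ at $p$, the ratio there is $\alpha$. This agrees with your computation at $o$, which is correct: $z/x$ has weight $\delta+\alpha$ and $y/x$ has weight $\delta$.

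In fact one fixed point suffices. The bundle $N^{*}_{\ov{C}/\Pi}\otimes\cI_{o/\ov{C}}^{-1}\simeq\cO_{\ov{C}}$ has a one-dimensional space of global sections, which is a character. Evaluating at any $T$-fixed point recovers that character. Once this is fixed, tensoring with $N_{\ov{C}/X}$ and taking $H^{0}$ gives the statement exactly as you describe.
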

\begin{proof}
    Since $N^{*}_{\ov{C}/\Pi}$ and $\cI_{o/\ov{C}}$ are isomorphic to $\cO_{\bP^1}(-1)$ as $\cO_{\ov{C}}$-modules, we have an isomorphism
    \[
        H^{0}(\ov{C},\, N_{\ov{C}/X} \otimes N^{*}_{\ov{C}/\Pi}) \otimes H^{0}(\ov{C},\, \cI_{o/\ov{C}} \otimes N_{\ov{C}/\Pi}) \rightarrow T_{\ov{C}} \cC_{o}
    \]
    which is $P \cap Q$-equivariant.
    Since $H^{0}(\ov{C},\, \cI_{o/\ov{C}} \otimes N_{\ov{C}/\Pi})$ is of dimension 1, it is enough compute its weight as a $T$-representation.
    Note that
    \[
        H^{0}(\ov{C},\, \cI_{o/\ov{C}} \otimes N_{\ov{C}/\Pi}) \simeq H^{0}(\ov{C},\, N_{\ov{C}/\Pi}) / H^{0}(o,\, N_{\ov{C}/\Pi}|_{o})
    \]
    as $\fr$-representations.
    Since $o = [v_{\varpi}]$, we have
    \[
        T\ov{C}|_{o} \simeq \Hom(\C \cdot v_{\varpi},\, \C\cdot v_{\varpi - \delta}), \quad T\Pi|_{o} \simeq \Hom(\C\cdot v_{\varpi},\, \C\langle v_{\varpi - \delta},\, v_{\varpi - \delta - \alpha}\rangle),
    \]
    hence
    \[
        H^{0}(o,\, N_{\ov{C}/\Pi}|_{o}) \simeq \Hom(\C\cdot v_{\varpi},\, \C\cdot v_{\varpi-\delta - \alpha}),
    \]
    which is of weight $-\delta - \alpha$.
    By Proposition \ref{prop:normal-of-line-in-plane}, the weights of $H^{0}(\ov{C},\, N_{\ov{C}/\Pi})$ are $-\delta-\alpha$ and $-\alpha$, and so the proof follows.
\end{proof}

We recall another useful lemma, which is a special case of Kostant's lemma \cite[Theorem 8.13.3]{wolf:spaces}. For a subset $\Sigma \subset \Delta$, recall the definition of the parabolic subgroup $P_{\Sigma}$ and let $L_\Sigma$ be its standard Levi subgroup and $\fl_{\Sigma}$ be the Lie algebra of $L_\Sigma$. For $\beta \in R$, define $c_\beta : \Delta \to \Z$ by the formula $\beta = \sum_{\alpha \in \Delta}c_\beta(\alpha)\alpha$. For a function $f: \Sigma \rightarrow \Z$, define 
$$\fg(f) = \sum_{\beta \in R, \ c_\beta\vert_\Sigma = f} \fg_{\beta}.$$

\begin{lemma}[{\cite[\S2.3]{landsberg.manivel:on}}] \label{lemma:g(f)-is-irreducible}
Assume that $f : \Sigma \rightarrow \Z_{\le 0}$ is such that $f \not\equiv 0$ and $\fg(f)\not=0$. Then $\fg(f)$ is an irreducible $\fl_{\Sigma}$-representation. 
\end{lemma}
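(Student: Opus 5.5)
The plan is to reduce the statement to a standard fact about the adjoint action of a Levi subalgebra on a graded piece of $\fg$, namely that $\fg$ decomposes under $\fl_\Sigma$ according to the "$\Sigma$-degree" of roots, and that each nonzero graded piece of nonzero degree has a unique lowest weight. Since $\fl_\Sigma$ is spanned by $\fh$ and the root spaces $\fg_\beta$ with $c_\beta|_\Sigma\equiv 0$, and brackets add $c$-functions, each $\fg(f)$ is an $\fl_\Sigma$-subrepresentation. Note also that $f\not\equiv 0$ guarantees $\fg(f)\cap\fl_\Sigma=0$, so $\fg(f)$ is spanned by root spaces, each one-dimensional. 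As $\fl_\Sigma$ is reductive, $\fg(f)$ is completely reducible, so irreducibility is equivalent to $\fg(f)$ having a unique highest weight vector for the Borel $\fb\cap\fl_\Sigma$, i.e.\ a unique root $\beta$ with $c_\beta|_\Sigma=f$ such that $\beta+\alpha\notin R$ for all $\alpha\in\Delta\setminus\Sigma$ (these are the simple roots of $\fl_\Sigma$). Since $f\le 0$, all roots in $\fg(f)$ are negative; I would equivalently work with $-f\ge 0$ and positive roots, replacing highest by lowest weights.

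The key step is thus: if $\beta,\beta'$ are two negative roots with the same restriction $f$ to $\Sigma$, both killed by all raising operators $e_\alpha$, $\alpha\in\Delta\setminus\Sigma$, then $\beta=\beta'$. I would argue via connectedness of the set $R_f:=\{\beta\in R: c_\beta|_\Sigma=f\}$ under the moves $\beta\mapsto\beta\pm\alpha$ with $\alpha\in\Delta\setminus\Sigma$ simple. Granting connectedness, one shows by a standard argument that the $\fl_\Sigma$-module generated by any root vector is all of $\fg(f)$: starting from a root vector $e_\beta$, applying $e_{\pm\alpha}$ moves to $e_{\beta\pm\alpha}$ up to a nonzero scalar whenever $\beta\pm\alpha$ is a root (since $[\fg_\beta,\fg_\alpha]=\fg_{\beta+\alpha}$ in that case). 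Hence any nonzero submodule, being $\fh$-stable and thus containing some root vector, is everything, giving irreducibility.

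So the main obstacle is connectedness of $R_f$. Here I would follow Kostant's approach: take $\beta,\beta'\in R_f$ and consider $\beta-\beta'$, which lies in the span of $\Delta\setminus\Sigma$. If $(\beta,\beta')>0$ and $\beta\ne\beta'$ then $\beta-\beta'$ is a root, necessarily in $R_{\Sigma}$-complement zero, i.e.\ a root of $\fl_\Sigma$, and one then decomposes that root of $\fl_\Sigma$ into simple steps, using at each step the string property to stay inside $R$. If $(\beta,\beta')\le 0$, I would use $(\beta-\beta',\beta-\beta')>0$ and the nonzero $\Sigma$-degree: writing $\beta-\beta'=\sum_{\alpha\in\Delta\setminus\Sigma} m_\alpha\alpha$, some $\alpha$ has $(\beta-\beta',\alpha)$ of the right sign to give $(\beta,\alpha)>0$ or $(\beta',\alpha)<0$ with $m_\alpha\neq0$, so $\beta-\alpha$ or $\beta'+\alpha$ is a root in $R_f$, reducing $\sum|m_\alpha|$; induct. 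The hypothesis $f\not\equiv0$ is what excludes $\beta'=-\beta$ type situations (where $\beta\pm\alpha$ could pass through $0$, which is not a root) and is where care is needed; I expect the bookkeeping of this induction, particularly ensuring the intermediate vectors are genuine roots and not $0$, to be the delicate point. Alternatively, one may simply cite \cite[Theorem 8.13.3]{wolf:spaces} directly, as the paper indicates.
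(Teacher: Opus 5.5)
Your proof is correct, but it differs from the paper, which gives no argument of its own: it quotes Landsberg--Manivel, i.e.\ Kostant's irreducibility theorem for the graded pieces of $\fg$ under a Levi subalgebra \cite[Theorem 8.13.3]{wolf:spaces}.

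You give a self-contained proof instead. First, $R_f=\{\beta\in R \mid c_\beta|_\Sigma=f\}$ is connected under the moves $\beta\mapsto\beta\pm\alpha$ with $\alpha\in\Delta\setminus\Sigma$. Second, any nonzero $\fl_\Sigma$-submodule of $\fg(f)$ is stable under $\mathfrak{h}=\mathrm{Lie}(T)$, whose weights on $\fg(f)$ are distinct roots of multiplicity one, so it contains some $\fg_\beta$. By $[\fg_{\pm\alpha},\fg_\beta]=\fg_{\beta\pm\alpha}$ it then contains all of $\fg(f)$.

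Two remarks tighten your sketch.

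\emph{The case split is unnecessary.} Your descent step works for every pair $\beta\neq\beta'$ in $R_f$, since it only uses $(\beta-\beta',\beta-\beta')>0$. So the separate case $(\beta,\beta')>0$, with its vague ``string property'' decomposition, can be dropped. If you keep it, justify it by $[\fg_{\beta-\beta'},\fg_{\beta'}]=\fg_\beta$ with $\fg_{\beta-\beta'}\subset\fl_\Sigma$; this gives what you need without any simple-step path.

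\emph{The bookkeeping you flag as delicate is immediate.} Choose $\alpha$ with $m_\alpha(\beta-\beta',\alpha)>0$. If $m_\alpha>0$, you get $(\beta,\alpha)>0$ or $(\beta',\alpha)<0$. Since $c_{\pm\alpha}|_\Sigma=0$ and $f\not\equiv 0$, neither $\beta=\alpha$ nor $\beta'=-\alpha$ can occur. Hence $\beta-\alpha$ (resp.\ $\beta'+\alpha$) is a genuine root, lies in $R_f$, and lowers $\sum_{\alpha'\in\Delta\setminus\Sigma}|m_{\alpha'}|$ by one. The case $m_\alpha<0$ is symmetric.

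Comparing the two routes: yours is an elementary proof using only two standard root-system facts, namely $(\beta,\alpha)>0,\ \beta\neq\alpha\Rightarrow\beta-\alpha\in R$ and $[\fg_\alpha,\fg_\beta]=\fg_{\alpha+\beta}$. It also shows that the sign condition $f\le 0$ plays no role; only $f\not\equiv 0$ matters. The citation is shorter and places the lemma within Kostant's general theory, which the paper invokes anyway when it describes the statement as a special case of Kostant's lemma.
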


\begin{prop} \label{prop:coro-Kostant}
Assume that $\delta$ is long.
    \begin{enumerate}
        \item
        As $\fl$-representations, we have the decomposition
        \[
            \ov{\fq} / \fq \simeq \bigoplus_{\substack{f: N(\delta) \cup (N(\alpha) \setminus \{\delta\}) \rightarrow \Z_{\le 0},\\ f\not\equiv 0,\, f|_{N(\delta)} \equiv 0}} \fg(f),
        \]
        and each $\fg(f)$ is irreducible (possibly zero).
        
        \item As $\fp^{\text{Levi}} \cap \ov{\fl}$-representations, we have the decomposition
        \[
            T_{\ov{C}}\cC_{o} \simeq \bigoplus_{\beta \in N(\delta)} \fg(\delta \mapsto 0,\, \beta \mapsto -1),
        \]
        and each $\fg(\delta \mapsto 0,\, \beta \mapsto -1)$ is irreducible with highest weight $-\beta$.

        \item As $\fr$-representations, $\fg(\delta \mapsto 0,\, \beta \mapsto -1)$ is irreducible with highest weight $-\beta$ if $\beta \in N(\delta) \setminus \{\alpha\}$, 
while we have the irreducible decomposition
%while the irreducible decomposition of $\fg(\delta \mapsto 0,\, \alpha \mapsto -1)$ is given by
        \[
\fg(\delta \mapsto 0,\, \alpha \mapsto -1)\simeq \fg_{-\alpha} \oplus \bigoplus_{\substack{f: N(\alpha) \cup \{\alpha\} \rightarrow \Z_{\le 0},\\ f(\delta) = 0,\, f(\alpha) = -1,\, f|_{N(\alpha)} \not\equiv 0}} \fg(f).
        \]
    \end{enumerate}
\end{prop}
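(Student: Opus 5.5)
The plan is to reduce all three statements to root combinatorics and Lemma~\ref{lemma:g(f)-is-irreducible}. The constant input is that $\delta$ is long, so $\ov{Q}=P_{N(\delta)}$ and $\ov{L}$ is the standard Levi subgroup with simple roots $\Delta\setminus N(\delta)$. Since $\delta$ is long we have $|\delta|\ge|\alpha|$, and hence $Q=P_\Sigma$ with $\Sigma=N(\delta)\cup(N(\alpha)\setminus\{\delta\})$.

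For (1), both $\ov{\fq}$ and $\fq$ are sums of root spaces containing $\fl$.
\begin{itemize}
\item The roots of $\ov{\fq}$ are those $\beta$ with $c_\beta|_{N(\delta)}\ge 0$ or $c_\beta|_{N(\delta)}\le 0$ but vanishing there, that is, with $c_\beta|_{N(\delta)}\ge0$ in the sense of a parabolic.
\item Precisely, $\beta$ is a root of $\ov{\fq}$ iff $c_\beta(\gamma)\ge 0$ for all $\gamma\in N(\delta)$ whenever $c_\beta|_{N(\delta)}\neq0$. The analogous statement holds for $\fq$ with $\Sigma$.
\item Hence a root lies in $\ov{\fq}$ but not in $\fq$ iff $c_\beta|_{N(\delta)}\equiv0$ and $c_\beta|_{\Sigma}$ is a nonzero function. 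Since roots have coefficients of constant sign, that function takes values in $\Z_{\le 0}$.
\end{itemize}
Grouping root spaces by $f=c_\beta|_\Sigma$ gives the displayed decomposition as vector spaces. It is $\fl$-stable because $[\fl,\fg(f)]\subset\fg(f)$: adding a root of $\fl$ does not change coefficients on $\Sigma$. Irreducibility of each nonzero $\fg(f)$ is exactly Lemma~\ref{lemma:g(f)-is-irreducible}.

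For (2), I will identify $T_{\ov{C}}\cC_o$ with $T_{\ov C}(P.\ov C)$, which is legitimate because $\cC_o$ is $P$-homogeneous when $\delta$ is long. This tangent space is $\fp/(\fp\cap\ov{\fq})$, or, using the opposite parabolic, the sum of root spaces $\fg_\beta$ with $c_\beta(\delta)=0$ and $c_\beta|_{N(\delta)}$ nonzero and nonpositive. The $\fp^{\text{Levi}}\cap\ov{\fl}$-module structure is then graded by $f=c_\beta|_{\{\delta\}\cup N(\delta)}$.

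The key point is that only $f$ with a single value $-1$ at one $\beta\in N(\delta)$ occur. I expect this to be the main obstacle. A root with $c(\delta)=0$ is supported on $\Delta\setminus\{\delta\}$, whose connected components each contain at most one element of $N(\delta)$. They contain exactly one in the relevant cases, because $\delta$ is adjacent to each component at one node. Such a root therefore lies in a single component and involves a single $\beta\in N(\delta)$.

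That $c(\beta)$ equals $1$ in absolute value should come from $\delta$ being long. Indeed $\scal{\gamma^\vee,\varpi}\in\{0,\pm1\}$ for roots of $T_o X$ in the degree-one part. Alternatively, dimension counting against $\cC_o\subset\bP(T_oX)$ shows that $T_{\ov C}\cC_o$ is the degree-one piece $\fg_{-1}\cap\dots$. I will try first to use that $\fg(\delta\mapsto -1)=T_oX$ is irreducible with roots $-\delta-\sum(\cdots)$, and that lines correspond to the minimal orbit. This gives $[\fg_{-\delta},\fg(\delta\mapsto0,\beta\mapsto -k)]$ with $k\le1$ by the long-root string bound $\scal{\delta^\vee,\cdot}\ge -1$ for long $\delta$.

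Irreducibility and the highest weight $-\beta$ come again from Lemma~\ref{lemma:g(f)-is-irreducible}, applied with $\Sigma=\{\delta,\beta\}$ after restricting to the relevant component. The highest weight is $-\beta$ because $\fg_{-\beta}$ is killed by raising operators of the Levi: adding a simple root other than $\beta,\delta$ to $-\beta$ is not a root.

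For (3), I restrict further to $\fr$, which has simple roots $\Delta\setminus(N(\delta)\cup N(\alpha))$. If $\beta\ne\alpha$, the component containing $\beta$ meets $N(\alpha)$ only through $\delta$, which is excluded, so irreducibility persists. For $\beta=\alpha$, I decompose by the additional coefficients on $N(\alpha)$. The piece with $f|_{N(\alpha)}\equiv0$ is just $\fg_{-\alpha}$, since $\alpha$ is isolated once its neighbours are removed. Each remaining piece is irreducible by Lemma~\ref{lemma:g(f)-is-irreducible} with $\Sigma=N(\alpha)\cup\{\alpha\}$.
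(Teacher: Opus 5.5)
Your argument is correct in substance. For (1) and (3) it is the paper's argument: group root spaces by their coefficients on the relevant simple roots and invoke Lemma~\ref{lemma:g(f)-is-irreducible}.

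The real difference is in (2). The paper does not compute $T_{\ov{C}}\cC_o$ from roots. It quotes the Landsberg--Manivel description $\cC_o\simeq\prod_{\beta\in N(\delta)}P/P_{\delta,\beta}$, with each factor an irreducible Hermitian symmetric space for $P^{\text{Levi}}$, and reads off the highest weights $-\beta$. You import only the $P$-homogeneity of $\cC_o$. You write $T_{\ov{C}}\cC_o=\fp/(\fp\cap\ov{\fq})$ as the sum of the $\fg_\gamma$ with $c_\gamma(\delta)=0$ and $c_\gamma|_{N(\delta)}$ nonzero and nonpositive. You then must bound the coefficients, which amounts to reproving that each $\beta\in N(\delta)$ is a cominuscule node of its component of $\Delta\setminus\{\delta\}$. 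This is more self-contained and shows exactly where the length of $\delta$ is used.

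However, you leave that step hedged among three options, and only the string bound works. State it as follows. If $c_\gamma(\delta)=0$, then
\[
\scal{\delta^\vee,\gamma}=\sum_{\beta\in N(\delta)}c_\gamma(\beta)\scal{\delta^\vee,\beta}=-\sum_{\beta\in N(\delta)}c_\gamma(\beta),
\]
since $\scal{\delta^\vee,\beta}=-1$ for every neighbour $\beta$ of the long root $\delta$. Moreover $|\scal{\delta^\vee,\gamma}|\le 1$ for every root $\gamma\neq\pm\delta$, because $\delta$ is long. As the coefficients of $\gamma$ have constant sign, $\sum_{\beta\in N(\delta)}|c_\gamma(\beta)|\le 1$. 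Hence exactly one $\beta$ has $c_\gamma(\beta)=-1$ and the others vanish. This single identity gives the decomposition in (2); the connected-component argument is then needed only in (3).

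Drop the remark that $\scal{\gamma^\vee,\varpi}\in\{0,\pm1\}$ on $T_oX$. It concerns $\delta$-coefficients rather than $\beta$-coefficients, and as stated it fails for short $\gamma$ (e.g.\ in $B_n/P_1$).

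Two minor slips:
\begin{enumerate}
\item In (1), the nonpositivity of $f=c_\beta|_\Sigma$ comes from $\fg_\beta\not\subset\fq$, which forces $\beta$ to be negative. It does not follow from constancy of sign alone, so your ``iff'' should include it.
\item In (2) and (3), it is cleaner to apply Lemma~\ref{lemma:g(f)-is-irreducible} directly with $\Sigma=\{\delta\}\cup N(\delta)$, respectively $\Sigma=N(\delta)\cup N(\alpha)$, as the paper does. Applying it with $\{\delta,\beta\}$ or $N(\alpha)\cup\{\alpha\}$ gives irreducibility for a larger Levi. Your ``restrict to the component'' step then needs the remark that root vectors outside the component of $\beta$ act trivially on these root spaces.
\end{enumerate}
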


\begin{proof}
Recall that since $\delta$ is long, $\fl$, $\fp^{\text{Levi}} \cap \ov{\fl}$ and $\fr$ are generated by $\Delta \setminus (N(\delta) \cup (N(\alpha) \setminus \{\delta\}))$, $\Delta \setminus (N(\delta) \cup \{\delta\})$ and $\Delta \setminus (N(\delta) \cup N(\alpha))$, respectively.

(1) Observe that as an $\fl$-representation, $\fg / \fq \simeq (\fg / \ov{\fq}) \oplus (\ov{\fq} / \fq)$.
    By Lemma \ref{lemma:g(f)-is-irreducible}, the decomposition of $\fg/\fq$ in irreducible $\fl$-representations is given by
    \[
        \fg/\fq \simeq \bigoplus_{f : N(\delta) \cup (N(\alpha) \setminus \{\delta\}) \rightarrow \Z_{\le0},\, f \not\equiv 0} \fg(f).
    \]
    A nonzero $\fg(f)$ is a subrepresentation of $\fg/\ov{\fq}$ if and only if $f|_{N(\delta)}\not\equiv0$, proving (1).
    
(2) Since $\delta$ is long, $\cC_{o} \simeq \prod_{\beta \in N(\delta)} P/P_{\delta,\, \beta}$ and each $P/P_{\delta,\, \beta}$ is an irreducible hermitian symmetric space with respect to the $P^{\text{Levi}}$-action, see \cite{landsberg.manivel:on}.
Therefore the highest weight spaces of $T_{\ov{C}}\cC_{o}$ are the root spaces associated to $-\beta$ for $\beta \in N(\delta)$, and the result follows from this and Lemma \ref{lemma:g(f)-is-irreducible}.

(3) By Lemma \ref{lemma:g(f)-is-irreducible}, each $\fg(f)$ for $f: N(\delta) \cup N(\alpha) \rightarrow \Z_{\le 0}$ with $f\not\equiv 0$ is $\fr$-irreducible or trivial.
    If $\beta \in N(\delta) \setminus \{\alpha\}$, then $\fg(\delta\mapsto 0,\, \beta \mapsto -1) = \fg(f)$ for $f : N(\delta) \cup N(\alpha) \rightarrow \Z_{\le 0}$ defined by $f(\beta) = -1,\quad f(\gamma) = 0, \quad \forall \gamma\not=\beta$ .
%   \[
%       \fg(\delta\mapsto 0,\, \beta \mapsto -1) = \fg(f)
%    \]
%    for $f : N(\delta) \cup N(\alpha) \rightarrow \Z_{\le 0}$ defined by
%    \[
%        f(\beta) = -1,\quad f(\gamma) = 0, \quad \forall \gamma\not=\beta.
%    \]

%\substack{}
    On the other hand,
    \begin{align*}
        \fg(\delta \mapsto 0, \, \alpha \mapsto -1) &= \bigoplus_{\substack{f: N(\delta) \cup N(\alpha) \rightarrow \Z_{\le 0},\\ f(\delta) = 0,\, f(\alpha) = -1}} \fg(f) \\
        &= \bigoplus_{\substack{f: N(\alpha) \cup \{\alpha\} \rightarrow \Z_{\le 0}, \\ f(\delta) = 0,\, f(\alpha) = -1}}
%        {f: N(\alpha) \cup \{\alpha\} \rightarrow \Z_{\le 0},\, f(\delta) = 0,\, f(\alpha) = -1} 
        \fg(f).
    \end{align*}
This proves the claim since for $f(N(\alpha)) = 0$, we have $\fg(f) = \fg_{-\alpha}$.   
\end{proof}

Consider the decomposition $\fl = \fz(\fl) \oplus \mathfrak{sl}_{2}(\delta) \oplus [\fr,\, \fr]$ where $\fz(\fl)$ is the center of $\fl$.

\begin{cor} \label{coro:trivial-sl2}
    Assume that $\delta$ is long.
    \begin{enumerate}
\item The action of $\mathfrak{sl}_{2}(\delta)$ is trivial on $H^{0}(\ov{C},\, N_{\ov{C}/X} \otimes N^{*}_{\ov{C}/\Pi})$.
\item The $\fl$-representation $H^{0}(\ov{C},\, N_{\ov{C}/X} \otimes N^{*}_{\ov{C}/\Pi})$ contains a trivial subrepresentation $\C_{0}$, and the irreducible decomposition of $H^{0}(\ov{C},\, N_{\ov{C}/X} \otimes N^{*}_{\ov{C}/\Pi})/ \C_{0}$ is given as follows:
\[
    \bigoplus_{\beta \in N(\delta) \setminus \{\alpha\}} (\fg (\delta \mapsto 0,\, \beta \mapsto -1) \otimes \C_{\alpha}) \oplus \bigoplus_{\substack{f: N(\alpha) \cup \{\alpha\} \rightarrow \Z_{\le 0},\\ f(\delta) = 0,\, f(\alpha) = -1,\, f|_{N(\alpha)} \not\equiv 0}} (\fg(f) \otimes \C_{\alpha}),
\]
where, $\fg (\delta \mapsto 0,\, \beta \mapsto -1)$, $\fg(f)$ and $\C_{\alpha}$ are $\fz(\fl) \oplus [\fr,\,\fr]$-representations.
%Here, $\fg (\delta \mapsto 0,\, \beta \mapsto -1)$ and $\fg(f)$ are regarded as $\fz(\fl) \oplus [\fr,\,\fr]$-representations.
\end{enumerate}
\end{cor}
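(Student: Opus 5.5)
The proof combines Proposition \ref{prop:TC-tensor-charcter} with the decompositions of Proposition \ref{prop:coro-Kostant}, and then controls the action of $\mathfrak{sl}_{2}(\delta)$ through weight considerations.

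By Proposition \ref{prop:TC-tensor-charcter}, we have $H^{0}(\ov{C},\, N_{\ov{C}/X} \otimes N^{*}_{\ov{C}/\Pi}) \simeq T_{\ov{C}}\cC_{o} \otimes \C_{\alpha}$ as $\fr$-representations. Proposition \ref{prop:coro-Kostant}(2),(3) then gives the $\fr$-decomposition into the pieces $\fg(\delta\mapsto 0,\beta\mapsto -1)$ for $\beta \in N(\delta)\setminus\{\alpha\}$, the line $\fg_{-\alpha}$, and the pieces $\fg(f)$, each twisted by $\C_\alpha$. The twisted line $\fg_{-\alpha}\otimes\C_\alpha$ has weight $0$. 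It is the distinguished $\C_0$. This matches the geometry: it corresponds to the section of $N_{\ov{C}/X}\otimes N^*_{\ov{C}/\Pi}$ coming from $N_{\ov{C}/\Pi}\otimes N^*_{\ov{C}/\Pi}=\cO_{\ov{C}}$, that is, from the inclusion $N_{\ov{C}/\Pi}\subset N_{\ov{C}/X}$. That inclusion is $L$-equivariant, since $L$ stabilises $\ov{C}$ and $\Pi$. Hence this $\C_0$ is a genuine $\fl$-subrepresentation, on which the center also acts trivially.

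The main step is (1): $\mathfrak{sl}_2(\delta)$ acts trivially. Since $\fl=\fz(\fl)\oplus\mathfrak{sl}_2(\delta)\oplus[\fr,\fr]$ and $\mathfrak{sl}_2(\delta)$ commutes with $\fr$, it suffices to show that every $T$-weight $\mu$ occurring satisfies $\langle\delta^\vee,\mu\rangle=0$. A nontrivial $\mathfrak{sl}_2(\delta)$-action would force some nonzero $\delta^\vee$-eigenvalue, because $\mathfrak{sl}_2(\delta)$ acts through a finite-dimensional representation; that is the same condition. The weights are of the form $\nu+\alpha$, where $\nu$ is a weight of $T_{\ov{C}}\cC_o$. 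Here $\nu=\gamma-\delta$ for a root $\gamma$ with $\fg_{-\gamma}$ a weight space of $T_{o}X$ and $\gamma-\delta$ nonneg... more concretely, $\nu$ ranges over negative roots $-\beta'$ with $c_{\beta'}(\delta)=0$ appearing in $\fg(\delta\mapsto 0,\beta\mapsto-1)$. The plan is to compute $\langle\delta^\vee,-\beta'+\alpha\rangle$ using the fact that $\delta$ is long. Note that $\delta$ is a root of $L$ in this case, since $|\delta|\ge|\alpha|$ automatically. Write $\beta'=\sum c_{\beta'}(\gamma)\gamma$ with $c_{\beta'}(\delta)=0$. Then $\langle\delta^\vee,\beta'\rangle=\sum_{\gamma\in N(\delta)}c_{\beta'}(\gamma)\langle\delta^\vee,\gamma\rangle$. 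Because $\delta$ is long, $\langle\delta^\vee,\gamma\rangle=-1$ for every neighbour $\gamma$ (for long $\delta$, the Cartan integer $\langle\delta^\vee,\gamma\rangle$ is $-1$ even when $\gamma$ is short). Hence
\[
\langle\delta^\vee,\beta'\rangle=-\sum_{\gamma\in N(\delta)}c_{\beta'}(\gamma)=-1,
\]
using that $\beta'$ lies in $\fg(\delta\mapsto0,\beta\mapsto-1)$, up to sign conventions, so exactly one neighbour coefficient equals $1$ and the others vanish. Also $\langle\delta^\vee,\alpha\rangle=-1$. Thus $\langle\delta^\vee,-\beta'+\alpha\rangle=1-1=0$ for all weights, which proves (1). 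The obstacle I expect is keeping track of signs, and checking that the $\fg(f)$ pieces with $f(\alpha)=-1$, $f(\delta)=0$ and arbitrary values on $N(\alpha)\setminus\{\delta\}$ still have total $N(\delta)$-coefficient $1$. This holds because for such $f$ the only $N(\delta)$-coefficient that can be nonzero is the one at $\alpha$, since $f$ takes the value $0$ on the other elements of $N(\delta)$, being in the summand for $\beta=\alpha$.

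For (2), once $\mathfrak{sl}_2(\delta)$ acts trivially, $\fl$-submodules coincide with $(\fz(\fl)\oplus[\fr,\fr])$-submodules. Since $\fr=\fz(\fr)\oplus[\fr,\fr]$ and $\fz(\fr)\subset\fz(\fl)+\C\delta^\vee$, with $\delta^\vee$ acting by $0$, irreducibility over $\fr$ from Proposition \ref{prop:coro-Kostant}(3) transfers to irreducibility over $\fl$. The quotient by $\C_0$ then has exactly the stated decomposition, each summand twisted by $\C_\alpha$.
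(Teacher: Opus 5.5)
Your proposal is correct and follows the paper's proof. For (1), you reduce to showing that $\delta^\vee$ kills every $T$-weight; this holds because the weights of $T_{\ov{C}}\cC_o$ pair to $1$ with $\delta^\vee$ (Proposition~\ref{prop:coro-Kostant}(2), using that $\delta$ is long) while $\langle\delta^\vee,\alpha\rangle=-1$. For (2), you transfer the $\fr$-decomposition of Propositions~\ref{prop:TC-tensor-charcter} and \ref{prop:coro-Kostant}(3) to $\fl$, using $\fz(\fr)=\fz(\fl)\oplus\C\delta^\vee$.

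Two cosmetic corrections, neither affecting the argument. First, the vanishing of the other neighbour coefficients comes from the support of a root with $c(\delta)=0$ being connected in the forest $\Delta\setminus\{\delta\}$; it does not come from values of $f$, and the $\fg(f)$ pieces need no separate check since they lie inside $T_{\ov{C}}\cC_o$. Second, $\mathfrak{sl}_2(\delta)$ commutes with $[\fr,\fr]$, not with all of $\fr$.
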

\begin{proof}
%    Put $H^{0} := H^{0}(\ov{C},\, N_{\ov{C}/X} \otimes N^{*}_{\ov{C}/\Pi})$.
(1) It is enough to prove that the maximal toral subalgebra $[\fg_{\delta},\, \fg_{-\delta}]$ acts trivially on $H^{0}(\ov{C},\, N_{\ov{C}/X} \otimes N^{*}_{\ov{C}/\Pi})$. By Proposition \ref{prop:coro-Kostant}, if $\omega$ is a weight of the $T$-representation $T_{\ov{C}} \cC_{o}$, then $\langle \delta^{\vee},\, \omega \rangle = 1$.
    Since $\langle \delta^{\vee},\,\alpha \rangle = -1$, Proposition \ref{prop:TC-tensor-charcter} implies that $[\fg_{\delta},\, \fg_{-\delta}]$ acts trivially on $H^{0}(\ov{C},\, N_{\ov{C}/X} \otimes N^{*}_{\ov{C}/\Pi})$. 

(2) Follows from Proposition \ref{prop:TC-tensor-charcter}, Proposition \ref{prop:coro-Kostant}, and (1).
\end{proof}

\subsection{The case of end nodes} \label{subsection:The case of end nodes}

In this section, assume that $\delta$ is long, and that it represents an end node in the Dynkin diagram, that is, $N(\delta)$ consists of a single element.
Then there is only one possible choice of $\alpha$, \emph{i.e.} $N(\delta) = \{\alpha\}$.

Under this assumption, we claim that $N_{C}$ is $L$-spherical.
First, suppose that $N(\alpha) = \{\delta\}$, then, in the notation of \cite{bourbaki:elements*78}, $X$ is one of the following:
\begin{itemize}
    \item $X=A_{2}/P_{1}$ with $\alpha= \alpha_{2}$.
    \item $X= A_{2}/P_{2}$ with $\alpha = \alpha_{1}$.
    \item $X=B_{2}/P_{1}$ with $\alpha = \alpha_{2}$ (or equivalently $X = C_{2}/P_{2}$ with $\alpha = \alpha_{1}$).
    \item $X=G_{2}/P_{2}$ with $\alpha = \alpha_{1}$.
\end{itemize}
In this case, $\ov{Q} = Q = P_{\alpha}$, and by Corollary \ref{coro:trivial-sl2},
\[
    H^{0}(\ov{C},\, N_{\ov{C}/X} \otimes N^{*}_{\ov{C}/\Pi}) / \C_{0} = 0.
\]
By Corollary \ref{coro:sym2-normal-of-line-in-plane} and Lemma \ref{lemma:computing-normal-sp}, we see that
\[
    N_{C} \simeq \C_{-(\delta + 2 \alpha)} \otimes \mathfrak{sl}_{2}(\delta),
\]
which is an $L$-spherical variety of rank 2 (by Table \ref{table-spherical}).

Next, assume that $N(\alpha) = \{\delta,\, \gamma\}$ for $\gamma \in \Delta \setminus \{\delta\}$.
Then $X$ is one of the following:
\begin{itemize}
    \item $X= A_{n}/P_{1}$ with $\alpha = \alpha_{2}$ and $\gamma = \alpha_{3}$ ($n \ge 3$).
    \item $X= A_{n}/P_{n}$ with $\alpha = \alpha_{n-1}$ and $\gamma = \alpha_{n-2}$ ($n \ge 3$).
    \item $X= B_{n}/P_{1}$ with $\alpha = \alpha_{2}$ and $\gamma = \alpha_{3}$ ($n \ge 3$).
    \item $X= C_{n}/P_{n}$ with $\alpha = \alpha_{n-1}$ and $\gamma = \alpha_{n-2}$ ($n \ge 3$).
    \item $X= D_{n}/P_{1}$ with $\alpha = \alpha_{2}$ and $\gamma = \alpha_{3}$ ($n \ge 5$).
    \item $X= E_{r}/P_{1}$ with $\alpha = \alpha_{3}$ and $\gamma = \alpha_{4}$ ($6 \le r \le 8$).
    \item $X= E_{r}/P_{r}$ with $\alpha = \alpha_{r-1}$ and $\gamma = \alpha_{r-2}$ ($6 \le r \le 8$).
    \item $X= F_{4}/P_{1}$ with $\alpha = \alpha_{2}$ and $\gamma = \alpha_{3}$.
\end{itemize}
In this case, $\ov{Q} = P_{\alpha}$ and $Q = P_{\alpha,\, \gamma}$, and hence by Proposition \ref{prop:coro-Kostant},
\[
    \ov{\fq}/\fq \simeq \bigoplus_{k < 0} \fg(\alpha \mapsto 0,\, \gamma \mapsto k)
\]
as $L$-representations, and by Corollary \ref{coro:trivial-sl2},
\[
    H^{0}(\ov{C},\, N_{\ov{C}/X} \otimes N^{*}_{\ov{C}/\Pi}) / \C_{0} \simeq \bigoplus_{k < 0} \fg(\delta \mapsto 0,\, \alpha \mapsto -1,\, \gamma \mapsto k) \otimes \C_{\alpha}
\]
as $\fl$-representations.
In the above list of $(X,\, \alpha,\, \gamma)$, one can easily check that $\Delta \setminus \{\delta,\,\alpha\}$ generates a simple subalgebra, and its highest root has coefficient 1 in $\gamma$.
Thus the previous isomorphism becomes
\[
    \ov{\fq}/\fq \simeq \fg(\alpha \mapsto 0,\, \gamma \mapsto -1),
\]
and its highest weight as an $L$-representation is $-\gamma$.
On the other hand, in the list of $X$, the coefficient of $\gamma$ in the highest root of the simple algebra generated by $\Delta \setminus \{\delta\}$ is 1 if $X=A_{n}/P_{1}$, $A_{n}/P_{n}$ or $C_{n}/P_{n}$, and 2 otherwise.
Therefore
\[
    H^{0}(\ov{C},\, N_{\ov{C}/X} \otimes N^{*}_{\ov{C}/\Pi}) / \C_{0} \simeq \fg(\delta \mapsto 0,\, \alpha \mapsto 0,\, \gamma \mapsto -1)
\]
if $X= A_{n}/P_{1}$, $A_{n}/P_{n}$ or $C_{n}/P_{n}$, and
\[\begin{array}{ll}
    H^{0}(\ov{C},\, N_{\ov{C}/X} \otimes N^{*}_{\ov{C}/\Pi}) / \C_{0} \simeq & \fg(\delta \mapsto 0,\, \alpha \mapsto 0,\, \gamma \mapsto -1) \\
    & \oplus (\fg(\delta \mapsto 0,\, \alpha \mapsto -1,\, \gamma \mapsto -2) \otimes \C_{\alpha}) \\
    \end{array}
%    H^{0}(\ov{C},\, N_{\ov{C}/X} \otimes N^{*}_{\ov{C}/\Pi}) / \C_{0} \simeq \fg(\delta \mapsto 0,\, \alpha \mapsto 0,\, \gamma \mapsto -1) \oplus (\fg(\delta \mapsto 0,\, \alpha \mapsto -1,\, \gamma \mapsto -2) \otimes \C_{\alpha})
\]
otherwise.
Then by Corollary \ref{coro:sym2-normal-of-line-in-plane} and Lemma \ref{lemma:computing-normal-sp},
    \[
        N_{C} \simeq \C_{- (\delta + 2 \alpha)} \otimes \mathfrak{sl}_{2}(\delta)
    \]
if $X = A_{n}/P_{1}$, $A_{n}/P_{n}$ or $C_{n}/P_{n}$, and
\[
    N_{C} \simeq (\C_{- (\delta + 2 \alpha)} \otimes \mathfrak{sl}_{2}(\delta)) \oplus (\fg(\delta \mapsto 0,\, \alpha \mapsto -1,\, \gamma \mapsto -2) \otimes \C_{\alpha})
\]
otherwise.
Now we compare $N_{C}$ with Table \ref{table-spherical} case by case.
\begin{enumerate}
    \item If $X = A_{n}/P_{1}$, $A_{n}/P_{n}$ or $C_{n}/P_{n}$, then $N_{C} \simeq \C_{- (\delta + 2 \alpha)} \otimes \mathfrak{sl}_{2}(\delta)$ is an $L$-spherical variety of rank 2.

    \item Assume that $X \not= A_{n}/P_{1}$, $A_{n}/P_{n}$ and $C_{n}/P_{n}$.
    Then the highest weight of the $\fr$-representation $\fg(\delta \mapsto 0,\, \alpha \mapsto -1,\, \gamma \mapsto -2)$ is
    \begin{itemize}
    \item $-\alpha_{2} - 2 \alpha_{3} - \cdots - 2 \alpha_{n}$ if $X = B_{n}/P_{1}$;
    \item $- \alpha_{2} - 2 \alpha_{3} - \cdots - 2 \alpha_{n-2} - \alpha_{n-1} - \alpha_{n}$ if $X = D_{n}/P_{1}$;
    \item $-\alpha_{2}-\alpha_{3} -2 \alpha_{4} - \alpha_{5}$ if $X = E_{r}/P_{1}$;
    \item $-\alpha_{2} - \alpha_{3} - \sum_{i=4}^{r-2} (2 \alpha_{i}) - \alpha_{r-1}$ if $X = E_{r}/P_{r}$; and
    \item $-\alpha_{2} - 2 \alpha_{3}$ if $X = F_{4}/P_{1}$.
    \end{itemize}
    We denote it by $\rho$.
    Observe that the characters of $\fz(\fl)$ given by $-(\delta + 2\alpha)$ and $\rho + \alpha$ are linearly independent.
    \begin{enumerate}
        \item
        If $X = B_{n}/P_{1}$ or $D_{n}/P_{1}$, then
        \[
        \fg(\delta \mapsto 0,\, \alpha \mapsto -1,\, \gamma \mapsto -2) = \fg_{\rho},
        \]
        hence
        \[
            N_{C} \simeq (\C_{- (\delta + 2 \alpha)} \otimes \mathfrak{sl}_{2}(\delta)) \oplus \C_{\rho+\alpha}.
        \]
        Since the characters $- (\delta + 2 \alpha)$ and $\rho + \alpha$ are linearly independent, $N_{C}$ is an $L$-spherical variety of rank 3.
        
        \item 
        If $X = E_{r}/P_{1}$, then the Dynkin diagram of $[\fr,\, \fr]$ is
        \[
            \dynkin[labels*={\alpha_{2}}]A{*} \dynkin[labels*={\alpha_{5},\alpha_{6},\alpha_{r}}]A{**.*}.
        \]
        By computing the Cartan numbers of $\rho$ with the simple roots of $[\fr,\,\fr]$, we see that $\fg(\delta \mapsto 0,\, \alpha \mapsto -1,\, \gamma \mapsto -2)$ as an $[\fr,\,\fr]$-representation is given by
        \[
            \dynkin[labels*={0}]A{*} \otimes \dynkin[labels*={,1,}]A{**.*},
        \]
        \emph{i.e.} the wedge square of the standard representation of the $A_{r-4}$-factor of $[\fr,\,\fr]$.
        Moreover, $\fz(\fl)$ acts on $\fg(\delta \mapsto 0,\, \alpha \mapsto -1,\, \gamma \mapsto -2)$ via the character $\rho$.
        Since $-(\delta + 2 \alpha)$ and $\rho + \alpha$ are linearly independent,
        \[
            N_{C} \simeq (\C_{- (\delta + 2 \alpha)} \otimes \mathfrak{sl}_{2}(\delta)) \oplus (\fg(\delta \mapsto 0,\, \alpha \mapsto -1,\, \gamma \mapsto -2) \otimes \C_{\alpha})
        \]
        is an $L$-spherical variety of rank 3 if $r = 6$ and of rank 4 if $r=7,\,8$.

        \item 
        If $X = E_{r}/P_{r}$, then the Dynkin diagram of $[\fr,\, \fr]$ is
        \begin{align*}
            \dynkin[labels*={\alpha_{1},\alpha_{3}}]A{**} \dynkin[labels*={\alpha_{2}}]A{*} & \text{ if }r=6,\\
            \dynkin[labels*={\alpha_{1},\alpha_{3}, \alpha_{4}, \alpha_{2}}]A{****} & \text{ if }r=7,\\
            \dynkin[edge length=.7cm, labels*={\alpha_{1},\alpha_{3},\alpha_{4},\alpha_{2},\alpha_{5}}]D{*****}& \text{ if }r=8,
        \end{align*}
        and the Cartan numbers of $\rho$ with the simple roots of $[\fr,\,\fr]$ are
        \begin{align*}
            \dynkin[labels*={1,}]A{**} \dynkin A{*} & \text{ if }r=6,\\
            \dynkin[labels*={1,,,}]A{****} & \text{ if }r=7,\\
            \dynkin[edge length=.7cm, labels*={1,,,,}]D{*****}& \text{ if }r=8.
        \end{align*}
        Therefore
        \[
            N_{C} \simeq (\C_{- (\delta + 2 \alpha)} \otimes \mathfrak{sl}_{2}(\delta)) \oplus (\fg(\delta \mapsto 0,\, \alpha \mapsto -1,\, \gamma \mapsto -2) \otimes \C_{\alpha})
        \]
        is an $L$-spherical variety of rank 3 if $r = 6,\,7$ and of rank 4 if $r=8$.

        \item
        If $X=F_{4}/P_{1}$, then $[\fr,\, \fr] = \mathfrak{sl}_{2}(\alpha_{4})$, the 3-dimensional subalgebra generated by $\pm \alpha_{4}$, and it acts on $\fg(\delta \mapsto 0,\, \alpha \mapsto -1,\, \gamma \mapsto -2)$ via the adjoint representation.
        Thus
        \[
            N_{C} \simeq (\C_{- (\delta + 2 \alpha)} \otimes \mathfrak{sl}_{2}(\delta)) \oplus (\mathfrak{sl}_{2}(\alpha_{4}) \otimes \C_{\rho+\alpha})
        \]
        is an $L$-spherical variety of rank 4.
    \end{enumerate}
\end{enumerate}

Now suppose that $|N(\alpha)| = 3$, and write $N(\alpha) = \{\delta,\, \gamma,\,\gamma'\}$.
Then $X$ is one of the following:
\begin{itemize}
    \item $X = D_{4} / P_{1}$ with $\alpha = \alpha_{2}$, $\gamma = \alpha_{3}$ and $\gamma' = \alpha_{4}$.
    \item $X = D_{n} / P_{n-1}$ with $\alpha = \alpha_{n-2}$, $\gamma = \alpha_{n-3}$ and $\gamma' = \alpha_{n}$ ($n \ge 4$).
    \item $X = D_{n} / P_{n}$ with $\alpha = \alpha_{n-2}$, $\gamma = \alpha_{n-3}$ and $\gamma' = \alpha_{n-1}$ ($n \ge 4$).
    \item $X = E_{r} / P_{2}$ with $\alpha = \alpha_{4}$, $\gamma = \alpha_{3}$ and $\gamma' = \alpha_{5}$ ($6 \le r \le 8$).
\end{itemize}
In this case, $\ov{Q} = P_{\alpha}$ and $Q = P_{\alpha,\,\gamma,\,\gamma'}$.
By Proposition \ref{prop:coro-Kostant},
\[
    \ov{\fq}/\fq \simeq \bigoplus_{k<0} \fg(\alpha \mapsto 0,\, \gamma \mapsto k,\, \gamma' \mapsto 0) \oplus \bigoplus_{k'<0} \fg(\alpha \mapsto 0,\, \gamma \mapsto 0,\, \gamma' \mapsto k')
\]
as $L$-representations.
Since the semi-simple subalgebra generated by $\Delta \setminus \{\alpha\}$ is a product of simple factors of type $A$, we see that
\[
    \ov{\fq}/\fq \simeq \fg(\alpha \mapsto 0,\, \gamma \mapsto -1,\, \gamma' \mapsto 0) \oplus \fg(\alpha \mapsto 0,\, \gamma \mapsto 0,\, \gamma' \mapsto -1).
\]
On the other hand, by Corollary \ref{coro:trivial-sl2},
\[
    H^{0}(\ov{C},\, N_{\ov{C}/X} \otimes N^{*}_{\ov{C}/\Pi})/ \C_{0} \simeq \bigoplus_{0\not= (k,\,k') \in \Z_{\le 0}^{\oplus 2}} \fg(\delta \mapsto 0,\, \alpha \mapsto -1,\, \gamma \mapsto k,\, \gamma' \mapsto k') \otimes \C_{\alpha}
\]
as $\fl$-representations.
Since the semi-simple subalgebra generated by $\Delta \setminus \{\delta\}$ is also a product of simple factors of type $A$,
\begin{align*}
    H^{0}(\ov{C},\, N_{\ov{C}/X} \otimes N^{*}_{\ov{C}/\Pi})/ \C_{0} & \simeq \fg(\delta \mapsto 0,\, \alpha \mapsto 0,\, \gamma \mapsto 0,\, \gamma' \mapsto -1) \\
    & \quad \oplus \fg(\delta \mapsto 0,\, \alpha \mapsto 0,\, \gamma \mapsto -1,\, \gamma' \mapsto 0) \\
    &\quad \oplus (\fg(\delta \mapsto 0,\, \alpha \mapsto -1,\, \gamma \mapsto -1,\, \gamma' \mapsto -1) \otimes \C_{\alpha}).
\end{align*}
By Corollary \ref{coro:sym2-normal-of-line-in-plane} and Lemma \ref{lemma:computing-normal-sp},
\[
    N_{C} \simeq (\C_{-(\delta + 2 \alpha)} \otimes \mathfrak{sl}_{2}(\delta)) \oplus (\fg(\delta \mapsto 0,\, \alpha \mapsto -1,\, \gamma \mapsto -1,\, \gamma' \mapsto -1) \otimes \C_{\alpha}).
\]
Observe that the highest weight of the $\fr$-representation $\fg(\delta \mapsto 0,\, \alpha \mapsto -1,\, \gamma \mapsto -1,\, \gamma' \mapsto -1)$ is $- \alpha - \gamma -\gamma'$.
Thus $\fz(\fl)$ acts on it via the character $-\alpha-\gamma-\gamma'$, and the characters of $\fz(\fl)$ given by $-(\delta + 2 \alpha)$ and $(- \alpha - \gamma -\gamma') + \alpha = -\gamma-\gamma'$ are linearly independent.
Furthermore, $\fg(\delta \mapsto 0,\, \alpha \mapsto -1,\, \gamma \mapsto -1,\, \gamma' \mapsto -1)$ as an $[\fr,\,\fr]$-representation in each case is as follows.
\begin{itemize}
    \item
    If $X = D_{4} / P_{1}$, $D_{4}/P_{3}$ or $D_{4}/P_{4}$, then $\fg(\delta \mapsto 0,\, \alpha \mapsto -1,\, \gamma \mapsto -1,\, \gamma' \mapsto -1) = \fg_{-\alpha - \gamma - \gamma'}$.
    Thus $N_{C}$ is an $L$-spherical variety of rank 3.
    
    \item
    If $X = D_{n}/P_{n-1}$ or $D_{n}/P_{n}$ for $n \ge 5$, then the Dynkin diagram of $[\fr,\,\fr]$ is
    \[
        \dynkin[labels*={\alpha_{1},\alpha_{2},\alpha_{n-4}}] A{**.*},
    \]
    and it acts on $\fg(\delta \mapsto 0,\, \alpha \mapsto -1,\, \gamma \mapsto -1,\, \gamma' \mapsto -1)$ via
    \[
        \dynkin[labels*={,,1}] A{**.*}.
    \]
    Thus $N_{C}$ is an $L$-spherical variety of rank 3.

    \item
    If $X = E_{r}/P_{2}$, then the Dynkin diagram of $[\fr,\,\fr]$ is
    \[
        \dynkin[labels*={\alpha_{1}}] A{*} \dynkin[labels*={\alpha_{6},\alpha_{r}}] A{*.*},
    \]
    and it acts on $\fg(\delta \mapsto 0,\, \alpha \mapsto -1,\, \gamma \mapsto -1,\, \gamma' \mapsto -1)$ via
    \[
        \dynkin[labels*={1}] A{*} \otimes \dynkin[labels*={1,}] A{*.*},
    \]
    \emph{i.e.} the tensor product of the standard representations of $\mathfrak{sl}_{2}$ and $\mathfrak{sl}_{r-4}$.
    Thus $N_{C}$ is an $L$-spherical variety of rank 4.
\end{itemize}

\subsection{The case of neighbors of end nodes}

From now on, we consider the case where $\delta$ is long but not an end node, \emph{i.e.} when $|N(\delta)| \ge 2$.
Then there is a long simple root in $N(\delta)$, say $\beta$, and the plane $\bP(v_{\varpi},\, v_{\varpi - \delta},\, v_{\varpi - \delta - \beta})$ is contained in $X$.
Thus we may choose $C$ as the $B$-stable double line on $\bP(v_{\varpi},\, v_{\varpi - \delta},\, v_{\varpi - \delta - \beta})$.
In other words, any long simple root $\beta$ in $N(\delta)$ can be chosen as $\alpha$.

In this section, as before we show that $N_{C}$ is $L$-spherical, under the assumptions that $|N(\delta)| \ge 2$ and that there is a long element $\alpha \in N(\delta)$ which is an end node, \emph{i.e.} $N(\alpha) = \{\delta\}$.
It can be applied to the following cases:
\begin{itemize}
    \item $X = A_{n}/P_{2}$ with $N(\delta) = \{\alpha_{1},\, \alpha_{3}\}$ and $\alpha = \alpha_{1}$ ($n \ge 3$).
    \item $X = A_{n}/P_{n-1}$ with $N(\delta) = \{\alpha_{n-2},\, \alpha_{n}\}$ and $\alpha = \alpha_{n}$ ($n \ge 3$).
    \item $X = B_{n}/P_{2}$ with $N(\delta) = \{\alpha_{1},\, \alpha_{3}\}$ and $\alpha = \alpha_{1}$ ($n \ge 3$).
    \item $X = D_{n}/P_{2}$ with $N(\delta) = \{\alpha_{1},\, \alpha_{3}\}$ and $\alpha = \alpha_{1}$ ($n \ge 5$).
    \item $X = D_{n}/P_{n-2}$ with $N(\delta) = \{\alpha_{n-3},\, \alpha_{n-1},\, \alpha_{n}\}$ and $\alpha = \alpha_{n}$ ($n \ge 4$).
    \item $X = E_{r}/P_{3}$ with $N(\delta) = \{\alpha_{1},\, \alpha_{4}\}$ and $\alpha = \alpha_{1}$ ($6 \le r \le 8$).
    \item $X = E_{r}/P_{4}$ with $N(\delta) = \{\alpha_{2},\, \alpha_{3},\, \alpha_{5}\}$ and $\alpha = \alpha_{2}$ ($6 \le r \le 8$).
    \item $X = E_{r}/P_{r-1}$ with $N(\delta) = \{\alpha_{r-2},\, \alpha_{r}\}$ and $\alpha = \alpha_{r}$ ($6 \le r \le 8$).
    \item $X = F_{4}/P_{2}$ with $N(\delta) = \{\alpha_{1},\, \alpha_{3}\}$ and $\alpha = \alpha_{1}$.
\end{itemize}
In this case, $\ov{Q} = Q = P_{N(\delta)}$.
By Corollary \ref{coro:trivial-sl2},
\[
    H^{0}(\ov{C},\, N_{\ov{C}/X} \otimes N^{*}_{\ov{C}/\Pi})/ \C_{0} \simeq \bigoplus_{\beta \in N(\delta)\setminus \{\alpha\}} \fg(\delta\mapsto 0,\, \beta \mapsto -1) \otimes \C_{\alpha}
\]
as $\fl$-representations, and hence by Corollary \ref{coro:sym2-normal-of-line-in-plane} and Lemma \ref{lemma:computing-normal-sp},
\[
    N_{C} \simeq (\C_{-(\delta + 2 \alpha)} \otimes \mathfrak{sl}_{2}(\delta)) \oplus \bigoplus_{\beta \in N(\delta)\setminus \{\alpha\}} (\fg(\delta\mapsto 0,\, \beta \mapsto -1) \otimes \C_{\alpha}).
\]
Observe that $\fz(\fl)$ acts on the irreducible factors via the characters $-(\delta + 2\alpha)$ and $-\beta + \alpha$, respectively, which are linearly independent.
Now we compare $N_{C}$ with Table \ref{table-spherical} in each case.
\begin{itemize}
    \item If $X = A_{n}/P_{2}$ with $\alpha = \alpha_{1}$, then the Dynkin diagram of $[\fr,\,\fr]$ is
    \[
        \dynkin[labels*={\alpha_{4},\alpha_{n}}] A{*.*},
    \]
    and it acts on $\fg(\delta\mapsto 0,\, \alpha_{3} \mapsto -1)$ via
    \[
        \dynkin[labels*={1,}] A{*.*}.
    \]
    Thus $N_{C}$ is an $L$-spherical variety of rank 3.
    
    \item If $X = A_{n}/P_{n-1}$ with $\alpha = \alpha_{n}$, then the Dynkin diagram of $[\fr,\,\fr]$ is
    \[
        \dynkin[labels*={\alpha_{1},\alpha_{n-3}}] A{*.*},
    \]
    and it acts on $\fg(\delta\mapsto 0,\, \alpha_{n-2} \mapsto -1)$ via
    \[
        \dynkin[labels*={,1}] A{*.*}.
    \]
    Thus $N_{C}$ is an $L$-spherical variety of rank 3.
    
    \item Suppose that $X = B_{n}/P_{2}$ with $\alpha = \alpha_{1}$.
    If $n=3$, then $\fg(\delta\mapsto 0,\, \alpha_{3} \mapsto -1)=\fg_{-\alpha_{3}}$, hence $N_{C}$ is an $L$-spherical variety of rank 3.
    If $n=4$, then $[\fr,\,\fr] = \mathfrak{sl}_{2}(\alpha_{4})$, which acts on $\fg(\delta\mapsto 0,\, \alpha_{3} \mapsto -1)$ via the adjoint representation, hence $N_{C}$ is an $L$-spherical variety of rank 4.
    If $n \ge 5$, then the Dynkin diagram of $[\fr,\,\fr]$ is
    \[
        \dynkin[labels*={\alpha_{4},,\alpha_{n}}] B{*.**},
    \]
    and it acts on $\fg(\delta\mapsto 0,\, \alpha_{3} \mapsto -1)$ via
    \[
        \dynkin[labels*={1,,}] B{*.**},
    \]
    hence $N_{C}$ is an $L$-spherical variety of rank 4.
    
    \item Suppose that $X = D_{n}/P_{2}$ with $\alpha = \alpha_{1}$.
    If $n= 5$, then $[\fr,\,\fr] = \mathfrak{sl}_{2}(\alpha_{4})\oplus \mathfrak{sl}_{2}(\alpha_{5})$, and it acts on $\fg(\delta\mapsto 0,\, \alpha_{3}\mapsto - 1)$ via the tensor product of the standard representations.
    Hence $N_{C}$ is an $L$-spherical variety of rank 4.
    If $n \ge 6$, then the Dynkin diagram of $[\fr,\,\fr]$ is
    \[
        \dynkin[edge length=.7cm, labels*={\alpha_{4},\alpha_{n-2},\alpha_{n-1},\alpha_{n}}] D{*.***},
    \]
    and it acts on $\fg(\delta\mapsto 0,\, \alpha_{3} \mapsto -1)$ via
    \[
        \dynkin[labels*={1,,,}] D{*.***},
    \]
    \emph{i.e.} the standard representation of $\mathfrak{so}_{2(n-3)}$.
    Thus $N_{C}$ is an $L$-spherical variety of rank 4.
    
    \item Suppose that $X = D_{n}/P_{n-2}$ $\alpha = \alpha_{n}$.
    In this case, we need to consider both $\fg(\delta \mapsto 0,\, \alpha_{n-3} \mapsto -1)$ and $\fg(\delta \mapsto 0,\, \alpha_{n-1} \mapsto -1)$.
    In fact,
    \[
        \fg(\delta \mapsto 0,\, \alpha_{n-1} \mapsto -1) = \fg_{- \alpha_{n-1}}.
    \]
    If $n = 4$, then $\fg(\delta \mapsto 0,\, \alpha_{n-3} \mapsto -1) = \fg_{-\alpha_{n-3}}$, hence $N_{C}$ is an $L$-spherical variety of rank 4.
    If $n \ge 5$, then the Dynkin diagram of $[\fr,\,\fr]$ is
    \[
        \dynkin[labels*={\alpha_{1},\alpha_{n-4}}] A{*.*}
    \]
    and it acts on $\fg(\delta \mapsto 0,\, \alpha_{n-3} \mapsto -1)$ via
    \[
        \dynkin[labels*={,1}] A{*.*}.
    \]
    Therefore $N_{C}$ is an $L$-spherical variety of rank 4.
    
    \item If $X = E_{r}/P_{3}$ with $\alpha = \alpha_{1}$, then the Dynkin diagram of $[\fr,\,\fr]$ is
    \[
        \dynkin[labels*={\alpha_{2}}] A{*} \dynkin[labels*={\alpha_{5},\alpha_{r}}] A{*.*},
    \]
    and it acts on $\fg(\delta \mapsto 0,\, \alpha_{4}\mapsto -1)$ via
    \[
        \dynkin[labels*={1}] A{*} \otimes \dynkin[labels*={1,}] A{*.*},
    \]
    \emph{i.e.} the tensor product of the standard representations of $\mathfrak{sl}_{2}$ and $\mathfrak{sl}_{r-3}$.
    Hence $N_{C}$ is an $L$-spherical variety of rank 4.
    
    \item If $X = E_{r}/P_{4}$ with $\alpha = \alpha_{2}$, then we need to consider $\fg(\delta \mapsto 0,\, \alpha_{3}\mapsto -1)$ and $\fg(\delta \mapsto 0,\, \alpha_{5}\mapsto -1)$.
    The Dynkin diagram of $[\fr,\,\fr]$ is
    \[
        \dynkin[labels*={\alpha_{1}}] A{*} \dynkin[labels*={\alpha_{6},\alpha_{r}}] A{*.*},
    \]
    and it acts on $\fg(\delta \mapsto 0,\, \alpha_{3}\mapsto -1)$ and $\fg(\delta \mapsto 0,\, \alpha_{5}\mapsto -1)$ via
    \[
        \dynkin[labels*={1}] A{*} \otimes \dynkin A{*.*} \quad \text{and} \quad \dynkin A{*} \otimes \dynkin[labels*={1,}] A{*.*},
    \]
    respectively.
    Thus $N_{C}$ is an $L$-spherical variety of rank 4.
    
    \item If $X = E_{r}/P_{r-1}$ with $\alpha = \alpha_{r}$, then the Dynkin diagram of $[\fr,\,\fr]$ is
    \begin{align*}
        \dynkin[labels*={\alpha_{1}, \alpha_{3}}] A{**} \dynkin[labels*={\alpha_{2}}] A{*} & \text{ if }r=6, \\
        \dynkin[labels*={\alpha_{1}, \alpha_{3}, \alpha_{4}, \alpha_{2}}] A{****} & \text{ if }r=7, \\
        \dynkin[edge length=.7cm,labels*={\alpha_{1}, \alpha_{3},\alpha_{4},\alpha_{2},\alpha_{5}}] D{*****} & \text{ if }r=8, \\
    \end{align*}
    and it acts on $\fg(\delta\mapsto0,\,\alpha_{r-2}\mapsto-1)$ via
    \begin{align*}
        \dynkin[labels*={, 1}] A{**} \otimes \dynkin[labels*={1}] A{*} & \text{ if }r=6, \\
        \dynkin[labels*={,, 1,}] A{****} & \text{ if }r=7, \\
        \dynkin[edge length=.7cm,labels*={,,,,1}] D{*****} & \text{ if }r=8. \\
    \end{align*}
    Observe that when $r=8$, the associated representation is the spin representation of $\mathfrak{so}_{10}$.
    Thus $N_{C}$ is an $L$-spherical variety of rank 4.
    
    \item If $X = F_{4}/P_{2}$ with $\alpha = \alpha_{1}$, then $[\fr,\,\fr] = \mathfrak{sl}_{2}(\alpha_{4})$ acts on $\fg(\delta \mapsto 0,\, \alpha_{3}\mapsto -1)$ via the standard representation.
    Thus $N_{C}$ is an $L$-spherical variety of rank 3.
\end{itemize}

\subsection{Remaining cases} \label{subsection:remaining-cases}

In this section, we consider the remaining cases for $\delta$ long: $|N(\delta)| \ge 2$ and any long element in $N(\delta)$ is not an end node.
In each case, we choose $\alpha \in N(\delta)$ so that $\alpha$ is long and $|N(\alpha)| = 2$.
More specifically, $\alpha$ is chosen as follows:
\begin{itemize}
    \item $X= A_{n}/P_{p}$ and $\alpha = \alpha_{p-1}$ ($3 \le p \le n-2$).
    \item $X= B_{n}/P_{p}$ and $\alpha = \alpha_{p-1}$ ($3 \le p \le n-1$).
    \item $X= D_{n}/P_{p}$ and $\alpha = \alpha_{p-1}$ ($3 \le p \le n-3$).
    \item $X= E_{r}/P_{5}$ and $\alpha = \alpha_{6}$ ($7 \le p \le 8$).
    \item $X= E_{8}/P_{6}$ and $\alpha = \alpha_{7}$.
\end{itemize}
Observe that $N(\delta) = \{\alpha,\, \beta\}$ and $N(\alpha) = \{\delta,\,\gamma\}$ for some $\beta\not=\alpha$ and $\gamma\not= \delta$.
Then $\ov{Q} = P_{\alpha,\,\beta}$ and $Q = P_{\alpha,\,\beta,\,\gamma}$.
By Proposition \ref{prop:coro-Kostant},
\[
    \ov{\fq}/\fq \simeq \bigoplus_{k < 0} \fg(\alpha\mapsto 0,\, \beta \mapsto 0,\, \gamma \mapsto k)
\]
as $L$-representations.
By our choice of $\alpha$, we see that $\Delta \setminus \{\alpha\}$ generates a semi-simple algebra such that the simple factor containing $\gamma$ is of type $A$.
Thus
\[
    \ov{\fq}/\fq \simeq \fg(\alpha\mapsto 0,\, \beta \mapsto 0,\, \gamma \mapsto -1).
\]
On the other hand, by Corollary \ref{coro:trivial-sl2} and by the choice of $\alpha$,
\[
    H^{0}(\ov{C},\, N_{\ov{C}/X} \otimes N^{*}_{\ov{C}/\Pi})/ \C_{0} \simeq (\fg(\delta \mapsto 0,\, \beta \mapsto - 1) \otimes \C_{\alpha}) \oplus \fg(\delta \mapsto 0,\, \alpha \mapsto 0,\, \gamma \mapsto -1)
\]
as $\fl$-representations.
By Corollary \ref{coro:sym2-normal-of-line-in-plane} and Lemma \ref{lemma:computing-normal-sp},
\[
    N_{C} \simeq (\C_{-(\delta+2\alpha)} \otimes \mathfrak{sl}_{2}(\delta)) \oplus (\fg(\delta \mapsto 0,\, \beta \mapsto - 1) \otimes \C_{\alpha}).
\]
As before, $\fz(\fl)$ acts via linearly independent characters $-(\delta + 2 \alpha)$ and $-\beta + \alpha$. We conclude by computing the second term and comparing it with Table \ref{table-spherical} case by case.
\begin{itemize}
    \item
    If $X= A_{n}/P_{p}$ and $\alpha = \alpha_{p-1}$, then the Dynkin diagram of $[\fr,\,\fr]$ is
    \[
        \dynkin[labels*={\alpha_{1},\alpha_{p-3}}]A{*.*} \dynkin[labels*={\alpha_{p+2},\alpha_{n}}]A{*.*},
    \]
    and it acts on $\fg(\delta \mapsto 0,\, \beta \mapsto - 1)$ via
    \[
        \dynkin A{*.*} \otimes \dynkin[labels*={1,}]A{*.*}.
    \]
    Hence $N_{C}$ is an $L$-spherical variety of rank 3.
    
    \item
    Suppose that $X= B_{n}/P_{p}$ and $\alpha = \alpha_{p-1}$.
    If $p=n-1$, then $\fg(\delta \mapsto 0,\, \beta \mapsto - 1) = \fg_{-\alpha_{n}}$, hence $N_{C}$ is an $L$-spherical variety of rank 3.
    If $p < n-1$, then the Dynkin diagram of $[\fr,\,\fr]$ is
    \begin{align*}
        \dynkin[labels*={\alpha_{1},\alpha_{n-5}}]A{*.*} \dynkin[labels*={\alpha_{n}}]A{*} & \text{ if }p=n-2, \\
        \dynkin[labels*={\alpha_{1},\alpha_{p-3}}]A{*.*} \dynkin[labels*={\alpha_{p+2},,\alpha_{n}}]B{*.**}, & \text{ if }p<n-2,
    \end{align*}
    and it acts on $\fg(\delta \mapsto 0,\, \beta \mapsto - 1)$ via
    \begin{align*}
        \dynkin[labels*={,}]A{*.*} \otimes \dynkin[labels*={2}]A{*} & \text{ if }p=n-2, \\
        \dynkin[labels*={,}]A{*.*} \otimes \dynkin[labels*={1,,}]B{*.**} & \text{ if }p<n-2.
    \end{align*}
    Hence $N_{C}$ is an $L$-spherical variety of 4.
    
    \item
    If $X= D_{n}/P_{p}$ and $\alpha = \alpha_{p-1}$, then the Dynkin diagram of $[\fr,\,\fr]$ is
    \begin{align*}
        \dynkin[labels*={\alpha_{1},\alpha_{n-6}}]A{*.*} \dynkin[labels*={\alpha_{n-1}}] A{*} \dynkin[labels*={\alpha_{n}}] A{*} & \text{ if }p=n-3, \\
        \dynkin[labels*={\alpha_{1},\alpha_{p-3}}]A{*.*} \dynkin[labels*={\alpha_{p+2},,\alpha_{n-1},\alpha_{n}}] D{*.***} & \text{ if }p<n-3,
    \end{align*}
    and it acts on $\fg(\delta \mapsto 0,\, \beta \mapsto - 1)$ via
    \begin{align*}
        \dynkin A{*.*} \otimes \dynkin[labels*={1}] A{*} \otimes \dynkin[labels*={1}] A{*} & \text{ if }p=n-3, \\
        \dynkin A{*.*} \otimes \dynkin[labels*={1,,,}] D{*.***} & \text{ if }p<n-3.
    \end{align*}
    Hence $N_{C}$ is an $L$-spherical variety of rank 4.
    
    \item
    If $X= E_{r}/P_{5}$ and $\alpha = \alpha_{6}$, then the Dynkin diagram of $[\fr,\,\fr]$ is
    \begin{align*}
        \dynkin[labels*={\alpha_{1},\alpha_{3}}]A{**} \dynkin[labels*={\alpha_{2}}] A{*} & \text{ if }r=7, \\
        \dynkin[labels*={\alpha_{1},\alpha_{3}}]A{**} \dynkin[labels*={\alpha_{2}}] A{*} \dynkin[labels*={\alpha_{8}}] A{*} & \text{ if }r=8,
    \end{align*}
    and it acts on $\fg(\delta \mapsto 0,\, \beta \mapsto - 1)$ via
    \begin{align*}
        \dynkin[labels*={,1}]A{**} \otimes \dynkin[labels*={1}] A{*} & \text{ if }r=7, \\
        \dynkin[labels*={,1}]A{**} \otimes \dynkin[labels*={1}] A{*} \otimes \dynkin A{*} & \text{ if }r=8.
    \end{align*}
    Hence $N_{C}$ is an $L$-spherical variety of rank 4.
    
    \item
    If $X= E_{8}/P_{6}$ and $\alpha = \alpha_{7}$, then the Dynkin diagram of $[\fr,\,\fr]$ is
    \[
        \dynkin[labels*={\alpha_{1},\alpha_{3}, \alpha_{4}, \alpha_{2}}]A{****}
    \]
    and it acts on $\fg(\delta \mapsto 0,\, \beta \mapsto - 1)$ via
    \[
        \dynkin[labels*={,,1, }]A{****}.
    \]
    Hence $N_{C}$ is an $L$-spherical variety of rank 4.
\end{itemize}

Now a representation theoretic proof of Theorem \ref{thm:main} for $\delta$ long is completed.
Note that in the process of the proof, since $\rank_{L}(N_{C}) = \rank_{G}(\Hilb_{2}(X))$ by Lemma \ref{lemma:loc-str-spherical}, we obtain the following corollary.

\begin{cor} \label{cor:rank}
    For $\delta$ long, $\Hilb_{2}(X)$ is a smooth $G$-spherical of rank $r$ with 
    \begin{enumerate}
        \item $r = 2$ if $X = A_{n}/P_{1}$, $A_{n}/P_{n}$, $C_{n}/P_{n}$ (with $n \ge 2$), $B_{2}/P_{1}$ or $G_{2}/P_{2}$,
        \item $r = 3$ if $X=A_{n}/P_{p}$ (with $2 \le p \le n-1$), $B_{n}/P_{1}$ (with $n \ge 3$), $B_{n}/P_{n-1}$, $D_{n}/P_{1}$, $D_{n}/P_{n-1}$, $D_{n}/P_{n}$ (with $n \ge 4$), $E_{6}/P_{1}$, $E_{6}/P_{6}$, $E_{7}/P_{7}$, or $F_{4}/P_{2}$,
        \item $r = 4$ otherwise.
    \end{enumerate}
%        If $\delta$ is long, then $\Hilb_{2}(X)$ is a smooth $G$-spherical variety of rank equal to
%    \begin{itemize}
%        \item 2 if $X = A_{n}/P_{1}$ ($n \ge 2$), $A_{n}/P_{n}$ ($n \ge 2$), $C_{n}/P_{n}$ ($n \ge 2$), $B_{2}/P_{1}$ or $G_{2}/P_{2}$,
%        \item 3 if $X=A_{n}/P_{p}$ ($2 \le p \le n-1$), $B_{n}/P_{1}$ ($n \ge 3$), $B_{n}/P_{n-1}$, $D_{n}/P_{1}$ ($n \ge 4$), $D_{n}/P_{n-1}$ ($n \ge 4$), $D_{n}/P_{n}$ ($n %\ge 4$), $E_{6}/P_{1}$, $E_{6}/P_{6}$, $E_{7}/P_{7}$, or $F_{4}/P_{2}$, and
%        \item 4 otherwise.
%    \end{itemize}
\end{cor}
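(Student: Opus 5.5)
The corollary collects the case-by-case computations of Sections~\ref{subsection:The case of end nodes}--\ref{subsection:remaining-cases}, so the plan is to combine three earlier results and then do careful bookkeeping. By Corollary~\ref{cor:hilb-is-smooth}, $\Hilb_2(X)$ is smooth. By Proposition~\ref{prop:Hilb-irreducible} it is irreducible. It contains the $B$-stable double line $C$, and $G.C = G/Q$ is projective. We may therefore apply Lemma~\ref{lemma:loc-str-spherical} with $H = G$, $z = C$ and $L_H = L$. This gives that $\Hilb_2(X)$ is $G$-spherical exactly when $N_C$ is $L$-spherical, and that in this case
\[
\rank_G(\Hilb_2(X)) = \rank_L(N_C).
\]
Everything then reduces to reading off $\rank_L(N_C)$ from the decompositions of $N_C$ obtained via Lemma~\ref{lemma:computing-normal-sp}, Corollary~\ref{coro:sym2-normal-of-line-in-plane} and Corollary~\ref{coro:trivial-sl2}.

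For the rank of each decomposition, the key input is that each summand of $N_C$ is acted on by its own character of $\fz(\fl)$, and these characters are linearly independent. This was checked in each subsection, for instance $-(\delta+2\alpha)$ against $\rho+\alpha$, or against $-\beta+\alpha$. With independent characters, $N_C$ is a multiplicity-free direct sum of the basic spherical pieces from Table~\ref{table-spherical}, each twisted by an independent scaling. Its rank is then the sum of the ranks of the pieces. These are:
\begin{itemize}
\item rank $2$ for $\C_{-(\delta+2\alpha)}\otimes\mathfrak{sl}_2(\delta)$, which is $\SO_3$ with a scaling;
\item rank $1$ for a line, or for a standard representation of $\SL_n$ twisted by a character;
\item rank $2$ for $V_{\SO_n}\otimes\C^1$, for $\Sym^2$ of the $\SL_2$ standard representation, and for the half-spin representation of $\mathrm{Spin}_{10}$;
\item rank $\min(m,n)$ for $V_{\SL_m}\otimes V_{\SL_n}\otimes\C^1$, which equals $2$ when one factor is $\SL_2$ and the other has $n\ge 2$;
\item rank $\lfloor n/2\rfloor$ for $\bigwedge^2$.
\end{itemize}
Adding $2$ to the rank of the second summand (or of the sum of the remaining summands) gives the values in each case.

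Next I would sort the cases into the three groups of the statement. Rank $2$ occurs when the second summand vanishes. This happens for $A_2/P_1$, $A_2/P_2$, $B_2/P_1$, $G_2/P_2$, and for $A_n/P_1$, $A_n/P_n$, $C_n/P_n$. Rank $3$ occurs when the extra summand contributes $1$. This covers:
\begin{itemize}
\item $B_n/P_1$ and $D_n/P_1$, where the extra summand is a single root space;
\item $E_6/P_1$, where it is $\bigwedge^2$ of the standard representation of $\SL_2$;
\item $E_6/P_6$ and $E_7/P_7$, which give a standard representation;
\item the $D_n/P_{n-1}$ and $D_n/P_n$ cases with $n\ge4$;
\item $A_n/P_2$, $A_n/P_{n-1}$ and $A_n/P_p$, each via a standard representation;
\item $B_n/P_{n-1}$;
\item $F_4/P_2$;
\item $B_3/P_2$, which coincides with $B_n/P_{n-1}$ at $n=3$.
\end{itemize}
All other long-root cases give rank $4$, because the extra summand has rank $2$: it is an $\SL_2\otimes\SL_k$ tensor product, an orthogonal standard representation, the adjoint representation of $\mathfrak{sl}_2$, or a spin representation. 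Alternatively, there are two extra one-dimensional summands, as in $D_n/P_{n-2}$.

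The main obstacle is not conceptual but consistency. The same $X$ can appear under several labels, for example $D_4/P_1$ together with $D_4/P_3$ and $D_4/P_4$, or $A_n/P_2$ inside $A_n/P_p$. Degenerate small-$n$ instances, such as $E_6/P_1$ where $\bigwedge^2\C^2$ is one-dimensional, have to be placed in the right bucket. I would double-check each boundary case directly, so that the list in the corollary matches the computations exactly and no $X$ is assigned two different ranks.
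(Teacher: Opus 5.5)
Your proposal follows the paper's own argument: smoothness and irreducibility (Corollary~\ref{cor:hilb-is-smooth}) together with Lemma~\ref{lemma:loc-str-spherical}, applied at the closed orbit $G.C = G/Q$, give $\rank_G(\Hilb_2(X)) = \rank_L(N_C)$. The ranks are then read off from the case-by-case decompositions of $N_C$ in Section~\ref{section:representation}. Rank additivity uses linearly independent central characters, as you say, but also that no simple factor of $L$ acts nontrivially on two summands: $\mathfrak{sl}_2(\delta)$ acts only on the first summand, and distinct factors of $[\fr,\fr]$ act on the others. Both facts hold in every case.

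One small slip does not change the result. For $E_6/P_1$ the extra summand is not a one-dimensional $\bigwedge^2\C^2$: it is $\bigwedge^2$ of the standard representation of the $A_2$-factor generated by $\alpha_5,\alpha_6$, i.e.\ the three-dimensional $\bigwedge^2\C^3$, which has rank $\lfloor 3/2\rfloor = 1$, so the total rank is still $3$.
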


Note that for $X$ cominuscule, then $\rank(\Hilb_{2}(X)) \leq 3$.

%In particular, if $X$ is an irreducible hermitian symmetric space, then the rank of $\Hilb_{2}(X)$ is at most 3.

\subsection{The case of short roots} \label{subsection:short-roots}

In this section, we give a representation theoretic proof of Theorem \ref{thm:main} for $\delta$ short \emph{i.e.} we show that $N_{C}$ is not $L$-spherical. We could try to fully compute the representation $N_C$. To simplify our strategy of the proof is to identify a subrepresentation of $N_C$ which is not spherical. We discuss two cases: \begin{enumerate}
\item if there exists a long root $\alpha \in N(\delta)$, it suffices to consider $\Hilb_2(\Pi)$, %the spaces of conics in $\Pi$,
%\item $\Sym^{2} H^{0}(\ov{C},\, N_{\ov{C}/\Pi}) \subset N_C$ is a non-spherical representation.
\item otherwise, we need to let $\Pi$ vary in a maximal projective subspace of $X$.
\end{enumerate}

Assume first that there exists a long root $\alpha \in N(\delta)$ and consider $\Hilb_2(\Pi)$ the space of conics contained in $\Pi$. Then $\Hilb_2(\Pi)$ contains as a subspace the space $\Aut(\Pi).C$ of double conics and $N_C$ contains the normal space 
$$\bar N_C = N_{\Aut(\Pi).C/\Hilb_2(\Pi),C}.$$
The same computations as in Lemma \ref{lemma:computing-normal-sp} (with $G = \Aut(\Pi)$) show that $\bar N_C \simeq \Sym^{2} H^{0}(\ov{C},\, N_{\ov{C}/\Pi})$. We get the following result.

\begin{prop}
\label{prop:near-long}
 If there exists a long root $\alpha \in N(\delta)$, then $N_C$ is not $L$-spherical.
\end{prop}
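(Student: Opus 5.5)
Since $\delta$ is short and $\alpha \in N(\delta)$ is long, we are in the case $|\delta| < |\alpha|$. So by Corollary \ref{coro:sym2-not-normal} the $L$-representation $\Sym^{2} H^{0}(\ov{C},\, N_{\ov{C}/\Pi})$ is not $L$-spherical. It splits as $\C_{-2(\delta+\alpha)} \oplus \C_{-(\delta+2\alpha)} \oplus \C_{-2\alpha}$, on which $L$ acts only through its center by three linearly dependent characters. The plan is to transport this non-sphericality to $N_C$ through the subrepresentation $\bar N_C \subset N_C$ described just before the statement.

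\textbf{Step 1.} We first justify that $\bar N_C = N_{\Aut(\Pi).C/\Hilb_2(\Pi),C}$ is an $L$-stable subspace of $N_C$. Here $L$ stabilises $\Pi$ and $C$, and it acts on $\Pi$ through a homomorphism to $\Aut(\Pi)_C$, so it preserves $\Hilb_2(\Pi)$ and $\Aut(\Pi).C$. Since $\alpha$ is long with $\langle\alpha^\vee,\delta\rangle=-1$, the proof of Proposition \ref{prop:Hilb-irreducible} shows $\Pi\subset X$, hence $\Hilb_2(\Pi)=\bP^5\subset\Hilb_2(X)$, and this inclusion induces $T_C\Hilb_2(\Pi)\to T_C\Hilb_2(X)$. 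The main point is that the induced map $\bar N_C\to N_C$ is injective. Equivalently, $T_C\Hilb_2(\Pi)\cap T_C(G.C)=T_C(\Aut(\Pi).C)$.

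\textbf{Step 2.} We prove the injectivity from the diagram in the proof of Lemma \ref{lemma:computing-normal-sp}. In that computation, the summand $\Sym^2 H^0(\ov C,N_{\ov C/\Pi})\cong H^0(\ov C,N_{C/\Pi}|_{\ov C})$ of the numerator arises from $H^0(C,N_{C/\Pi})\subset H^0(C,N_{C/X})$. Running the same computation for $\Pi$ in place of $X$ gives $\bar N_C\cong \Sym^2 H^0(\ov C,N_{\ov C/\Pi})$, as stated. One then checks that the quotient by $\C_0\oplus\ov\fq/\fq$ does not kill this piece. The denominator pieces $H^0(N_{\ov C/C})$ and $H^0(N_{\ov C/C}\otimes \cI)$ come from the other columns. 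Also, $\ov\fq/\fq$ has weights that are roots $-\beta$ of $\ov\fq$, whereas $\bar N_C$ has weights $-2(\delta+\alpha)$, $-(\delta+2\alpha)$ and $-2\alpha$, none of which are roots; a weight comparison therefore suffices. Alternatively, a direct $T$-weight argument works: the tangent space $\fg/\fq$ to $G.C$ has only root weights, while these three weights (twice roots, or non-roots) cannot appear. This gives $\bar N_C\hookrightarrow N_C$ as $L$-representations. I expect this identification to be the main obstacle, in particular checking that these weights genuinely occur only in the $\bar N_C$ part and are not cancelled.

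\textbf{Step 3.} We conclude by complete reducibility. $L$ is reductive, so $N_C\cong\bar N_C\oplus N'$. If $N_C$ were $L$-spherical, every $L$-stable direct summand of a spherical representation would be spherical, since projecting the open $B_L$-orbit gives an open orbit. In particular $\bar N_C$ would be spherical, contradicting Corollary \ref{coro:sym2-not-normal}. More concretely, the three characters of $\bar N_C$ are linearly dependent, so the center of $L$, the only part of $L$ acting, cannot have an open orbit on this $3$-dimensional space, and any $B_L$-orbit in $N_C$ projects to a non-open subset of $\bar N_C$. Hence $N_C$ is not $L$-spherical.
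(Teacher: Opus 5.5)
Your argument is the paper's: the paper observes that $\bar N_C\simeq\Sym^2H^0(\ov C,N_{\ov C/\Pi})$ sits inside $N_C$ and invokes Corollary~\ref{coro:sym2-not-normal}. Your Step~3 makes explicit the complete-reducibility step the paper leaves implicit.

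One claim in Step~2 is false, although the conclusion survives. The equality $T_C\Hilb_2(\Pi)\cap T_C(G.C)=T_C(\Aut(\Pi).C)$ fails.

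\begin{itemize}
\item The weight $-(\delta+\alpha)$ direction of $T_C(\Aut(\Pi).C)$ tilts $\ov C$ inside $\Pi$ away from $[v_\varpi]$. This direction is not tangent to $G.C$.
\item The weight $-(\delta+\alpha)$ part of $\fg/\fq$ is the single line $\fg_{-(\delta+\alpha)}$. Since $\scal{(\delta+\alpha)^\vee,\varpi-\delta}>0$ and $v_{\varpi-\delta}$ is extremal, this root vector sends $v_{\varpi-\delta}$ to a nonzero vector of weight $\varpi-2\delta-\alpha$, which lies outside $\Pi$.
\item So its tangent vector has nonzero image in $H^0(C,N_{\Pi/X}|_C)$, and it does not lie in $T_C\Hilb_2(\Pi)$.
\item Example: for $\Sp_4$ acting on $\bP^3$, with $\Pi=\bP(e_1^{\perp})$, the orbit $G.C$ meets the double lines of $\Pi$ only in those supported on lines through $[e_1]$. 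That is a $\bP^1$, not the whole $\bP^2$.
\end{itemize}

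Hence $T_C(\Aut(\Pi).C)\not\subset T_C(G.C)$, and there is no natural induced map $\bar N_C\to N_C$ at all. The paper's phrase ``$N_C$ contains $\bar N_C$'' has to be read loosely in the same way.

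Your weight comparison actually proves only the inclusion $\subseteq$, and that is all you need. Here $[L,L]$ acts trivially on $\Pi$, so $L$ acts on $T_C\Hilb_2(\Pi)$ diagonally through characters. The three weight vectors with the non-root weights $-2(\delta+\alpha)$, $-(\delta+2\alpha)$, $-2\alpha$ therefore span an $L$-stable subspace. This subspace meets $T_C(G.C)$ trivially, since $T_C(G.C)$ has only root weights. Its image in $N_C$ is thus an $L$-subrepresentation isomorphic to $\bar N_C$.

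Your alternative route also gives this summand: compare weights in the isomorphism of Lemma~\ref{lemma:computing-normal-sp} (valid for $\delta$ short as an identity of $L$-representations) and apply Krull--Schmidt. With Step~2 rephrased this way, Step~3 goes through as written.
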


\begin{proof}
By the previous discussion, we have that $\Sym^{2} H^{0}(\ov{C},\, N_{\ov{C}/\Pi}) \simeq \bar N_C$ is a subrepresentation of $N_C$. The result thus follows from Corollary \ref{coro:sym2-not-normal}.
\end{proof}

\begin{remark}
Proposition \ref{prop:near-long}  and Lemma \ref{lemma:loc-str-spherical} give a proof of Theorem \ref{thm:main} in the cases $B_n/P_n$, $C_n/P_{n-1}$, $F_4/P_3$ and $G_2/P_1$.
\end{remark}

The representation $\Sym^{2} H^{0}(\ov{C},\, N_{\ov{C}/\Pi})$ used in the previous proof is always a subrepresentation of $N_C$, however as explained in Corollary \ref{coro:sym2-not-normal}, this representation is not spherical only if $|\alpha| > |\delta|$. In the left cases, we have $|\alpha| = |\delta|$ for all $\alpha \in N(\delta)$ so that we cannot use the previous argument. We therefore need to identify a bigger subrepresentation of $N_C$ and for this we consider a family of planes containing $\Pi$. Let $\alpha \in N(\delta)$ be the root contained in the non simply-laced connected component of the Dynkin diagram obtained by removing $\delta$.

\begin{example}
Note that the cases we consider are $C_n/P_k$ for $k \in [1,n-2]$ and $F_4/P_4$. With notation as in \cite{bourbaki:elements*78}, we have
\begin{itemize}
\item $\alpha = \alpha_{k+1}$ for $C_n/P_k$ and 
\item $\alpha = \alpha_3$ for $F_4/P_4$.
\end{itemize}
\end{example}

We consider a family of maximal projective subspaces contained in $X$ obtained by selecting the largest Dynkin subdiagram of the Dynkin diagram of $G$ which is of type $C_m$ and such that $\delta$ is the unique short simple end-root. Explicitly, we consider a subset $\Sigma \subset \Delta$ of simple roots. Then the family of projective subspaces is $G/P_\Sigma$ and each such subspace is obtained as the fiber of $G/P_{\Sigma\cup\{\delta\}} \to G/P_\Sigma$ identified with its image in $X = G/P_\delta$. With notation as in \cite{bourbaki:elements*78}, we have
\begin{itemize}
\item $\Sigma = \{\alpha_{k-1}\}$ for $C_n/P_k$. In this case the maximal projective space is $\bP^{2(n-k)+1}$ and the family of these subspaces is $G/P_{k-1}$. 
\item $\Sigma = \{\alpha_1\}$ for $F_4/P_4$.  In this case the maximal projective space is $\bP^{7}$ and the family of these subspaces is $F_4/P_{1}$. 
\end{itemize}

The projective space over the $B$-fixed point in $G/P_\Sigma$ is $P_\Sigma/P_{\Sigma \cup \{\delta\}} = \bP(W_\Sigma)$ for some $P_\Sigma$-representation $W_\Sigma$. Let $\Gr(3,W_\Sigma)$ be the grassmannian of planes in $\bP(W_\Sigma)$ and set $\cP = G \times^{P_\Sigma} \Gr(3,W_\Sigma)$ be the associated familly of planes in $X$. We have a natural $G$-equivariant morphism $\cP \to G/P_\Sigma$. Note that $\Pi \subset \bP(W_\Sigma)$ is a $B$-stable plane. Its $P_\Sigma$-orbit $P_\Sigma.\Pi \subset \Gr(3,W_\Sigma)$ is the (unique) closed $P_\Sigma$-orbit. We get that $G.\Pi \subset \cP$ is the (unique) closed $G$-orbit. Finally, let $\cE$ be the universal subbundle over $\Gr(3,W_\Sigma)$. Denote also by $\cE$ its pull back to $\cP$. Then $\cF = \bP (\Sym^{2} \cE^{*}|_{\cP})$ is the space of conics in $X$ contained in a plane of the family $\cP$. In particular, since $\Pi \in \cP$, we have $C \in \cF$. Define $\tilde N_C = N_{G.C/\cF,C}$. The above discussion shows that $\tilde N_C$ is a subrepresentation of $N_C$, we even have the sequence of subrepresentations $\bar N_C \subset \tilde N_C \subset N_C$. The following result concludes the proof of Theorem  \ref{thm:main}.

\begin{prop}
\label{prop:near-short}
With the above notations, we have $\tilde N_C$ is not $L$-spherical.
\end{prop}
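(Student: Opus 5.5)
The plan is to avoid computing $\tilde N_C$ weight by weight. Instead, I would turn the question back into a global sphericality statement about $\cF$ and then reduce it to the Levi factor of $P_\Sigma$ acting on conics in the single projective space $\bP(W_\Sigma)$.

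\textbf{Step 1: reduce to sphericality of $\cF$.} The variety $\cF=\bP(\Sym^2\cE^*|_{\cP})$ is a projective bundle over $\cP=G\times^{P_\Sigma}\Gr(3,W_\Sigma)$, which is itself homogeneous over $G/P_\Sigma$. Hence $\cF$ is smooth, irreducible and projective, and $G.C\subset\cF$ is a closed, hence projective, orbit. Lemma~\ref{lemma:loc-str-spherical}, applied with $Z=\cF$, $H=G$ and $z=C$, shows that $\tilde N_C$ is $L$-spherical if and only if $\cF$ is $G$-spherical. It therefore suffices to show that $\cF$ is not $G$-spherical.

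\textbf{Step 2: reduce to the Levi factor of $P_\Sigma$.} We have $\cF=G\times^{P_\Sigma}\cF_\Sigma$, where $\cF_\Sigma=\bP(\Sym^2\cE^*)\to\Gr(3,W_\Sigma)$ is the space of plane conics in $\bP(W_\Sigma)$. This space is birational to $\Hilb_2(\bP(W_\Sigma))$. The fiber $\bP(W_\Sigma)=P_\Sigma/P_{\Sigma\cup\{\delta\}}$ is homogeneous under the Levi factor $L_\Sigma$, so the unipotent radical of $P_\Sigma$ acts trivially on $\bP(W_\Sigma)$, hence on $\cF_\Sigma$. By parabolic induction (take $B\subset B^-$-opposite position so that $B\cap R^u(P_\Sigma^-)$ covers the open cell of $G/P_\Sigma$), $\cF$ is $G$-spherical if and only if $\cF_\Sigma$ is $L_\Sigma$-spherical. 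In each case the centre of $L_\Sigma$ acts on $W_\Sigma$ by scalars, so it acts trivially on $\cF_\Sigma$.

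\textbf{Step 3: case analysis.}
\begin{itemize}
\item[(a)] For $X=C_n/P_k$ with $k\le n-2$, we have $L_\Sigma\simeq\GL_{k-1}\times\Sp_{2m}$ with $m=n-k+1\ge 3$, and $W_\Sigma$ is $\C^{2m}$ with $\GL_{k-1}$ acting through scalars. A Borel subgroup of $L_\Sigma$ therefore acts on $\cF_\Sigma$ through a Borel subgroup of $\Sp_{2m}$. We are thus reduced to showing that $\Hilb_2(\bP^{2m-1})$ is not $\Sp_{2m}$-spherical for $m\ge 3$, which is exactly Proposition~\ref{prop:conic-sp}.
\item[(b)] For $X=F_4/P_4$, the fiber $\bP(W_\Sigma)$ is $\bP^7$. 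Here $\dim\cF_\Sigma=\dim\Gr(3,8)+5=20$. A Borel subgroup of $L_\Sigma$ has dimension at most $13$, and its central $\C^*$ acts trivially on $\cF_\Sigma$, so an effective Borel orbit has dimension at most $12<20$. The dimension count alone therefore rules out sphericality.
\end{itemize}

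\textbf{Expected obstacle.} The main point to check carefully is Step 2. One must verify that the identification $\cF\simeq G\times^{P_\Sigma}\cF_\Sigma$ is compatible with $C$ and $G.C$, and that the unipotent radical really acts trivially on $\cF_\Sigma$. If the latter failed, the reduction to $L_\Sigma$ would only give an inequality, not an equivalence. Even so, the direction needed here survives: if $\cF$ has an open $B$-orbit, then the $B\cap L_\Sigma$-orbits on the fiber over the open Schubert cell must be open, because $B\cap R^u(P_\Sigma^-)$ is used up by the base. So non-sphericality of $\cF_\Sigma$ still suffices. For the $F_4$ case, identifying $L_\Sigma$ and $W_\Sigma$ precisely is not needed; the crude dimension bound is enough.
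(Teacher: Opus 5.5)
Your proof is correct, but it runs in the opposite direction from the paper's. The paper stays local. It identifies $\tilde N_C/\bar N_C$ with the normal space $\Lambda^2\Pi^*$ of the isotropic Grassmannian $P_\Sigma.\Pi$ in $\Gr(3,W_\Sigma)$, and writes $\tilde N_C\simeq(\mathfrak{sl}_2(\delta)\otimes\C_{\chi_1})\oplus(V_{\mathfrak{sl}_2(\delta)}\otimes\C_{\chi_2})\oplus\C_{\chi_3}$. It then concludes because this $6$-dimensional space is acted on through $\SL_2(\delta)\times\bG_m^3$, whose Borel subgroups have dimension $5$.

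You instead use Lemma~\ref{lemma:loc-str-spherical} backwards, so the claim becomes non-sphericality of $\cF$. Parabolic induction along $\cF=G\times^{P_\Sigma}\cF_\Sigma$ (which does not even need $R^u(P_\Sigma)$ to act trivially) then brings you to $\Sp_{2m}$ acting on $\cF_\Sigma\simeq\Hilb_2(\bP^{2m-1})$, and you quote Proposition~\ref{prop:conic-sp}. Your route avoids computing $\Lambda^2\Pi^*$ and the characters, and it shows that both remaining families come from the same phenomenon. The paper's route is a short dimension count that keeps the representation-theoretic proof of Theorem~\ref{thm:main} independent of Section~\ref{section:geometric}. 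That independence is lost once you rely on Proposition~\ref{prop:conic-sp}.

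Two details need adjusting.

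\emph{The fibre in case (b).} It is not $\bP^7$. The Levi of $P_{\alpha_1}$ in $F_4$ has derived group of type $C_3$ (on $\alpha_2,\alpha_3,\alpha_4$), and $W_\Sigma$ is its standard $6$-dimensional representation, so $\bP(W_\Sigma)=\bP^5$; the $\bP^7$ in the text is a slip you inherited. Hence $\dim\cF_\Sigma=\dim\Gr(3,6)+5=14$, against an effective Borel of dimension $12$. The conclusion stands, and (b) is literally case (a) with $m=3$.

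\emph{The Levi in Step 1.} Lemma~\ref{lemma:loc-str-spherical} gives sphericality with respect to a Levi subgroup of the stabiliser of the point $z\in\cF$ over $C$, namely $Q\cap P_\Sigma$.
\begin{itemize}
\item For $C_n/P_k$ one has $Q\subset P_\Sigma$: for $k\ge 2$, $\alpha_{k-1}\in N(\delta)$ is among the simple roots removed in $Q$, and for $k=1$, $P_\Sigma=G$. So this Levi is $L$.
\item For $F_4/P_4$, however, $Q=P_{\{\alpha_2,\alpha_3\}}$ contains $U_{-\alpha_1}$. This subgroup fixes $\Pi$ pointwise but does not preserve $W_\Sigma$. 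So the group that genuinely acts on $\tilde N_C$ is the Levi of $Q\cap P_\Sigma$, and that is what your argument correctly handles.
\end{itemize}
The paper's formulation passes over the same point.
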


\begin{proof}
The same computations as in Lemma \ref{lemma:computing-normal-sp} or the discussion before Proposition \ref{prop:near-long} show that $\bar N_C \simeq \Sym^{2} H^{0}(\ov{C},\, N_{\ov{C}/\Pi})$. We need to compute the representation $\tilde N_C/\bar N_C$ which is isomorphic to $N_{P_\Sigma.\Pi/\Gr(3,W_\Sigma),\Pi}$ and because $P_{\Sigma}.\Pi$ is the isotropic grassmannian for the form defined by $P_\Sigma$ this space is isomorphic to $\Lambda^2 \Pi^{*}$. Now by Corollary \ref{coro:sym2-normal-of-line-in-plane}, we have $\bar N_C = \mathfrak{sl}_2(\delta) \otimes \C_{\chi_1}$ as $L$-representation for some character $\chi_1$ of $T$. Since $L$ stabilises the line $\bar C \subset \Pi$ and acts on $\bar C$ via $\SL_2(\delta)$ and $T$, we get that the isomorphism of $L$-representations $\Lambda^2\Pi^{*} \simeq \left(V_{\mathfrak{sl}_2(\delta)} \otimes \C_{\chi_2}\right) \oplus \C_{\chi_3}$ for some characters $\chi_2$ and $\chi_3$. The group $L$ therefore acts on 
$$\tilde N_C \simeq \left(\mathfrak{sl}_2(\delta) \otimes \C_{\chi_1}\right) \oplus \left(V_{\mathfrak{sl}_2(\delta)} \otimes \C_{\chi_2}\right) \oplus \C_{\chi_3}$$ 
via $\SL_2(\delta) \times \bG_m^3$. However, since a Borel subgroup of $\SL_2(\delta) \times \bG_m^3$ is of dimension 5, the representation $\tilde N_C$ is not $L$-spherical.
\end{proof}

%%%%%%%%%%%%%%%%%%%%%%%%%%%%%%%%%%%%%%%%%%%%%%%%%%%%%%

% Bibliography
% A .bib file can be used by adding the line "\def\mybibfile{file.bib}" to the
% file equivrigid.conf. Do not add equivrigid.conf to the repository.
% \ifdefined\mybibfile
% \bibliography{\mybibfile}
% \else

% \fi
% \bibliographystyle{halpha}

\end{document}